\setlist{font=\normalfont}
\renewcommand*{\fps@figure}{htpb!}
\newtheorem{theorem}{Theorem}[section]
\theoremstyle{definition}
\newtheorem{proposition}[theorem]{Proposition}
\newtheorem{lemma}[theorem]{Lemma}
\newtheorem{remark}[theorem]{Remark}
\newtheorem{corollary}[theorem]{Corollary}
\DeclareMathOperator{\tr}{tr}
\DeclareMathOperator{\qtr}{\widehat{tr}}
\DeclareMathOperator{\ttr}{\widetilde{tr}}
\newcommand{\term}[1]{\textit{#1}}
\newcommand{\cx}{\mathbb{C}}
\newcommand{\nats}{\mathbb{N}}
\newcommand{\ints}{\mathbb{Z}}
\newcommand{\Runiv}{R_{\mathrm{univ}}}
\newcommand{\iunit}{\mathbf{i}}
\newcommand{\SL}{\mathrm{SL}}
\newcommand{\PSL}{\mathrm{PSL}}
\newcommand{\PGL}{\mathrm{PGL}}
\newcommand{\SO}{\mathrm{SO}}
\newcommand{\Oq}{\mathcal{O}_{q^2}(\SL_2)}
\newcommand{\frameM}{\mathrm{Fr}M}
\newcommand{\frameX}{X^{\mathrm{Fr}}_M}
\newcommand{\skein}{\mathcal{S}}
\newcommand{\skeinrd}{\skein^{\mathrm{rd}}}
\newcommand{\skeincr}{\skein^{\mathrm{cr}}}
\newcommand{\surface}{\Sigma}
\newcommand{\surclose}{\overline{\Sigma}}
\newcommand{\marked}{\mathcal{P}}
\newcommand{\heeg}{\mathcal{H}}
\newcommand{\thicken}[1]{\widetilde{#1}}
\newcommand{\ori}{\mathfrak{o}}
\newcommand{\cut}{\Theta}
\newcommand{\bigon}{\mathbb{B}}
\newcommand{\annulus}{\mathbb{A}}
\newcommand{\lantern}{\mathbb{L}}
\newcommand{\qtorus}{\mathbb{T}}
\newcommand{\qglue}{\widehat{\mathcal{G}}}
\newcommand{\qvar}[1]{\hat{#1}}
\newcommand{\qz}{\qvar{z}}
\newcommand{\qy}{\qvar{y}}
\newcommand{\vexp}[1]{\mathbf{#1}}
\newcommand{\ideal}[1]{\langle#1\rangle}
\newcommand{\lideal}[1]{\prescript{}{L\!}{\ideal{#1}}}
\newcommand{\quotbyL}[1]{/\!\!\lideal{#1}}
\newcommand{\rideal}[1]{\ideal{#1}^{}_{\!R}}
\newcommand{\triang}{\mathcal{T}}
\newcommand{\dtrunc}{\mathfrak{T}}
\newcommand{\embed}{\hookrightarrow}
\newcommand{\onto}{\twoheadrightarrow}
\tikzset{knot diagram/every knot diagram/.style={
    background color=gray!20,clip width=5,end tolerance=5pt,clip radius=0.1cm}}
\newcommand{\stsize}{\scriptsize}
\newenvironment{linkdiag}[1][0.5]{\mathop{}\!
  \begin{tikzpicture}[scale=0.8,baseline=(ref.base)]
    \node (ref) at (0.5,{#1}){\phantom{$-$}};}{\end{tikzpicture}\!\mathop{}}
\tikzset{-o-/.code 2 args={
    \pgfkeysalso{decoration={markings,mark=at position #1 with {\arrow{#2}}},
      postaction={decorate}}
}}
\newcommand{\stnode}[1]{node[inner sep=1pt,right]{\stsize$#1$}}
\newcommand{\stnodel}[1]{node[inner sep=1pt,left]{\stsize$#1$}}
\newenvironment{anndiag}[3][]{
\begin{linkdiag}[#3/2]
\fill[gray!20] (0,0) rectangle (#2,#3);
\draw[dashed] (0,0) -- (#2,0) (0,#3) -- (#2,#3);
\draw (#2,0) -- (#2,#3);
\ifthenelse{\equal{#1}{u}}{\foreach \y in {0,4/11,8/11,1}}{\foreach \y in {0,1/3,2/3,1}}
\draw[fill=white] (#2,{\y*#3})circle(0.07);
}{\end{linkdiag}}
\newcommand{\relempty}[1][0]{
\begin{linkdiag}
\fill[gray!20] (0,-#1)rectangle(1,{1+#1});
\draw (1,-#1)--(1,{1+#1});
\ifthenelse{\equal{#1}{0}}{}{\draw[fill=white] (1,0.5)circle(0.07);}
\end{linkdiag}\,
}
\newcommand{\relcross}[2]{
\begin{linkdiag}
\fill[gray!20] (0,0)rectangle(1,1);
\draw[-stealth] (1,0)--(1,1);
\begin{knot}
\strand[thick] (0,0.3)..controls +(0.5,0) and +(-0.5,0)..(1,0.7);
\strand[thick] (0,0.7)..controls +(0.5,0) and +(-0.5,0)..(1,0.3);
\end{knot}
\draw (1,0.7)\stnode{#1} (1,0.3)\stnode{#2};
\end{linkdiag}
}
\newcommand{\relarc}[3][->]{
\begin{linkdiag}
\fill[gray!20] (0,0)rectangle(1,1);\draw[#1] (1,0)--(1,1);
\draw[thick] (1,0.7)..controls(0.2,0.7) and (0.2,0.3)..(1,0.3);
\draw (1,0.7)\stnode{#2} (1,0.3)\stnode{#3};
\end{linkdiag}
}
\newcommand{\relarcl}[3][->]{
\begin{linkdiag}
\fill[gray!20] (0,0)rectangle(1,1);\draw[#1] (0,0)--(0,1);
\draw[thick] (0,0.7)..controls(0.8,0.7) and (0.8,0.3)..(0,0.3);
\draw (0,0.7)\stnodel{#2} (0,0.3)\stnodel{#3};
\end{linkdiag}
}
\newcommand{\relacross}[3][->]{
\begin{linkdiag}
\fill[gray!20] (0,0)rectangle(1,1);\draw[#1] (1,0)--(1,1);
\draw[thick] (0,0.7)--(1,0.7) (0,0.3)--(1,0.3);
\draw (1,0.7)\stnode{#2} (1,0.3)\stnode{#3};
\end{linkdiag}
}
\newcommand{\relcorner}[3][]{
\begin{linkdiag}
\fill[gray!20] (0,-0.1)rectangle(1,1.1);\draw[#1] (1,-0.1)--(1,1.1);
\draw[thick] (1,0.8)..controls(0.2,0.8) and (0.2,0.2)..(1,0.2);
\draw[fill=white] (1,0.5)circle(0.07);
\draw (1,0.8)\stnode{#2} (1,0.2)\stnode{#3};
\end{linkdiag}
}
\newcommand{\relbottom}[1]{
\begin{linkdiag}
\fill[gray!20] (0,0)rectangle(1,1);\draw (1,0)--(1,1);
\draw[thick] (0,0.2)--(1,0.2);
\draw[fill=white] (1,0.5)circle(0.07);
\draw (1,0.2)\stnode{#1};
\end{linkdiag}
}
\newcommand{\reltwup}[1]{
\begin{linkdiag}
\fill[gray!20] (0,0)rectangle(1,1);\draw (1,0)--(1,1);
\draw[thick] (0,0.2)..controls (0.5,0.2) and (0.5,0.8)..(1,0.8);
\draw[fill=white] (1,0.5)circle(0.07);
\draw (1,0.8)\stnode{#1};
\end{linkdiag}
}
\newcommand{\relbottoml}[1]{
\begin{linkdiag}
\fill[gray!20] (0,0)rectangle(1,1);\draw (0,0)--(0,1);
\draw[thick] (0,0.2)--(1,0.2);
\draw[fill=white] (0,0.5)circle(0.07);
\draw (0,0.2)\stnodel{ #1};
\end{linkdiag}
}
\newcommand{\reltwupl}[1]{
\begin{linkdiag}
\fill[gray!20] (0,0)rectangle(1,1);\draw (0,0)--(0,1);
\draw[thick] (1,0.2)..controls (0.5,0.2) and (0.5,0.8)..(0,0.8);
\draw[fill=white] (0,0.5)circle(0.07);
\draw (0,0.8)\stnodel{ #1};
\end{linkdiag}
}
\def\BZ{\mathbbm Z}
\def\BC{\mathbbm C}
\def\calT{\mathcal T}
\def\calS{\mathcal S}
\def\s{\sigma}
\def\longto{\longrightarrow}
\def\a{\alpha}
\def\b{\beta}
\def\d{\delta}
\def\be{\begin{equation}}
\def\ee{\end{equation}}
\def\SO{\mathrm{SO}}
\newcommand{\cutred}{\cut_{\mathrm{red}}}
\renewcommand\thepart{\@Roman\c@part}%
\renewcommand\part{%
\if@noskipsec \leavevmode \fi
\par
\addvspace{6.7ex}%
\@afterindentfalse
\secdef\@part\@spart}
\def\@part[#1]#2{%
\ifnum \c@secnumdepth >\m@ne
\refstepcounter{part}%
\addcontentsline{toc}{part}{Part~\thepart.\ #1}%
\else
\addcontentsline{toc}{part}{#1}%
\fi
{\parindent \z@ \raggedright
\interlinepenalty \@M
\normalfont
\ifnum \c@secnumdepth >\m@ne
\centering\large\scshape \partname~\thepart.%
\hspace{1ex}%
\fi%
\large\scshape #2%
\markboth{}{}\par}%
\nobreak
\vskip 4.7ex
\@afterheading}
\def\@spart#1{
\refstepcounter{part}%
\addcontentsline{toc}{part}{#1}%
{\parindent \z@ \raggedright
\interlinepenalty \@M
\normalfont
\centering\large\scshape #1\par}%
\nobreak
\vskip 4.7ex
\@afterheading}
\renewcommand*\l@part[2]{%
\ifnum \c@tocdepth >-2\relax
\addpenalty\@secpenalty
\addvspace{0.75em \@plus\p@}%
\begingroup
\parindent \z@ \rightskip \@pnumwidth
\parfillskip -\@pnumwidth
{\leavevmode
\normalsize \bfseries #1\hfil \hb@xt@\@pnumwidth{\hss #2}}\par
\nobreak
\if@compatibility
\global\@nobreaktrue
\everypar{\global\@nobreakfalse\everypar{}}%
\fi
\endgroup
\fi}
\def\l@subsection{\@tocline{2}{0pt}{2pc}{6pc}{}}
\begin{document}
\title{A Quantum trace map for 3-manifolds}

\author{Stavros Garoufalidis}
\address{
International Center for Mathematics, Department of Mathematics \\
Southern University of Science and Technology \\
Shenzhen, China \newline
{\tt \url{http://people.mpim-bonn.mpg.de/stavros}}}
\email{stavros@mpim-bonn.mpg.de}

\author{Tao Yu}
\address{Shenzhen International Center for Mathematics \\
Southern University of Science and Technology \\
1088 Xueyuan Avenue, Shenzhen, Guangdong, China}
\email{yut6@sustech.edu.cn}

\thanks{
  {\em Keywords and phrases}: Kauffman bracket skein module, 3-manifolds, knots,
  character varieties, quantum trace map, ideal triangulations, gluing equations,
  quantum gluing module, Frobenius-Chebyshev homomorphism. 
}

\date{19 March 2024}

\begin{abstract}
We define a quantum trace map from the skein module of a 3-manifold with torus
boundary components to a module (left and right quotient of a quantum torus)
constructed from an ideal triangulation. Our map is a 3-dimensional version of the
well-known quantum trace map on surfaces introduced by Bonahon and Wong and further
developed by L\^e. 
\end{abstract}

\maketitle

{\footnotesize
\tableofcontents
}


\section{Introduction}
\label{sec.intro}

\subsection{The quantum trace map of a surface}

The quantum trace map, introduced by Bonahon--Wong~\cite{BW:qtrace} connects
the skein algebra of a punctured surface (a quantum object), with an algebra
of $q$-commuting variables related to hyperbolic geometry. The quantum trace map
was originally introduced as a replacement of the topologically defined 
skein module (generated by framed links) and the algebro-geometric quotient of
the character variety of a surface group by a more manageable object, namely a
quantum torus, i.e., a Laurent polynomial ring of $q$-commuting variables.
The original discovery of Bonahon--Wong used miraculous cancellations which have
seen been explained by subsequent work of L\^e~\cite{Le:cancel}. 
This map has recently attracted a lot of attention from researchers in topology,
representation theory, character varieties, cluster algebras and their quantization. 

Recall that the skein module of a closed oriented surface $S(\surface)$ (with
coefficients in a ring $R$ that contains an invertible element $q$) is the
$R$-module generated by the set of isotopy classes of framed unoriented links
in $\surface \times (-1,1)$, modulo the relations~\eqref{eq-skein} and~\eqref{eq-loop}. 

\begin{align}
\begin{linkdiag}
\fill[gray!20] (-0.1,0)rectangle(1.1,1);
\begin{knot}
\strand[thick] (1,1)--(0,0);
\strand[thick] (0,1)--(1,0);
\end{knot}
\end{linkdiag}
&=q
\begin{linkdiag}
\fill[gray!20] (-0.1,0)rectangle(1.1,1);
\draw[thick] (0,0)..controls (0.5,0.5)..(0,1);
\draw[thick] (1,0)..controls (0.5,0.5)..(1,1);
\end{linkdiag}
+q^{-1}
\begin{linkdiag}
\fill[gray!20] (-0.1,0)rectangle(1.1,1);
\draw[thick] (0,0)..controls (0.5,0.5)..(1,0);
\draw[thick] (0,1)..controls (0.5,0.5)..(1,1);
\end{linkdiag},\label{eq-skein}\\
\begin{linkdiag}
\fill[gray!20] (0,0)rectangle(1,1);
\draw[thick] (0.5,0.5)circle(0.3);
\end{linkdiag}
&=(-q^2-q^{-2})
\begin{linkdiag}
\fill[gray!20] (0,0)rectangle(1,1);
\end{linkdiag}.\label{eq-loop}
\end{align}

The skein module was introduced in the early days of quantum topology by
Przytycki~\cite{Przytycki} and Turaev~\cite{Tu:conway}. 

There is a well-known connection between the skein module of a surface (or an
arbitrary 3-manifold) and the $\SL_2(\BC)$-character variety. Namely, when
$q=1$, the skein module $\calS_1(\surface)$ is a commutative algebra whose quotient
by its nil radical coincides with the coordinate ring of the $\SL_2(\BC)$-character
variety of $\surface$. The quantum trace map sends the skein module
$\calS_1(\surface)$ to a quantum torus that depends on an ideal triangulation of
$\surface$. Two key features of this map are
\begin{itemize}
\item[(a)]
  The skein module $\calS(\surface)$ of $\surface$ is an associative (and in general
  non-commutative) algebra, and so is the target quantum torus.
\item[(b)]
  The quantum trace map depends on an ideal triangulation of $\surface$, but it
  is invariant under 2--2 Pachner moves that connect any two such triangulations.
  The reason behind this is the fact that the $\SL_2(\BC)$-character variety of
  irreducible representations of a surface is irreducible and has canonical
  coordinates induced by an ideal triangulation of the surface.
\end{itemize}

Our aim is to define a 3-dimensional analogue of the quantum trace map for
a 3-manifold $M$ equipped with an ideal triangulation $\calT$. Unfortunately the
two key features above fail for 3-manifolds. Indeed, the domain
of such a map, namely the skein module $\calS(M)$, is no longer an
algebra but only a module over a universal coefficient ring (and in general only
a module over the skein algebra $\calS(\partial M)$ of the boundary of $M$).
Likewise, as in the case of a surface, an ideal triangulation $\calT$ gives coordinates
for some components of the $\SL_2(\BC)$-character variety of irreducible representations
of $M$. But now, this variety contains several components, some of which are
detected by an ideal triangulation $\calT$, but these detected components are no
longer invariant under 2--3 Pachner moves that connect every two ideal triangulations. 

With these subtleties in mind, we define a 3-dimensional
quantum trace map that allows one to do computations using the standard methods of
3-manifold triangulations developed in \texttt{SnapPy}~\cite{snappy}.

\subsection{Preliminaries}

To define the 3-dimensional quantum trace map we will fix an oriented 3-manifold $M$
and an ideal triangulation $\calT$ of it. The boundary of the manifold can be arbitrary,
however the most important case for us will be the case where the boundary is a finite
union of tori, for instance when $M$ is the complement of a knot in 3-space. There
are three ingredients that go into the definition. 

\begin{itemize}
\item
  The gluing equations variety $G_\calT$.
\item
  The coordinate ring $\BC[\frameX]$ of $\SL_2(\BC)$-character variety of $\frameX$. 
\item
  The quantum torus $\qtorus(\triang)$ and its left/right quotient $\qglue(\triang)$. 
\end{itemize}

We will begin by giving a brief description of what is needed here, and refer to
the later sections for a detailed discussion. We first recall the gluing equations
variety of an ideal triangulation introduced by Thurston~\cite{Thurston} and further
studied by Neumann--Zagier~\cite{NZ}.  
Fix a connected, oriented 3-manifold $M$ with torus boundary components and
an ideal triangulation $\calT$ of $M$ that consists of $N$ tetrahedra $T_1,\dots,T_N$.
Note that the number of edges of $\calT$ is also $N$. We can assign shape parameters
$Z_j$, $Z'_j=1/(1-Z_j)$ and $Z''_j=1-1/Z_j$ to each pair of opposite edges of the ideal
tetrahedron $T_j$, where the triple $(Z_j,Z'_j,Z''_j)$ satisfies the equations
\begin{equation}
\label{3Z}  
  Z_jZ'_jZ''_j=-1, \qquad Z_j + Z'^{-1}_j = 1, \qquad Z'_j + Z''^{-1}_j = 1,
  \qquad Z''_j + Z^{-1}_j = 1 \,.
\end{equation}
Each edge $e$ of $\calT$ gives rise to a gluing equation
\begin{equation}
\label{eT}
\prod_{T : \, e \in T} Z_T^\Box=1
\end{equation}
given as the product of the shapes of the tetrahedra
that go around the edge. 

The gluing equations variety $G^P_\calT$ is defined as the solutions in
$(\BC^\times)^{3N}$ of all shapes $Z=(Z_1,Z_1',Z_1'',\dots,Z_N,Z_N',Z_N'')$
that satisfy the Lagrangian equations~\eqref{3Z} for $j=1,\dots,N$ and all the
edge gluing equations~\eqref{eT}. A point in the gluing equations variety $G^P_\calT$
gives a $\PSL_2(\BC)$-representation of $\pi_1(M)$, well-defined up to conjugation
(thus the superscript $P$ in $G^P_\calT$). Said differently, an ideal triangulation gives
a chart for the $\PSL_2(\BC)$-character variety of the manifold.

However, we need a lift of the theory to $\SL_2(\cx)$. To achieve this, we
assign square root shape parameters $z_j,z'_j,z''_j$ to pairs of opposite edges of
each tetrahedron $T_j$. These parameters satisfy the following equations
\begin{equation}
\label{3z}
z_j z'_j z''_j=\iunit, \qquad z^2_j+(z'_j)^{-2}=1, \qquad (z'_j)^2+(z''_j)^{-2}=1,
\qquad (z''_j)^2+z_j^{-2}=1 
\end{equation}
and the edge equations which take the form
\begin{equation}
\label{et}
\prod_{T : \, e \in T} z_T^\Box= -1 \,.
\end{equation}
As before, the gluing equations variety $G_\calT$ is defined as 
the solutions in $(\BC^\times)^{3N}$ of all shapes
$z=(z_1,z_1',z_1'',\dots,z_N,z_N',z_N'')$ that satisfy the Lagrangian
equations~\eqref{3z} for $j=1,\dots,N$ and all the edge gluing equations~\eqref{et}.
With this twist, a point in the gluing equations variety $G_\calT$
gives an $\SL_2(\BC)$-representation of $\pi_1(\frameX)$ of the oriented framed bundle
$\frameX$ of $M$; see Proposition~\ref{prop-classical-rep}. At the same time, the
specialization to $q=1$ of the skein module $\skein(M)$, divided by the nilradical,
coincides with the coordinate ring of $\frameX$; see Lemma~\ref{lem-classical} below.

The last ingredient that we now discuss is the quantum torus and its two-sided
quotient, the gluing equations module. As in the skein module of $M$, we fix a
ring $R$ that contains an invertible element $q^{1/2}$ and a primitive 4-th root
of unity $\iunit$. Associated to $\calT$ is a quantum torus
\begin{equation}
\label{qtorusT}  
\qtorus(\triang) = \bigotimes_{j=1}^N \qtorus \langle \qz_j, \qz''_j \rangle, \qquad
\qtorus \langle \qz_j, \qz''_j \rangle =
R \langle \qz_j, \qz''_j \rangle/\ideal{\qz''_j \qz_j - q \qz_j \qz''_j}  
\end{equation}
where the variables $\qz_j$ and $\qz''_\ell$ for $j,\ell=1,\dots,N$ commute except in
the following instance 
$\qz_j \qz_j'' = q \qz_j'' \qz_j$. A more symmetric definition of
$\qtorus \langle \qz_j, \qz''_j \rangle$ is given by the quotient of
$R \langle \qz_j, \qz'_j, \qz''_j \rangle$ where the three variables satisfy the
$q$-commutation relations
\begin{equation}\label{qzcommute}
\qz_j \qz_j' = q \qz_j' \qz_j, \qquad
\qz_j' \qz''_j = q \qz''_j \qz_j', \qquad
\qz_j'' \qz_j = q \qz_j \qz_j'', \qquad
\qz_j\qz'_j\qz''_j=\iunit q^{3/2} \,.
\end{equation}

The quantum torus $\qtorus(\triang)$ is an associative algebra and has a left and
a right ideal generated, respectively, by the Lagrangian equations
\begin{equation}\label{3qz}
\qz_j^{-2}+(\qz''_j)^2=1
\end{equation}
for $j=1,\dots, N$ and by the edge gluing equations
\begin{equation}\label{eqt}
\prod_{T : \, e \in T} \qz_T^\Box= -q^2
\end{equation}
for all edges, where the product of these $q$-commuting variables is given by
their canonical Weyl-ordering (see Section~\ref{sub.qtorus} below). 

The \term{quantum gluing equations module} $\qglue(\triang)$ is the quotient of
$\qtorus(\triang)$ from the left by the edge-equations and from the right by the
Lagrangian equations
\begin{equation}
\label{ghatdef}
\qglue(\triang)=
\rideal{\mathrm{edge}}\backslash\qtorus(\triang)\quotbyL{\mathrm{Lagrangian}}
\,.
\end{equation}

The quantum gluing equations module is implicit in the work of Dimofte who studied
the quantization of the character variety of an ideally-triangulated
3-manifold~\cite{Dimofte:quantum}. Dimofte used the symplectic properties of the
gluing equations (coming from the symplectic properties of the Neumann--Zagier
matrices), as well as standard methods of non-commutative symplectic reduction to
arrive at a module of $q$-commuting operators. A similar module appears in
~\cite[Eqn.(10)]{AGLR}.

Our definition of $\qglue(\triang)$ comes from a presentation of the skein module
of $M$ as a quotient by a left and by a right ideal (see Proposition~\ref{prop.ST}
below), which itself comes from the fact that the 3-manifold $M$ is obtained by
a thickened surface by attaching 2-handles on either side.

\subsection{Our results}
\label{sub.results}

We now have all the ingredients to phrase our main result.
Fix an ideal triangulation $\calT$ of a 3-manifold $M$ as above.

\begin{theorem}
\label{thm.1}  
There exists a map
\begin{equation}
\label{defqtr}
\qtr : \skein(M) \longto \qglue(\triang)
\end{equation}
that fits in a commutative diagram
\begin{equation}
\label{eqn.diagram}
\begin{tikzcd}
\skein(M) \arrow[d] \arrow[r] &
\qglue(\triang)
\arrow[d] \\
\BC[\frameX] \arrow[r] & \BC[G_\calT]
\end{tikzcd} \,.
\end{equation}
\end{theorem}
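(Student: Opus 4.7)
The plan is to construct $\qtr$ by factoring it through a surface-level quantum trace map and then descending to the two-sided quotient. As anticipated at the end of Section~\ref{sub.results}, the ideal triangulation $\calT$ presents $M$ as a thickened splitting surface $\surface$ with two families of 2-handles attached on opposite sides. One family, attached from one side, records the edge-linking conditions of $\calT$; the dual family, attached from the other side, imposes the tetrahedron-internal consistency conditions. By Proposition~\ref{prop.ST}, this yields a presentation of $\skein(M)$ as a double quotient of $\skein(\surface)$ by a right ideal (from one family of handles) on the left and a left ideal (from the other) on the right, in exact parallel with the defining presentation~\eqref{ghatdef} of $\qglue(\triang)$.

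The first step is to produce a lift
\[
\widetilde{\qtr}\colon \skein(\surface) \longto \qtorus(\triang)
\]
by applying the Bonahon--Wong quantum trace map~\cite{BW:qtrace}, as further developed by L\^e~\cite{Le:cancel}, to $\surface$ endowed with the punctured triangulation inherited from $\calT$. The surface $\surface$ decomposes, along the arcs of this triangulation, into a disjoint union of four-punctured spheres, one per tetrahedron $T_j$, while $\qtorus(\triang)$ is correspondingly a tensor product~\eqref{qtorusT} of per-tetrahedron quantum tori whose generators $\qz_j, \qz_j', \qz_j''$ satisfy the $q$-commutation relations~\eqref{qzcommute}. For a single tetrahedron the surface quantum trace assigns to each boundary arc an explicit Weyl-ordered monomial in $\qz_j, \qz_j', \qz_j''$, and naturality under the triangle and bigon gluings assembles these into a globally defined $\widetilde{\qtr}$.

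The main obstacle, and the heart of the argument, is to verify that $\widetilde{\qtr}$ sends each of the two families of handle-attachment relations into the correct one-sided ideal of $\qtorus(\triang)$. Concretely, each edge-handle relation (a curve in $\surface$ bounding a disk in the corresponding 2-handle and encircling an edge $e$ of $\calT$) must map into the right ideal generated by $\prod_{T \ni e} \qz_T^\Box + q^2$ from~\eqref{eqt}, while each tetrahedron-handle relation must map into the left ideal generated by $\qz_j^{-2} + (\qz_j'')^2 - 1$ from~\eqref{3qz}. At the level of monomials this reduces to comparing Weyl-ordered products produced by the surface quantum trace against these quantum gluing identities; the symplectic structure of the Neumann--Zagier matrices, exploited already in~\cite{Dimofte:quantum}, ensures that all $q$-powers arising from the Weyl-ordering match on the nose. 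I expect this step to require care with sign conventions and with the interaction of the $\iunit$ factor in~\eqref{3z} with the Weyl normalization in~\eqref{qzcommute}.

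Once $\widetilde{\qtr}$ is shown to descend to $\qtr$, the commutativity of~\eqref{eqn.diagram} follows by specializing to $q=1$. Lemma~\ref{lem-classical} identifies $\skein_1(M)$ modulo its nilradical with $\BC[\frameX]$, while the specialization $\qglue(\triang)|_{q=1}$ is identified with $\BC[G_\calT]$ by direct comparison of~\eqref{3qz}--\eqref{eqt} with~\eqref{3z}--\eqref{et}. The commutativity itself then amounts to the classical statement that, for $[\rho] \in G_\calT$, the trace of a loop under the corresponding framed $\SL_2(\BC)$-representation equals the shape-parameter polynomial produced by $\qtr$ at $q=1$, which is the content of Proposition~\ref{prop-classical-rep}.
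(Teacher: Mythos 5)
Your proposal has the right skeleton --- factor through a surface-level map via Proposition~\ref{prop.ST}, then descend to the two-sided quotient and specialize to $q=1$ --- and that is indeed the overall strategy of the paper. But there are two genuine gaps.

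First, the lift $\widetilde{\qtr}$. You propose applying the Bonahon--Wong quantum trace to ``$\surface$ endowed with the punctured triangulation inherited from $\calT$.'' But $\surface_\triang$ is the boundary of a regular neighborhood of the dual $1$-skeleton: it carries a decomposition into $4$-punctured spheres (the lanterns $\lantern_j$) along the $A$-circles, not an ideal triangulation in the Bonahon--Wong sense, and the target you want is the per-tetrahedron quantum torus $\bigotimes_j \qtorus\langle\qz_j,\qz_j''\rangle$ with the relations~\eqref{qzcommute}, not a Chekhov--Fock shear-coordinate torus. Producing a map from the skein of a lantern to this quantum torus requires new machinery: the splitting along $A$-circles lands in the corner-reduced stated skein of the lantern, $\skeincr(\lantern)$, and one then needs to show that a further quotient $\qglue(\lantern)$ of it is a quotient of exactly the quantum torus $\qtorus\langle\qz,\qz'',\qy\rangle$ by the Lagrangian relation (Theorem~\ref{thm-lantern}). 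This identification is the technical heart, and it is not supplied by citing~\cite{BW:qtrace,Le:cancel}.

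Second, and more substantively, your accounting of how the two families of handle-slides produce the two one-sided ideals is wrong. You assert that the $B_e$-handle slides land in the edge-equation right ideal (correct) and that the $A_f$-handle slides map into the left ideal generated by the Lagrangian relations $\qz_j^{-2}+(\qz_j'')^2-1$ (not correct). In fact the $A$-handle slides vanish identically under the splitting map: the splitting $\cut_A$ sends them to relations that hold in the corner-reduced skein of the lantern (Lemma~\ref{lemma-crhandle}), so there is nothing left to absorb into an ideal. The Lagrangian left ideal does not come from a family of $2$-handles at all; it is built into the very definition of $\qglue(\lantern)$ as a quotient of $\skeincr(\lantern)$ by setting the off-diagonal stated arcs to zero (Equation~\eqref{eq-qglue-def}), and Theorem~\ref{thm-lantern} shows this is equivalent to imposing the Lagrangian. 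For the $B$-handle slides, the computation that the result lies in the edge-equation right ideal is not an automatic consequence of the Neumann--Zagier symplectic structure; it requires a careful explicit manipulation of the $\cup$-product versus the Weyl-ordered $\cdot$-product on stated diagrams near the $B$-circle, since the edge monomial is precisely a Weyl-ordered product of standard arcs. Similarly, in the classical limit, Proposition~\ref{prop-classical-rep} alone does not give commutativity of~\eqref{eqn.diagram}: one also uses that the stated skein at $q=1$ is commutative and crossing-insensitive, so that $\qtr^1_\triang$ is a genuine algebra map and can be checked on generators by matching matrices assigned to $\alpha$, $\beta$, $\gamma$ edges of the cell decomposition.
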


The left vertical map was already discussed.
The quantum trace map~\eqref{defqtr} is given by the composition 
\begin{equation}
\label{qrt2steps}
\skein(M) \stackrel{\cong}{\longleftarrow}
\rideal{B}\backslash \skein(\surface_\triang) \quotbyL{A}
\longrightarrow \qglue(\triang).
\end{equation}
Here, $\surface_\triang$ is the boundary of a small neighborhood of the dual 1-skeleton
of $\triang$, and it is decorated with two sets of curves $\{A_f\},\{B_e\}$ that bounds
disks in $M$ but not $\surface_\triang$. The kernel of the natural map
$\skein(\surface_\triang)\to\skein(M)$ includes handle slides along these curves,
discussed in detail in Section~\ref{sec-kbsm} below. Let $\lideal{A}$
and $\rideal{B}$ denote the submodule generated by handle slides along the
corresponding types of curves.

\begin{proposition}
\label{prop.ST}
$\lideal{A}$ and $\rideal{B}$ are left and right ideals in $\skein(\surface_\triang)$
respectively, and we have an isomorphism of $R$-modules
\begin{equation}
\label{STmap}
\skein(M)\cong\rideal{B}\backslash\skein(\surface_\triang)\quotbyL{A} \,.
\end{equation}
\end{proposition}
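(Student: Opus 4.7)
The proof uses the Heegaard-like decomposition induced by the triangulation. The surface $\surface_\triang$ bounds on one side a handlebody $\mathcal{N}_A$, namely a regular neighborhood of the dual $1$-skeleton of $\triang$, whose compressing disks $\{D_f\}$ have boundary curves $A_f$, one per face of $\triang$. On the other side sits a compression body $\mathcal{N}_B$ with interior boundary $\partial M$ and with compressing disks $\{D_e\}$ having boundaries $B_e$, one per edge of $\triang$; thus $M=\mathcal{N}_A\cup_{\surface_\triang}\mathcal{N}_B$.

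The ideal claim is a local matter. Fix a bicollar $\surface_\triang\times(-1,1)\embed M$ with the $A$-handles on the $-1$ side and the $B$-handles on the $+1$ side, so that multiplication in $\skein(\surface_\triang)$ stacks in the interval direction. A generator of $\lideal{A}$ is a difference $L-L'$ of framed links in $\surface_\triang\times(-1,1)$ that agree outside a thin slab $\surface_\triang\times(-1,-1+\varepsilon)$ and differ there by a slide along a parallel copy of some $A_f$. For any $\alpha\in\skein(\surface_\triang)$, which we may represent in $\surface_\triang\times(0,1)$, the product $\alpha\cdot(L-L')$ is a handle-slide difference of the same form, because $\alpha$ is disjoint from the slide region. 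Hence $\lideal{A}$ is a left ideal, and symmetrically $\rideal{B}$ is a right ideal.

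The inclusion $\surface_\triang\times(-1,1)\embed M$ induces a natural $R$-linear map $\iota\colon\skein(\surface_\triang)\to\skein(M)$. Each handle-slide generator is a difference of framed links that are ambient isotopic in $M$ through the corresponding $2$-handle, so $\lideal{A}+\rideal{B}\subseteq\ker(\iota)$, and $\iota$ descends to an $R$-linear map $\Phi\colon\rideal{B}\backslash\skein(\surface_\triang)\quotbyL{A}\to\skein(M)$. Surjectivity of $\Phi$ is standard for Heegaard-like presentations: every framed link $K\subset M$ can be put into bridge position relative to $\surface_\triang$, hence isotoped to lie in $\surface_\triang\times(-1,1)$, so $\iota$ (and therefore $\Phi$) is surjective.

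The main obstacle is the injectivity of $\Phi$. Given framed links $L_0,L_1\subset\surface_\triang\times(-1,1)$ related by an ambient isotopy in $M$, we must show $[L_0]-[L_1]\in\lideal{A}+\rideal{B}$. By a Cerf-theoretic analysis of the isotopy, after a generic perturbation it decomposes into finitely many elementary moves of three types: (i) isotopies within $\surface_\triang\times(-1,1)$, which preserve the skein class in $\skein(\surface_\triang)$; (ii) finger moves pushing a strand of the link across one of the compressing disks $D_f$, each changing the skein class by a handle slide along $A_f$ and thereby contributing an element of $\lideal{A}$; and (iii) finger moves across one of the disks $D_e$, each contributing an element of $\rideal{B}$. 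Summing the contributions from all moves of types (ii) and (iii) exhibits $[L_0]-[L_1]$ as an element of $\lideal{A}+\rideal{B}$, establishing injectivity and finishing the proof.
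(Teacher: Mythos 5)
Your decomposition of $M$ is exactly the one the paper uses: $M$ is recovered from the thickened dual surface by attaching the $A$-handles on one side and the $B$-handles on the other (plus 3-handles, which do not change skein modules), and your one-line argument that $\lideal{A}$ is a left ideal and $\rideal{B}$ a right ideal --- the slide region sits at the bottom (resp.\ top) of the bicollar, so stacking a skein on the other side commutes with the slide --- is the same as the paper's. The difference is that where the paper simply invokes Proposition~\ref{prop-handle} (Przytycki's \cite[Proposition~2.2]{Przytycki:fund}: attaching a 2-handle induces a surjection of skein modules whose kernel is the submodule generated by handle slides), you attempt to reprove that statement from scratch, and that is where your argument has a gap.

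Concretely, your injectivity step silently identifies $\ker\bigl(\skein(\surface_\triang)\to\skein(M)\bigr)$ with the submodule generated by differences $[L_0]-[L_1]$ of collar links that are ambient isotopic in $M$. That identification is not automatic: a general kernel element is a linear combination that vanishes in $\skein(M)$ because of skein and trivial-loop relations applied to links of $M$ that need not lie in the collar. One must first argue that every link of $M$ can be isotoped into $\surface_\triang\times(-1,1)$ \emph{and} that every defining relation of $\skein(M)$ can be performed inside the collar (this is precisely the point stressed in the remark after Proposition~\ref{prop-handle}); only then does $\skein(M)$ acquire the presentation by collar links modulo collar relations and $M$-isotopy that reduces injectivity to the statement you address. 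The remaining step --- decomposing a generic isotopy into collar isotopies and single passages through the $A$- and $B$-handles, each contributing a handle-slide difference located at the bottom or the top of the collar --- is the standard general-position argument, but you assert the Cerf-theoretic decomposition rather than prove it. So the route is the same as the paper's; either cite \cite[Proposition~2.2]{Przytycki:fund} as the paper does, or supply the missing reduction before your finger-move analysis.
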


The reason behind the isomorphism~\eqref{STmap} is topological, namely the manifold
$M$ is obtained from the thickened surface $\surface_\calT \times [-1,1]$
by attaching $A$-handles on one side and $B$-handles on the
other.

The identification~\eqref{STmap} is a convenient way to encode elements of
the 3d-skein module $\skein(M)$ in coordinates. Aside from its use in the
3d-quantum trace map, the above coordinate presentation of the skein module of a
3-manifold is useful computationally and also theoretically. Indeed, several quantum
invariants, such as the Witten--Reshetikhin--Turaev invariant and its lift to
the Habiro ring, the state integrals of Andersen--Kashaev and the 3D-index of 
Dimofte--Gaiotto--Gukov can be extended to invariants of the skein module of
$M$ and factor through the quantum trace map. We will discuss this topic in
a subsequent publication.

We next discuss the quantum trace map when $q=\zeta$ is
a root of unity. More precisely, we fix a root of unity $\zeta$ such that $\zeta^4$ is
a primitive $N$-th root of unity, and let $\varepsilon=\zeta^{N^2}$, a 4-th root of
unity. In this case, we denote the skein module and the quantum gluing equations module
by $\skein_\zeta(M)$ and $\qglue_\zeta(\triang)$, and their classical versions by
$\skein_\varepsilon(M)$ and $\qglue_\varepsilon(\triang)$ to indicate the dependence on
the chosen root of unity.

In this case, both the skein and the quantum gluing equations modules have a
Chebyshev-Frobenius homomorphism
\begin{equation}\label{2frob}
\Phi_\zeta:\skein_\varepsilon(M)\to\skein_\zeta(M), \qquad
\varphi_\zeta:\qglue_\varepsilon(\triang)\to\qglue_\zeta(\triang)
\end{equation}
where the former is defined geometrically by threading a framed link by a linear
combination of parallels given by the $N$-th Chebyshev polynomial, and the latter
is defined algebraically on a quantum torus by raising its generators to their
$N$-th powers. In both cases, the quantum binomial theorem and the vanishing of
the quantum binomial at roots of unity imply that both maps~\eqref{2frob} are
well-defined. This is discussed in detail in Section~\ref{sub.2frob} below.

The two maps~\eqref{2frob} are related by the quantum trace, which is analogous to
the surface case \cite[Theorem~5.2]{BL}.

\begin{theorem}
\label{thm.frob}
The following diagram commutes.
\begin{equation}
\label{eq-Cheb-Frob}
\begin{tikzcd}
\skein_\varepsilon(M) \arrow[r,"\qtr^\varepsilon_\triang"] \arrow[d,"\Phi_\zeta"] &
\qglue_\varepsilon(\triang) \arrow[d,"\varphi_\zeta"] \\
\skein_\zeta(M) \arrow[r,"\qtr^\zeta_\triang"] &
\qglue_\zeta(\triang)
\end{tikzcd} \,.
\end{equation} 
\end{theorem}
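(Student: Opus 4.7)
The plan is to factor the square~\eqref{eq-Cheb-Frob} through the surface-level quantum trace map and invoke the surface statement of Bonahon--L\^e~\cite[Thm.~5.2]{BL}. By the two-step construction~\eqref{qrt2steps}, for both $\bullet\in\{\varepsilon,\zeta\}$, the map $\qtr^\bullet_\triang$ is the composition
\[
\skein_\bullet(M)\stackrel{\cong}{\longleftarrow}\rideal{B}\backslash\skein_\bullet(\surface_\triang)\quotbyL{A}\longto\qglue_\bullet(\triang),
\]
and I will show that $\Phi_\zeta$ and $\varphi_\zeta$ each descend to the intermediate subquotients. Commutativity of~\eqref{eq-Cheb-Frob} will then follow from the commutativity of the analogous square with $\skein_\bullet(\surface_\triang)$ on the left and $\qtorus_\bullet(\triang)$ on the right, which is a direct application of~\cite[Thm.~5.2]{BL} to the closed surface $\surface_\triang$.

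First I would verify that $\Phi_\zeta\colon\skein_\varepsilon(\surface_\triang)\to\skein_\zeta(\surface_\triang)$ sends the left ideal $\lideal{A}_\varepsilon$ into $\lideal{A}_\zeta$ and the right ideal $\rideal{B}_\varepsilon$ into $\rideal{B}_\zeta$, so that it induces a map on the subquotient of Proposition~\ref{prop.ST}. The key input is the transparency of $T_N$-threaded links at $q=\zeta$: a framed component threaded by the $N$-th Chebyshev polynomial is transparent to any other skein element at $q=\zeta$. Since a handle-slide generator of $\lideal{A}$ (resp.\ $\rideal{B}$) is a local Kauffman resolution across an attaching disk bounded by $A_f$ (resp.\ $B_e$), transparency forces the difference between the threaded slid and unslid links to lie in the $\zeta$-handle-slide ideal. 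Under Proposition~\ref{prop.ST}, the induced map on the subquotient coincides with $\Phi_\zeta\colon\skein_\varepsilon(M)\to\skein_\zeta(M)$.

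Second, on the algebraic side I would show that $\varphi_\zeta$ on $\qtorus_\varepsilon(\triang)$ preserves both the Lagrangian left ideal and the edge-equation right ideal defining $\qglue$. Since $\varphi_\zeta$ raises each generator to its $N$-th power, this reduces to the quantum-binomial identity at roots of unity: the $N$-th Weyl-ordered power of the Lagrangian element $\qz_j^{-2}+(\qz''_j)^2-1$ at $q=\varepsilon$ equals $\qz_j^{-2N}+(\qz''_j)^{2N}-1$ at $q=\zeta$ modulo the Lagrangian ideal, and analogously for the edge relation~\eqref{eqt}. Hence $\varphi_\zeta$ descends to a map $\qglue_\varepsilon(\triang)\to\qglue_\zeta(\triang)$, completing the compatibility needed to pass to the surface square.

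The main obstacle is the first step: controlling $\Phi_\zeta$ on the handle-slide ideals. Threading by $T_N$ is nonlinear on framed links, so one cannot apply naive functoriality to handle slides. The cleanest approach is to use L\^e's local product-rule description of $\Phi_\zeta$ in a bicollar neighborhood of each attaching circle, to move $T_N$-threaded copies across the attaching disk by transparency, and then identify the resulting correction terms with elements of the $\zeta$-handle-slide ideal. Once this compatibility is established, the remainder of the proof is a formal diagram chase through the subquotients defining $\skein_\bullet(M)$ and $\qglue_\bullet(\triang)$.
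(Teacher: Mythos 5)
There is a genuine gap at the heart of your reduction. The square you propose to quote from \cite{BL} does not exist in the form you need: the intermediate map out of $\skein_\bullet(\surface_\triang)$ in this paper is \emph{not} the Bonahon--Wong/L\^e quantum trace of a punctured, ideally triangulated surface (indeed $\surface_\triang$ is closed), but the composite $\ttr_\triang$ of \eqref{eq-qtr-def}: splitting along the $A$-curves, followed by the corner reductions \eqref{eq-bad}, \eqref{eq-crdef} and the quotients to $\bigotimes_j\qglue_\bullet(\lantern_j)$. So \cite[Theorem~5.2]{BL} cannot be invoked ``directly''; what is actually available from that paper is only the compatibility of the Chebyshev homomorphism with the \emph{splitting} homomorphism (Theorem~\ref{thm-Cheb-surface}, i.e.\ \cite[Corollary~4.7]{BL}). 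The substantive remaining content — absent from your proposal — is the verification on each lantern that $\Phi_\zeta$ descends through the corner reductions and then agrees with the Frobenius map $\varphi_\zeta$: one must check that $\Phi_\zeta$ sends the relations \eqref{eq-bad} and \eqref{eq-crdef} at $q=\varepsilon$ (with the 4-th root of unity $\iunit'$ of \eqref{eq-iprime}) to the corresponding relations at $q=\zeta$, and then use Lemma~\ref{lemma-cr-span} to reduce to monomials in standard arcs, where the framed power $a^{(N)}$ equals the algebraic power $a^N$ and hence matches $\varphi_\zeta$. The root-of-unity bookkeeping $\iunit'=(\iunit\zeta)^N/\varepsilon$ is essential here and your argument never confronts it. Your treatment of $\varphi_\zeta$ (Lagrangian and edge ideals via the $q$-binomial identity) is fine and matches Lemma~\ref{lemma-qglue-Frob}.

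Separately, the step you single out as the ``main obstacle'' — proving $\Phi_\zeta(\lideal{A}_\varepsilon)\subseteq\lideal{A}_\zeta$ and $\Phi_\zeta(\rideal{B}_\varepsilon)\subseteq\rideal{B}_\zeta$ by transparency of $T_N$-threaded links — is avoidable. Since both $\qtr^\varepsilon_\triang$ and $\qtr^\zeta_\triang$ are already well defined on $\skein_\bullet(M)$, and the surjection $\skein_\bullet(\surface_\triang)\onto\skein_\bullet(M)$ commutes with $\Phi_\zeta$ essentially by definition (threading a link does not depend on whether it is viewed in the thickened surface or in $M$), it suffices to verify commutativity of \eqref{eq-Cheb-Frob} after precomposing with this surjection; in particular $\Phi_\zeta$ automatically carries the kernel into the kernel, and no separate descent through the subquotient of Proposition~\ref{prop.ST} is required. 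This is exactly how the paper proceeds: it expands \eqref{eq-Cheb-Frob} along \eqref{eq-qtr-def} (with the punctured surface $\surface'_\triang$), notes that the outer squares commute by definition, the splitting square commutes by Theorem~\ref{thm-Cheb-surface}, and then does the lantern-by-lantern comparison described above. So your proposal expends effort on an unnecessary difficulty while leaving the genuinely needed lantern/corner-reduction comparison unproved.
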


We finally discuss the issue of effective computation. An important feature of the
quantum trace map~\eqref{defqtr} is that it is effectively computable. In fact, it is
computable by the same methods of \texttt{SnapPy} pioneered by Thurston and
developed by Weeks (and more recently, by Culler, Dunfield and Goerner~\cite{snappy})
to study hyperbolic structures of 3-manifolds and their deformations. For a detailed
discussion, see Section~\ref{sec.compute}.

\subsection{Further directions}
\label{sub.further}

We end this section with some brief comments about further directions. As mentioned
already, ideal triangulations are related by a sequence of 2--3 Pachner moves. However,
even classically, the gluing equations variety $G_\calT$ ``sees'' some components of
representations of the framed manifold, and these components can change under 2--3
Pachner moves. Hence, the codomain $\qglue(\triang)$ of the quantum trace map can
change under 2--3 Pachner moves, and what is worse, it can become trivial when $\calT$
is a degenerate ideal triangulation (e.g., has a univalent vertex). Hence, the
map~\eqref{eqn.diagram} is \term{not} invariant under 2--3 Pachner moves. On the other
hand, the map is invariant under 3--2 Pachner moves, and under certain conditions,
also invariant under 2--3 moves. We postpone this discussion to a subsequent
publication.

Finally, extensions to the $\SL_n$-version of the quantum-trace map, building on
the results of~\cite{LeYu:SLn} are possible and will also be discussed subsequently.


\section{Triangulations}
\label{sec.triang}

\subsection{Oriented triangulations and their dual surfaces}
\label{sec-dualS}

In this section we recall oriented ideal triangulations, and define their dual
surfaces. Let $T$ be an oriented tetrahedron. A labeling of the vertices of $T$ by
$0,1,2,3$ is compatible with the orientation if vertices $1,2,3$ are counterclockwise
when viewed from vertex $0$. We represent the tetrahedron using a top view like
Figure~\ref{fig-label-tetra}.

\begin{figure}[htpb!]
\centering
\begin{tikzpicture}[baseline=(ref.base)]
\tikzmath{\s=2;}
\path (0,0) coordinate (0) node[below]{2} 
({\s/2},{\s}) coordinate (1) node[above]{0} 
(\s,0) coordinate (2) node[below]{3} 
({1.3*\s},{0.7*\s}) coordinate (3) node[right]{1}; 
\draw[dashed] (0)--(3);
\draw (1) -- (2) (2) -- (0) -- (1) -- (3) -- cycle;
\node (ref) at (1,1){\phantom{$-$}};
\end{tikzpicture}
\quad$\Rightarrow$\quad
\begin{tikzpicture}[baseline=(ref.base)]
\draw (0,0) circle(1) node[anchor=-150]{0};
\draw (0,0) -- (-150:1)node[anchor=30]{2} (0,0) -- (-30:1)
node[anchor=150]{3} (0,0) -- (90:1)node[above]{1};
\node (ref) at (0,0){\phantom{$-$}};
\end{tikzpicture}
\caption{Labeling a tetrahedron.}\label{fig-label-tetra}
\end{figure}
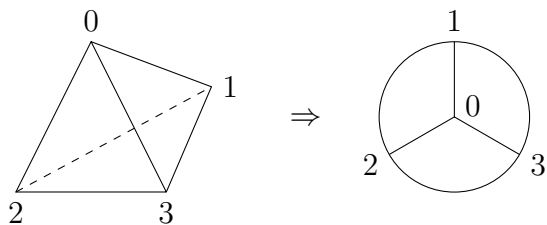

Suppose $M$ is a compact oriented 3-manifold with nonempty boundary. An oriented
triangulation $\triang$ of $M$ is a collection of tetrahedra $T_1,\dotsc,T_N$ whose faces
are paired using orientation reversing homeomorphisms such that gluing the tetrahedra
minus the vertices gives the interior of $M$.

As in the introduction, consider the dual 1-skeleton of $\triang$, which is a graph
embedded in $M$. A small neighborhood of the dual 1-skeleton is a handlebody. Then
$\surface_\triang$ is defined as the boundary of this handlebody and oriented using the
outward normal. The intersection of $\surface_\triang$ with each face $f$ of $\triang$
is a circle denoted $A_f$. There is another set of pairwise disjoint curves $B_e$ on
$\surface_\triang$, one for each edge $e$ of $\triang$, such that $B_e$ bounds a disk
dual to $e$ in the complement of the handlebody. Here, dual to $e$ means that the disk
intersects $e$ at a point, and it does not intersect other edges. We define $B_e$ more
carefully in the following.

In a tetrahedron $T$, $\surface_\triang\cap T$ is a sphere with 4 boundary components.
Borrowing from the theory of mapping class groups of surfaces, we call it the
\term{lantern}. The boundary of the
lantern consists of the $A$-curves on the faces of the tetrahedron. Each pair of boundary
curves can be connected by an arc that goes around the edges of the tetrahedron. We call
these arcs the \term{standard arcs} on the lantern, and the lantern decorated with the
standard arcs is called the \term{standard lantern}, denoted $\lantern$. This is shown
in Figure~\ref{fig-smoothL}, where the blue arcs are the standard arcs.

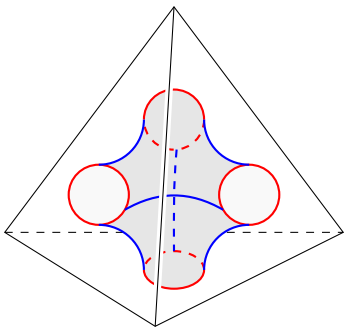
\begin{figure}[htpb!]
\centering
\begin{tikzpicture}
\path (0,2.5)coordinate(A) (-2.25,-0.5)coordinate(B) (-0.25,-1.75)
coordinate(C) (2.25,-0.5)coordinate(D);
\draw[dashed] (B) -- (D);
\fill[gray!20] (0.4,1) arc[radius=0.4,start angle=0,end angle=180]
arc[radius=0.6,start angle=0,end angle=-90]
arc[radius=0.4,start angle=90,end angle=-90]
arc[radius=0.6,start angle=90,end angle=0]
arc[x radius=0.4,y radius=0.25,start angle=-180,end angle=0]
arc[radius=0.6,start angle=180,end angle=90]
arc[radius=0.4,start angle=-90,delta angle=-180]
arc[radius=0.6,start angle=-90,end angle=-180];
\begin{scope}[red,thick]
\draw[fill=gray!5] (-1,0)circle(0.4) (1,0)circle(0.4);
\draw (0.4,1)arc[radius=0.4,start angle=0,end angle=180] (0.4,-1)
arc[x radius=0.4,y radius=0.25,start angle=0,end angle=-180];
\draw[dashed] (0.4,1)arc[radius=0.4,start angle=0,end angle=-180] (0.4,-1)
arc[x radius=0.4,y radius=0.25,start angle=0,end angle=180];
\end{scope}
\begin{scope}[blue,thick]
\draw (-1,0.4)arc[radius=0.6,start angle=-90,end angle=0]
(1,0.4)arc[radius=0.6,start angle=-90,end angle=-180]
(-1,-0.4)arc[radius=0.6,start angle=90,end angle=0]
(1,-0.4)arc[radius=0.6,start angle=90,end angle=180];
\path (-1,0)++(-30:0.4)coordinate(E) (1,0)++(-150:0.4)coordinate(F);
\draw (E) to[out=30,in=150] (F);
\draw[dashed] (0,1) ++(-85:0.4) to[out=-95,in=90] (0,-0.75);
\end{scope}
\draw[knot,knot gap=7,background color=white] (A) -- (C);
\draw (A) -- (B) -- (C) -- (D) -- cycle;
\end{tikzpicture}
\caption{Lantern surface in a tetrahedron. $A$-curves in red,
  $B$-arcs in blue.}\label{fig-smoothL}
\end{figure}

In the triangulation $\triang$, there is an embedded copy of $\lantern$ in each
tetrahedron. When a pair of faces is glued, so does the corresponding boundary components
of embedded $\lantern$. The face pairing also includes how the edges of the faces are
matched. Then we can require that the standard arcs dual to the matched edges connect to
each other. After all faces are glued, the standard arcs form the $B$-circles.

The data $\heeg_\triang:=(\surface_\triang,\{A_f\},\{B_e\})$ is called the \term{dual
surface} to the triangulation $\triang$. We just argued that $\heeg_\triang$ can be
constructed from the face pairings of $\triang$. Conversely, The intersection pattern of
the curves $\{A_f\}$ and $\{B_e\}$ determines the face pairings of the triangulation. In
this sense, $\heeg_\triang$ is equivalent to the triangulation $\triang$.
 
\subsection{Diagrams for the lantern}

The orientation of $\surface_\triang$ is important, for example in the definition of the
skein algebra. Thus, we want to describe a triangulation $\triang$ using combinatorial
data that respects orientations.

Now consider the standard lantern inside the tetrahedron. Notice that the lantern is a
smooth version of the truncated tetrahedron, which is embedded in the tetrahedra of
$\triang$ in a dual position. See Figure~\ref{fig-dual-tetra}, where the top view is also
included. Note the dual vertex 3 is in the back, which affects the orientation of the
figure. We either stretch the back vertex to infinity and truncate, or rotate the dual
tetrahedron to make some vertex the top one. See Figure~\ref{fig-flatL}. Note the
labeling is opposite of the orientation of the dual tetrahedron.

\begin{figure}[htpb!]
\centering
\begin{tikzpicture}[line join=bevel,baseline=(ref.base)]
\tikzmath{\s=3;}
\path (0,0) coordinate (0) node[below]{2} 
({\s/2},{\s}) coordinate (1) node[above]{0} 
(\s,0) coordinate (2) node[below]{3} 
({1.3*\s},{0.7*\s}) coordinate (3) node[right]{1}; 
\draw[dashed] (0) -- (3);
\foreach \a/\b/\c in {0/1/2,1/3/2,0/2/3,0/3/1}
\path (barycentric cs:\a=0.2,\b=0.4,\c=0.4)coordinate(v\a\b\c)
(barycentric cs:\b=0.2,\c=0.4,\a=0.4)coordinate(v\b\c\a)
(barycentric cs:\c=0.2,\a=0.4,\b=0.4)coordinate(v\c\a\b);
\fill[gray!20] (v310) -- (v031) -- (v213) -- (v321) -- (v132) -- (v023) -- (v302)
-- (v120) -- (v012) -- (v201) -- cycle; 
\begin{scope}[red,thick]
\draw[fill=gray!5] (v012) -- (v120) -- (v201) -- cycle
(v132) -- (v321) -- (v213) -- cycle;
\draw (v023) -- (v302) (v031) -- (v310);
\draw[densely dashed] (v023) -- (v230) -- (v302) (v310) -- (v103) -- (v031);
\end{scope}
\begin{scope}[blue,thick]
\draw (v012) -- (v321) (v120) -- (v302) (v201) -- (v310) (v132) -- (v023) (v031)
-- (v213);
\draw[densely dashed] (v230) -- (v103);
\end{scope}
\draw[knot,knot gap=7,background color=white] (1) -- (2);
\draw (2) -- (0) -- (1) -- (3) -- cycle;
\node (ref) at (1,1.5){\phantom{$-$}};
\end{tikzpicture}
\quad$\Rightarrow$\quad
\begin{tikzpicture}[baseline=(ref.base)]
\draw[thick,blue,fill=gray!20] (0,0)circle(1);
\fill[white] (0,0)circle(0.12) foreach \t in {-90,30,150} {(\t:1)circle(0.12)};
\draw[thick,blue,dashed] foreach \t in {30,150,-90} {(\t:0.12) -- (\t:1.12)};
\draw[thick,red,radius=0.12] (30:1)circle node[anchor=-150]{\small2} 
(150:1)circle node[anchor=-30]{\small3} 
(-90:1)circle node[below,outer sep=2pt]{\small1} 
(0,0)circle node[anchor=-60,outer sep=1pt]{\small0};
\draw[knot,knot gap=7,background color=white]
(0,0) -- (-150:1.5)node[anchor=30]{3} 
(0,0) -- (-30:1.5)node[anchor=150]{2} 
(0,0) -- (90:1.5)node[above]{1}; 
\draw (0,0) circle(1.5) node[anchor=120,outer sep=1pt]{0};
\node (ref) at (0,0){\phantom{$-$}};
\end{tikzpicture}
\caption{Truncated dual tetrahedron.}\label{fig-dual-tetra}
\end{figure}
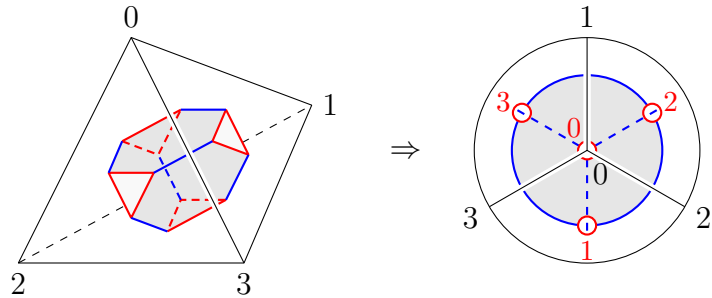

\begin{figure}[htpb!]
\centering
\begin{tikzpicture}[baseline=(ref.base)]
\draw[thick,blue,fill=gray!20] (0,0)circle(1);
\fill[white] (0,0)circle(0.12) foreach \t in {-90,30,150} {(\t:1)circle(0.12)};
\draw[thick,blue,dashed] foreach \t in {30,150,-90} {(\t:0.12) -- (\t:1.12)};
\draw[thick,red,radius=0.12] (30:1)circle node[anchor=-150]{\small2} 
(150:1)circle node[anchor=-30]{\small3} 
(-90:1)circle node[below,outer sep=2pt]{\small1} 
(0,0)circle node[above,outer sep=2pt]{\small0};
\node (ref) at (0,0){\phantom{$-$}};
\end{tikzpicture}
\quad$\Rightarrow$\quad
\begin{tikzpicture}[thick,baseline=(ref.base)]
\tikzmath{\r1=0.4;\r2=0.2;}
\draw[red,fill=gray!20] (0,0)circle(1) (45:1)node[above right,inner sep=2pt]{0};
\draw[blue] (-90:\r1) -- (30:\r1) -- (150:\r1) -- cycle;
\draw[red,fill=white] (-90:\r1)circle(\r2)node{1} (30:\r1)circle(\r2)
node{2} (150:\r1)circle(\r2)node{3};
\draw[blue] foreach \t in {-90,30,150} {(\t:{\r1+\r2}) -- (\t:1)};
\node (ref) at (0,0){\phantom{$-$}};
\end{tikzpicture}
\quad or \quad
\begin{tikzpicture}[baseline=(ref.base)]
\draw[thick,blue,fill=gray!20] (0,0)circle(1);
\draw[thick,blue] foreach \t in {-30,-150,90} {(0,0) -- (\t:1)};
\fill[white] (0,0)circle(0.12) foreach \t in {-30,-150,90} {(\t:1)circle(0.12)};
\draw[thick,red,radius=0.12] (90:1)
circle node[anchor=-90,outer sep=2pt]{1} (-30:1)
circle node[anchor=150]{2} (-150:1)circle node[anchor=30]{3} (0,0)
circle node[anchor=90,outer sep=2pt]{0};
\node (ref) at (0,0){\phantom{$-$}};
\end{tikzpicture}
\caption{Different diagrams of the lantern.}\label{fig-flatL}
\end{figure}
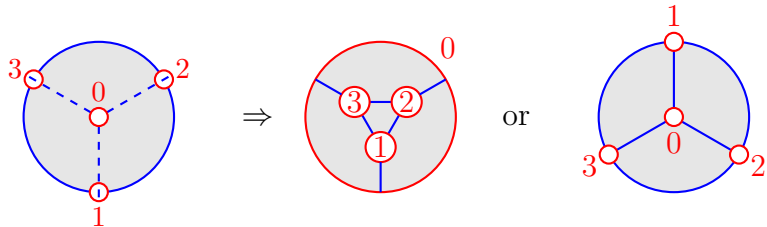


\section{Classical case}
\label{sec.classical}

By ``classical'' one usually refers to the specialization $q=1$ of the skein
module, which is then related to the $\PSL_2(\BC)$-character variety of the
ambient manifold. 
In this section we recall several well-known facts about triangulations and
$\SL_2(\BC)$-character varieties of 3-manifolds. Our constructions will involve
$\SL_2(\BC)$-representations rather than $\PSL_2(\BC)$ ones that come naturally
from developing maps in hyperbolic geometry (due to the fact that the orientation
preserving isometries of 3-dimensional hyperbolic space is $\PSL_2(\BC)$). 

This subtle distinction between $\PSL_2$ versus $\SL_2$ will require minor twists
and modifications of well-known results.

\subsection{Twisted character variety}
\label{sub.twisted}

The first twist in this story is that we need to consider the frame bundle
$p:\frameM\to M$ of oriented orthonormal frames in the tangent bundle of $M$ (defined
with respect to some fixed Riemannian metric on $M$) as opposed to $M$ itself.

This is a principal $\SO_3$-bundle, and in dimension 3, it is well-known that it
is trivial. This implies that
\begin{equation}
\label{eq-frame-iso}
\pi_1(\frameM)\cong\pi_1(M)\times\ints/2.
\end{equation}
Here $\ints/2\cong\pi_1(\SO_3)$ is canonically included in the center of $\pi_1(\frameM)$
using the inclusion of the fiber $i:\SO_3\embed\frameM$, whereas the inclusion of
$\pi_1(M)$ depends on a spin structure of $M$.

Let $\frameX$ be the subset of the $\SL_2(\cx)$-character variety of $\frameM$ represented
by homomorphisms $\pi_1(\frameM)\to\SL_2(\cx)$ sending a nontrivial loop in the fiber to
$-I$. It is non-canonically isomorphic to the usual $\SL_2(\BC)$-character variety,
denoted $X_M$, using \eqref{eq-frame-iso}. We call $\frameX$ the twisted character
variety of $M$. It is more natural than $X_M$ in our setup. See Section~\ref{sec-kbsm}.

The skein module of $M$ is generated by framed links in $M$. Conveniently, framed
curves in $M$ represent elements of $\pi_1(\frameM)$. Indeed, let $\alpha$ be
a smooth path with nonvanishing tangent. A normal framing of $\alpha$ is a unit vector
field along $\alpha$ which is everywhere orthogonal to the tangent vector of $\alpha$.
The curve $\alpha$ with a normal framing determines a section of $\frameM$ along
$\alpha$ using the tangent vector, the normal vector, and their cross product.

Every element of $\pi_1(\frameM)$ can be represented by such a path in $M$ with a normal
framing. To do so, first ignore the framing and find a smooth path with the specified
tangent vectors at the endpoints. If we assign a random normal framing, it is either
homotopic to the given element in $\pi_1(\frameM)$ or differ by a full rotation in the
fiber. The latter can be inserted in the normal framing.

\subsection{Cell decomposition}
\label{sec-cell}

Given a triangulation $\triang$, we define a cell decomposition (a CW complex) on $M$
that is convenient to use in the frame bundle $\frameM$.

Take the dual surface $\heeg_\triang:=(\surface_\triang,\{A_f\},\{B_e\})$. For technical
reasons involving orientations and smoothness, we replace each $A_f$ and $B_e$ with two
parallel copies. If we cut along all parallels of $A_f$, we get annuli between the
parallels and lanterns in each tetrahedron, and each lantern is decorated with two
parallels for every standard arc. These parallels define a cell structure on
$\surface_\triang$, which is extended to a cell structure on $M$ by attaching 2-cells
along all parallels of $A$-curves and $B$-curves, and then filling in with 3-cells
inside the cylinders bounded by parallel curves and inside the lanterns. We can
simplify this cell structure by contracting the transverse edges between parallel
$A$-curves. Let $\dtrunc$ denote the cell structure of $M$ after the contraction.

If we also collapse the cylinders bounded by parallel $A$-curves in the transverse
direction, or equivalently, if we do not double the $A$-curves, we obtain the cell
structure in \cite{GGZ:glue} using doubly truncated tetrahedra. Here, a doubly
truncated tetrahedron is obtained from an ideal tetrahedron by truncating the vertices
and then the edges. See Figure~\ref{fig-doubly}. Then $M$ decomposes as the union of
the double truncation of all tetrahedra and the prism neighborhoods of the edges. See
the same figure for an example of an edge with valence 4.

\begin{figure}[htpb!]
\centering
\begin{tikzpicture}[baseline=0cm,line join=bevel]
\tikzmath{\s=3;}
\path (0,0) coordinate (0) (60:\s) coordinate (1) (\s,0) coordinate (2)
({1.3*\s},{1.05*\s}) coordinate (3);
\draw[dashed] (0) -- (3);
\foreach \a/\b/\c/\d in {0/1/2/3,1/3/2/0,2/0/1/3,3/2/1/0}
\draw[thick,red] (barycentric cs:\a=0,\b=7,\c=3,\d=1)
-- (barycentric cs:\a=0,\b=3,\c=7,\d=1)
-- (barycentric cs:\a=0,\b=1,\c=7,\d=3)
-- (barycentric cs:\a=0,\b=1,\c=3,\d=7)
-- (barycentric cs:\a=0,\b=3,\c=1,\d=7)
-- (barycentric cs:\a=0,\b=7,\c=1,\d=3) -- cycle;
\draw[thick,red,knot,knot gap=3,background color=white]
(barycentric cs:0=0,1=6.8,2=3.2,3=1) -- (barycentric cs:0=0,1=3.5,2=6.5,3=1)
(barycentric cs:0=1,1=6.8,2=3.2,3=0) -- (barycentric cs:0=1,1=3.5,2=6.5,3=0)
(barycentric cs:0=0,1=2.5,2=7,3=1.5) -- (barycentric cs:0=0,1=1.1,2=7,3=2.9)
(barycentric cs:0=1.5,1=2.5,2=7,3=0) -- (barycentric cs:0=2.95,1=1.05,2=7,3=0)
(barycentric cs:0=6.8,1=3.2,2=1,3=0) -- (barycentric cs:0=3.5,1=6.5,2=1,3=0)
(barycentric cs:0=0,1=6.5,2=1,3=3.5) -- (barycentric cs:0=0,1=3.2,2=1,3=6.8);
\foreach \a/\b/\c/\d in {0/1/2/3,1/3/2/0,2/0/1/3,3/2/1/0}
\draw[thick,blue,line cap=round] (barycentric cs:\a=7,\b=3,\c=0,\d=1)
-- (barycentric cs:\a=7,\b=3,\c=1,\d=0)
(barycentric cs:\a=7,\b=1,\c=3,\d=0)
-- (barycentric cs:\a=7,\b=0,\c=3,\d=1)
(barycentric cs:\a=7,\b=0,\c=1,\d=3)
-- (barycentric cs:\a=7,\b=1,\c=0,\d=3);
\draw[knot,knot gap=7,background color=white] (1) -- (2);
\draw (0) -- (1) -- (3) -- (2) -- cycle;
\end{tikzpicture}
\begin{tikzpicture}[baseline=0cm,line join=bevel]
\draw[blue,thick] (0,0)coordinate(A) -- (-30:0.5)coordinate(B) 
-- ++(30:0.5)coordinate(C) -- ++(90:0.5)coordinate(D);
\draw[blue,thick,dashed] (A) -- (D);
\draw[dashed] (0.5,0) -- ++(-1,1.5);
\begin{scope}[xshift=-1cm,yshift=1.5cm]
\draw[blue,thick] (0,0) -- (-30:0.5) -- ++(30:0.5) -- ++(90:0.5) -- cycle;
\end{scope}
\draw[red,thick,line cap=round] foreach \p in {A,B,C,D} {(\p) -- ++(-1,1.5)};
\fill (0.5,0)circle(0.05) ++(-1,1.5)circle(0.05);
\end{tikzpicture}
\caption{Doubly truncated tetrahedron and prism neighborhood of an edge.}
\label{fig-doubly}
\end{figure}
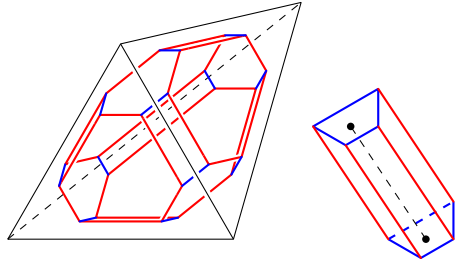

Following \cite{GGZ:glue}, the 1-skeleton of the doubly truncated cell structure
consists of three types of edges, namely short, medium and long, denoted by
$\gamma$, $\beta$ and $\alpha$, respectively.
The short edges are arcs on the parallels of the $B$-curves. The edges on the
$A$-curves are divided into two types. The long ones are parallel to the edges of
the tetrahedra and the rest are the medium ones. In \cite{GGZ:glue} there is a
systematic way to label different edges in each tetrahedron, but we will not need it
here.

To connect with the frame bundle $\frameM$, we need a smooth version of the 1-skeleton.
By construction, the 1-skeleton can be drawn on $\surface_\triang$. The $\gamma$ edges
are already joined smoothly since they are segments of $B$-curves. For the $\alpha$ and
$\beta$ edges, we homotope them according to Figure~\ref{fig-smooth-skel}, where
orientations are also chosen for the edges. The key here is that at each point of the
0-skeleton, the tangent vectors of the edges all agree.

\begin{figure}[htpb!]
\centering
\begin{tikzpicture}[scale=1.25,every node/.style={inner sep=1pt}]
  \draw[blue,dashed] (0.8,-1) -- +(0,2) node[above]{$B_e$} (3,0.4)
  arc[radius=0.4,start angle=90,delta angle=180];
\begin{scope}[blue,thick]
\draw (0.4,-1) -- +(0,2) (1.2,-1) -- +(0,2);
\path[->,tips] (0.4,1) -- +(0,-0.4);
\path[->,tips] (0.4,0) -- +(0,-0.7);
\path[->,tips] (1.2,-1) -- +(0,0.4);
\path[->,tips] (1.2,0) -- +(0,0.7);
\draw (3,0.6) arc[radius=0.6,start angle=90,delta angle=180];
\path[->,tips] (3,0.6) arc[radius=0.6,start angle=90,delta angle=150];
\path[->,tips] (3,0.6) arc[radius=0.6,start angle=90,delta angle=40];
\path (0.4,0.8)node[left]{$\gamma$} (0.4,-0.8)node[left]{$\gamma$} (1.2,0.8)
node[right]{$\gamma$} (1.2,-0.8)node[right]{$\gamma$};
\end{scope}
\draw[red,dashed] (0,0)node[left]{$A_f$} -- (3,0);
\begin{scope}[red,thick]
\draw (0.8,0)circle(0.4) (1.8,0)circle(0.6) (-0.2,0.6)
arc[radius=0.6,start angle=90,delta angle=-180] (2.7,0)++(30:0.3)
arc[radius=0.3,start angle=30,delta angle=300];
\path[->,tips] (0.8,0.4)arc[radius=0.4,start angle=90,delta angle=45];
\path[->,tips] (0.8,-0.4)arc[radius=0.4,start angle=-90,delta angle=45];
\path[->,tips] (1.8,0.6)arc[radius=0.6,start angle=90,delta angle=-45];
\path[->,tips] (1.8,-0.6)arc[radius=0.6,start angle=-90,delta angle=-45];
\path[->,tips] (-0.2,0.6)arc[radius=0.6,start angle=90,delta angle=-30];
\path[->,tips] (-0.2,0.6)arc[radius=0.6,start angle=90,delta angle=-160];
\path[->,tips] (2.7,0)++(30:0.3)arc[radius=0.3,start angle=30,delta angle=30];
\path[->,tips] (2.7,0)++(30:0.3)arc[radius=0.3,start angle=30,delta angle=280];
\path (0.8,0.4)node[above right]{$\alpha$} (0.8,-0.4)node[below left]{$\alpha$}
(1.8,0.6)node[above]{$\beta$} (1.8,-0.6)node[below]{$\beta$};
\end{scope}
\fill foreach \x in {0.4,1.2,2.4} {(\x,0)circle(0.04)};
\end{tikzpicture}
\caption{A smooth 1-skeleton of $\dtrunc$ on $\surface_\triang$.}\label{fig-smooth-skel}
\end{figure}

Now we also describe the 2-cells of $\dtrunc$. From Figure~\ref{fig-smooth-skel}, we can
see the disks bounded by $\alpha^2$ and $\beta^2$, which come from the doubling of
$A$-curves. The rest of the 2-cells can be seen in Figure~\ref{fig-doubly}. Two types are
on the surface $\surface_\triang$ and partially visible in Figure~\ref{fig-smooth-skel}:
the hexagons $(\gamma^{-1}\beta)^3$ near the vertices of the tetrahedra and the rectangles
$(\alpha^{-1}\gamma)^2$ near the edges of the tetrahedra. Finally, two more types are not
on $\surface_\triang$: the hexagons $(\alpha^{-1}\beta)^3$ near the faces and the polygons
$\gamma^n$ at the bases of the edge prisms. Here, the boundaries of the 2-cells are
expressed as a concatenation of edges. We did not distinguish between different edges of
the same type, but the notation makes sense since at each point of the 0-skeleton, the
labels $\alpha,\beta,\gamma$ and their inverses uniquely determine the next edge.

By the construction of the last section, each edge of the 1-skeleton $\dtrunc^{(1)}$
determines a path in $\frameM$. The condition on the tangent vectors at the 0-skeleton
implies that these paths glue together to define a section $s:\dtrunc^{(1)}\to\frameM$.

The section $s$ cannot be extended to the entire 2-skeleton by the following lemma, but
it is a natural construction, and it allows us to define representations in
Proposition~\ref{prop-classical-rep} without making additional choices.

\begin{lemma}
The section $s$ extends over the 2-cells bounded by the hexagons $(\alpha^{-1}\beta)^3$
and $(\gamma^{-1}\beta)^3$. $s$ does not extend over 2-cells bounded by the circles
$\alpha^2,\beta^2$, the rectangles $(\alpha^{-1}\gamma)^2$, or the polygons around the
edges $\gamma^n$.
\end{lemma}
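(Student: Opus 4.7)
The plan is to translate the extension question into an obstruction calculation in $\pi_1(\SO_3)=\ints/2$ and then handle the six cell types separately. Every $2$-cell $D$ of $\dtrunc$ bounds a disk in $M$ by construction, so $\partial D$ is null-homotopic in $M$. Consequently $s_*[\partial D]\in\pi_1(\frameM)$ lies in the kernel of $p_*$, which by the splitting~\eqref{eq-frame-iso} equals the fiber subgroup $\pi_1(\SO_3)=\ints/2$. Thus $s$ extends over $D$ if and only if $[s|_{\partial D}]=0$ in $\pi_1(\SO_3)$. Since any oriented $3$-manifold is parallelizable, we may trivialize $\frameM\cong M\times\SO_3$ and read this class off as the homotopy class of $s|_{\partial D}\colon\partial D\to\SO_3$.

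Along edges contained in $\surface_\triang$, I would take the normal vector $N$ in the framing $(T,N,T\times N)$ to be the outward unit normal to $\surface_\triang$; the tangent-matching at the $0$-skeleton prescribed in Figure~\ref{fig-smooth-skel} then makes $s$ continuous across $0$-cells. For a smoothly immersed loop $\gamma\subset\surface_\triang$ with this choice of $N$, the frame rotates purely about the $N$-axis, so the class of $s|_\gamma$ in $\pi_1(\SO_3)$ equals the parity of the tangential turning number of $\gamma$. For $2$-cells whose interiors leave $\surface_\triang$ (the face hexagons, the edge polygons, and the doubled-$A$ bigons), we pick any trivialization of $TM$ over the $2$-cell (which exists since the $2$-cell is contractible) and count the framing rotation of $s|_{\partial D}$ against it.

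Running through the cases, I would show that this parity is even for the two hexagon types and odd for the other four. For the circles $\alpha^2$ and $\beta^2$, the rectangle $(\alpha^{-1}\gamma)^2$, and the polygon $\gamma^n$, the boundary is essentially a simple closed curve encircling a single transverse intersection or a single edge $e$ of $\triang$, so the turning number is $\pm 1$ and the class is nontrivial. For the two hexagonal cases $(\gamma^{-1}\beta)^3$ near a vertex and $(\alpha^{-1}\beta)^3$ near a face, the count is more subtle: one must carefully track how each $\alpha$-, $\beta$-, and $\gamma$-arc rotates the tangent vector along its length using the smoothing in Figure~\ref{fig-smooth-skel}, and verify that the three curved arcs combine with the three short arcs to produce an \emph{even} number of full tangent rotations.

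The main obstacle is precisely this rotation count for the hexagonal boundaries. Although the $0$-cells contribute no corner rotation (the tangents agree there by construction), the individual arcs contribute fractional turns whose sum depends on the orientations and smoothings chosen in Figure~\ref{fig-smooth-skel}. The cleanest verification is likely via Gauss--Bonnet applied to the disk on $\surface_\triang$ bounded by the vertex hexagon (and, for the face hexagon, to a transverse disk with its induced framing), reading off the total tangent rotation as the integrated geodesic curvature plus the vanishing angle defects at the $0$-cells, and comparing with the Euler characteristic of the disk to conclude that the resulting parity is even.
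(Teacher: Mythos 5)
Your general framework is the same as the paper's: the obstruction to extending $s$ over a $2$-cell is the class of $s|_{\partial D}$ in the fiber $\pi_1(\SO_3)=\ints/2$, and for boundary curves lying on $\surface_\triang$ with the surface normal as framing this class is the parity of the number of full turns of the frame's tangent vector. The nontrivial cases $\alpha^2$, $\beta^2$ and $\gamma^n$ are handled essentially as in the paper.

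The gap is in the two cases where the lemma asserts the class is \emph{trivial}, i.e.\ exactly the cases that matter, and it stems from measuring the wrong rotation. The section $s$ is defined using the \emph{oriented} tangent of each edge, so along an edge traversed in reverse (the $\alpha^{-1}$ and $\gamma^{-1}$ letters in the boundary words) the frame vector points \emph{against} the direction of travel; the quantity you must wind is this vector field, not the forward tangent of the boundary curve. Your Gauss--Bonnet argument (geodesic curvature plus vanishing corner angles for the disk bounded by the hexagon) computes the turning of the forward tangent, which is one full turn for \emph{any} embedded disk boundary --- exactly the same count that underlies your ``simple closed curve, hence turning number $\pm1$'' justification for $(\alpha^{-1}\gamma)^2$. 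Applied uniformly, that reasoning gives odd parity for the vertex hexagon $(\gamma^{-1}\beta)^3$ as well, contradicting the statement; in fact the correct count, taking the edge reversals into account as in Figure~\ref{fig-smooth-skel}, is \emph{two} full turns for $(\gamma^{-1}\beta)^3$ and one for $(\alpha^{-1}\gamma)^2$, and your proposal never performs this corrected count. Likewise for the face hexagon $(\alpha^{-1}\beta)^3$ you only say one ``must verify'' evenness; the paper's actual argument is different and concrete: $(\alpha\beta)^3$ is a smooth curve bounding a disk in $M$ whose normal framing is the outward normal of the disk boundary, hence a single full turn, and $(\alpha^{-1}\beta)^3$ differs from it by three insertions of $\alpha^2$ (each a full turn), giving an even total. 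Without either this comparison or a rotation count that respects the reversed-edge convention, the two ``extends'' cases remain unproved, and the tools you propose would, if pushed through literally, give the wrong parity for the vertex hexagon.
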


\begin{proof}
First we consider the 2-cells on the surface $\surface_\triang$, which are
$\alpha^2,\beta^2$,
$(\alpha^{-1}\gamma)^2$, and $(\gamma^{-1}\beta)^3$. To determine if the boundary of such
a 2-cell is trivial in $\pi_1(\frameM)$, we count how many times the tangent vector turns
on the surface. $\alpha^2$ and $\beta^2$ are the easiest: they do a full turn. The others
are slightly tricky since some edges are traversed in reverse direction. Note the
orientations of the edges are used to define the section $s$, so even if we go backwards,
the tangent vector should be the same. With this in mind, it is easy to see that
$(\alpha^{-1}\gamma)^2$ does a full turn, and $(\gamma^{-1}\beta)^3$ does two full turns,
so the former is nontrivial, while the latter is trivial in $\pi_1(\frameM)$.

Next, we look at the polygons around the edges $\gamma^n$. The normal framing can be
taken to lie on the 2-cell bounded by $\gamma^n$ pointing inward. Thus, the framing
does a full turn, which is nontrivial.

Finally, we look at $(\alpha^{-1}\beta)^3$. It is easier to consider $(\alpha\beta)^3$,
which is a smooth curve that also bounds a disk in $M$, and the normal framing of
$(\alpha\beta)^3$ is the outward normal of the boundary of the disk. This shows that
$(\alpha\beta)^3$ is homotopic to a full turn. Since $\alpha^2$ is a full turn and
$(\alpha^{-1}\beta)^3$ differs from $(\alpha\beta)^3$ by 3 insertions of $\alpha^2$,
$(\alpha^{-1}\beta)^3$ is trivial in $\pi_1(\frameM)$.
\end{proof}

\begin{corollary}
The section $s$ induces a surjective homomorphism of the fundamental groupoids
$s_\ast:\pi_1(\dtrunc^{(1)},\dtrunc^{(0)})
\to\pi_1(\frameM,s(\dtrunc^{(0)}))$. 
\end{corollary}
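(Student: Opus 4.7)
The plan is to reduce surjectivity of $s_\ast$ on the full groupoid to surjectivity at a single vertex group, and then exploit the $\SO_3$-bundle structure $p\colon\frameM\to M$. First I would observe that both source and target are connected (transitive) groupoids: $\dtrunc^{(1)}$ is path-connected because it is the $1$-skeleton of a CW-decomposition of the connected manifold $M$, and $\frameM$ is path-connected. Moreover $s$ restricts to a bijection from $\dtrunc^{(0)}$ onto the object set $s(\dtrunc^{(0)})$ of the target. For a functor between connected groupoids that is a bijection on objects, fullness is equivalent to surjectivity on a single vertex group: given $g\colon Fv_1\to Fv_2$ in the target, one conjugates by images of chosen morphisms $v_0\to v_1$ and $v_2\to v_0$ in the source to reduce to the vertex group at $v_0$. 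Hence it suffices to prove that $s_\ast\colon\pi_1(\dtrunc^{(1)},v_0)\to\pi_1(\frameM,s(v_0))$ is surjective for a single $v_0\in\dtrunc^{(0)}$.

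Next I would plug this into the short exact sequence associated to the principal $\SO_3$-fibration,
\[
1\longrightarrow\pi_1(\SO_3)\longrightarrow\pi_1(\frameM,s(v_0))\xrightarrow{\;p_\ast\;}\pi_1(M,v_0)\longrightarrow 1,
\]
with $\pi_1(\SO_3)=\ints/2$. For the projection to $\pi_1(M)$, cellular approximation shows that the inclusion $\dtrunc^{(1)}\embed M$ induces a surjection $\pi_1(\dtrunc^{(1)},v_0)\twoheadrightarrow\pi_1(M,v_0)$, and since $p\circ s=\mathrm{id}_{\dtrunc^{(1)}}$, the composition $p_\ast\circ s_\ast$ is precisely this surjection. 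To hit the kernel $\ints/2$, I would invoke the preceding lemma: $s_\ast[\alpha^2]$ is nontrivial in $\pi_1(\frameM)$. But $\alpha^2$ bounds a $2$-cell in $\dtrunc$ (one of the disks produced by doubling an $A$-curve and contracting the transverse edges), so $p_\ast s_\ast[\alpha^2]=1$. Hence $s_\ast[\alpha^2]$ is a nontrivial element of the fiber $\ints/2$ and thus generates it. A routine diagram chase in the short exact sequence now yields surjectivity of $s_\ast$ on the vertex group at $v_0$, which combined with the reduction of the first paragraph concludes the proof.

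The main obstacle, modest as it is, lies in the last step: verifying that $\alpha^2$ is null-homotopic in $M$ itself (not merely in $\frameM$). This requires unpacking the construction of $\dtrunc$ from Section~\ref{sec-cell}, where the doubling of each $A$-curve produces an annulus in $M$ whose transverse edges are contracted in $\dtrunc$, leaving a $2$-cell with boundary $\alpha^2$. Once this null-homotopy is identified, the five-lemma-style argument above closes the argument without further calculation.
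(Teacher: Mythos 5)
Your proposal is correct and rests on the same two ingredients as the paper's own proof: cellular approximation to push (the projection of) any path into $\dtrunc^{(1)}$, and the preceding Lemma to see that the nontrivial rotation in the fiber is already in the image of $s_\ast$ (realized by $s_\ast[\alpha^2]$, which is null-homotopic in $M$ since $\alpha^2$ bounds a $2$-cell of $\dtrunc$). The paper simply argues directly at the level of paths, comparing $a$ with $s_\ast(p(a))$ and noting that they differ by a fiber rotation, whereas you repackage the same facts through the vertex-group reduction and the short exact sequence of the $\SO_3$-bundle; the difference is organizational, not mathematical.
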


\begin{proof}
Given a path $a$ in $\frameM$ with endpoints on $s(\dtrunc^{(0)})$, we can
require the projection $p(a)$ on $M$ is on the 1-skeleton $\dtrunc^{(1)}$ after 
a homotopy. Then in $\pi_1(\frameM,s(\dtrunc^{(0)}))$, $a$ and $s_\ast(p(a))$
differ by some rotation in the fiber, which is in the image of $s_\ast$. Thus, every
element $a$ is in the image of $s_\ast$.
\end{proof}

\subsection{Representations}

In this section we discuss how to assign $\SL_2(\BC)$-representations to points of
the gluing equations variety $G_\calT$. Since the $\gamma$ edges goes around the
edges of the tetrahedra, each $\gamma$ edge is assigned a shape parameter $z_\gamma$.

\begin{proposition}
\label{prop-classical-rep}
Given a point $z \in G_\triang$, the assignment of matrices
\begin{equation}
\label{eq-abc-mat}
\alpha\mapsto\begin{pmatrix}0&1\\-1&0\end{pmatrix},\qquad
\beta\mapsto\begin{pmatrix}\iunit&0\\1&-\iunit\end{pmatrix},\qquad
\gamma\mapsto\begin{pmatrix}z_\gamma^{-1}&0\\0&z_\gamma\end{pmatrix}
\end{equation}
to the 1-skeleton $\dtrunc^{(1)}$ defines a homomorphism
\begin{equation}
\pi_1(\frameM,s(\dtrunc^{(0)}))\to\SL_2(\cx).
\end{equation}
This representation is a lift of the $\PGL_2(\cx)$-representation given
in~\cite{GGZ:glue}.
\end{proposition}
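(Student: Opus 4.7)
The plan is to leverage the preceding corollary, which gives a surjective homomorphism of fundamental groupoids $s_\ast : \pi_1(\dtrunc^{(1)},\dtrunc^{(0)}) \to \pi_1(\frameM, s(\dtrunc^{(0)}))$. The domain is the free groupoid on the oriented edges $\alpha$, $\beta$, $\gamma$, so the formulas in~\eqref{eq-abc-mat} tautologically define a groupoid homomorphism $\tilde\rho$ into $\SL_2(\cx)$. Everything reduces to showing that $\tilde\rho$ vanishes on $\ker s_\ast$, i.e., descends across $s_\ast$.

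The next step is to pin down the generators of $\ker s_\ast$. By the standard presentation of a fundamental group(oid) from a CW structure, a presentation of $\pi_1(\frameM, s(\dtrunc^{(0)}))$ is obtained from the 1-skeleton by imposing one relation per 2-cell of $\dtrunc$. The preceding lemma has already sorted the 2-cells into two classes: the lifting 2-cells $(\alpha^{-1}\beta)^3$ and $(\gamma^{-1}\beta)^3$, whose boundaries must map to the identity in $\pi_1(\frameM)$; and the non-lifting 2-cells $\alpha^2$, $\beta^2$, $(\alpha^{-1}\gamma)^2$, and the edge polygons $\gamma^n$, whose $s$-images equal the fiber generator $\iota \in \pi_1(\SO_3) = \ints/2$ inside $\pi_1(\frameM)$. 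To land in the twisted character variety $\frameX$ we require $\iota \mapsto -I$; hence the factorization through $s_\ast$ amounts to checking that $\tilde\rho$ sends each lifting boundary to $I$ and each non-lifting boundary to $-I$.

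The verifications for $\alpha^2$, $\beta^2$, $(\alpha^{-1}\gamma)^2$, and $(\alpha^{-1}\beta)^3$ should be essentially formal: they require only $\iunit^2 = -1$ and, for the last one, the observation that $\alpha^{-1}\beta$ has trace $-1$ and determinant $1$, hence characteristic polynomial $\lambda^2+\lambda+1$ and order $3$. The identity $\gamma^n = -I$ for an edge polygon should reduce directly to the edge gluing equation~\eqref{et}, since the matrices $\gamma$ are diagonal with top-left entry $z_\gamma^{-1}$. The main obstacle will be the hexagon relation $(\gamma^{-1}\beta)^3 = I$ around a vertex of a tetrahedron, which is not formal; I expect it to use the Lagrangian shape relations~\eqref{3z}. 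My approach is to expand the triple product around a vertex with consecutive edge-shapes $z, z', z''$ satisfying $z z' z'' = \iunit$ together with one of the relations $z^2 + (z')^{-2} = 1$, and check that the diagonal entries collapse to $1$ and the off-diagonal entry cancels precisely under these equations.

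For the final assertion that $\rho$ lifts the $\PGL_2(\cx)$-representation of~\cite{GGZ:glue}, I would compose $\rho$ with the projection $\SL_2(\cx) \to \PGL_2(\cx)$ and observe that, since the square of each square-root shape $z_\gamma$ is the corresponding $\PSL_2$-shape $Z_\gamma$, the reduced matrices coincide with those written down in loc.~cit.
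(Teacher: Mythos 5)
Your proposal is essentially correct, and the matrix verifications you sketch do check out: $\alpha^2,\beta^2,(\alpha^{-1}\gamma)^2$ square to $-I$ formally, $\alpha^{-1}\beta$ has trace $-1$ and determinant $1$ so $(\alpha^{-1}\beta)^3=I$, the edge polygon $\gamma^n$ gives $-I$ exactly by the edge equation~\eqref{et}, and the vertex hexagon $(\gamma^{-1}\beta)^3=I$ does reduce to $zz'z''=\iunit$ (diagonal entries) plus one consecutive Lagrangian relation $z^2+(z')^{-2}=1$ (off-diagonal cancellation), which is precisely the list of relations in~\eqref{3z}; this matches the classification of extending/non-extending 2-cells in the preceding lemma, and the paper likewise leaves these as ``a direct calculation.'' Where you diverge from the paper is in how the descent from the edge-path groupoid to $\pi_1(\frameM,s(\dtrunc^{(0)}))$ is organized: the paper chooses a spin structure $t$ agreeing with $s$ on $\dtrunc^{(0)}$, twists $\rho_s$ by the sign defect $e$ to get $\rho_t$ killing \emph{all} 2-cell boundaries, descends to $\pi_1(M)$, and then extends over $\pi_1(\frameM)\cong\pi_1(M)\times\ints/2$ by sending the fiber class to $-I$; you instead analyze $\ker s_\ast$ directly and reduce to ``lifting boundaries $\mapsto I$, non-lifting boundaries $\mapsto -I$.'' Your route avoids choosing a spin structure, while the paper's route makes the product decomposition of $\pi_1(\frameM)$ (and hence the landing in $\frameX$) completely explicit.

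The one step you should tighten is the assertion that ``a presentation of $\pi_1(\frameM,s(\dtrunc^{(0)}))$ is obtained from the 1-skeleton by imposing one relation per 2-cell of $\dtrunc$'': the standard CW-presentation theorem applies to $M$, which $\dtrunc$ triangulates, not to $\frameM$. The reduction you want is nevertheless true and can be justified without the spin structure: any $w\in\ker s_\ast$ projects trivially to $\pi_1(M)$, hence is a product of conjugated 2-cell boundaries; by the lemma each non-lifting boundary has $s_\ast$-image the \emph{same} nontrivial central fiber class $\iota$ of order $2$, so $s_\ast(w)=\iota^m$ forces the number $m$ of non-lifting factors to be even, and then $\tilde\rho(w)=(-I)^m=I$ because $-I$ is central. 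Either this parity argument or the paper's spin-structure twist is needed to close that step; with it your proof is complete. For the final assertion, your plan (project to $\PGL_2$ and compare shapes $z_\gamma^2=Z_\gamma$) is the same as the paper's, though the paper notes the comparison with~\cite{GGZ:glue} is only up to conjugation, scalars, and a reversal of the orientation of $\gamma$, rather than an on-the-nose coincidence of matrices.
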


\begin{proof}
By comparing \eqref{eq-abc-mat} with Example~10.15 of \cite{GGZ:glue}, we see that they
are conjugate up to scalars. Note our $\gamma$ has the opposite orientation as their
$\gamma^{012}$. This implies the lifting property assuming the $\SL_2(\cx)$-representation
here is well-defined.

Let $\rho_s:\pi_1(\dtrunc^{(1)},\dtrunc^{(0)})\to\SL_2(\cx)$ denote the
assignment of the matrices. To show that the representation is well-defined, we first
look at the images of boundaries of the 2-cells under $\rho_s$. By a direct calculation,
we see that the boundary of a 2-cell maps to $I$ if $s$ extends over the 2-cell or to
$-I$ if $s$ does not extend.

Choose a spin structure of $M$, represented by a section
$t:\dtrunc^{(1)}\to\frameM$
that extends to a section $\bar{t}:M\to\frameM$. We can require that $t$ agrees with $s$
on $\dtrunc^{(0)}$. Then $t$ determines an embedding
$\pi_1(M,\dtrunc^{(0)})\to\pi_1(\frameM,s(\dtrunc^{(0)}))$, which is
part of an isomorphism
\begin{equation}
\pi_1(M,\dtrunc^{(0)})\times\ints/2
\xrightarrow[\cong]{\bar{t}_\ast\cdot i_\ast}
\pi_1(\frameM,s(\dtrunc^{(0)})).
\end{equation}

The difference between $s_\ast$ and $t_\ast$ is given by a map
$e:\pi_1(\dtrunc^{(1)},\dtrunc^{(0)})\to\ints/2$. Let
$\rho_t:\pi_1(\dtrunc^{(1)},\dtrunc^{(0)})\to\SL_2(\cx)$ be the
assignment of matrices modified from $\rho_s$ using the signs determined by $e$. By
construction, $\rho_t$ maps the boundaries of all 2-cells to $I$. Thus, $\rho_t$
determines a representation $\bar{\rho}_t:\pi_1(M,\dtrunc^{(0)})\to\SL_2(\cx)$,
which can be extended to
\begin{equation}
\rho:\pi_1(\frameM,s(\dtrunc^{(0)}))
\cong\pi_1(M,\dtrunc^{(0)})\times\ints/2
\xrightarrow{\bar{\rho}_t\cdot i_\ast}
\SL_2(\cx).
\end{equation}
Then $\rho$ is also the representation induced by $\rho_s$. To see this, note that
$\rho\circ t_\ast=\rho_t$ by construction. Since $t_\ast$ and $\rho_t$ are both related
to their $s$-counterparts by $e$, we get $\rho\circ s_\ast=\rho_s$.
\end{proof}

\begin{remark}
Recall $Z^\square_j=(z^\square_j)^2$ is the usual shape parameter. Although opposite edges
of a tetrahedron have the same shape parameters, they do not need to have the same square
roots. For the proof to work, $z_jz'_jz''_j=\iunit$ in \eqref{3z} can be
replaced by 4 equations of the same form for triples of edges sharing a vertex in each
tetrahedron. These equations imply that $(z^\square_j)^2$ is the same for opposite edges,
but in each tetrahedron, we can choose all 3 pairs of opposite edges to take the same or
the opposite square roots.

This is also reflected in the quantization. See Remark~\ref{rem-qglue-ext}.
\end{remark}


\section{Skein modules}
\label{sec.skein}

\subsection{Kauffman bracket skein modules}
\label{sec-kbsm}

In this section we recall the basics of the skein module of a 3-manifold in several
flavors. The \term{skein module} $\skein_q(M;R)$ of an oriented 3-manifold
$M$ is the $R$-module generated by the set of isotopy classes of framed unoriented
links in $M$, modulo the relations~\eqref{eq-skein} and~\eqref{eq-loop}. 

When $M=\surface\times(-1,1)$ is the thickening of an oriented surface $\surface$,
the skein module $\skein(M)$ gains an algebra structure by stacking. This means given
two links $\alpha,\beta$, isotoped such that $\alpha\subset\surface\times(0,1)$ and
$\beta\subset\surface\times(-1,0)$, we define $\alpha\beta=\alpha\cup\beta$. Then the
skein algebra $\skein(\surface)$ is the module $\skein(M)$ with this product structure.

In this definition, we fix a ring $R$ that contains an invertible
element $q^{1/2}$ and a primitive 4-th root of unity $\iunit$. 
The two main examples that we are interested in are the universal case
$R=\Runiv:=\ints[\iunit][q^{\pm1/2}]$ and the classical limit $R=\cx$ with
$q^{1/2}=1$.

Some statements about the skein module are independent of the choice of $R$ and $q$. We
will omit them if the choice is unimportant. When both are omitted, it is usually
sufficient to consider the universal case $R=\Runiv$. This follows from the universal
coefficient property \cite[Proposition~2.2(4)]{Przytycki:fund}
\begin{equation}
\label{eq-skein-ucoef}
\skein_q(M;R)\otimes_R R'\cong \skein_{q'}(M;R')
\end{equation}
induced by a ring homomorphism $r:R\to R'$ with $q'=r(q)$.

The skein module is a quantization of the character variety in the following sense. When
$q=\pm1$, \eqref{eq-skein} shows that crossing changes do not affect the corresponding
elements in $\skein_{\pm1}(M;\cx)$. Thus, only the (framed) homotopy classes of the links
matter. This also means that disjoint union defines a commutative algebra structure on
$\skein_{\pm1}(M;\cx)$.

In both cases, $\skein_{\pm1}(M;\cx)$ is related to the coordinate ring of the character
variety. Let $c$ be a closed curve in $M$. Define the \term{trace function}
$t_c:X_M\to\cx$ by $t_c(\rho)=\tr(\rho(c))$ for every $\rho:\pi_1(M)\to\SL_2(\cx)$. It
is easy to see that $t_c$ only depends on the conjugacy class of $\rho$ and homotopy
class of $c$, and it is independent of the orientation of $c$.

\begin{theorem}[\cite{Bul}]\label{thm-classical-m1}
The algebra homomorphism $\skein_{-1}(M;\cx)\to\cx[X_M]$ sending each knot $K$ to $-t_K$
is surjective, and the kernel is the nilradical of $\skein_{-1}(M;\cx)$.
\end{theorem}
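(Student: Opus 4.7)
The plan is to follow Bullock's original strategy~\cite{Bul} in three stages: well-definedness, surjectivity, and identification of the kernel. First I would verify that the assignment $K \mapsto -t_K$ extends to a well-defined $\cx$-algebra homomorphism, where the algebra structure on $\skein_{-1}(M;\cx)$ comes from disjoint union, as noted just above the theorem. Specializing $q=-1$ in~\eqref{eq-skein} and~\eqref{eq-loop} yields $[\text{crossing}] = -[\text{smoothing}_0] - [\text{smoothing}_\infty]$ and $[\text{unknot}] = -2$. Under the sign-twisted trace assignment, these are precisely the $\SL_2(\cx)$ trace identity $\tr(AB) = \tr(A)\tr(B) - \tr(AB^{-1})$ (a consequence of Cayley--Hamilton in rank $2$) and $\tr(I) = 2$. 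The verification is local at each crossing and handled identically whether the two strands lie on the same or on different components, once one keeps track of the sign convention that a link with $n$ components maps to $(-1)^n \prod_i t_{K_i}$.

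Surjectivity is a consequence of the classical theorem of Procesi (in the Fricke--Klein tradition) that $\cx[X_M]$ is generated as a $\cx$-algebra by the trace functions $t_c$ as $c$ ranges over closed loops in $M$. Since each such $t_c$ is the negative of the image of a framed knot whose core is $c$, the map hits a generating set and is surjective.

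The identification of the kernel with the nilradical splits into two inclusions. The containment $\sqrt{0} \subseteq \ker$ is automatic: $\cx[X_M]$ is the coordinate ring of an affine variety and hence reduced, so every nilpotent maps to zero. The reverse inclusion is the substantive content. The strategy is to observe that each conjugacy class $[\rho] \in X_M$ induces a $\cx$-algebra character $\chi_\rho \colon \skein_{-1}(M;\cx) \to \cx$ by $K \mapsto -\tr(\rho(K))$, so elements of $\ker$ vanish at every $\chi_\rho$. One then shows that \emph{every} $\cx$-algebra character of $\skein_{-1}(M;\cx)$ arises in this way from some $[\rho]$. Granting this reconstruction, $\ker$ is contained in the intersection of all maximal ideals coming from characters, which by the Nullstellensatz (applied to the finitely generated commutative $\cx$-algebra $\skein_{-1}(M;\cx)/\sqrt{0}$, whose finite generation is inherited from that of $\cx[X_M]$ via surjectivity) coincides with the nilradical.

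The main obstacle is the reconstruction step, namely that every $\cx$-algebra character of $\skein_{-1}(M;\cx)$ is realized by an $\SL_2(\cx)$-representation of $\pi_1(M)$. Here the classical invariant theory of tuples of $2 \times 2$ matrices enters: the values of a character on a sufficient collection of diagrammatic traces (single loops and small products) must be shown to satisfy the polynomial compatibility relations that cut out the $\SL_2$-character variety of a free group, from which one constructs candidate matrices and then verifies consistency with the relations in a finite presentation of $\pi_1(M)$. This bridge between skein manipulations on one side and $\SL_2$-trace identities on the other is exactly the technical heart of Bullock's theorem, later completed by Przytycki--Sikora, and is where all the substantive work concentrates.
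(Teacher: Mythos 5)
The paper cites this theorem from \cite{Bul} without reproducing a proof, so there is no in-paper argument to compare against; I am assessing your sketch on its own. The well-definedness step via the $2\times 2$ Cayley--Hamilton trace identity and the surjectivity step via the Fricke--Klein/Procesi generation of $\cx[X_M]$ by trace functions are both correct and are the standard openings.

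The genuine gap is in the kernel step. You write that the finite generation of $\skein_{-1}(M;\cx)/\sqrt{0}$ is ``inherited from that of $\cx[X_M]$ via surjectivity,'' but this is backwards: a surjection of $\cx$-algebras transfers finite generation from source to target, not from target to source, so knowing $\skein_{-1}(M;\cx) \onto \cx[X_M]$ tells you nothing about finite generation of the source. (It would transfer if you already knew the map was an isomorphism modulo nilradical, but that is precisely the thing being proved, so the argument as stated is circular.) To run the Jacobson-ring/Nullstellensatz step, you must establish \emph{independently} that $\skein_{-1}(M;\cx)$ is a finitely generated $\cx$-algebra for compact $M$. This is true, and the mechanism is the compactness of $M$ together with the product-to-sum trace relations $t_{\gamma}t_{\delta} = t_{\gamma\delta} + t_{\gamma\delta^{-1}}$, which let one rewrite arbitrary loops in terms of a finite collection determined by a finite generating set of $\pi_1(M)$, but it is a separate nontrivial input, not a corollary of surjectivity. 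With that repaired, your route (every $\cx$-algebra character of the skein algebra is realized by an $\SL_2(\cx)$-representation, then intersect maximal ideals) is a coherent plan; the reconstruction-from-traces step you flag as ``the technical heart'' is indeed the substantive content and is left entirely unresolved, which is acceptable for a sketch but should be stated as an explicit citation rather than an exercise.
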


This result is not sufficient for our purposes, since the specialization to
$q=-1$ is generally not a commutative limit for a quantum torus. It turns out that
the specialization to $q=1$ is related to the twisted character variety $\frameX$.
Trace functions can be defined on $\frameX$ similarly for smooth curves in $M$ with
normal framing. 

\begin{lemma}
\label{lem-classical}
The algebra homomorphism $\skein_1(M;\cx)\to\cx[\frameX]$ sending each framed knot $K$ to
$t_K$ is surjective, and the kernel is the nilradical of $\skein_1(M;\cx)$.
\end{lemma}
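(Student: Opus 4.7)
The plan is to adapt Bullock's theorem (Theorem~\ref{thm-classical-m1}) to the twisted setting, replacing the ordinary character variety $X_M$ with the twisted character variety $\frameX$. At $q=1$, the skein relation~\eqref{eq-skein} becomes (crossing) $=$ (smoothing$_1$)~$+$~(smoothing$_2$) and the loop relation~\eqref{eq-loop} becomes $\bigcirc = -2 \cdot \emptyset$. These match, respectively, the $\SL_2(\cx)$ trace identity $\tr(A)\tr(B)=\tr(AB)+\tr(AB^{-1})$ and the value $\tr(-I)=-2$. The key observation is that at $q=1$ the signs already line up so that $K\mapsto t_K$ (rather than $-t_K$ as at $q=-1$) is the correct assignment, provided one uses the twisted variety $\frameX$.

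I would first verify well-definedness. A framed knot $K$ determines a class $[K]\in\pi_1(\frameM)$ by Section~\ref{sub.twisted}, so $t_K(\rho):=\tr(\rho([K]))$ is a class function on $\frameX$. A small zero-framed unknot represents the fiber loop, which every $\rho\in\frameX$ sends to $-I$, giving trace $-2$ and matching the loop relation. For the crossing relation, working in a local ball containing two arcs, I represent the arcs by elements $a,b\in\pi_1(\frameM)$; the two smoothings represent $ab$ and $ab^{-1}$ with induced framings, and the trace identity delivers the equality. The framing shift corresponding to a kink, which the skein and loop relations together evaluate to $-q^3|_{q=1}=-1$, matches multiplication by the fiber loop in $\pi_1(\frameM)$, which negates traces since $\rho(\text{fiber})=-I$.

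For surjectivity, the Fricke--Klein--Procesi theorem shows that the coordinate ring of the $\SL_2(\cx)$-character variety of $\frameM$ is generated by trace functions of elements of $\pi_1(\frameM)$; restricting to the subvariety $\frameX$ leaves a generating set. Since every element of $\pi_1(\frameM)$ is representable by a framed curve in $M$ (Section~\ref{sub.twisted}), the image of the map contains these generators. For the kernel, $\cx[\frameX]$ is reduced, so the nilradical is contained in the kernel. Conversely, an algebra homomorphism $\phi:\skein_1(M;\cx)/\sqrt{0}\to\cx$ assigns to framed knots values satisfying all $\SL_2$ trace identities (these being precisely the image of the skein relations). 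By Procesi's reconstruction there exists $\rho:\pi_1(\frameM)\to\SL_2(\cx)$ with $\phi(K)=t_K(\rho)$; the loop relation applied to the fiber loop, combined with its centrality in $\pi_1(\frameM)$, forces $\rho(\text{fiber})=-I$, so $\rho$ is a point of $\frameX$. Hence the maximal spectrum of $\skein_1(M;\cx)/\sqrt{0}$ is $\frameX$, and the kernel equals the nilradical.

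The main technical subtlety is the careful bookkeeping of framings at crossings, especially verifying that the framings induced on the two smoothings of a crossing combine with the kink-sign to make the reconstructed representation send the fiber loop to $-I$ rather than $+I$. This sign is precisely what distinguishes $\frameX$ from $X_M$ and explains why the $q=1$ specialization (unlike $q=-1$) naturally lands on the twisted character variety.
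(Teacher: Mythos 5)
Your proposal is correct in outline, but it takes a genuinely different route from the paper. The paper does not redo the Bullock/Procesi argument in the twisted setting: instead it fixes a spin structure on $M$, defines a sign $\sigma(K)=\pm1$ for each framed knot recording whether the normal framing of $K$ agrees with the spin structure, invokes Barrett's theorem that $K\mapsto-\sigma(K)K$ gives an algebra isomorphism $\skein_1(M;\cx)\cong\skein_{-1}(M;\cx)$, observes that the same sign twist transports $\cx[\frameX]$ to $\cx[X_M]$ via the isomorphism~\eqref{eq-frame-iso}, and then simply quotes Bullock's Theorem~\ref{thm-classical-m1}. Your approach proves the statement \emph{ab initio} by checking the skein and loop relations against the $\SL_2$ trace identity directly for framed curves in $\frameM$, and then invoking Fricke--Klein--Procesi for surjectivity and a Procesi-type reconstruction for the kernel. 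The paper's proof is much shorter and avoids the framing bookkeeping entirely (Barrett already did that work), at the cost of introducing a choice of spin structure and two citations; yours is more self-contained and does not require choosing a spin structure, at the cost of having to carefully carry out the framing analysis and to reprove the reconstruction step in the twisted context. Two details you should tighten if you pursue this route: (a) the "smoothings represent $ab$ and $ab^{-1}$" claim needs a genuine check that the framings induced on the smoothings match the concatenated framed loops in $\pi_1(\frameM)$ with no stray fiber factor (you flag this as the main subtlety but do not do it); (b) "the loop relation forces $\rho(\mathrm{fiber})=-I$" is not literally true — $\tr(\rho(\mathrm{fiber}))=-2$ together with centrality only forces $\rho(\mathrm{fiber})$ to be $-I$ or a nontrivial unipotent times $-I$; the latter case disappears only after passing to the semisimplification, which is harmless in the character variety but should be said.
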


\begin{proof}
Fix a spin structure of $M$. Given a framed knot $K$, define $\sigma(K)=1$ if the
trivialization of $\frameM|_K$ determined by the normal framing agrees with the spin
structure up to homotopy, and define $\sigma(K)=-1$ otherwise. \cite{Bar} shows that
$K\mapsto -\sigma(K)K$ defines an algebra isomorphism between $\skein_{\pm1}(M;\cx)$ (in
either direction). Similarly, by unpacking the isomorphism $X_M\cong\frameX$ induced by
\eqref{eq-frame-iso}, the isomorphism $\cx[\frameX]\to\cx[X_M]$ of coordinate rings is
given by $t_K\mapsto\sigma(K)t_K$ for any framed knot $K$. Then the corollary follows
from Theorem~\ref{thm-classical-m1} combined with these isomorphisms.
\end{proof}

The skein module of a 3-manifold does not change when 3-handles are attached to the
manifold, and changes in a predictable way when 2-handles are attached. Indeed, 
consider the 3-manifold $N$ obtained from $M$ with a 2-handle attached along a smooth
curve $c$ on $\partial M$. $c$
comes with two isotopic normal framings given by the outward and inward normal vectors of
$\partial M$. Let $L\subset M$ be a framed link with a segment close to $c$ where the
framing of $L$ and $c$ are opposite. Then there is a well-defined connected sum $L\# c$
which is isotopic to $L$ in $N$. The operation $L\to L\# c$ in $M$ is called a
\term{handle slide}.

\begin{proposition}[{\cite[Proposition~2.2]{Przytycki:fund}}]
\label{prop-handle}
The inclusion $i:M\embed N$ induces a surjective map $i_\ast:\skein(M)\onto\skein(N)$.
The kernel is the submodule generated by handle slides.
\end{proposition}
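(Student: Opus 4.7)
The plan is to establish surjectivity first, and then identify the kernel with the submodule $J$ generated by the handle-slide relations $L - L\# c$. Throughout, write $N = M \cup_c h$ where $h = D^2 \times [-1,1]$ is the 2-handle attached along the annular collar $A = \partial D^2 \times [-1,1]$ of $c$ in $\partial M$, with cocore $D^2 \times \{0\}$.

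For surjectivity, take any framed link $L' \subset N$ and put it in general position with respect to the cocore. Then $L' \cap h$ is a disjoint collection of framed arcs with endpoints on $A$. Since $h$ is a 3-ball and $A$ is a connected annulus on $\partial h$, each such arc is rel-endpoints isotopic to an arc lying in a collar of $A$ inside $h$. Performing these isotopies simultaneously pushes $L'$ into $M$, producing a preimage of $[L']$ under $i_\ast$.

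For the kernel, the inclusion $J \subset \ker i_\ast$ is immediate from the definition: a handle slide $L \to L\# c$ is literally realized by an isotopy of $L$ in $N$ that pushes the relevant segment across $h$, so $i_\ast(L) = i_\ast(L\# c)$. For the reverse containment, I construct an inverse
\begin{equation*}
\pi : \skein(N) \longrightarrow \skein(M)/J
\end{equation*}
sending a framed link $L' \subset N$ to the class of any link in $M$ obtained from the pushing-off construction above. Once $\pi$ is shown to be well-defined, the identities $\pi \circ i_\ast = \id$ (trivial, as a link already in $M$ requires no pushing) and $i_\ast \circ \pi = \id$ (since the pushing-off is an isotopy in $N$) combined with $J \subset \ker i_\ast$ force $\ker i_\ast = J$.

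The main technical step is the well-definedness of $\pi$. Any two ways of pushing an arc of $L' \cap h$ off the cocore into a collar of $A$ differ by wrapping around the core $\partial D^2 \times \{0\}$ some number of times, and each unit of wrapping is precisely a handle slide along $c$ in the sense of the definition preceding the proposition, so any two resulting links in $M$ agree modulo $J$. Compatibility with the Kauffman bracket relations \eqref{eq-skein} and \eqref{eq-loop} is clear because these relations are supported in a small ball which, after a preliminary isotopy, can be taken disjoint from $h$. Invariance under ambient isotopy of $L'$ in $N$ follows by decomposing such an isotopy into pieces supported in $M$ and pieces that cross $h$, the latter being absorbed by handle slides via the same wrapping analysis. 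The subtle point I expect to be the main obstacle is checking the framing bookkeeping: one must verify that the ambiguity in pushing a framed arc off the cocore, rel framed endpoints on $A$, is generated by a single handle slide along $c$ in each direction, and that the hypothesis built into the definition of $L\# c$ (opposite framings along the common segment) is exactly the condition needed for the isotopy through $h$ to preserve the framing count.
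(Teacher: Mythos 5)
Your overall strategy is the standard one (and essentially the one in Przytycki's paper, which this paper simply cites without reproving): push links off the $2$-handle to get surjectivity, and identify the kernel by constructing a retraction $\pi:\skein(N)\to\skein(M)/J$. However, there is a genuine gap at the decisive step. The well-definedness of $\pi$ is not a technical afterthought: proving that two links in $M$ which become isotopic in $N$ agree modulo $J$ \emph{is} the hard inclusion $\ker i_\ast\subseteq J$, and your ``wrapping analysis'' asserts it rather than proves it. That analysis only addresses the simplest local configuration: a single strand pushed off the cocore in two ways that differ by winding around the attaching region. A general ambient isotopy in $N$ between links lying in $M$ can take strands across the cocore disk repeatedly, can involve several strands running through the handle that are knotted, braided or linked with one another inside it, and can move parts of the link in and out of $h$ in complicated ways; saying that such pieces are ``absorbed by handle slides via the same wrapping analysis'' begs the question. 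The standard argument here views the isotopy as a map $F:L\times[0,1]\to N$, puts it in general position with respect to the cocore disk $D$ (and its boundary circle $\partial D\subset A$), and then inducts on the components of $F^{-1}(D)$, checking that each elementary passage of the link across $D$ changes the pushed-off link in $M$ by precisely a band sum with a parallel copy of $c$, i.e.\ a handle slide. Some argument of this kind (or an equivalent innermost-component analysis) is needed and is absent; the framing bookkeeping you flag as the main obstacle is comparatively minor.

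Secondarily, the justification of surjectivity is also shaky as written. An arc of $L'\cap h$ properly embedded in the ball $h$ with endpoints on $A$ can be knotted, and distinct arcs can be linked with each other, so the claim that each arc is rel-endpoints isotopic into a collar of $A$, and especially that these isotopies can be ``performed simultaneously,'' needs justification (and the rel-endpoints condition is neither available in general inside $h$ nor actually needed, since the endpoints are ordinary points of the link). The clean fix is the usual one: make $L'$ transverse to the cocore $D$, slide each intersection point radially along $D$ past its boundary circle $\partial D\subset A$ into $M$ (possible because $\partial D$ lies in the interior of $N$), and then push the remaining part of $L'$, which lies in $h\setminus D$, across $A$ into a collar inside $M$. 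With surjectivity repaired this way and the general-position analysis of isotopies supplied, your retraction scheme does complete the proof.
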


\begin{remark}
The proof of the proposition does not depend on the details of the defining relations.
It only requires the fact that the defining relations of $\skein(N)$ can always be
performed in $M$. For the variations of skein modules defined below, we also have
handle slides for attaching 2-handles. The proof will be omitted.
\end{remark}

\begin{remark}
Although we will not use it in our paper, we remark that the assignement of every
framed link in $M$ to its homology class with coefficients in $\BZ/2\BZ$ induces
a $H_1(M,\BZ/2\BZ)$-grading on the skein module $\skein(M)$. The quantum trace map
preserves that grading. 
\end{remark}

\subsection{Stated skein modules}

Motivated by the ideas of TQFT, we want a local version of the skein module. This
means cutting a 3-manifold along an embedded surface, and doing so, a framed link
becomes a framed tangle in the cut manifold. In addition, we will use a choice of
$\pm$ at each boundary point of the framed tangle, thus arriving at the stated
framed tangles that we now discuss in detail. 

We recall the stated skein modules defined by L\^e \cite{Le:decomp}. A
\term{punctured bordered surface} is a surface of the form
$\surface=\surclose\setminus\marked$, where $\surclose$ is a compact oriented surface,
possibly with boundary
and $\marked$ is a finite set that intersects every component of $\partial\surclose$.
A component of $\partial\surface$ is called a \term{boundary edge}, which is
homeomorphic to an open interval. Although the orientation of the surface induces
one on its boundary, we will not use the orientation of the boundary at all. 

A tangle over $\surface$ is an unoriented embedded 1-dimensional submanifold
$\alpha\subset\surface\times(-1,1)$ such that
\begin{enumerate}
\item
  the projection of $\partial\alpha$ is in $\partial\surface$, and that
\item
  the heights of $\partial\alpha$ (i.e., the image of $\partial\alpha$ under the
  projection map $\surface \times (-1,1) \to (-1,1)$) are distinct over each boundary
  edge.
\end{enumerate}

A vector $\surface\times(-1,1)$ is \term{vertical} if it is tangent to
$\{p\}\times(-1,1)$ in the positive direction. A \term{framing} of a tangle $\alpha$
over $\surface$ is a transverse vector field along $\alpha$ which is vertical at
$\partial\alpha$. A \term{state} of a tangle $\alpha$ is a map
$\partial\alpha\to\{+,-\}$. By convention, $\pm$ are identified with $\pm1$ when
necessary.

Throughout the paper, all tangles will be framed and stated. 
An \term{isotopy} of tangles over $\surface$ is homotopy
within the class of tangles over $\surface$. In particular, the height order of
$\partial\alpha$ over each boundary edge of $\surface$ is preserved by isotopy.

As usual, tangles are represented by their diagrams. The framing of a diagram is
always vertical. At each endpoint, the state is labeled by a sign $\pm$. It is also
important to indicate height orders in the diagrams. In the figures drawn below, 
we use an arrow on the boundary of the surface to indicate that as one follows the
direction of the arrow, the heights of the endpoints are consecutive and increasing.
This arrow should not be confused with the orientation of the boundary inherited from
the orientation of the surface. 

Fix a ring $R$ and $q\in R$ as in the introduction. The \term{stated skein module}
$\skein_q(\surface;R)$ of $\surface$ (abbreviated by $\skein(\surface)$ when $R$ and $q$
are clear as before) is the $R$-module spanned by isotopy classes of (framed, stated)
tangles over $\surface$ modulo the skein relation \eqref{eq-skein}, the trivial loop
relation \eqref{eq-loop}, the trivial arc relations \eqref{eq-arcs}, and the
state exchange relation \eqref{eq-stex}.
\begin{align}
\label{eq-arcs}
\relarc{+}{-}&=q^{-1/2}\relempty,\qquad
\relarc{+}{+}=\relarc{-}{-}=0,\\
\label{eq-stex}
\begin{linkdiag}
\fill[gray!20] (0,0)rectangle(1,1);\draw (1,0)--(1,1);
\draw [thick] (0,0.7)..controls(0.8,0.7) and (0.8,0.3)..(0,0.3);
\end{linkdiag}
&=q^{1/2}\relacross{-}{+}-q^{5/2}\relacross{+}{-}.
\end{align}
In these diagrams, the shaded region is a part of the surface, and the thin line on
the side is part of the boundary of the surface.

\begin{remark}
\label{rem-rel-motive}
The trivial arc relations \eqref{eq-arcs} as well as \eqref{eq-arctriv} below appeared in
\cite{BW:qtrace}. They are quantizations of the classical matrix entries of the
$\alpha$-matrix as in Equation~\eqref{Mist}. The powers of $q$ and the
relation~\eqref{eq-stex}
are required so that splitting in Section~\ref{sec-splitting} works, as explained in
\cite{Le:decomp}.

We define additional new quotients of the skein algebra with similar philosophy, where
certain arcs are set to scalars that quantize the classical values and make splitting
work. 
\end{remark}

\begin{lemma}[{\cite[Lemmas~2.3 and 2.4]{Le:decomp}}]
Relations \eqref{eq-arcs} and \eqref{eq-stex} imply the following.
\begin{enumerate}
\item Trivial arc relation.
\begin{equation}
\label{eq-arctriv}    
\relarc{-}{+}=-q^{-5/2}\relempty.
\end{equation}
\item Height exchange relations. 
\begin{equation}
\label{eq-heightex}
\relcross{\nu}{+}=q^{-\nu}\relacross{+}{\nu},\qquad
\relcross{-}{\nu}=q^\nu\relacross{\nu}{-}
\end{equation}
\end{enumerate}
for $\nu = \pm$.
\end{lemma}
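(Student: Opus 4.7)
My plan is to derive both identities by direct manipulation using the relations \eqref{eq-skein}--\eqref{eq-arcs} together with \eqref{eq-stex}. Part (1), the trivial arc relation $\relarc{-}{+} = -q^{-5/2}\relempty$, uses the state exchange relation essentially, while part (2), the height exchange relations, follows from the Kauffman bracket skein relation once part (1) is in hand.

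For part (1), I plan to apply \eqref{eq-stex} in a configuration where the two horizontal strands appearing on the right-hand side are capped off on the right boundary, so that $\relacross{-}{+}$ and $\relacross{+}{-}$ are converted into $\relarc{-}{+}$ and $\relarc{+}{-}$ respectively, while the left-hand side of \eqref{eq-stex} reduces to a scalar multiple of $\relempty$. This produces a single linear equation in $\relarc{-}{+}$, $\relarc{+}{-}$, and $\relempty$. Substituting the known value $\relarc{+}{-} = q^{-1/2}\relempty$ from \eqref{eq-arcs} and solving, the combination of coefficients $q^{1/2}$ and $-q^{5/2}$ from \eqref{eq-stex} against $q^{-1/2}$ from \eqref{eq-arcs} should yield precisely the factor $-q^{-5/2}$.

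For part (2), I apply \eqref{eq-skein} to the crossing in $\relcross{s}{t}$. One smoothing is $\relacross{s}{t}$, carrying coefficient $q^{-1}$; the other smoothing produces an arc on the right boundary with states $s,t$, together with a companion arc passing into the surrounding surface. The boundary arc is $\relarc{s}{t}$, whose value is given by \eqref{eq-arcs} together with the newly established \eqref{eq-arctriv}: it vanishes when $s = t$, giving immediately $\relcross{\pm}{\pm} = q^{-1}\relacross{\pm}{\pm}$ and matching the two formulas in the diagonal cases; in the remaining mixed cases the arc equals $q^{-1/2}\relempty$ or $-q^{-5/2}\relempty$. Assembling these values with the $q$-coefficients from the skein relation yields both stated height exchange formulas.

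The main obstacle I anticipate is the first step of part (1), where the state exchange relation \eqref{eq-stex} must be applied in such a way that its right-hand side reduces cleanly to the arcs $\relarc{\pm}{\mp}$; this requires a careful choice of local configuration together with consistent tracking of heights and orientations of the boundary endpoints. Once this single identity is established, the rest of the argument is a routine case analysis against \eqref{eq-skein}.
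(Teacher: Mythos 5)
Your part (1) follows the standard route (the paper itself offers no proof, citing L\^e, so I judge your argument on its own): apply \eqref{eq-stex} in an ambient configuration where the two strand ends running off into the surface are joined to each other, so that each right-hand term closes up into a trivial arc at the boundary and the left-hand side closes up into a trivial loop. Two small caveats: your phrase ``capped off on the right boundary'' should really say that the two far ends are joined in the interior; and the arithmetic needs the loop value from \eqref{eq-loop} explicitly, since the equation is $(-q^2-q^{-2})\relempty=q^{1/2}\relarc{-}{+}-q^{5/2}\cdot q^{-1/2}\relempty$, and it is the cancellation of $-q^{2}$ against the loop value that leaves $-q^{-2}$ and hence $\relarc{-}{+}=-q^{-5/2}\relempty$. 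The coefficients $q^{1/2},-q^{5/2},q^{-1/2}$ alone do not produce the answer.

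The genuine gap is in part (2), in the mixed case $\relcross{-}{+}=q\,\relacross{+}{-}$, which is part of the lemma (take $\nu=-$ in the first formula). Resolving the crossing gives $q^{-1}\relacross{-}{+}$ plus $q$ times a diagram consisting of the boundary cap \emph{together with} the companion turn-back arc. Evaluating the cap to $-q^{-5/2}$ does not finish the job: what remains is a scalar times the turn-back diagram, which is not of the form $q^{\pm1}\relacross{\cdot}{\cdot}$. You must apply \eqref{eq-stex} a second time, to the companion arc, converting it into $q^{1/2}\relacross{-}{+}-q^{5/2}\relacross{+}{-}$, and then observe that the $\relacross{-}{+}$ contributions cancel ($-q^{-1}+q^{-1}=0$), leaving exactly $q\,\relacross{+}{-}$. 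Your plan, which asserts that the cap values assembled with the skein coefficients ``yield both stated formulas,'' only works in the diagonal cases, where the cap vanishes; as written the mixed case does not follow. Moreover, your listing of both mixed cap values $q^{-1/2}$ and $-q^{-5/2}$ suggests you expected a one-term formula for $\relcross{+}{-}$ as well, but the lemma (correctly) does not claim one: that case genuinely produces a two-term expression, which is why it is excluded from \eqref{eq-heightex}.
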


A tangle diagram is \term{simple} if it does not have crossings, trivial arcs, or
trivial loops. Let $\ori$ be an orientation of $\partial\surface$. A diagram is
\term{$\ori$-ordered} if the height over each boundary edge is increasing as one
follows $\ori$. All diagrams so far have been positively ordered, meaning $\ori$ is
the positive orientation induced by $\surface$. Finally, a diagram $D$ is
\term{increasingly stated} if there are no endpoints on the same boundary edge such
that the higher endpoint has $-$ state and the lower endpoint has $+$ state.

\begin{theorem}[{\cite[Theorem~2.11]{Le:decomp}}]\label{thm-basis}
$\skein(\surface)$ is a free $R$-module. For any orientation $\ori$, the set of
$\ori$-ordered, increasingly stated, simple diagrams is a basis of $\skein(\surface)$.
\end{theorem}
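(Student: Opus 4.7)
The plan is to establish the two claims—that the specified set spans $\skein(\surface)$, and that it is linearly independent—separately, the first by an algorithmic reduction and the second by a cut-and-paste reduction to simpler surfaces.

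For spanning, I would take an arbitrary tangle diagram $D$ representing an element of $\skein(\surface)$ and reduce it to a linear combination of $\ori$-ordered, increasingly stated, simple diagrams via four phases. First, resolve every crossing using the Kauffman relation \eqref{eq-skein} to obtain a linear combination of crossingless diagrams. Second, discard all trivial loops using \eqref{eq-loop}, introducing scalar factors of $-q^2-q^{-2}$. Third, eliminate all trivial arcs using \eqref{eq-arcs} and the derived relation \eqref{eq-arctriv}, which collapses $\pm\mp$ arcs to scalars and annihilates same-state trivial arcs. The resulting linear combination is supported on simple diagrams. Fourth, on each boundary edge impose the $\ori$-ordering on heights by iterated use of the height-exchange relations \eqref{eq-heightex}, which produce powers of $q$; then enforce the increasingly stated condition by applying the state-exchange relation \eqref{eq-stex} at every pair of consecutive endpoints that violate it, each application replacing a violation by two increasingly stated configurations. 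Termination of the whole algorithm requires a well-founded complexity; I would use a lexicographically ordered tuple whose first entry counts crossings, second counts trivial loops and arcs, third counts adjacent height inversions (summed over boundary edges), and fourth counts inversions of the $-+$ type on each edge. Each reduction move strictly decreases the tuple, so the algorithm terminates on an $R$-linear combination of the claimed basis elements.

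For linear independence, the natural strategy is to use the splitting homomorphism associated to cutting $\surface$ along a properly embedded ideal arc $a$: this map (developed in Section~\ref{sec-splitting} of the paper, foreshadowed in Remark~\ref{rem-rel-motive}) sends $\skein(\surface)$ into $\skein(\surface\setminus a)$ and is compatible with the claimed basis in the sense that an $\ori$-ordered, increasingly stated, simple diagram on $\surface$ splits into a sum indexed by states on the two new boundary edges created by $a$, each summand being of the same type on $\surface\setminus a$. By successively cutting along arcs dual to an ideal triangulation (possibly after stabilizing to ensure one exists), one reduces the independence statement on $\surface$ to the independence statement on disjoint unions of triangles and on closed surfaces without marked points. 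The triangle case is verified by an explicit finite computation, and the closed-surface case reduces to the classical basis theorem for the unstated Kauffman bracket skein module due to Przytycki. The injectivity of the composed splitting map on the $R$-span of the candidate basis then propagates independence back to $\surface$.

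The main obstacle is the linear independence part: the spanning algorithm is essentially a bookkeeping exercise once the right complexity is chosen, whereas independence requires constructing and analyzing the splitting map carefully enough to show that it distinguishes the candidate basis elements. In particular, one must verify that the powers of $q$ appearing in \eqref{eq-stex} and \eqref{eq-heightex} are exactly those needed for the splitting to respect the basis, and that after cutting to triangles the resulting system of vectors in the stated skein module of a triangle is linearly independent; this is a nontrivial finite-dimensional check that underlies the whole theorem. A secondary subtlety is that the state-exchange step of the spanning algorithm can in principle introduce new inversions on \emph{other} boundary edges if an endpoint is moved across an intermediate point, so the complexity must be chosen to strictly decrease even under these effects—a point that should be handled with care in the detailed write-up.
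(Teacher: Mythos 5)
You should first note that the paper does not prove Theorem~\ref{thm-basis} at all: it is quoted from L\^e \cite{Le:decomp}, and L\^e's proof is a diamond-lemma argument --- reduction rules corresponding to \eqref{eq-skein}, \eqref{eq-loop}, \eqref{eq-arcs} and \eqref{eq-stex}, a well-ordered degree that every rule strictly decreases, and a local-confluence lemma, from which spanning and linear independence follow simultaneously. This is exactly the scheme the present paper re-uses in Appendix~\ref{sec-stdA-iso} (the rules \eqref{eq-stdA-skein}--\eqref{eq-stdA-stex} are lifted from that proof, and Theorem~\ref{thm-diamond}, Lemma~\ref{lemma-confl} play the same roles). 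Your spanning half is the easy direction of that argument and is fine, modulo the termination bookkeeping you yourself flag.

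The genuine gap is in your independence half. The stated skein module of the ideal triangle is an \emph{infinite-rank} free $R$-module (simple diagrams can contain arbitrarily many parallel arcs near each corner), so ``the triangle case is verified by an explicit finite computation'' is not a proof; establishing independence for the triangle and bigon is essentially the whole difficulty, and in the literature it is done either by the confluence argument above or by identifying these modules with $\Oq$-type algebras --- an identification (cf.\ \eqref{bigon-iso}) whose proof in turn uses the basis theorem, so your route risks circularity. Relatedly, your compatibility claim is false as stated: splitting an $\ori$-ordered, increasingly stated, simple diagram along an ideal arc produces summands whose states on the two new boundary edges run over \emph{all} sign sequences and need not be increasing, and the height orders must be rearranged; what one can prove is a triangularity/leading-term statement, and this has to be formulated and proved before independence can be ``propagated back.'' Finally, ``stabilizing'' by adding punctures changes the surface and its skein module, so that step needs justification, and the injectivity of the splitting homomorphism you implicitly lean on is, in the sources, a consequence of the basis theorem rather than an input --- only its well-definedness comes for free. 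A corrected write-up along your lines would therefore have to prove the elementary base cases and the triangular compatibility from scratch, at which point the confluence proof is the more economical route.
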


Just as in the case of closed surfaces, the stated skein module $\skein(\surface)$
has an associative (in general, non-commutative) product given by stacking.
Specifically, given tangles $\alpha,\beta\in\skein(\surface)$, the product
$\alpha\cup\beta$ is defined as stacking $\alpha$ above $\beta$. What's more, it
has a $\BZ$-grading associated to each boundary edge $e$ of $\surface$, defined as
follows. For a tangle $\alpha$ with state $s:\partial\alpha\to\{\pm\}$, define
\begin{equation}
d_e(\alpha)=\sum_{x\in\alpha\cap e}s(x) \in \BZ \,.
\end{equation}
It is easy to see that $d_e$ preserves the defining relations~\eqref{eq-skein}
and~\eqref{eq-loop}, hence it becomes a $\ints$-grading on $\skein(\surface)$ that is
also compatible with the product $\cup$. 

Now we consider the reduced skein algebra $\skeinrd(\surface)$ defined in \cite{CL}.
It is the quotient of $\skein(\surface)$ by the (two-sided) ideal generated by the
\term{bad arcs}.
\begin{equation}
\label{eq-bad}
\relcorner{-}{+}=0.
\end{equation}
This relation holds for all possible heights on the boundaries of bad arcs. Same as
Remark~\ref{rem-rel-motive}, the motivation for this relation is the vanishing entry
in the $\beta$ matrix given in Equation~\eqref{Mbeta}.

By \cite[Corollary~4.6]{LeYu:qtrace}, the left, right, and two-sided ideal generated
by the bad arcs agree. For consistency with the quotients later, we usually consider
the left ideal.

The following useful result is obtained in the proof of \cite[Theorem~7.1]{CL}. 

\begin{lemma}
\label{lemma-bad}
In $\skeinrd(\surface)$, for a positively ordered diagram of the following form, we have
\begin{equation}
\begin{linkdiag}[0.8]
\fill[gray!20] (0,-0.2) rectangle (1,1.8); \draw (1,-0.2) -- (1,1.8);
\draw[fill=white] (1,1.4)circle(0.07) (1,-0.2)circle(0.07);
\begin{knot}
\strand[thick] (0,0.8) -- +(1,0) (0,1.1) -- +(1,0);
\strand[thick] (1,0.55)\stnode{+}..controls +(-0.8,0) and +(-0.8,0)..(1,1.6)\stnode{-};
\end{knot}
\draw[thick] (0,0.3) -- +(1,0) (0,0) -- +(1,0);
\path[tips,->] (0.8,0.95)node{...} (0.8,0.15)node{...} (1,0) -- +(0,1.3);
\end{linkdiag}
=0.
\end{equation}
\end{lemma}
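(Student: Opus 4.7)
The plan is to argue by induction on $k$, the number of \emph{enclosed arcs}, i.e., those horizontal strands whose right endpoints lie on the boundary edge strictly between the bad arc's $+$ endpoint at height $0.55$ and the puncture at height $1.4$. The outer horizontal strands at heights $\leq 0.3$ lie below the bad arc in the positive ordering, and by an isotopy in $\surface \times (-1,1)$ they can be separated from the bad arc throughout the argument, so they play no essential role.

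For the base case $k=0$, after such an isotopy the diagram is a product in $\skein(\surface)$ of a standard bad arc $\relcorner{-}{+}$ with a simpler diagram. By the defining relation \eqref{eq-bad} of $\skeinrd(\surface)$, the whole expression vanishes.

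For the inductive step with $k \geq 1$, I focus on the topmost enclosed arc $\alpha$, with right endpoint at some height $h \in (0.55, 1.4)$ and state $\nu \in \{+,-\}$. The bad arc crosses $\alpha$ in two points as it bends out to the left and returns. I resolve both crossings using the skein relation \eqref{eq-skein}, obtaining four terms. In each, I use isotopy together with the trivial-arc/loop relations \eqref{eq-arcs}, \eqref{eq-arctriv}, \eqref{eq-loop} and, when needed, the height exchange \eqref{eq-heightex}, to bring the result into one of two forms: either (i) a scalar multiple of a diagram satisfying the inductive hypothesis with $k-1$ enclosed arcs, so that induction closes the case; or (ii) a diagram in which one of the resulting components is itself a standard bad arc $\relcorner{-}{+}$, which vanishes by \eqref{eq-bad}.

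The main obstacle will be verifying, in each resolved term, that the ``bad-arc-plus-enclosed-arcs'' structure persists under the reconnections, namely that there is still an arc from a $+$-state endpoint on the boundary edge below the puncture at $1.4$ to a $-$-state endpoint on the edge above it, enclosing one fewer horizontal strand. This requires careful bookkeeping of states through the skein resolutions and of heights through any height-exchange steps. The crucial topological point underlying the whole argument is that the puncture at $(1,1.4)$ lies strictly between the two endpoints of the bad arc, so any resolution that reconnects these endpoints must still involve an arc winding around the puncture — producing either the residual bad-arc pattern needed for induction, or a standard bad arc killed directly by \eqref{eq-bad}.
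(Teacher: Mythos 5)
The paper itself gives no argument for this lemma (it is quoted from the proof of Theorem~7.1 of \cite{CL}), so your sketch has to stand on its own, and as written it has two genuine gaps. The first is the base case and, with it, your dismissal of the strands drawn below the bad arc. An isotopy of stated tangles preserves the height order of endpoints on each boundary edge, and the picture only constrains the diagram near this edge: the lower strands (and anything else in the diagram) may return to this edge above the $-$ endpoint, or cross one another elsewhere. Consequently the bad arc cannot in general be slid to the top or bottom of the height order, and a middle-slab factorization $x\cdot(\text{bad arc})\cdot y$ is obstructed whenever some other component has endpoints both below and above the bad arc's endpoints, or when mutual crossings of the other components prevent partitioning them into an ``above'' and a ``below'' part. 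So even the statement ``a bad arc with nothing interleaved, arbitrary stuff elsewhere, is zero'' is not an isotopy-plus-product observation; it already requires the exchange relations \eqref{eq-heightex}, \eqref{eq-stex} (or an argument of the same kind as the lemma itself), and cannot simply be quoted in the base case.

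The second gap is in the inductive step, which analyses the wrong local picture and rests on a dichotomy that fails. Each enclosed strand has its right endpoint on the boundary inside the disk cut off by the bad arc, so it meets the bad arc in exactly one crossing (an odd number), not two; resolving it gives two terms, not four. With the correct resolution, one smoothing joins the enclosed endpoint (state $\nu$) to the $-$ end of the bad arc, producing a corner arc around the puncture with states $(\nu,-)$: if $\nu=-$ this is a $(-,-)$ corner arc, which is neither a bad arc (it is nonzero in $\skeinrd(\surface)$ and only becomes a unit in the further quotient $\skeincr(\surface)$ via \eqref{eq-crdef}) nor a diagram of the lemma's shape, so neither \eqref{eq-bad} nor your induction hypothesis applies to that term; the other smoothing produces a cap from the $+$ end to the enclosed endpoint which still crosses the remaining enclosed strands, again of neither form. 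The ``careful bookkeeping'' you defer is exactly where the proof lives: the workable route is to move the interleaved endpoints past the $+$ end one at a time, using \eqref{eq-heightex} when the adjacent states allow it (a scalar) and \eqref{eq-stex} otherwise (a scalar term plus a cap term handled by \eqref{eq-arcs}, \eqref{eq-arctriv}), checking at each stage that every extra term either contains a genuine bad arc or has strictly fewer interleaved endpoints. None of that is established by the sketch, and the heuristic ``any resolution must still wind around the puncture'' does not deliver it, precisely because winding around the puncture with states other than $(-,+)$ is not zero in $\skeinrd(\surface)$.
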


\begin{theorem}[{\cite[Theorem~7.1]{CL}}]
\label{thm-basis-rd}
$\skeinrd(\surface)$ is a free $R$-module. A basis is given by the subset of
elements without bad arc components in the basis of $\skein(\surface)$ from
Theorem~\ref{thm-basis} with $\ori$ positive.
\end{theorem}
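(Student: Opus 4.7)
The plan is to prove the stronger statement that the ideal $I$ generated by bad arcs equals the $R$-submodule $R\cdot B_1$ of $\skein(\surface)$, where $B = B_0 \sqcup B_1$ is the partition of the basis from Theorem~\ref{thm-basis} (with $\ori$ positive) into those simple increasingly-stated diagrams that have no bad arc component versus those that do. Since that theorem gives a direct sum decomposition $\skein(\surface) = RB_0 \oplus RB_1$ as a free $R$-module, the equality $I = RB_1$ immediately yields $\skeinrd(\surface) = \skein(\surface)/I \cong RB_0$ as a free $R$-module with basis (the image of) $B_0$, which is the claim.

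The inclusion $RB_1 \subseteq I$ is immediate: each $D\in B_1$, being simple, positively ordered, and increasingly stated, decomposes as the disjoint union of its bad arc components with a remainder, so it factors in the stated skein as a product involving at least one bad arc and hence lies in $I$.

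The reverse inclusion $I \subseteq RB_1$ is the substantive point. Using \cite[Corollary~4.6]{LeYu:qtrace} one may work with the left ideal, and by Theorem~\ref{thm-basis} it suffices to show that for every bad arc $\beta$ and every basis element $D\in B$, the product $\beta\cdot D$ lies in $RB_1$ when re-expanded in the basis. Place $\beta$ just above $D$ in the thickened surface, then reduce to the basis by successively applying the defining relations: the skein relation \eqref{eq-skein} to resolve crossings, the trivial loop relation \eqref{eq-loop}, the trivial arc relations \eqref{eq-arcs} and \eqref{eq-arctriv}, and the height- and state-exchange relations \eqref{eq-heightex} and \eqref{eq-stex} to enforce increasing heights and an increasingly-stated configuration along each boundary edge. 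Run this as an induction on a complexity measure such as the number of crossings of $\beta$ with $D$ plus the number of out-of-order endpoints along each edge.

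The crux is to verify that every term surviving this reduction still contains a bad arc. Trivial loops and the scalar reductions from \eqref{eq-arcs}, \eqref{eq-arctriv} are harmless since they do not touch the bad arc. The genuinely dangerous step is the state exchange \eqref{eq-stex}, where the $(+,-)$ endpoints of $\beta$ are commuted past adjacent endpoints of $D$ on the same boundary edge; there one must check that the two resulting terms either again exhibit a bad arc at the modified heights, or are annihilated by Lemma~\ref{lemma-bad}, which kills exactly the configurations in which $\beta$ sits above parallel strands with the offending state pattern on the same edge. Carrying out this combinatorial bookkeeping — and confirming that the induction terminates with the bad-arc property preserved in every surviving term — is the main technical obstacle.
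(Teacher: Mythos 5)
First, note that the paper does not prove Theorem~\ref{thm-basis-rd} at all: it is imported from \cite[Theorem~7.1]{CL}, so there is no internal proof to compare with, and your argument has to stand on its own (or reproduce the argument of that reference). Your reduction of the statement is correct: given Theorem~\ref{thm-basis}, writing the basis as $B=B_0\sqcup B_1$ (no bad arc component / at least one), the theorem is equivalent to the equality $I=RB_1$ for the bad-arc ideal $I$, and the inclusion $RB_1\subseteq I$ is indeed routine, using that the relation \eqref{eq-bad} is imposed for all heights and that the left, right and two-sided ideals coincide.

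The gap is that the substantive half, $I\subseteq RB_1$ (equivalently, linear independence of the image of $B_0$ in $\skeinrd(\surface)$), is only planned, not proved, and the invariant you propose to carry through the induction is not actually preserved. Concretely, take a bad arc $\beta$ with state $-$ just above a puncture on edge $e_1$ and $+$ just below it on $e_2$, and a basis diagram $D$ with a $+$-stated endpoint on $e_1$; in $\beta\cdot D$ the pair $(-\ \text{above},\ +\ \text{below})$ on $e_1$ violates increasing statedness, and rewriting it via \eqref{eq-stex} produces one term in which $\beta$'s endpoints are joined to $D$ by a returning arc (the bad arc disappears as a component) and one term in which the former bad arc now carries states $(+,+)$ (again not a bad arc), and the latter is not of the shape killed by Lemma~\ref{lemma-bad}. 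So ``every surviving term still contains a bad arc'' fails after a single dangerous move; what must be shown is that the resulting combinations nonetheless re-expand inside $RB_1$, and organizing those cancellations is precisely the content of the proof in \cite{CL} (from which Lemma~\ref{lemma-bad} is itself extracted, so quoting it here without an independent derivation risks circularity, although it can be derived directly in the quotient). In addition, your termination measure is not obviously well-founded: eliminating a non-increasing state pair via \eqref{eq-stex} trades boundary endpoints for a returning arc that may create new crossings, so a genuine decreasing degree (as in the diamond-lemma argument the paper runs in Appendix~\ref{sec-stdA-iso} for the annulus) has to be specified. As it stands the proposal is a reasonable strategy outline, but the crux you label ``the main technical obstacle'' is exactly the theorem, and it is not established.
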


\subsection{Surfaces with triangular boundary}

The closed surface that appears in Proposition~\ref{prop.ST} is obtained by gluing
together punctured bordered \term{surface with triangular boundary}. The latter
are punctured bordered surfaces $\surface$ with each boundary component of
$\surclose$ containing three boundary edges of $\surface$. The three boundary
edges on the same component of $\partial\surclose$ form a \term{boundary triangle}.
Note that punctures in the interior of $\surclose$ are allowed.

\begin{figure}[htpb!]
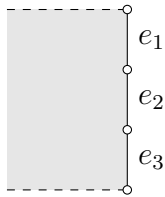

\centering
$\begin{anndiag}{2}{3}
\foreach \i in {1,2,3}
\path (2,{3.5-\i}) node[right]{$e_\i$};
\end{anndiag}$
\caption{Labeling boundary edges in a boundary triangle using the cyclic order
  induced by the orientation of the boundary triangle.}\label{fig-b-label}
\end{figure}

For a surface $\surface$ with triangular boundary, we need a different product
structure on $\skein(\surface)$, which we denote by $\cdot$. It is modified from $\cup$
by a power of $q$. The reason for this modification is to obtain the correct
quantization \eqref{eq-zzp}. See also Section~\ref{sub.qtorus}. Without stating
otherwise, we use $\cdot$ as the product for the rest of the paper.

Let $b:\ints^3\otimes\ints^3\to\ints$ be the unique skew-symmetric bilinear form
that is invariant under cyclic permutations of the components of $\ints^3$ such that
$b((1,0,0),(0,1,0))=1$. For each boundary triangle $E=\{e_1,e_2,e_3\}$, label the
edges as Figure~\ref{fig-b-label}. Then the gradings of a tangle $\alpha$ form a
vector $d_E(\alpha)=(d_{e_1}(\a),d_{e_2}(\a),d_{e_3}(\a)) \in\ints^3$. For tangles
$\alpha,\beta$, define a new product by
\begin{equation}
\label{eq-new-prod}
\alpha \cdot \beta=q^{-\frac{1}{2}\sum_{E}b(d_E(\alpha),d_E(\beta))}\alpha\cup\beta
\end{equation}
where the sum is over all boundary triangles $E$. It is easy to see that this extends
linearly to an associative product with the empty tangle as unit.

\subsection{Corner-reduced skein module}

In this section $\surface$ denotes a surface with triangular boundary. Let
$\skeincr(\surface)$
be the quotient of the reduced skein algebra $\skeinrd(\surface)$ by the left ideal
(under $\cdot$ product) generated by the following relations near marked points. 
\begin{equation}
\label{eq-crdef}
\relcorner{-}{-}=-\iunit q^{-1},\qquad
\relcorner{+}{+}=\iunit q,\qquad
\relcorner{+}{-}=1.
\end{equation}
As mentioned in Remark~\ref{rem-rel-motive}, the motivation for these relations are the
three nonzero entries of the $\beta$-matrix given in Equation~\eqref{Mbeta}. Keep in
mind that as in~\eqref{eq-bad} the heights in the above relations are arbitrary. Note
also that the boundary of the above arcs lies in different boundary edges since the
boundary of the above pictures is part of a triangle. By \cite[Proposition~7.4]{CL},
the first two are equivalent in $\skeinrd(\surface)$.

\begin{remark}
\label{remark-ichoice}
Because we do not specify which 4-th root of unity $\iunit$ is, technically
$\skeincr(\surface)$ depends on this choice as well, and it is independent of the
choice of $q$. In this paper, the only time this is relevant is in Section
~\ref{sub.2frob}, where we discuss roots of unity.
\end{remark}

\begin{lemma}
\label{lemma-prod-corner}
In $\skeincr(\surface)$, the lowest endpoint on a boundary edge can be modified in
the following way.
\begin{align}
\label{eq-prod-mm}
\begin{linkdiag}[0.9]
\fill[gray!20] (0,0) rectangle (1,1.8); \draw (1,0) -- (1,1.8);
\draw[fill=white] (1,0)circle(0.07) (1,1.3)circle(0.07);
\draw[thick] (0,0.5) -- +(1,0) (0,1) -- +(1,0);
\draw[thick] (0,0.2) -- +(1,0) \stnode{+};
\path[tips,->] (0.8,0.75)node[rotate=90]{...} (1,0) -- +(0,1.2);
\end{linkdiag}
&=\iunit q^{\frac{1}{2}(d-d'+3)}
\begin{linkdiag}[0.9]
\fill[gray!20] (0,0) rectangle (1,1.8); \draw (1,0) -- (1,1.8);
\draw[fill=white] (1,0)circle(0.07) (1,1.3)circle(0.07);
\begin{knot}
\strand[thick] (0,0.5) -- +(1,0) (0,1) -- +(1,0);
\strand[thick] (0,0.2)..controls+(0.5,0) and +(-0.5,0)..(1,1.6)\stnode{-};
\end{knot}
\path[tips,->] (0.8,0.75)node[rotate=90]{...} (1,0) -- +(0,1.2);
\end{linkdiag},\\
\label{eq-prod-mixed}
\begin{linkdiag}[0.9]
\fill[gray!20] (0,0) rectangle (1,1.8); \draw (1,0) -- (1,1.8);
\draw[fill=white] (1,0)circle(0.07) (1,1.3)circle(0.07);
\draw[thick] (0,0.5) -- +(1,0) (0,1) -- +(1,0);
\draw[thick] (0,1.6) -- +(1,0) \stnode{-};
\path[tips,->] (0.8,0.75)node[rotate=90]{...} (1,0) -- +(0,1.2);
\end{linkdiag}
&=-\iunit q^{d'-d''}
\begin{linkdiag}[0.9]
\fill[gray!20] (0,0) rectangle (1,1.8); \draw (1,0) -- (1,1.8);
\draw[fill=white] (1,0)circle(0.07) (1,1.3)circle(0.07);
\draw[thick] (0,0.5) -- +(1,0) (0,1) -- +(1,0);
\draw[thick] (0,1.6) -- +(1,0) \stnode{+};
\path[tips,->] (0.8,0.75)node[rotate=90]{...} (1,0) -- +(0,1.2);
\end{linkdiag}
+q^{\frac{1}{2}(d+d'-2d''-1)}
\begin{linkdiag}[0.9]
\fill[gray!20] (0,0) rectangle (1,1.8); \draw (1,0) -- (1,1.8);
\draw[fill=white] (1,0)circle(0.07) (1,1.3)circle(0.07);
\begin{knot}
\strand[thick] (0,0.5) -- +(1,0) (0,1) -- +(1,0);
\strand[thick] (0,1.6)..controls+(0.5,0) and +(-0.5,0)..(1,0.2)\stnode{-};
\end{knot}
\path[tips,->] (0.8,0.75)node[rotate=90]{...} (1,0) -- +(0,1.2);
\end{linkdiag}.
\end{align}
Here, $d$ and $d'$ are the gradings of the left-hand sides on the top and bottom
edges, respectively, and $d''$ is the grading on the third edge in the same boundary
triangle (not shown in the diagrams).
\end{lemma}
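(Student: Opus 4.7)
The plan is to prove both identities by applying the state-exchange relation~\eqref{eq-stex} at the top marked point of the shown edge, and then using the corner relations~\eqref{eq-crdef} together with the bad-arc relation~\eqref{eq-bad} to evaluate the resulting corner arcs as scalars. Throughout, the powers of $q$ coming from the modified product~\eqref{eq-new-prod} and the height-exchange rules~\eqref{eq-heightex} must be tracked carefully, since every time an endpoint is moved across the boundary triangle the $\ints^3$-gradings change and the product picks up a factor of $q^{-\frac{1}{2} b(d_E(\alpha),d_E(\beta))}$.

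For \eqref{eq-prod-mm}, I start from the RHS. Applying the state-exchange relation~\eqref{eq-stex} to the endpoint with state $-$ at the top produces a sum of two diagrams containing a corner arc at the top vertex joining the shown edge to the adjacent edge of the boundary triangle. One of the two corner arcs has states $(-,+)$ reading from inside to outside the triangle, which is a bad arc and vanishes by~\eqref{eq-bad}. The other has states $(+,-)$, which equals $1$ in $\skeincr(\surface)$ by~\eqref{eq-crdef}. What remains is, up to an explicit $q$-power, precisely the LHS of~\eqref{eq-prod-mm}. Solving for the LHS gives the claimed identity, with the scalar $\iunit q^{\frac{1}{2}(d-d'+3)}$ collecting contributions from the surviving state-exchange coefficient, the height-exchange factors needed to slide past the $d+d'$ other endpoints on the shown edge, and the product modification \eqref{eq-new-prod} that records the jump of the gradings on the boundary triangle.

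For \eqref{eq-prod-mixed}, I apply state exchange~\eqref{eq-stex} directly to the LHS, at the top marked point. Both resulting terms survive this time: one contains the corner arc $\relcorner{-}{-}$, which evaluates to $-\iunit q^{-1}$ by~\eqref{eq-crdef} and produces a diagram with a $+$-state endpoint at the top after using~\eqref{eq-heightex} to push it into position, giving the first term of the RHS with coefficient $-\iunit q^{d'-d''}$; the other contains a corner arc that routes the arc around the boundary triangle to the bottom edge with state $-$, yielding the second term of the RHS with coefficient $q^{\frac{1}{2}(d+d'-2d''-1)}$. Combining with the modified product~\eqref{eq-new-prod} and the state/height-exchange factors gives the two-term sum with the stated coefficients.

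The principal difficulty is purely a bookkeeping one: verifying that the accumulated powers of $q$ from the state-exchange coefficients $q^{1/2}, q^{5/2}$, the height-exchange factors $q^{\pm 1}$, the corner values $\pm\iunit q^{\pm 1}$, and the grading-dependent contributions $q^{-\frac{1}{2} b(d_E(\alpha),d_E(\beta))}$ from the $\cdot$ product combine exactly to the exponents $\tfrac{1}{2}(d-d'+3)$, $d'-d''$, and $\tfrac{1}{2}(d+d'-2d''-1)$. The cyclic invariance and skew-symmetry of $b$ ensure that only the gradings of the three edges of the given boundary triangle enter the answer, and a systematic computation confirms the exponents.
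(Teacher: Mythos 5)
There is a genuine gap: the step that is supposed to generate the corner arcs is attributed to the wrong relation, and the actual mechanism that makes the lemma work is missing. The state-exchange relation \eqref{eq-stex} is local to a single boundary edge: its left-hand side is an arc that turns back \emph{before} reaching the edge, and its right-hand side consists of two strands hitting that \emph{same} edge with opposite states. It cannot be ``applied to the endpoint with state $-$ at the top'', it never involves the marked point, and it cannot produce a corner arc joining two adjacent edges of the boundary triangle, nor transport an endpoint across a marked point --- which is exactly what \eqref{eq-prod-mm} and \eqref{eq-prod-mixed} assert. The only defining relations that connect adjacent edges are the corner relations \eqref{eq-bad} and \eqref{eq-crdef}, and they enter as a \emph{left} ideal; so any proof must explicitly multiply the diagram on the right (i.e.\ stack at the bottom, which is why the hypothesis ``lowest endpoint'' is there) by a corner arc, using \eqref{eq-crdef} backwards: $\alpha=\iunit q\,\alpha\cdot\relcorner{-}{-}$ for \eqref{eq-prod-mm} and $\alpha=\alpha\cdot\relcorner{+}{-}$ for \eqref{eq-prod-mixed}. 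Your outline never introduces these factors and never engages with the left-ideal constraint.

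Once the corner arc is stacked below, it crosses the strands of the original diagram, and the heart of the computation is resolving the lowest crossing with the Kauffman relation \eqref{eq-skein} --- a relation you never invoke. In \eqref{eq-prod-mm} one resolution term contains a trivial arc evaluated by \eqref{eq-arcs}, giving the surviving term, while the other contains a $(+,-)$ arc wrapping the marked point \emph{with other strands passing through it}; killing it requires Lemma~\ref{lemma-bad} (the strengthening from the proof of Theorem~\ref{thm-basis-rd}), not the bare bad-arc relation \eqref{eq-bad} as you claim. In \eqref{eq-prod-mixed} both resolution terms survive, one being a $\cup$-product with $\relcorner{-}{-}$ that must be rewritten via \eqref{eq-new-prod} to produce the coefficient $-\iunit q^{d'-d''}$. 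Your description of which corner arcs appear is also inconsistent with the identities (for \eqref{eq-prod-mm} the relevant corner value is $\relcorner{-}{-}=-\iunit q^{-1}$, not $\relcorner{+}{-}=1$). So the difficulty here is not merely $q$-power bookkeeping, as your last paragraph suggests; without the corner-arc multiplication, the crossing resolution via \eqref{eq-skein}, and Lemma~\ref{lemma-bad}, the argument does not get off the ground.
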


\begin{proof}
Let $\alpha$ denote the left-hand side of \eqref{eq-prod-mm}. First we use the
definition of the product to combine the diagrams. Then we resolve the lowest crossing.
\begin{align*}
\alpha&=\iunit q \alpha\cdot\relcorner{-}{-}
=(\iunit q)q^{-\frac{1}{2}(d'-d)}\alpha\cup\relcorner{-}{-}\\
&=\iunit q^{\frac{1}{2}(d-d')+1}
\begin{linkdiag}[0.75]
\fill[gray!20] (0,-0.3) rectangle (1,1.8); \draw (1,-0.3) -- (1,1.8);
\draw[fill=white] (1,-0.3)circle(0.07) (1,1.3)circle(0.07);
\begin{knot}
\strand[thick] (0,0.5) -- +(1,0) (0,1) -- +(1,0);
\strand[thick] (0,0.2) -- +(1,0)\stnode{+};
\strand[thick] (1,-0.1)\stnode{-}..controls +(-0.8,0) and +(-0.8,0)..(1,1.6)\stnode{-};
\end{knot}
\path[tips,->] (0.8,0.75)node[rotate=90]{...} (1,0) -- +(0,1.2);
\end{linkdiag}
=\iunit q^{\frac{1}{2}(d-d')+1}\left(
q
\begin{linkdiag}[0.75]
\fill[gray!20] (0,-0.3) rectangle (1,1.8); \draw (1,-0.3) -- (1,1.8);
\draw[fill=white] (1,-0.3)circle(0.07) (1,1.3)circle(0.07);
\begin{knot}
\strand[thick] (0,0.5) -- +(1,0) (0,1) -- +(1,0);
\strand[thick] (0,0.2)..controls +(0.5,0) and +(-0.5,0)..(1,1.6)\stnode{-};
\end{knot}
\draw[thick] (1,-0.1)\stnode{-}..controls +(-0.5,0) and +(-0.5,0)..(1,0.2)\stnode{+};
\path[tips,->] (0.8,0.75)node[rotate=90]{...} (1,0) -- +(0,1.2);
\end{linkdiag}
+q^{-1}
\begin{linkdiag}[0.75]
\fill[gray!20] (0,-0.3) rectangle (1,1.8); \draw (1,-0.3) -- (1,1.8);
\draw[fill=white] (1,-0.3)circle(0.07) (1,1.3)circle(0.07);
\begin{knot}
\strand[thick] (0,0.5) -- +(1,0) (0,1) -- +(1,0);
\strand[thick] (1,0.2)\stnode{+}..controls +(-0.6,0) and +(-0.6,0)..(1,1.6)\stnode{-};
\end{knot}
\draw[thick] (0,0.2)..controls +(0.5,0) and +(-0.5,0)..(1,-0.1)\stnode{-};
\path[tips,->] (0.8,0.75)node[rotate=90]{...} (1,0) -- +(0,1.2);
\end{linkdiag}
\right).
\end{align*}
The second term is zero by Lemma~\ref{lemma-bad}. The first term simplifies
to \eqref{eq-prod-mm}.

For \eqref{eq-prod-mixed}, still let $\alpha$ denote the left-hand side. Then
\begin{equation*}
\alpha=\alpha\cdot\relcorner{+}{-}
=q^{\frac{1}{2}(d+d'-2d'')}
\begin{linkdiag}[1.05]
\fill[gray!20] (0,0) rectangle (1,2.1); \draw (1,0) -- (1,2.1);
\draw[fill=white] (1,0)circle(0.07) (1,1.3)circle(0.07);
\begin{knot}
\strand[thick] (0,0.5) -- +(1,0) (0,1) -- +(1,0);
\strand[thick] (0,1.9) -- +(1,0)\stnode{-};
\strand[thick] (1,0.2)\stnode{-}..controls +(-0.8,0) and +(-0.8,0)..(1,1.6)\stnode{+};
\end{knot}
\path[tips,->] (0.8,0.75)node[rotate=90]{...} (1,0) -- +(0,1.2);
\end{linkdiag}
=q^{\frac{1}{2}(d+d'-2d'')}\left(
q^2
\begin{linkdiag}[1.05]
\fill[gray!20] (0,0) rectangle (1,2.1); \draw (1,0) -- (1,2.1);
\draw[fill=white] (1,0)circle(0.07) (1,1.3)circle(0.07);
\begin{knot}
\strand[thick] (0,0.5) -- +(1,0) (0,1) -- +(1,0);
\strand[thick] (0,1.9) -- +(1,0)\stnode{+};
\strand[thick] (1,0.2)\stnode{-}..controls +(-0.8,0) and +(-0.8,0)..(1,1.6)\stnode{-};
\end{knot}
\path[tips,->] (0.8,0.75)node[rotate=90]{...} (1,0) -- +(0,1.2);
\end{linkdiag}
+q^{-1/2}
\begin{linkdiag}[1.05]
\fill[gray!20] (0,0) rectangle (1,2.1); \draw (1,0) -- (1,2.1);
\draw[fill=white] (1,0)circle(0.07) (1,1.3)circle(0.07);
\begin{knot}
\strand[thick] (0,0.5) -- +(1,0) (0,1) -- +(1,0);
\strand[thick] (0,1.9)..controls +(0.8,0) and +(-0.8,0)..(1,0.2)\stnode{-};
\end{knot}
\path[tips,->] (0.8,0.75)node[rotate=90]{...} (1,0) -- +(0,1.2);
\end{linkdiag}
\right).
\end{equation*}
The second term is in the desired form. The first term is a $\cup$-product. By
rewriting in terms of the new product, we obtain \eqref{eq-prod-mixed}.
\end{proof}

In Lemma~\ref{lemma-prod-corner}, there are strands unchanged by the relations. They
are always higher than the strands changed by the relations, which is the result of
the quotient by a left ideal. We will say such relations in $\skeincr(\surface)$ holds
at the bottom and omit the unchanged strands. By rewriting the relations in the lemma,
we obtain the following.

\begin{corollary}
\label{cor-tw}
In $\skeincr(\surface)$, the following relations hold at the bottom.
\begin{align}
\label{eq-twup}
\relbottom{+}&=\iunit q^{\frac{1}{2}(d-d'+3)}\reltwup{-},&
\relbottom{-}&=\iunit q^{\frac{1}{2}(d'-d+3)}\reltwup{+}+q^{\frac{1}{2}(2d''-d-d'+1)}
\reltwup{-}.\\
\label{eq-twdown}
\reltwup{-}&=-\iunit q^{\frac{1}{2}(d'-d-3)}\relbottom{+},&
\reltwup{+}&=q^{\frac{1}{2}(2d''-d-d'-1)}\relbottom{+}-\iunit q^{\frac{1}{2}(d-d'-3)}
\relbottom{-}.
\end{align}
Here, $d,d',d''$ have the same meaning as in Lemma~\ref{lemma-prod-corner}.
\end{corollary}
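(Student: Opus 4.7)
The plan is to obtain the corollary by algebraic rewriting of Lemma~\ref{lemma-prod-corner}, plus one new instance of the ``multiply by a corner arc'' trick used in that lemma's proof.

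The first equation of \eqref{eq-twup}, $\relbottom{+}=\iunit q^{\frac{1}{2}(d-d'+3)}\reltwup{-}$, is a direct specialization of \eqref{eq-prod-mm}: the ``unchanged'' higher strands in the lemma are absorbed into the ambient diagram, and their contribution, together with that of the shown $+$ strand, is captured by the total gradings $d$ and $d'$ of the LHS on the top and bottom edges of the boundary triangle. The second equation of \eqref{eq-twup} is derived by the same technique applied to $\alpha:=\relbottom{-}$. I write $\alpha=\alpha\cdot\relcorner{+}{-}$ using $\relcorner{+}{-}\equiv 1$ in $\skeincr(\surface)$, expand via the product formula~\eqref{eq-new-prod} to extract a grading-dependent power of $q$, and then resolve the crossing between the corner arc and the $\relbottom{-}$ strand via~\eqref{eq-skein}. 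One resulting resolution reduces via the trivial arc relations~\eqref{eq-arcs}--\eqref{eq-arctriv} to the $\reltwup{-}$ term, the other to the $\reltwup{+}$ term. Unlike in the proof of~\eqref{eq-prod-mm}, where one resolution vanishes by Lemma~\ref{lemma-bad}, both resolutions here survive, producing the two-term right-hand side.

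The equations~\eqref{eq-twdown} then follow by linear inversion of~\eqref{eq-twup}: the first equation of~\eqref{eq-twdown} is obtained by solving the first equation of~\eqref{eq-twup} for $\reltwup{-}$, and for the second equation of~\eqref{eq-twdown} I substitute this expression for $\reltwup{-}$ into the second equation of~\eqref{eq-twup} and solve for $\reltwup{+}$. The main technical obstacle is the grading bookkeeping: in each equation the symbols $d,d',d''$ refer to the gradings of its own LHS, and these differ by $\pm 1$ on appropriate edges when the state or position of the shown strand changes. Consequently the substitutions above require suitable grading shifts before simplifying and shifting back in order to reproduce the exponents $\frac{1}{2}(d'-d-3)$, $\frac{1}{2}(2d''-d-d'-1)$, and $\frac{1}{2}(d-d'-3)$ displayed in~\eqref{eq-twdown}. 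The resulting arithmetic is routine but requires care, and it is the place where a calculation error is easiest to make.
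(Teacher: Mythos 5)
Your first equation of \eqref{eq-twup} is indeed just a restatement of \eqref{eq-prod-mm}, and your plan for \eqref{eq-twdown} (invert the linear relations, re-indexing the gradings $d,d',d''$ to the new left-hand sides) is exactly the ``rewriting'' the paper intends. The gap is in your derivation of the second equation of \eqref{eq-twup}. In the product of its left-hand side with the corner arc of \eqref{eq-crdef} (state $+$ above the puncture, $-$ below), the corner arc sits below everything, because the relations generate a left ideal. At the unique crossing between it and the lowest strand of your diagram, resolving by \eqref{eq-skein} produces exactly two smoothings, and neither behaves as you claim: one smoothing contains a trivial returning arc on the lower edge whose two states are both $-$ (the $-$ of your strand and the $-$ of the corner arc), so it is killed by \eqref{eq-arcs} rather than surviving; the other smoothing consists of a strand ending at the very bottom of the lower edge with state $-$ together with a corner arc (states $-$ below, $+$ above) that is no longer in bottom position, so \eqref{eq-crdef} cannot be applied to it and the term does not reduce to a twisted-up strand. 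The crossing resolution therefore yields at best a tautology, not the two-term identity. The two terms in that identity do not come from \eqref{eq-skein} together with \eqref{eq-arcs}--\eqref{eq-arctriv}; they come from the state exchange relation \eqref{eq-stex}, and the identity is in fact already contained in the lemma as \eqref{eq-prod-mixed}, whose proof applies \eqref{eq-stex} to the two adjacent endpoints on the edge above the puncture (note that there the moving strand ends above, not below, the puncture, which is why the state exchange is available in that configuration and not in yours).

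The intended argument, and the fix for yours, is to use both identities of Lemma~\ref{lemma-prod-corner}: \eqref{eq-prod-mm} and \eqref{eq-prod-mixed} form a linear system in the four local configurations (endpoint above or below the puncture, state $\pm$), and all four equations of the corollary follow by solving this system, shifting the gradings by $\pm1$ according to the state and edge of the displayed endpoint. In particular, \eqref{eq-prod-mixed}, solved for the configuration with the endpoint below the puncture and state $-$ and re-indexed, is precisely the second equation of \eqref{eq-twup}. Your proposal never invokes \eqref{eq-prod-mixed}, and the replacement computation you propose for it does not work as described.
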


Note that these moves do not preserve the gradings $d$ and $d'$. Although \eqref{eq-twup}
and \eqref{eq-twdown} are inverses, a naive substitution does not show this, because the
gradings need to be adjusted.

\newcommand{\relaround}[1]{
\begin{linkdiag}
\fill[gray!20] (-0.2,-0.2)rectangle(1.2,1.2);
\draw[fill=white] (0.5,0.5)circle(0.25);
\foreach \t in {0,120,240}
\draw[fill=white] (0.5,0.5) +(\t:0.25)circle(0.07);
\draw[thick] (-0.2,0.5)..controls +(0.35,0) and +(-0.35,0)..(0.5,#1)
..controls +(0.35,0) and +(-0.35,0).. (1.2,0.5);
\end{linkdiag}
}

\begin{lemma}
\label{lemma-crhandle}
Suppose $\surface$ is a surface with triangular boundary.
In $\skeincr(\surface)$, the following handle slide moves hold at the bottom.
\begin{equation}
\begin{linkdiag}
\fill[gray!20] (-0.2,-0.2)rectangle(1.2,1.2);
\draw[fill=white] (0.5,0.5)circle(0.3);
\foreach \t in {0,120,240}
\draw[fill=white] (0.5,0.5) +(\t:0.3)circle(0.07);
\draw[thick] (-0.2,0.5) ..controls (0.1,0.5) and (-0.1,0)..(0.5,0)
arc[radius=0.5,start angle=-90,delta angle=180]
..controls (-0.1,1) and (-0.1,0.5).. (0.2,0.5)\stnode{\mu};
\end{linkdiag}
=(-q^3)
\begin{linkdiag}
\fill[gray!20] (-0.2,-0.2)rectangle(1.2,1.2);
\draw[fill=white] (0.5,0.5)circle(0.3);
\foreach \t in {0,120,240}
\draw[fill=white] (0.5,0.5) +(\t:0.3)circle(0.07);
\draw[thick] (-0.2,0.5) -- (0.2,0.5)\stnode{\mu};
\end{linkdiag},
\qquad
\relaround{1}=\relaround{0}.
\end{equation}
\end{lemma}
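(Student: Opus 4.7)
The plan is to derive both identities by applying the corner-reduction relations of Corollary~\ref{cor-tw} (together with \eqref{eq-crdef} and the bad-arc relation \eqref{eq-bad}) iteratively at each of the three marked points of the triangular boundary. Since the triangle has exactly three corners, both handle slides amount to transporting an endpoint (resp.\ passing strand) once around the triangle via three local moves, one per corner.

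For the first identity, I view the LHS as an arc starting on the external boundary, winding once counterclockwise around the triangle, and terminating at a state-$\mu$ endpoint on the left edge of the triangle (just below the upper-left marked point). I apply the twist-up relation \eqref{eq-twup} at this endpoint, transporting it past the upper-left marked point onto the top edge; then I apply \eqref{eq-twup} again at the (now-relevant) right marked point, moving the endpoint onto the right edge; and finally once more at the lower-left marked point, returning the endpoint to the left edge. After these three corner moves, the arc no longer winds around the triangle and coincides with the straight arc on the RHS. Each application contributes a factor of the form $\iunit q^{c}$ with $c$ depending on the gradings $d,d',d''$; the product of the three factors, together with the shifts in these gradings that occur as the endpoint moves from one edge to the next, collapses to $\iunit^3 q^{3} \cdot (-1) = -q^{3}$. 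For $\mu = -$, relation \eqref{eq-twup} branches into two summands; I would check that the extra branch reduces at a later corner to a configuration of the bad-arc type \eqref{eq-bad} and is therefore killed in $\skeincr(\surface)$.

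For the second identity, neither endpoint of the arc lies on the triangular boundary, so I instead treat the arc as a passing strand and slide it past each marked point from above to below. Near each marked point, I apply the state exchange relation \eqref{eq-stex} locally to split the strand into two stated half-arcs meeting the boundary at that corner; the resulting corner arcs evaluate to scalars via \eqref{eq-crdef} with the $-+$ branch killed by \eqref{eq-bad}. The surviving half-arcs recombine on the far side of the marked point as a passing strand with a specific scalar coefficient. Iterating at all three corners pulls $\relaround{1}$ through the triangular region to $\relaround{0}$, and the three accumulated coefficients multiply to exactly $1$.

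The main obstacle is the bookkeeping of the $q$-exponents: the gradings $d, d', d''$ in Corollary~\ref{cor-tw} shift after each corner move (as the endpoint or strand changes edges), so the exponents from successive moves do not simply add but telescope in a grading-dependent way. Verifying that they sum to exactly the powers $3$ (first identity) or $0$ (second identity), and that all branching terms in \eqref{eq-twup} and \eqref{eq-stex} either cancel among themselves or are annihilated by \eqref{eq-bad}, constitutes the bulk of the actual computation. The rest is a straightforward diagrammatic chase.
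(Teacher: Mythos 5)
Your plan for the first identity is the paper's plan (three applications of \eqref{eq-twup}), but the two concrete claims you make about how the computation closes up are both wrong. First, the surviving term cannot come from three single-branch moves each contributing $\iunit q^{c}$: the single-branch option in \eqref{eq-twup} flips the state, so three of them would return the endpoint with state $-\mu$, not $\mu$, and in any case $\iunit^{3}q^{3}\cdot(-1)=\iunit q^{3}\neq -q^{3}$. What actually happens is that the state flips after the very first move, so branching occurs for \emph{both} $\mu=+$ and $\mu=-$ (not just $\mu=-$): the second and third moves use the two-term rule, the branches that return to state $\mu$ are exactly those using the $\iunit$-carrying, state-flipping option an even number of times (whence $\iunit^{2}=-1$), their grading-dependent exponents telescope to a grading-independent power of $q$, and they combine to the coefficient $-q^{3}$. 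Second, the unwanted branches ending in state $-\mu$ are \emph{not} bad arcs and are not killed by \eqref{eq-bad}: they are ordinary nonzero diagrams (a strand wrapping the hole with a single stated endpoint), and they disappear only because two different branching paths produce them with coefficients $\pm\iunit q^{n}$ for the same exponent $n$ and cancel against each other. So the disposal mechanism you propose would not pan out, and the cancellation you defer to ``bookkeeping'' is in fact the entire content of the lemma.

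For the second identity your route also diverges from the paper and is not well defined as stated. A strand disjoint from the triangular boundary cannot be ``slid past one marked point'': the obstruction to changing sides is the whole boundary component, and a single application of \eqref{eq-stex} at an edge produces two arcs each having one endpoint on that edge and the other end running off into the diagram — these are not corner arcs, so \eqref{eq-crdef} and \eqref{eq-bad} do not evaluate them to scalars, and there is no canonical way to ``recombine the surviving half-arcs on the far side of the marked point.'' The paper instead applies \eqref{eq-stex} once to cut the strand at a boundary edge, uses the already-proved first identity (the $-q^{3}$ wrap relation) to carry one of the resulting stated arcs around the hole, and then recombines via the trivial-arc relations and a Reidemeister-type simplification. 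One could make a corner-by-corner version work by cutting once with \eqref{eq-stex} and then transporting the two cut endpoints across the punctures using Corollary~\ref{cor-tw}, but that amounts to re-proving the first identity inside the second; as written, your three local moves with ``coefficients multiplying to $1$'' do not correspond to valid relations in $\skeincr(\surface)$.
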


\begin{proof}
The first identity is obtained by applying \eqref{eq-twup} three times. For the
second one, we redraw it as
\begin{equation}
\begin{anndiag}{1}{2}
\draw[thick] (0,0.5)..controls +(0.5,0)..(0.5,0)
(0,1.5)..controls +(0.5,0)..(0.5,2);
\end{anndiag}
=
\begin{anndiag}{1}{2}
\draw[thick] (0,0.5)..controls +(0.5,0)..(0.5,1)..controls +(0,0.5)..(0,1.5);
\end{anndiag}.
\end{equation}
To prove this, we combine the first identity with \eqref{eq-stex}.
\begin{align*}
\begin{anndiag}[u]{1}{2.2}
\draw[thick] (0,0.6)..controls +(0.5,0)..(0.5,0)
(0,1.8)..controls +(0.5,0)..(0.5,2.2);
\end{anndiag}
&=
q^{1/2}
\begin{anndiag}[u]{1}{2.2}
\draw[thick] (0,0.55) -- +(1,0)\stnode{-}
(1,0.25)\stnode{+}..controls +(-0.5,0)..(0.5,0)
(0,1.8)..controls +(0.5,0)..(0.5,2.2);
\path[tips,->] (1,0) -- (1,0.75);
\end{anndiag}
-q^{5/2}
\begin{anndiag}[u]{1}{2.2}
\draw[thick] (0,0.55) -- +(1,0)\stnode{+}
(1,0.25)\stnode{-}..controls +(-0.5,0)..(0.5,0)
(0,1.8)..controls +(0.5,0)..(0.5,2.2);
\path[tips,->] (1,0) -- (1,0.75);
\end{anndiag}
=(-q^3)^{-1}
\left(
q^{1/2}
\begin{anndiag}[u]{1}{2.2}
\begin{scope}[thick]
\begin{knot}
\strand (0,0.55) -- +(1,0)\stnode{-};
\strand (1,0.25)\stnode{+}..controls +(-0.4,0)..(0.6,2.2);
\end{knot}
\draw (0.6,0)..controls +(0,0.4) and +(0,0.4)..(0.4,0)
(0,1.8)..controls +(0.4,0)..(0.4,2.2);
\end{scope}
\path[tips,->] (1,0) -- (1,0.75);
\end{anndiag}
-q^{5/2}
\begin{anndiag}[u]{1}{2.2}
\begin{scope}[thick]
\begin{knot}
\strand (0,0.55) -- +(1,0)\stnode{+};
\strand (1,0.25)\stnode{-}..controls +(-0.4,0)..(0.6,2.2);
\end{knot}
\draw (0.6,0)..controls +(0,0.4) and +(0,0.4)..(0.4,0)
(0,1.8)..controls +(0.4,0)..(0.4,2.2);
\end{scope}
\path[tips,->] (1,0) -- (1,0.75);
\end{anndiag}
\right)\\
&=(-q^3)^{-1}
\begin{anndiag}[u]{1}{2.2}
\begin{knot}
\strand[thick] (0,0.6) -- (0.7,0.6);
\strand[thick] (0,1.8)..controls +(0.2,0)..(0.2,2)
arc[radius=0.1,start angle=180,end angle=0] -- (0.4,0.4)
to[out angle=-90,curve through={(0.9,0.4)},in angle=0] (0.7,0.6);
\end{knot}
\end{anndiag}
=
\begin{anndiag}[u]{1}{2.2}
\draw[thick] (0,0.5)..controls +(0.5,0)..(0.5,1.2)..controls +(0,0.6)..(0,1.8);
\end{anndiag}.\qedhere
\end{align*}
\end{proof}

\subsection{Splitting}
\label{sec-splitting}

The stated skein algebra and its reduced version have splitting homomorphisms
connecting the punctured bordered surfaces before and after splitting, thus
reducing the surfaces to elementary pieces, namely a standard monogon, bigon, and
triangle shown in Figure~\ref{fig-elem-pb}. We briefly recall how this works.

\begin{figure}[htpb!]
\begin{tikzpicture}
\draw[fill=gray!20] (0,0)..controls (-1,0.6) and (-1,2)..(0,2)
..controls (1,2) and (1,0.6)..cycle
(3,0)..controls (2,0.6) and (2,1.4)..(3,2)
..controls (4,1.4) and (4,0.6)..cycle
(5.5,0) -- +({4/sqrt(3)},0) coordinate(A) -- +({2/sqrt(3)},2) coordinate(B)
        -- cycle;
\draw[fill=white] (0,0)circle(0.05) (3,0)circle(0.05) (3,2)circle(0.05)
(5.5,0)circle(0.05) (A)circle(0.05) (B)circle(0.05);
\end{tikzpicture}
\caption{Elementary surfaces: monogon, bigon, and triangle.}\label{fig-elem-pb}
\end{figure}
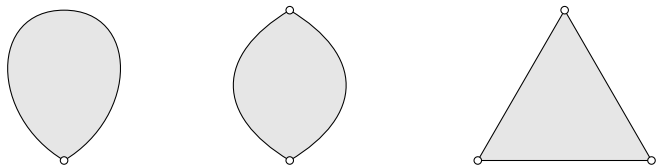

Let $\surface=\surclose\setminus\marked$ be a punctured bordered surface. An
\term{ideal arc} 
on $\surface$ is a simple arc $e:[0,1]\to\surclose$ such that $e(0),e(1)\in\marked$
and $e((0,1))$ is in the interior of $\surface$. Splitting $\surface$ along $e$
produces a new punctured bordered surface denoted $\surface_e$. There is a quotient
map $p:\surface_e\to\surface$ gluing the new boundary edges back. Let
$\thicken{p}:\surface_e\times(-1,1)\to\surface\times(-1,1)$ be the induced map.

Fix an ideal arc $e$ on a punctured bordered surface $\surface$, and a tangle
$\alpha$ over $\surface$. Isotope $\alpha$ such that it is transverse to
$\thicken{e}=e\times(-1,1)$ and the heights of $\alpha\cap\thicken{e}$ are distinct,
and define
\begin{equation}
\label{eq-le-split}
\cut_e(\alpha)=\sum_{s:\alpha\cap\thicken{e}\to\{\pm\}}(\alpha,s),
\end{equation}
where $(\alpha,s)$ is the tangle $\thicken{p}^{-1}(\alpha)$ with the state $s(x)$
assigned to both points in $\thicken{p}^{-1}(x)$ for all $x\in\alpha\cap\thicken{e}$.

\begin{theorem}[{\cite[Theorem~3.1]{Le:decomp}, \cite[Thm.~7.6]{CL}}]
\label{thm-le-split}
Given an ideal arc $e$ on a punctured bordered surface $\surface$, \eqref{eq-le-split}
is a well-defined algebra homomorphism (with respect to $\cup$)
\begin{equation}\label{cute}
\cut_e:\skein(\surface)\to\skein(\surface_e)
\end{equation}
that satisfies
\begin{equation}
\cut_e\circ\cut_f=\cut_f\circ\cut_e
\end{equation}
if $e$ and $f$ are disjoint ideal arcs in $\surface$, and descends to an algebra
homomorphism
\begin{equation}
\cut_e : \skeinrd(\surface)\to\skeinrd(\surface_e),
\end{equation}
denoted by $\cut_e$ by abuse of notation.
\end{theorem}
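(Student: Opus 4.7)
The plan is to verify each assertion in turn: first, well-definedness of $\cut_e$ on $\skein(\surface)$; then the algebra homomorphism property under $\cup$; then commutation with disjoint cuts; and finally descent to $\skeinrd(\surface)$.

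For well-definedness, I would show that the state sum on the right-hand side of \eqref{eq-le-split} is independent of the transverse representative of $\alpha$ and respects the defining relations \eqref{eq-skein}, \eqref{eq-loop}, \eqref{eq-arcs}, and \eqref{eq-stex}. Any two transverse representatives are related by isotopies disjoint from $\thicken{e}$ together with a sequence of \emph{finger moves} that create or destroy a pair of transverse intersections of a strand with $\thicken{e}$. On isotopies disjoint from $\thicken{e}$ the cut is manifestly unchanged. On a finger move, $\cut_e$ inserts a U-turn near one of the two new boundary edges of $\surface_e$, summed over its two possible states; applying the trivial arc relations \eqref{eq-arcs} and \eqref{eq-arctriv} collapses the bubble to the original strand, the scalars $q^{-1/2}$ and $-q^{-5/2}$ cancelling against the matched pair of endpoints on opposite sides of $\thicken{e}$. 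Compatibility with \eqref{eq-skein} at a crossing lying on $\thicken{e}$ and with \eqref{eq-loop} is immediate from linearity of the state sum, while compatibility with \eqref{eq-stex} is verified on a boundary edge of $\surface$ that does not meet the interior of $e$.

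For the algebra property, given $\alpha,\beta\in\skein(\surface)$ stacked so that $\alpha\subset\surface\times(0,1)$ and $\beta\subset\surface\times(-1,0)$, the heights of $\alpha\cap\thicken{e}$ lie above those of $\beta\cap\thicken{e}$; hence a state on $(\alpha\cup\beta)\cap\thicken{e}$ decomposes uniquely as a pair of states on the two factors, yielding $\cut_e(\alpha\cup\beta)=\cut_e(\alpha)\cup\cut_e(\beta)$. For disjoint arcs $e$ and $f$, isotope $\alpha$ to be simultaneously transverse to $\thicken{e}$ and $\thicken{f}$ with all heights distinct; the two state sums are then over independent index sets and so commute.

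For the descent to $\skeinrd$, I would show that the bad-arc left ideal of $\skein(\surface)$ is carried into the bad-arc left ideal of $\skein(\surface_e)$. A bad arc disjoint from $\thicken{e}$ remains bad after splitting. If the bad arc meets $\thicken{e}$, expand the state sum; in each summand, use the height exchange \eqref{eq-heightex} to slide endpoints so that the relevant strand is at the bottom next to the original puncture, and then apply Lemma~\ref{lemma-bad} together with \eqref{eq-bad} to see that every term vanishes in $\skeinrd(\surface_e)$. The main obstacle will be the finger-move step: one must carefully track height orderings and the powers of $q$ coming from \eqref{eq-arcs}, \eqref{eq-arctriv}, and \eqref{eq-heightex}, and verify that all scalars conspire to reproduce the identity. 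This is the ``miraculous cancellation'' alluded to in the introduction; L\^e's argument in \cite{Le:decomp} reformulates it as compatibility with the right coaction of $\Oq$ on stated arcs, which is probably the cleanest way to organize the bookkeeping.
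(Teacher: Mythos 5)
This theorem is not proved in the paper at all: it is imported verbatim from L\^e \cite{Le:decomp} (Theorem~3.1) and Costantino--L\^e \cite{CL} (Theorem~7.6), so there is no internal argument to compare yours against; what you have written is an outline of the proof that lives in those references. As an outline it has the right architecture (isotopy invariance of the state sum, compatibility with the defining relations, the stacking argument for multiplicativity, independence of disjoint cuts, and the bad-arc-ideal check for the descent to $\skeinrd$), and the parts on multiplicativity, commutation of disjoint cuts, and the descent are essentially correct as sketched.

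The gap is in the well-definedness step, which is the whole content of the theorem. First, your list of moves relating two transverse representatives (isotopies disjoint from $\thicken{e}$ plus finger moves) is incomplete: since the definition \eqref{eq-le-split} requires the heights of $\alpha\cap\thicken{e}$ to be distinct, a generic isotopy also passes through events where two intersection points with $\thicken{e}$ exchange heights, and invariance under this move is a separate, non-trivial check --- it is exactly where the height-exchange relations \eqref{eq-heightex} must be seen to produce cancelling powers of $q$ on the two copies of the cut edge. Second, your description of the finger move misidentifies the mechanism: the trivial arc relations \eqref{eq-arcs} and \eqref{eq-arctriv} only evaluate the returning arc on the finger side to the scalars $q^{-1/2}$ and $-q^{-5/2}$; on the other copy of the cut edge the strand now has two stated endpoints, and reassembling that pair into the original uncut strand is precisely the state-exchange relation \eqref{eq-stex}. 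So the finger-move check is a coefficient match between the returning-arc scalars and the coefficients in \eqref{eq-stex} (with the correct height order), not a cancellation internal to \eqref{eq-arcs}. You acknowledge deferring this bookkeeping to L\^e's coaction formulation, but that is in effect citing \cite{Le:decomp}, which is what the paper already does; as a self-contained proof the proposal stops short exactly at the computation that makes the theorem true.
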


Now suppose $\surface=\surclose\setminus\marked$ has triangular boundary. When we
consider the corner reduced skein module $\skeincr(\surface)$, we can split along a
closed curve $c$ in the interior of $\surface$ with three distinguished points
$p_1,p_2,p_3\in c$. Let $\surface'_c=\surface\setminus\{p_1,p_2,p_3\}$.
Then $c\cap\surface'_c$ is the union of three ideal arcs in $\surface'_c$.
$\surface'_c$ split along these arcs is a surface $\surface_c$ with triangular
boundary. Let $\cut:\skein(\surface'_c)\to\skein(\surface_c)$ be the composition of
splits.

\begin{theorem}
\label{thm-split}
The composition of splits $\cut:\skein(\surface'_c)\to\skein(\surface_c)$ induces
an $R$-module homomorphism $\cut_c:\skeincr(\surface)\to\skeincr(\surface_c)$.
\begin{equation}\label{eq-splitcr}
\begin{tikzcd}[column sep=large,row sep=small]
\skein(\surface'_c) \arrow[r,"\cut"] \arrow[d,two heads] & \skein(\surface_c)
\arrow[dd,two heads]\\
\skein(\surface) \arrow[d,two heads]\\
\skeincr(\surface) \arrow[r,"\cut_c"] & \skeincr(\surface_c)
\end{tikzcd}
\end{equation}
\end{theorem}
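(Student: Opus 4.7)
The plan is to show that $\cut$ descends to the map $\cut_c$ by verifying that the kernel of the composition $\skein(\surface'_c)\twoheadrightarrow\skeincr(\surface)$ is sent into the kernel of $\skein(\surface_c)\twoheadrightarrow\skeincr(\surface_c)$. First I would observe that the left vertical map $\skein(\surface'_c)\twoheadrightarrow\skein(\surface)$ is in fact an isomorphism, since the three interior punctures $p_1,p_2,p_3$ are codimension two in $\surface\times(-1,1)$, so framed tangles and isotopies between them can be generically perturbed to avoid the vertical lines above these points. By Theorem~\ref{thm-le-split}, $\cut$ already descends to the reduced skein level, taking care of the bad-arc part of the kernel. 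It then remains to handle the three corner-defining elements of~\eqref{eq-crdef} together with the left $\cdot$-ideal they generate.

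For the corner relations themselves, I would use that each arc $\alpha$ occurring in~\eqref{eq-crdef} is supported in a small neighborhood of a marked point of $\surface$, which lies on $\partial\surclose$, whereas $c$ is an interior simple closed curve. Isotoping $\alpha$ away from $c$ makes each of the three splits act trivially on $\alpha$, so $\cut(\alpha)=\alpha$ in $\skein(\surface_c)$ with the same state assignments on the same boundary edges. Consequently the three expressions of~\eqref{eq-crdef} are carried by $\cut$ to the same-shaped expressions in $\skein(\surface_c)$, which vanish in $\skeincr(\surface_c)$.

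The main obstacle, and the step I expect will take the most care, is the interaction between $\cut$, which Theorem~\ref{thm-le-split} provides only as a $\cup$-algebra homomorphism, and the modified product $\cdot$ used to define the corner-reducing left ideal. For an arbitrary tangle $\beta$ and a corner arc $\alpha$ as above, I would argue that since splitting along $c$ does not touch boundary edges on $\partial\surclose$, one has $d_E(\cut(\beta))=d_E(\beta)$ on every original boundary triangle $E$; and since $\alpha$ does not meet $c$, the gradings $d_{E'}(\alpha)$ vanish on every new boundary triangle $E'$ of $\surface_c$ produced by the splits along $c$. Combining these vanishings with the definition~\eqref{eq-new-prod} and the $\cup$-algebra property of $\cut$, the scalar corrections between $\cdot$ and $\cup$ match on the two sides, yielding $\cut(\alpha\cdot\beta)=\alpha\cdot\cut(\beta)$. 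This shows that the corner-reducing left $\cdot$-ideal in $\skein(\surface)$ maps into the corresponding ideal in $\skein(\surface_c)$, so the induced $R$-module map $\cut_c$ exists and fits in~\eqref{eq-splitcr} as claimed.
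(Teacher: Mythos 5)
There is a genuine gap, and it sits exactly where the real work of this theorem lies. Your claim that the left vertical map $\skein(\surface'_c)\twoheadrightarrow\skein(\surface)$ is an isomorphism is false. Removing the three points $p_1,p_2,p_3$ from $\surface$ removes the vertical lines $\{p_i\}\times(-1,1)$ from the thickened surface; a single tangle can indeed be perturbed off these lines, but an isotopy between tangles sweeps out a $2$-dimensional trace, which generically meets a $1$-dimensional line in a $3$-manifold in isolated points and cannot be pushed off it. Hence the map is surjective but not injective: its kernel is generated by the relations identifying a strand passing on one side of a puncture $p_i$ with a strand passing on the other side. Showing that $\cut$ composed with the quotient to $\skeincr(\surface_c)$ kills exactly these ``isotopy across the puncture'' relations is the main content of the proof, and it is not automatic: after splitting, the two sides of such a relation become genuinely different stated diagrams near the new boundary triangles, and one must verify, using the twisting relations of Corollary~\ref{cor-tw} (which rest on the specific scalars in~\eqref{eq-crdef}), that the two state sums agree in $\skeincr(\surface_c)$. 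Indeed, if your isomorphism claim were true the theorem would hold for arbitrary choices of the corner scalars, which it does not; the scalars in~\eqref{eq-crdef} are chosen precisely to make this computation close up.

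The remaining parts of your plan are essentially fine and correspond to the easy half of the paper's argument: the bad-arc ideal is handled by Theorem~\ref{thm-le-split}, and the corner relations of $\surface$ are untouched by the splitting (it happens near $c$, the corner reduction near $\partial\surface$), so they map to the corresponding relations in $\skein(\surface_c)$; your care with the discrepancy between $\cup$ and $\cdot$ via the boundary gradings is a reasonable way to phrase why the left-ideal structure is preserved. But without the puncture-crossing computation the induced map $\cut_c$ is not yet well defined, so the proposal as it stands does not prove the theorem.
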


\begin{proof}
First consider the descent to $\skein(\surface)\to\skeincr(\surface_c)$. The map
$\skein(\surface'_c)\onto\skein(\surface)$ is induced by the inclusion
$\surface'_c\embed\surface$. The kernel of this quotient is generated by isotopies
across the punctures $p_1,p_2,p_3$, or in terms of diagrams,
\begin{equation}
\begin{linkdiag}
\fill[gray!20] (0,0) rectangle (2,1);
\draw (1,0) -- (1,1);
\draw[thick] (0,0.2) -- (2,0.2);
\draw[fill=white] (1,0.5)circle(0.07) node[inner sep=2pt,above left]{\small$p_i$};
\end{linkdiag}
=
\begin{linkdiag}
\fill[gray!20] (0,0) rectangle (2,1);
\draw (1,0) -- (1,1);
\draw[thick] (0,0.2)..controls +(0.5,0) and +(-0.5,0)..(1,0.8)
..controls +(0.5,0) and +(-0.5,0)..(2,0.2);
\draw[fill=white] (1,0.5)circle(0.07);
\end{linkdiag}.
\end{equation}
We need to show that the splitting of both sides are equal in $\skeincr(\surface_c)$.
We can always isotope the tangle such that the intersection with
$\tilde{c}=c\times(-1,1)$ is the lowest. Thus, we only need to check the equality
at the bottom.
\begin{align*}
&\quad\cut_c\left(
\begin{linkdiag}
\fill[gray!20] (0,0) rectangle (2,1);
\draw (1,0) -- (1,1);
\draw[thick] (0,0.2)..controls +(0.5,0) and +(-0.5,0)..(1,0.8)
..controls +(0.5,0) and +(-0.5,0)..(2,0.2);
\draw[fill=white] (1,0.5)circle(0.07);
\end{linkdiag}\right)
=\reltwup{+}\reltwupl{+}+\reltwup{-}\reltwupl{-}\\
&=
\left(q^{\frac{1}{2}(2d_+''-d_+-d_+'-1)}\relbottom{+}-\iunit q^{\frac{1}{2}(d_+-d_+'-3)}
\relbottom{-}\right)\left(\iunit q^{\frac{1}{2}(d_+'-d_++3)}\relbottoml{-}\right)+\\
&\quad+\left(-\iunit q^{\frac{1}{2}(d_-'-d_--3)}\relbottom{+}\right)
\left(\iunit q^{\frac{1}{2}(d_--d_-'+3)}\relbottoml{+}+q^{\frac{1}{2}(d_-''-d_--d_-'+1)}
\relbottoml{-}\right)\\
&=\relbottom{+}\relbottoml{+}+\relbottom{-}\relbottoml{-}
= \cut_c\left(
\begin{linkdiag}
\fill[gray!20] (0,0) rectangle (2,1);
\draw (1,0) -- +(0,1);
\draw[thick] (0,0.2) -- (2,0.2);
\draw[fill=white] (1,0.5)circle(0.07);
\end{linkdiag}\right).
\end{align*}
Here, Corollary~\ref{cor-tw} is applied to every factor, where $d_\pm,d_\pm',d_\pm''$
are the gradings of $\reltwup{\pm}$. Then $d_+=d_-+2$ and $d_+'=d_-',d_+''=d_-''$. Note
for $\reltwupl{\pm}$, a rotation is necessary for Corollary~\ref{cor-tw} to apply,
which is why the primes do not match.
Note in the right half of each term, the direction of twist is opposite of the left
half because of the orientation. This shows $\skein(\surface)\to\skeincr(\surface_c)$
is well-defined.

The descent to $\cut_c:\skeincr(\surface)\to\skeincr(\surface_c)$ is trivial. The
splitting homomorphism only affects a neighborhood of $c$, whereas the quotient
$\skein(\surface)\onto\skeincr(\surface)$ happens near $\partial\surface$.
\end{proof}

If the split surface $\surface_c=\surface_1\sqcup\surface_2$ is disconnected, then
$\skeincr(\surface_c)$ is naturally isomorphic to
$\skeincr(\surface_1)\otimes\skeincr(\surface_2)$. In this case, the splitting
homomorphism has the form
\begin{equation}
\cut_c:\skeincr(\surface)\to\skeincr(\surface_1)\otimes\skeincr(\surface_2).
\end{equation}

\subsection{The skein of the bigon and action of $\Oq$}
\label{sec-Oq}

The splitting Theorem~\ref{thm-le-split} of the previous section reduces the study of
the stated skein module of a surface to that of an elementary surface. In this and
in the next sections we study three examples of elementary punctured bordered surfaces,
namely the standard bigon, annulus and the lantern.

We begin with the bigon $\bigon$ shown in Figure~\ref{fig-arc-bigon}. It contains
the tangle $a_{\mu\nu}$ with states $\mu,\nu$ at $a(0),a(1)$ respectively.

\begin{figure}[htpb!]
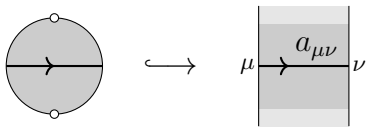

\[
\begin{linkdiag}
\draw[fill=gray!40] (0.5,0.5)circle(0.8);
\draw[fill=white] (0.5,-0.3)circle(0.07) (0.5,1.3)circle(0.07);
\draw[thick] (-0.3,0.5) -- (1.3,0.5);
\path[tips,thick,->] (-0.3,0.5) -- (0.5,0.5);
\end{linkdiag}
\quad\lhook\joinrel\longrightarrow\quad
\begin{linkdiag}
\fill[gray!20] (0,-0.5) rectangle (1.5,1.5);
\fill[gray!40] (0,-0.2) rectangle (1.5,1.2);
\draw (0,-0.5) -- (0,1.5) (1.5,-0.5) -- (1.5,1.5);
\draw[thick] (0,0.5)\stnodel{\mu} -- (1.5,0.5)\stnode{\nu};
\path[tips,thick,->] (0,0.5) -- (0.5,0.5) node[above right,inner sep=2pt]{$a_{\mu\nu}$};
\end{linkdiag}
\]
\caption{Bigon neighborhood of an arc.}\label{fig-arc-bigon}
\end{figure}

The skein module $\skein(\bigon)$ of the bigon can be identified with the quantum
matrix group $\Oq$ generated by $a$, $b$, $c$, $d$ with relations 
\begin{equation}
\label{eq-Oq-def}
\begin{aligned}
ca &=q^2 ac, \qquad db =q^2 bd, \qquad ba =q^2 ab, \qquad dc =q^2 cd \\
bc &= cb, \qquad ad-q^{-2}bc =1, \qquad da-q^2cb =1 \,.
\end{aligned}
\end{equation}
See \cite{Majid} and also \cite[Defn.~1]{CL}. Explicitly, 
in~\cite[Theorem~3.4]{CL}, it was shown that there is an isomorphism
\be
\label{bigon-iso}
\Oq \to \skein(\bigon)
\ee
given by mapping $a$, $b$, $c$, $d$ of $\Oq$ to
$a_{++}$, $a_{+-}$, $a_{-+}$ and $a_{--}$, and checking that the defining
relations~\eqref{eq-Oq-def} hold true.

The skein of a bigon acts on the skein of a punctured bordered surface $\surface$
as follows. Fix an oriented simple arc $a:[0,1]\to\surface$ whose endpoints are on
different boundary triangles of $\surface$. The inclusion of the bigon induces an
$R$-module embedding
\begin{equation}
\label{phiasurface}
\phi_a: \Oq\embed\skein(\surface) \,.
\end{equation}
By the assumption that $a$ ends on distinct boundary triangles, the factor in
\eqref{eq-new-prod} is trivial.
Therefore, the two product structures match.
Since $\skeincr(\surface)$ is a left $\skein(\surface)$-module, it induces a left
$\Oq$-module structure.

This module structure is compatible with splitting in the following sense. Suppose
$c$ is a simple closed curve in the interior of $\surface$ that intersects the arc
$a$ once transversely. Let $a',a''$ be the arcs on $\surface_c$ obtained by splitting
$a$ at $a\cap c$. Then the arcs $a',a''$ induces an $(\Oq\otimes\Oq)$-action on
$\skeincr(\surface_c)$.

\begin{lemma}
\label{lemma-oq-compatible}
The following diagram is commutative
\begin{equation}
\begin{tikzcd}
\Oq\otimes\skeincr(\surface) \arrow[r,"\Delta\otimes\cut_c"] \arrow[d] &
(\Oq\otimes\Oq)\otimes\skeincr(\surface_c) \arrow[d] \\
\skeincr(\surface) \arrow[r,"\cut_c"] & \skeincr(\surface_c)
\end{tikzcd}
\end{equation}
Here, the vertical arrows are the module actions, and $\Delta$ is the coproduct of
$\Oq$.
\end{lemma}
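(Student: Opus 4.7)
The plan is to reduce the check to generators and then invoke the splitting theorem as an algebra homomorphism. Since the $\Oq$-action on $\skeincr(\surface)$ is $R$-linear and the diagram is linear in the first tensor factor, it suffices to verify commutativity on the generators $a_{\mu\nu} \in \Oq$ for $\mu,\nu \in \{\pm\}$ via the isomorphism \eqref{bigon-iso}.

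First I would compute the two sides on a generator $a_{\mu\nu} \otimes x$, where $x \in \skeincr(\surface)$. Going down-then-right one obtains $\cut_c(\phi_a(a_{\mu\nu}) \cdot x)$. By the hypothesis that $a$ ends on two distinct boundary triangles of $\surface$, the twisting factor in \eqref{eq-new-prod} is trivial, so $\phi_a(a_{\mu\nu}) \cdot x = \phi_a(a_{\mu\nu}) \cup x$. Isotope $x$ to be disjoint from a neighborhood of $a$; then apply Theorems~\ref{thm-le-split} and~\ref{thm-split}, which say $\cut_c$ is an algebra homomorphism with respect to $\cup$ that descends to $\skeincr$. This yields $\cut_c(\phi_a(a_{\mu\nu})) \cdot \cut_c(x)$ in $\skeincr(\surface_c)$.

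Next I would evaluate $\cut_c(\phi_a(a_{\mu\nu}))$ using the defining formula \eqref{eq-le-split}. Since $a$ meets $c$ transversely in exactly one point, splitting produces two arcs $a'$ and $a''$ in $\surface_c$ meeting the new boundary triangle created from $c$, and summing over the state $\sigma \in \{+,-\}$ assigned to the cut point gives
\[
\cut_c(\phi_a(a_{\mu\nu})) \;=\; \sum_{\sigma \in \{\pm\}} \phi_{a'}(a_{\mu\sigma}) \cdot \phi_{a''}(a_{\sigma\nu}).
\]
This is precisely $(\phi_{a'} \otimes \phi_{a''})(\Delta(a_{\mu\nu}))$, since the coproduct of $\Oq$ is $\Delta(a_{\mu\nu}) = \sum_\sigma a_{\mu\sigma} \otimes a_{\sigma\nu}$ (this coproduct coincides with the one inherited topologically from splitting the bigon; see \cite{CL}). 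Combined with the previous paragraph, going down-then-right equals going right-then-down on every generator, as required.

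The main technical obstacle is verifying that no stray power of $q$ from the modified product \eqref{eq-new-prod} appears on either side: on the left the $\Oq$-action is via $\cdot$, while the right-hand side lives in $\skeincr(\surface_c)$ whose product is also modified, and $a'$ and $a''$ now share a freshly created boundary triangle. The check is that the two endpoints of $a'$ and $a''$ on the new triangle contribute gradings with $b$-pairing equal to zero (the arcs hit two different edges of the new triangle in a cyclically adjacent pattern, which by skew-symmetry of $b$ cancels against the original normalization), so that $\phi_{a'}(a_{\mu\sigma}) \cdot \phi_{a''}(a_{\sigma\nu}) = \phi_{a'}(a_{\mu\sigma}) \cup \phi_{a''}(a_{\sigma\nu})$. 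Once this bookkeeping is done, the diagram commutes.
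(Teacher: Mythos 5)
Your overall strategy (reduce to the generators $a_{\mu\nu}$, use that $\cut_c$ is a $\cup$-algebra map, and identify the splitting of a single stated arc with $\Delta(a_{\mu\nu})=\sum_\sigma a_{\mu\sigma}\otimes a_{\sigma\nu}$) is essentially an expanded version of the paper's one-line proof, which simply invokes the identification of $\Delta$ with the bigon splitting homomorphism. However, the expansion is exactly where the only nontrivial content of the lemma lives --- the powers of $q$ coming from the modified product \eqref{eq-new-prod} --- and your bookkeeping there has a genuine gap. First, the step ``$\phi_a(a_{\mu\nu})\cdot x=\phi_a(a_{\mu\nu})\cup x$ because $a$ ends on distinct boundary triangles'' is not justified: that hypothesis only makes the factor in \eqref{eq-new-prod} trivial for products of elements \emph{in the image of} $\phi_a$ (so that $\phi_a$ is an algebra embedding for either product). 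For a general $x\in\skeincr(\surface)$ with endpoints on the triangles containing $\partial a$, the pairing $b(d_E(\phi_a(a_{\mu\nu})),d_E(x))$ is nonzero, so the $\cdot$-action and the $\cup$-product differ by a nontrivial power of $q$. These factors do not need to vanish; they need to be matched against the identical factors produced on the other side of the diagram by the $\cdot$-action of $\phi_{a'},\phi_{a''}$ on the terms of $\cut_c(x)$ over the \emph{old} triangles, and your argument never does this comparison.

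Second, your ``technical obstacle'' paragraph analyzes the wrong pairing and rests on an incorrect picture: splitting along $c$ creates \emph{two} new boundary triangles, and $a'$ ends on one while $a''$ ends on the other, so they share no boundary triangle and the pairing between them is vacuously zero (hence $\phi_{a'}(a_{\mu\sigma})\cdot\phi_{a''}(a_{\sigma\nu})=\phi_{a'}(a_{\mu\sigma})\cup\phi_{a''}(a_{\sigma\nu})$ for a trivial reason, not the cyclic-adjacency one you give). The factors that actually need attention are those between the new endpoint of $a'$ (state $\sigma$) and the states of a term $(x,s)$ of $\cut_c(x)$ on one new triangle, together with the corresponding factors on the other new triangle from $a''$; these cancel in pairs because the two copies of $c$ inherit opposite boundary orientations, so the cyclic labelings of the two new triangles are reversed and the bilinear form $b$ changes sign. (This is the same orientation-cancellation the paper uses when proving $\ttr_\triang(\rideal{B})=0$.) With that correction your argument goes through, but as written the proof does not establish the equality of $q$-powers, which is precisely what has to be checked; the rest of the lemma is, as the paper says, a formal consequence of \cite[Theorem~3.4]{CL}.
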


\begin{proof}
This is obvious using the identification of $\Delta$ with the unique splitting
homomorphism of the bigon, which is proved in \cite[Theorem~3.4]{CL}.
\end{proof}

\subsection{The skein of the annulus}

Fix 3 points $p_1,p_2,p_3\in S^1$. Then the annulus $S^1\times[0,1]$ becomes a
surface with triangular boundary $\annulus=(S^1\times[0,1])\setminus\marked$ where
$\marked=\{p_1,p_2,p_3\}\times\{0,1\}$. We call $\annulus$ the \term{standard annulus}.

If we split $\annulus$ along its core $S^1\times{1/2}$ and choose the new marked
points at $\{p_1,p_2,p_3\}\times{1/2}$, then the two components are canonically
identified with standard annulus. Then the splitting homomorphism is a comultiplication
\begin{equation}
\Delta:\skeincr(\annulus)\to\skeincr(\annulus)\otimes\skeincr(\annulus).
\end{equation}
It is coassociative by the commuting property of the splitting homomorphism.

Fix $p\in S^1\setminus\{p_1,p_2,p_3\}$, and let $a:[0,1]\to\annulus$, $a(t)=(p,t)$. In
the last section, we defined a left $\Oq$-action on $\skeincr(\annulus)$ along $a$. By
Lemma~\ref{lemma-oq-compatible}, the action is compatible with the comultiplication.
Thus, $\skeincr(\annulus)$ is a left $\Oq$-module-coalgebra.

\begin{figure}[htpb!]
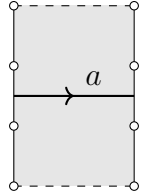

\centering
$\begin{anndiag}{2}{3}
\draw (0,0) -- (0,3);
\foreach \y in {0,1,2,3}
\draw[fill=white] (0,\y)circle(0.07);
\draw[thick] (0,1.5) -- (2,1.5);
\path[tips,thick,->] (0,1.5) -- (1,1.5) node[above right]{$a$};
\end{anndiag}$
\caption{Standard annulus.}\label{fig-std-ann}
\end{figure}

\begin{theorem}
\label{thm-stdA}
The action of $\Oq$ on the empty diagram defines an isomorphism of left
$\Oq$-module-coalgebras.
\begin{equation}
\phi_a:\Oq\to\skeincr(\annulus).
\end{equation}
\end{theorem}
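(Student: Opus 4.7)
The plan proceeds in three stages. In the first stage, I verify the module and coalgebra structure. Since $\phi_a(x) = x \cdot \emptyset$ and the $\Oq$-action on $\skeincr(\annulus)$ is associative, $\phi_a$ is automatically a left $\Oq$-module map (with $\Oq$ acting on itself by multiplication). For the coalgebra property, I apply Lemma~\ref{lemma-oq-compatible} to the pair $(x, \emptyset)$: since splitting the empty diagram along the core circle yields $\emptyset \otimes \emptyset$, the lemma gives
\[
\Delta(\phi_a(x)) = \Delta(x \cdot \emptyset) = \Delta_{\Oq}(x) \cdot (\emptyset \otimes \emptyset) = (\phi_a \otimes \phi_a)(\Delta_{\Oq}(x)).
\]

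For injectivity, I identify $\phi_a$ with the composition
\[
\Oq \xrightarrow{\eqref{bigon-iso}} \skein(\bigon) \hookrightarrow \skein(\annulus) \twoheadrightarrow \skeincr(\annulus),
\]
where the middle arrow embeds the bigon as a thin tubular neighborhood of the arc $a$. A PBW monomial in the generators $a,b,c,d$ of $\Oq$ maps to a stack of parallel non-winding top-to-bottom arcs whose states record the monomial; these stacks, after reordering heights to be increasingly stated, are distinct elements of the basis of $\skeincr(\annulus)$ furnished by Theorem~\ref{thm-basis-rd}, giving injectivity.

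The main obstacle is surjectivity. By Theorem~\ref{thm-basis-rd} it suffices to show that every simple, positively ordered, increasingly stated, non-bad-arc diagram in $\annulus$ lies in $\phi_a(\Oq)$. Such a diagram decomposes into arcs (possibly winding around the annulus) and essential simple closed curves parallel to the core, with trivial closed curves already absorbed by \eqref{eq-loop}. Arcs with both ends on the same boundary triangle are scalarized by \eqref{eq-crdef} or killed by \eqref{eq-bad}. Top-to-bottom arcs that wind around the annulus are unwound using the handle slide in Lemma~\ref{lemma-crhandle}, which trades a loop encircling the three corner marked points for a factor $-q^3$. For an essential closed curve, the plan is to introduce an auxiliary transverse arc, apply the skein relation \eqref{eq-skein} at the resulting crossing to rewrite the closed curve as a combination of winding arcs, and then straighten via the previous step. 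The hardest part of the proof is to verify that this final reduction assembles into a well-defined preimage in $\Oq$, matching the closed curve on the nose; I expect this to invoke the quantum determinant relation $ad - q^{-2} bc = 1$ in $\Oq$.
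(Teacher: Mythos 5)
Your reduction of the problem to bijectivity, and your verification of the module--coalgebra structure, match the paper (which dismisses these as ``the only nontrivial part is that $\phi_a$ is bijective''). However, your injectivity argument contains a genuine gap. You cite Theorem~\ref{thm-basis-rd} for a basis of $\skeincr(\annulus)$, but that theorem gives a basis of $\skeinrd(\annulus)$, the quotient of $\skein(\annulus)$ only by bad arcs. The corner-reduced module $\skeincr(\annulus)$ is a further quotient of $\skeinrd(\annulus)$ by the left ideal generated by the corner relations \eqref{eq-crdef}, which set certain arcs equal to scalars. These relations collapse many of the $\skeinrd$-basis elements, so the images of PBW monomials in $\skeincr(\annulus)$ are \emph{not} a priori linearly independent. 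Establishing that they \emph{are} --- i.e.\ producing any basis of $\skeincr(\annulus)$ at all --- is precisely the hard step that the paper's Appendix~\ref{sec-stdA-iso} devotes itself to, via the diamond lemma of~\cite{SW}: the paper sets up a rewriting system $S$ (Lemmas~\ref{lemma-RA-quot} and~\ref{lemma-confl}), proves it is locally confluent and terminating, and identifies the irreducible diagrams (non-winding stacks confined to a bigon neighborhood of $a$) as a free basis; this simultaneously proves both injectivity and surjectivity. Your argument takes the existence of such a basis for granted, so it is circular.

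Your surjectivity sketch is closer in spirit to the paper's reduction rules \eqref{eq-stdA-skein}--\eqref{eq-stdA-twdownp} (it uses the same ingredients: skein moves, corner twists from Corollary~\ref{cor-tw}, and the handle slide of Lemma~\ref{lemma-crhandle}), but it stays at the level of ``every diagram can be pushed into $\phi_a(\Oq)$'' without controlling well-definedness. You acknowledge this yourself in the last sentence. Without a termination-plus-confluence argument (or some other way to control the rewriting), there is no guarantee that different reduction sequences land on the same element of $\Oq$, so the ``well-defined preimage'' you hope for is exactly what remains unproven. To repair the proposal you would need to supply the basis argument for $\skeincr(\annulus)$; at that point you would essentially be reproducing the paper's diamond-lemma proof.
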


The proof is given in Appendix~\ref{sec-stdA-iso}.

\begin{corollary}
\label{cor-comod}
The comultiplication $\Delta$ on $\skeincr(\annulus)$ has a counit such that
\begin{equation}\label{eq-counit}
\epsilon(a_{\mu\nu})=\delta_{\mu\nu},
\end{equation}
where $a_{\mu\nu}$ is defined in the last section.

Suppose $\surface$ is a surface with triangular boundary, and $c$ is a simple closed
curve parallel to a boundary triangle. Then the splitting homomorphism along $c$ is
a $\skeincr(\annulus)$-comodule structure for $\skeincr(\surface)$.
\end{corollary}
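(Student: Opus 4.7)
The plan is to deduce both parts from the structural results already established, namely Theorem \ref{thm-stdA} (the coalgebra identification $\Oq \cong \skeincr(\annulus)$) and Theorems \ref{thm-le-split}, \ref{thm-split} (naturality and commutativity of splittings).

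First, for the existence of a counit on $\skeincr(\annulus)$: Theorem \ref{thm-stdA} asserts that $\phi_a$ is an isomorphism of coalgebras (the module-coalgebra statement is stronger than what is needed here). The quantum matrix bialgebra $\Oq$ already carries its standard counit $\epsilon_{\Oq}$, determined by $a,d\mapsto 1$ and $b,c\mapsto 0$. Transporting gives a counit $\epsilon := \epsilon_{\Oq} \circ \phi_a^{-1}$ for $\Delta$ on $\skeincr(\annulus)$, and using the matrix identifications $a_{++}\leftrightarrow a$, $a_{+-}\leftrightarrow b$, $a_{-+}\leftrightarrow c$, $a_{--}\leftrightarrow d$ from \eqref{bigon-iso} we read off $\epsilon(a_{\mu\nu}) = \delta_{\mu\nu}$, which is \eqref{eq-counit}.

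For the comodule structure, a simple closed curve $c$ parallel to a boundary triangle of $\surface$, equipped with three distinguished points close to the marked points of the triangle, cuts off a collar whose closure (with its induced marked points and orientation) is the standard annulus $\annulus$. The complementary piece $\surface_1$ deformation retracts back onto $\surface$, yielding a canonical identification $\skeincr(\surface_1)\cong\skeincr(\surface)$. Theorem \ref{thm-split} then produces
\[
\rho := \cut_c : \skeincr(\surface) \longrightarrow \skeincr(\surface)\otimes\skeincr(\annulus),
\]
and what remains is to verify the two right-comodule axioms.

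Coassociativity $(\rho\otimes\id)\circ\rho = (\id\otimes\Delta)\circ\rho$ follows by introducing a second curve $c'$ parallel to and nested between $c$ and the boundary triangle. Splitting first along $c'$ and then along $c$ realizes $(\rho\otimes\id)\circ\rho$, while splitting first along $c$ and then along $c'$ (which, after the first split, lies inside the annulus component and cuts it into its two halves) realizes $(\id\otimes\Delta)\circ\rho$; by Theorem \ref{thm-le-split} splittings along disjoint curves commute, so the two agree. For the counit axiom $(\id\otimes\epsilon)\circ\rho = \id$, every class in $\skeincr(\surface)$ admits a representative tangle contained in $\surface_1$, i.e.\ pushed off the annular collar and hence disjoint from $c$; for such an $\alpha$, one has $\cut_c(\alpha) = \alpha\otimes\emptyset$ where $\emptyset$ is the empty diagram in $\annulus$, and $\epsilon(\emptyset) = 1$ since the empty diagram corresponds to $1\in\Oq$. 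The only delicate point, and thus the main thing to keep honest, is that the identification of the annular collar with the \emph{standard} $\annulus$ (including its marked points and orientation) really carries $\Delta$ on $\skeincr(\annulus)$ to the nested-splitting operation used in coassociativity; this is a routine bookkeeping check against the definition of $\Delta$ in Theorem \ref{thm-stdA}.
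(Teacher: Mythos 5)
Your argument for the existence of the counit and for equation \eqref{eq-counit} matches the paper's: transport the counit of $\Oq$ through $\phi_a$. That part is fine.

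For the second part, the paper simply cites \cite[Proposition~4.1(a)]{CL} together with Theorem~\ref{thm-split}, so your attempt to spell out the comodule axioms is a genuinely more explicit route. Your coassociativity argument (two nested boundary-parallel curves, commuting splits from Theorem~\ref{thm-le-split}) is the right idea and, modulo the tensor-factor bookkeeping you flag, is sound.

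However, there is a gap in your verification of the counit axiom. You assert that \emph{every} class in $\skeincr(\surface)$ admits a representative tangle contained in $\surface_1$, i.e.\ disjoint from $c$. This is false whenever the tangle has endpoints on the boundary triangle to which $c$ is parallel: such endpoints lie inside the collar, and since isotopies of tangles keep endpoints on $\partial\surface$, the tangle cannot be pushed off $c$. (Concretely, take $\surface=\annulus$ itself and $c$ parallel to one of its two boundary triangles; the generators $a_{\mu\nu}$ of $\skeincr(\annulus)\cong\Oq$ run from one triangle to the other and intersect $c$ essentially.) So $(\id\otimes\epsilon)\circ\rho = \id$ is only checked on a proper subset of $\skeincr(\surface)$. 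To close the gap one must handle a tangle $\alpha$ that crosses $c$ in, say, $k$ points: after the split, the annular piece is a disjoint union of $k$ parallel arcs $a_{s_i\nu_i}$, and one must show that applying $\epsilon$ (transported through $\phi_a$) to this union yields $\prod_i\delta_{s_i\nu_i}$, so the state sum collapses back to $\alpha$ under the identification $\surface_1\cong\surface$. This requires knowing that $\phi_a$ carries the $\cup$-product of parallel copies of $a$ to the product in $\Oq$ and that $\epsilon_{\Oq}$ is multiplicative—plausible, but it is precisely the content that the blanket claim ``push off $c$'' was meant to avoid, and it does need to be argued.
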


\begin{proof}
Since $\Oq$ has a counit, it can be transferred to $\skeincr(\annulus)$, which gives
\eqref{eq-counit}.

The second part is the same as \cite[Proposition~4.1(a)]{CL}.
\end{proof}

Note the isomorphism $\phi_a$ depends on the choice of the point $p$ that determines the
arc $a$. However, the corollary does not, since the counit $\epsilon$ is uniquely
determined by the comultiplication $\Delta$. The isomorphism is used to show that
\eqref{eq-counit} is sufficient to define the counit.

The corollary is very useful for calculation. We give one example which is used later.

\newcommand{\reldist}[3][]{
\begin{linkdiag}
\fill[gray!20] (0,-0.1)rectangle(1,1.1);\draw[#1] (1,-0.1)--(1,1.1);
\draw[thick] (0,0.8)--(1,0.8) (0,0.2)--(1,0.2);
\draw[fill=white] (1,0.5)circle(0.07);
\draw (1,0.8)\stnode{#2} (1,0.2)\stnode{#3};
\end{linkdiag}
}

\begin{lemma}
In $\skeincr(\surface)$, we have
\begin{equation}
\label{eq-cup-puncture}
\begin{linkdiag}
\fill[gray!20] (0,-0.1)rectangle(1,1.1);\draw (1,-0.1)--(1,1.1);
\draw[thick] (0,0.8)..controls +(0.8,0) and +(0.8,0)..(0,0.2);
\draw[fill=white] (1,0.5)circle(0.07);
\end{linkdiag}
=\iunit q\reldist{+}{+}+\reldist{-}{+}-\iunit q^{-1}\reldist{-}{-}.
\end{equation}
Here, the left-hand side has no endpoints on the boundary triangle.
\end{lemma}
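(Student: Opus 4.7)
The plan is to prove \eqref{eq-cup-puncture} by reducing to an identity in $\skeincr(\annulus)$ via the splitting homomorphism, and then computing the annulus piece using the corner relations \eqref{eq-crdef} together with the bad arc relation \eqref{eq-bad}.

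The first step is to apply $\cut_c$ from Theorem~\ref{thm-split}, where $c$ is a simple closed curve parallel to the boundary triangle containing the puncture, positioned so that $c$ crosses the cup on the left-hand side transversally in exactly two points. The split surface decomposes as $\surface_c = \surface_1 \sqcup \annulus$, with $\annulus$ the component containing the puncture. Splitting expands both sides of \eqref{eq-cup-puncture} as sums indexed by state choices $(s_1,s_2) \in \{+,-\}^2$ at the two crossings on $c$: the cup splits into (two short arcs in $\surface_1$ with states $s_1,s_2$) tensored with (a cup in $\annulus$ with the same states on its $c$-endpoints and wrapping around the puncture), while each term of the right-hand side splits into the same pair of short arcs in $\surface_1$ tensored with two straight arcs across the annulus. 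Because the $\surface_1$ factors are identical on both sides and form linearly independent basis elements by Theorems~\ref{thm-basis} and~\ref{thm-basis-rd}, it suffices to verify the annulus identity
\[
U_{s_1 s_2} = \iunit q\, V_{s_1 s_2}^{++} + V_{s_1 s_2}^{-+} - \iunit q^{-1}\, V_{s_1 s_2}^{--}
\]
for every $(s_1,s_2)$, where $U_{s_1 s_2} \in \skeincr(\annulus)$ denotes the cup piece and $V_{s_1 s_2}^{\mu\nu}$ denotes the two-arc piece with boundary states $(\mu,\nu)$.

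For the annulus computation, I isotope $U_{s_1 s_2}$ so that it consists of an upper horizontal arc from $c$ running toward the boundary edge above the puncture, a corner arc wrapping tightly around the puncture, and a lower horizontal arc returning to $c$. Summing over intermediate states $(\mu,\nu)$ at the two endpoints of the corner arc expresses $U_{s_1 s_2}$ as a sum of four terms, each a product of an upper arc (with states $s_1,\mu$), a corner arc $\relcorner{\mu}{\nu}$, and a lower arc (with states $\nu,s_2$). The corner relations \eqref{eq-crdef} evaluate three of these corner factors to the scalars $\iunit q$, $1$, $-\iunit q^{-1}$ for $(\mu,\nu) = (+,+),(+,-),(-,-)$ respectively, and the bad arc relation \eqref{eq-bad} kills the $(\mu,\nu)=(-,+)$ term. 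Upon identifying $\skeincr(\annulus)\cong\Oq$ via Theorem~\ref{thm-stdA}, the surviving upper-arc times lower-arc products become precisely the $V_{s_1 s_2}^{\mu\nu}$ in the statement, after the state indices are realigned by the direction of the corner traversal.

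The main obstacle is the careful bookkeeping of $q$-power factors. Three independent sources contribute: the twisted product \eqref{eq-new-prod} at the boundary triangle of the puncture; the height-exchange relations \eqref{eq-heightex} required to align endpoints on each boundary edge in the positive order; and the left-ideal quotient defining $\skeincr$ via Lemma~\ref{lemma-prod-corner}, encoded cleanly in Corollary~\ref{cor-tw}. To ensure that these three sources of $q$-weights cancel in exactly the right way to produce the coefficients $\iunit q$, $1$, and $-\iunit q^{-1}$ with no residual grading factors, one applies \eqref{eq-twup}--\eqref{eq-twdown} uniformly to bring every term to a common "bottom" configuration on the triangle's edges before extracting the scalar. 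Once this bookkeeping is carried out, the three-term sum on the right-hand side of \eqref{eq-cup-puncture} is recovered, completing the proof.
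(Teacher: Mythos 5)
Your overall route --- split along a curve $c$ parallel to the boundary triangle and evaluate the piece near the puncture with the corner relations \eqref{eq-crdef} and the bad arc relation \eqref{eq-bad} --- is the same general idea as the paper's, but the two steps you substitute for the paper's key ingredient both contain genuine gaps. First, you deduce the identity in $\skeincr(\surface)$ from an equality of images under $\cut_c$, justified by ``linear independence of the $\surface_1$ factors''. That argument runs the wrong way: independence (even if available) would let you extract the annulus identities \emph{from} $\cut_c(\mathrm{LHS})=\cut_c(\mathrm{RHS})$, but to pass from equality of split images back to equality in $\skeincr(\surface)$ you need $\cut_c$ to be injective on the corner-reduced module, which Theorem~\ref{thm-split} does not provide (it only gives well-definedness), and the basis theorems you invoke (Theorems~\ref{thm-basis} and~\ref{thm-basis-rd}) concern $\skein$ and $\skeinrd$, not $\skeincr$; moreover, in $\skeincr$ relations such as Corollary~\ref{cor-tw} identify diagrams with different boundary states up to units, so independence of your $\surface_1$ factors would itself require proof. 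The paper avoids all of this by using the comodule counit of Corollary~\ref{cor-comod}: since $c$ is boundary-parallel, $(\id\otimes\epsilon)\circ\cut_c=\id$ on $\skeincr(\surface)$, so one writes the left-hand side as $\sum_{\mu,\nu}(\text{outer diagram})_{\mu\nu}\,\epsilon\bigl((\text{annulus diagram})_{\mu\nu}\bigr)$ and never needs injectivity.

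Second, the annulus-level verification you describe is not a legitimate computation inside $\skeincr(\annulus)$: a single arc $U_{s_1s_2}$ cannot be ``expressed as a sum over intermediate states $(\mu,\nu)$ of a product of an upper arc, a corner arc, and a lower arc'' --- that state-sum decomposition is what a \emph{further} splitting of the annulus would produce, not an identity within a fixed surface's skein module, so this step begs the same question as the first. It is also unnecessary: placing one of the three punctures of $c$ between the two strands of the cup (as in the paper's picture), the annulus factor is literally a corner arc around that puncture and hence equals the scalar $\iunit q$, $1$, $-\iunit q^{-1}$ or $0$ directly from \eqref{eq-crdef} and \eqref{eq-bad}; applying the counit then returns exactly the coefficients in \eqref{eq-cup-puncture}. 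The per-$(s_1,s_2)$ identities you propose to check instead (a scalar equated with a combination of products of crossing arcs in $\skeincr(\annulus)\cong\Oq$) are never actually verified in your sketch, and absent the independence/injectivity point above they are a strictly stronger statement than the lemma itself.
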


\begin{proof}
Apply the comodule structure and then the counit.
\begin{equation}
\begin{linkdiag}
\fill[gray!20] (0,-0.1)rectangle(1,1.1);
\draw (1,-0.1)--(1,1.1); \draw[dashed] (0.3,-0.1) -- (0.3,1.1);
\draw[thick] (0,0.8)..controls +(0.8,0) and +(0.8,0)..(0,0.2);
\draw[fill=white] (0.3,0.5)circle(0.07) (1,0.5)circle(0.07);
\end{linkdiag}
=\sum_{\mu,\nu}\reldist{\mu}{\nu}\epsilon\left(
\begin{linkdiag}
\fill[gray!20] (0,-0.1)rectangle(1,1.1);
\draw (0,-0.1)--(0,1.1) (1,-0.1)--(1,1.1);
\draw[thick] (0,0.8)\stnodel{\mu}..controls +(0.8,0) and +(0.8,0)..(0,0.2)\stnodel{\nu};
\draw[fill=white] (0,0.5)circle(0.07) (1,0.5)circle(0.07);
\end{linkdiag}
\right).
\end{equation}
The tangle in the counit evaluates to scalars by the defining relations of $\skeincr$,
including the bad arc relation of $\skeinrd$. The lemma is obtained after substitution.
\end{proof}

\subsection{Interlude: basics on the quantum torus}
\label{sub.qtorus}

In this section we include a short discussion on the quantum torus, which is an
example of a skew Laurent polynomial ring. For more details, see e.g.~\cite{GW}.

Given a skew-symmetric $r\times r$ matrix $B$ with integer entries, the quantum torus
$\qtorus(B)$ is defined by
\begin{equation}
\qtorus(B)=R\langle x_1^{\pm1},\dotsc,x_r^{\pm1} \rangle/\ideal{x_ix_j-q^{B_{ij}}x_jx_i}.
\end{equation}
If we use notations other than $x_j$ for the generators and specify the $q$-commuting
relations elsewhere, we simply write the generators as in the introduction.

The quantum torus is an associative, and in general non-commutative algebra with
unit. Additively, there is an $R$-linear isomorphism from the Laurent polynomial ring
\begin{equation}
R[t_1^{\pm1},\dotsc,t_r^{\pm1}]\xrightarrow{\cong}\qtorus(B).
\end{equation}
The image of the monomial $t_1^{k_1}\dotsm t_r^{k_r}$ is the \term{Weyl-ordered monomial}
in $\qtorus(B)$.
\begin{equation}
x^{\vexp{k}}=q^{-\frac{1}{2}\sum_{i<j}B_{ij}}x_1^{k_1}\dotsm x_r^{k_r},
\qquad \vexp{k}=(k_1,\dotsc,k_r)\in\ints^r.
\end{equation}
These monomial $q$-commute according to the bilinear form $\langle\cdot,\cdot\rangle_B$
associated to $B$ on $\ints^r$.
\begin{equation}\label{eq-qtorus-prod}
x^{\vexp{k}}x^{\vexp{l}}=q^{\frac{1}{2}\langle\vexp{k},\vexp{l}\rangle_B}x^{\vexp{k}+\vexp{l}}
=q^{\langle\vexp{k},\vexp{l}\rangle_B}x^{\vexp{k}}x^{\vexp{l}}.
\end{equation}
This normalization can be formally understood as the Baker-Campbell-Hausdorff formula
for $e^{\sum_i k_i\ln(x_i)}$.

Note that Weyl-ordering can be defined for all products with $q$-commuting factors.
Weyl-ordering is also related to the product structures on the skein algebra
$\skein(\surface)$ of a surface $\surface$ with triangular boundary. Let $\alpha,\beta$
be two disjoint diagrams on $\surface$ such that no boundary edge contains endpoint of
both $\alpha$ and $\beta$. Then by definition, $\alpha\cup\beta$ is commuting, but
$\alpha\cdot\beta$ is $q$-commuting. Comparing Definition~\eqref{eq-new-prod} with
\eqref{eq-qtorus-prod}, we see that the $\cup$-product is the Weyl-ordering of
$\cdot$-product in this case. This observation is used in the next section.

\subsection{The skein of the lantern}
\label{sec-lantern}

Recall the standard lantern $\lantern$ defined in Section~\ref{sec-dualS}. On each
boundary component of $\lantern$, delete a point between each pair of endpoints of
standard arcs. This gives a surface with triangular boundary. See
Figure~\ref{fig-puncL}.
We use $\lantern$ to denote the compact surface except in skein modules where triangular
boundaries are required.

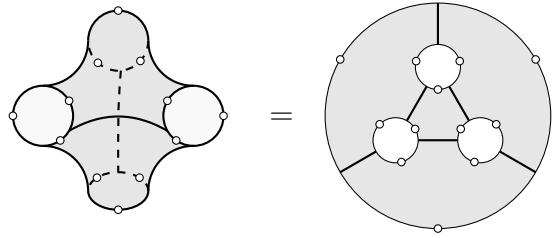
\begin{figure}[htpb!]
\centering
\begin{tikzpicture}[baseline=(ref.base)]
\fill[gray!20] (0.4,1) arc[radius=0.4,start angle=0,end angle=180]
arc[radius=0.6,start angle=0,end angle=-90]
arc[radius=0.4,start angle=90,end angle=-90]
arc[radius=0.6,start angle=90,end angle=0]
arc[x radius=0.4,y radius=0.25,start angle=-180,end angle=0]
arc[radius=0.6,start angle=180,end angle=90]
arc[radius=0.4,start angle=-90,delta angle=-180]
arc[radius=0.6,start angle=-90,end angle=-180];
\begin{scope}[thick]
\draw[fill=gray!5] (-1,0)circle(0.4) (1,0)circle(0.4);
\draw (0.4,1)arc[radius=0.4,start angle=0,end angle=180] (0.4,-1)
arc[x radius=0.4,y radius=0.25,start angle=0,end angle=-180];
\draw[dashed] (0.4,1)arc[radius=0.4,start angle=0,end angle=-180] (0.4,-1)
arc[x radius=0.4,y radius=0.25,start angle=0,end angle=180];
\draw (-1,0.4)arc[radius=0.6,start angle=-90,end angle=0]
(1,0.4)arc[radius=0.6,start angle=-90,end angle=-180]
(-1,-0.4)arc[radius=0.6,start angle=90,end angle=0]
(1,-0.4)arc[radius=0.6,start angle=90,end angle=180];
\path (-1,0)++(-30:0.4)coordinate(E) (1,0)++(-150:0.4)coordinate(F);
\draw (E) to[out=30,in=150] (F);
\draw[dashed] (0,1) ++(-85:0.4) to[out=-95,in=90] (0,-0.75);
\end{scope}
\path (0.4,-1)
arc[x radius=0.4,y radius=0.25,start angle=0,end angle=45] coordinate (P) (0.4,-1)
arc[x radius=0.4,y radius=0.25,start angle=0,end angle=135] coordinate (Q);
\draw[fill=white] (-1,0) foreach \t in {30,-55,180} {+(\t:0.4)circle(0.05)}
(1,0) foreach \t in {150,-125,0} {+(\t:0.4)circle(0.05)}
(0,1) foreach \t in {-42.5,-132.5,90} {+(\t:0.4)circle(0.05)}
(0,-1.25)circle(0.05) (P)circle(0.05) (Q)circle(0.05);
\node (ref) at (0,0){\phantom{$-$}};
\end{tikzpicture}
\quad$=$\quad
\begin{tikzpicture}[baseline=(ref.base)]
\tikzmath{\r1=0.65;\r2=0.3;}
\draw[fill=gray!20] (0,0)circle(1.5);
\draw[thick] (90:\r1) -- (-30:\r1) -- (-150:\r1) -- cycle;
\foreach \t in {90,-30,-150} {
\draw[fill=white] (-\t:1.5)circle(0.05) (\t:\r1)circle(\r2);
\foreach \s in {75,-75,180}
\draw[fill=white] (\t:\r1) +({\t+\s}:\r2)circle(0.05);
\draw[thick] (\t:{\r1+\r2}) -- (\t:1.5);
}
\node (ref) at (0,0){\phantom{$-$}};
\end{tikzpicture}
\caption{The standard lantern with triangular boundary.}\label{fig-puncL}
\end{figure}

Let $\phi^i:\Oq\to\skein(\lantern)$, $i=1,\dotsc,6$, denote the embedding along the
standard arcs $a^i$. They fit together to define an $R$-linear map
\begin{equation}
\label{phidef}
\phi:\Oq^{\otimes6}\to\skein(\lantern), \qquad
\phi(x^1\otimes\dotsb\otimes x^6)=\phi^1(x^1)\cup\dotsm\cup\phi^6(x^6) \,.
\end{equation}

\begin{lemma}
$\phi$ is an algebra embedding if we use the $\cup$-product $\skein(\lantern)$.
Hence, its $\skein^0$ is a subalgebra (for both products).
\end{lemma}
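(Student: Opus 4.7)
The plan is to realize $\phi$ as the composition of a disjoint-union identification with an inclusion-induced map, and then use the basis theorem for stated skein modules to deduce injectivity.

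First, the six standard arcs $a^1,\dotsc,a^6$ on the lantern are pairwise disjoint, since each connects a distinct pair of boundary components of $\lantern$. I would therefore choose pairwise disjoint bigon neighborhoods $\bigon_i\subset\lantern$ of $a^i$, with $\bigon_i\embed\lantern$ the embedding defining $\phi^i$. The inclusion $\bigsqcup_{i=1}^6\bigon_i\embed\lantern$ induces an $R$-module map
\begin{equation*}
\skein\Bigl(\bigsqcup_{i=1}^6\bigon_i\Bigr)\longrightarrow\skein(\lantern).
\end{equation*}
By the standard identification of stated skein modules of a disjoint union with a tensor product, combined with the bigon isomorphism~\eqref{bigon-iso}, the source is $\Oq^{\otimes 6}$, and the composition is precisely $\phi$.

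Second, to verify that $\phi$ is an algebra map for the $\cup$-product, it suffices to check that $\phi^i(\Oq)$ and $\phi^j(\Oq)$ $\cup$-commute for $i\neq j$. But any element of $\phi^i(\Oq)$ can be represented by a diagram inside $\bigon_i$, and similarly for $\phi^j(\Oq)$; since the $\bigon_i$ are disjoint in $\lantern$, two such diagrams can be made disjoint in $\lantern\times(-1,1)$ after an arbitrarily small vertical perturbation, so their $\cup$-product equals the disjoint union and is commutative. This matches the commutation of distinct tensor factors in $\Oq^{\otimes 6}$, so $\phi$ is a homomorphism for $\cup$.

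For injectivity, I would invoke Theorem~\ref{thm-basis}. In each $\bigon_i$ the basis of simple, positively ordered, increasingly stated diagrams consists of parallel families $a^i_{\mu_1\nu_1}\cup\dotsb\cup a^i_{\mu_k\nu_k}$ of the standard arc with prescribed states. Applying $\phi$ to a pure tensor of such basis elements yields the disjoint union of six such parallel families, one in each $\bigon_i$. With an orientation $\ori$ of $\partial\lantern$ chosen so that its restriction near the endpoints of each $a^i$ agrees with the positive orientation used on $\bigon_i$, the resulting diagram is still simple, $\ori$-ordered, and increasingly stated, hence an element of the basis of $\skein(\lantern)$ provided by Theorem~\ref{thm-basis}. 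Distinct basis tensors in $\Oq^{\otimes 6}$ map to distinct basis elements of $\skein(\lantern)$, so $\phi$ is injective.

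The main obstacle is the bookkeeping in this last step: on each of the four boundary triangles of $\lantern$ three of the six $\bigon_i$'s deposit their endpoints, so one must produce a single orientation $\ori$ of $\partial\lantern$ (equivalently, a compatible height order) whose restriction to each $\bigon_i$ recovers the positive order used to apply Theorem~\ref{thm-basis} on the bigon, and simultaneously makes the combined diagram increasingly stated. Because each $\bigon_i$ meets a given boundary triangle in only one edge, such a compatible $\ori$ can be chosen, but verifying this requires a careful orientation check in the picture of Figure~\ref{fig-puncL}.
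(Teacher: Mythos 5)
Your route is the same as the paper's: injectivity from the basis theorem (Theorem~\ref{thm-basis}) and the homomorphism property from commutation, under $\cup$, of the images of the six bigon embeddings. The one step whose justification does not hold up as written is the commutation step. You argue that since the bigon neighborhoods are disjoint, the two diagrams can be made disjoint after a small vertical perturbation, ``so their $\cup$-product equals the disjoint union and is commutative.'' In a stated skein module this inference is false in general: the isotopy class of a stated tangle records the height order of its endpoints over each boundary edge, so two disjoint tangles whose endpoints lie on a common boundary edge have no well-defined ``disjoint union'' class, and their $\cup$-products in the two orders differ by height-exchange factors. A counterexample lives in the bigon itself: $a_{++}$ and $a_{--}$ admit disjoint representatives, yet $a_{++}\cup a_{--}\neq a_{--}\cup a_{++}$, since $ad-da=(q^{-2}-q^{2})bc\neq 0$ in $\Oq$. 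What actually makes your claim true --- and is the reason the paper gives --- is that the six standard arcs end on the twelve pairwise distinct boundary edges of the lantern, so the images of distinct $\phi^i$ share no boundary edge and no height bookkeeping relates them; you do record this fact (``each $\bigon_i$ meets a given boundary triangle in only one edge''), but you invoke it only later, for the choice of $\ori$, not where the commutation (and the identification of the inclusion-induced map with $\phi$) is claimed. With that fact moved to where it is needed, your injectivity argument goes through; indeed the orientation check you flag as the main obstacle is immediate, because Theorem~\ref{thm-basis} holds for any $\ori$, each boundary edge of $\lantern$ may be oriented independently, and each such edge meets only one $\bigon_i$.

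Separately, you did not address the second assertion of the lemma, that $\skein^0=\phi(\Oq^{\otimes 6})$ is a subalgebra for both products. The paper handles it in one line: the image is a subalgebra under $\cup$ because $\phi$ is a $\cup$-algebra map, and since $\alpha\cdot\beta$ differs from $\alpha\cup\beta$ by an invertible power of $q^{1/2}$ determined by the boundary gradings, the image is closed under $\cdot$ as well.
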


\begin{proof}
$\phi$ is injective using the basis from Theorem~\ref{thm-basis}. Each $\phi_i$ is an
algebra embedding, and their images commute with each other under $\cup$ since the
standard arcs all end on different boundary edges. Thus, $\phi$ is an algebra map if
we use the $\cup$-product on $\skein(\lantern)$. Then clearly, the image is a
subalgebra under $\cup$, but the two products differ by a scalar, so it is also a
subalgebra under $\cdot$.
\end{proof}

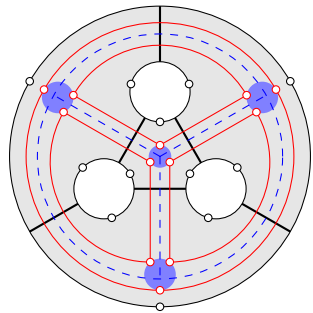
\begin{figure}[htpb!]
\centering
\begin{tikzpicture}[every node/.style={inner sep=2pt},baseline=(ref.base)]
\tikzmath{\r1=0.86;\r2=0.4;\r3=2-(2-\r1-\r2)/2;\r9=0.15;\r4=\r3-\r9*2;
\h=\r3-\r4+\r9/2;\d=sqrt(3)/2*\r9;\r5=(\h*\h+\d*\d)/(2*\h);}
\draw[fill=gray!20] (0,0)circle(2);
\draw[thick] (90:\r1) -- (-30:\r1) -- (-150:\r1) -- cycle;
\foreach \t in {90,-30,-150} {
\draw[fill=white] (-\t:2)circle(0.05) (\t:\r1)circle(\r2);
\foreach \s in {75,-75,180}
\draw[fill=white] (\t:\r1) +({\t+\s}:\r2)circle(0.05);
\draw[thick] (\t:{\r1+\r2}) -- (\t:2);
}
\fill[blue!50] (0,0)circle(\r9) foreach \t in {30,150,-90}
{(\t:{\r3+\r9-\r5})circle(\r5)};
\draw[blue,dashed] (0,0)circle(\r3) foreach \t in {30,150,-90} {(0,0) -- (\t:\r3)};
\draw[red] (0,0)circle({\r3+\r9}) foreach \t in {30,150,-90}
{({\t+60}:\r9) -- ++(\t:\r4) arc[radius=\r4,start angle=\t,delta angle=120]
  -- cycle};
\draw[red,fill=white] foreach \t in {30,150,-90} {(\t:{\r3+\r9})circle(0.05)
({\t+60}:\r9)circle(0.05) +(\t:\r4)circle(0.05) +({\t+120}:\r4)circle(0.05)};
\node (ref) at (0,0){\phantom{$-$}};
\end{tikzpicture}
\caption{Splitting the standard lantern.}\label{fig-lgraph}
\end{figure}

\begin{lemma}
\label{lemma-cr-span}
The restriction of the quotient $\skein(\lantern)\onto\skeincr(\lantern)$ to $\skein^0$
is surjective.
\end{lemma}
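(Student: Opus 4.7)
The plan is to verify surjectivity on a convenient spanning set. By Theorem~\ref{thm-basis-rd}, $\skeinrd(\lantern)$ is spanned by simple diagrams without bad-arc components, and $\skeincr(\lantern)$ is its further quotient by the left ideal generated by the corner relations~\eqref{eq-crdef}. I will argue that every such basis element reduces, modulo these corner relations, to a linear combination of diagrams formed by parallel bundles of the six standard arcs; any such diagram lies inside $\skein^0=\phi(\Oq^{\otimes 6})$.

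First, I dispose of the closed components. A simple closed curve on the 4-holed sphere $\lantern$ either bounds a disk (handled by~\eqref{eq-loop}) or separates the four boundary triangles into two nonempty groups. Isotoping such a separating curve close to a single triangle and applying the two handle-slide identities of Lemma~\ref{lemma-crhandle} at the bottom of the diagram, the curve either collapses to a scalar multiple of the empty diagram (when it encircles a single triangle) or simplifies its topological type by one. Iterating, all closed components are eliminated at the cost of scalar factors.

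For the arc components, I use the rotation moves of Corollary~\ref{cor-tw} to reposition endpoints between the three edges of each boundary triangle, picking up scalar factors and splitting into sums over states. Combined with the state and height exchange relations~\eqref{eq-stex}~\eqref{eq-heightex}, this reduces to the case where every arc endpoint already sits on the edge adjacent to the standard arc connecting the boundary triangles at its two ends. For the isotopy class of each arc in the 4-holed sphere, excess winding around another boundary triangle is stripped by applying the $\skeincr(\annulus)$-comodule structure from Corollary~\ref{cor-comod} along a curve parallel to that triangle: the winding portion lands in $\skeincr(\annulus)\cong\Oq$ by Theorem~\ref{thm-stdA}, which is in turn spanned by parallel copies of a single standard arc.

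The hard part will be the bookkeeping of heights, states, and orderings throughout these reductions: each corner rotation and each splitting/recombination reorders strands at boundary edges, and one must verify that the final disjoint bundle of parallel standard arcs is arranged exactly as a $\cup$-product of the $\phi^i$-images defining $\skein^0$. Since the two product structures $\cup$ and $\cdot$ on $\skein(\lantern)$ differ only by an overall scalar coming from the cyclic form in~\eqref{eq-new-prod}, this is a systematic tracking of $q$-factors rather than a structural obstacle. Once this verification is carried out, every simple diagram in $\skeincr(\lantern)$ is represented by an element of $\skein^0$, establishing surjectivity.
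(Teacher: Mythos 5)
Your approach differs genuinely from the paper's and is much more laborious. The paper's proof is a single global move: put an arbitrary diagram in general position with respect to the $K_4$ graph dual to the six standard arcs, draw four curves parallel to the boundary triangles but so close to the graph that a regular neighborhood of the graph lies between them, split along all four at once, and apply the counit of Corollary~\ref{cor-comod} to the four resulting annuli. Every surviving term of the state sum then consists of parallel bundles of standard-arc segments, hence lies in $\skein^0=\phi(\Oq^{\otimes6})$. No basis theorem, no case analysis by component type, no induction, and none of the $q$-bookkeeping you flag as ``hard''; closed components dissolve automatically because every essential closed curve transverse to the graph necessarily crosses the splitting curves.

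Beyond being less efficient, your proposal has gaps. A boundary-parallel closed loop does not collapse via Lemma~\ref{lemma-crhandle}: both identities there are local moves for an \emph{arc} with a stated free end passing or encircling a triangle, and neither applies to a bare loop; the correct tool is the counit of Corollary~\ref{cor-comod} on an annulus containing the loop. Your induction on the ``topological type'' of $2{+}2$-separating curves is also not grounded --- there are infinitely many isotopy classes of such curves on a $4$-holed sphere and you give no complexity measure that the handle slides are shown to decrease, so termination is unjustified. Finally, in the arc case you invoke the comodule structure but not the counit, so the split merely re-expresses the element rather than reducing it. All of these issues are sidestepped by the paper's one-shot splitting.
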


\begin{proof}
We start by drawing a graph dual to the standard arcs. See the dashed blue graph in
Figure~\ref{fig-lgraph}. Given any diagram on $\lantern$, we can assume that it is
disjoint from the vertices of the blue graph and transverse to the edges. Then we draw
4 curves isotopic to the boundary triangles and very close to the graph, and we put
punctures on the curves close to the vertices of the graph. See the red curves in the
same figure. Now we use the comodule property to split along the red curves and apply
the counit to the annuli. The result is the same element in $\skeincr(\lantern)$ but
given as a sum of new diagrams. If the red curves are close enough to the graph, then
each term is a diagram consisting of standard arcs. Such a diagram is in the image of
$\phi$.
\end{proof}

Let $\qglue(\lantern)$ denote the quotient of $\skeincr(\lantern)$ by the left ideal
generated by standard arcs with opposite states assigned to the endpoints. In other
words, the left ideal is generated by
\begin{equation}
\label{eq-qglue-def}
a^i_{+-}=a^i_{-+}=0,\qquad i=1,\dotsc,6
\end{equation}
using the notation of Section~\ref{sec-Oq}. The motivation for this relation is the
vanishing of the off-diagonal entries of the $\gamma$ matrix given in Equation
~\eqref{Mgamma}. 

Since $\skeincr(\lantern)$ is also a quotient of $\skein^0\subset\skein(\lantern)$, we
can obtain $\qglue(\lantern)$ using a different order of quotients. Let $\qtorus^0$ be
the quotient of $\skein^0$ by \eqref{eq-qglue-def}. We should consider the left ideal
in the quotient, but using the definition \eqref{eq-Oq-def} of $\Oq$, it is easy to
see that the left ideal is also two-sided. Also from the definition, since we set the
off-diagonal elements of $\Oq$ to $0$, we have
\begin{equation}
\label{eq-lantern-torus}
\qtorus^0=R[(a^1_{--})^{\pm1},\dotsc,(a^6_{--})^{\pm1}]
\end{equation}
when we use $\cup$ product, which implies that $\qtorus^0$ is a quantum torus
under $\cdot$ by the discussion in Section~\ref{sub.qtorus}. Here, inverses are given by
$(a^i_{--})^{-1}=a^i_{++}$. This shows $\qglue(\lantern)$ is a quotient of a quantum
torus.

In the next section, we show that the corner reductions for $\qtorus^0$ are equivalent
to the Lagrangian equation, which justify the notation $\qglue(\lantern)$. However,
this is not obvious a priori, so we choose to define $\qglue(\lantern)$ as a quotient of
$\skeincr(\lantern)$, which is better for the splitting homomorphism.

\begin{lemma}\label{lemma-qglue-2side}
The quotient \eqref{eq-qglue-def} is 2-sided in the sense that
\begin{equation}
a^i_{+-}\cdot\qglue(\lantern)=a^i_{-+}\cdot\qglue(\lantern)=0.
\end{equation}
\end{lemma}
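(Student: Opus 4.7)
The plan is to reduce to the subalgebra $\skein^0 \subseteq \skein(\lantern)$ where two-sidedness of the relevant ideal is clear, and then transport this through the surjection from Lemma~\ref{lemma-cr-span}.

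First, I would make explicit the claim (already noted in the paragraph preceding \eqref{eq-lantern-torus}) that the left ideal $L^0 \subseteq \skein^0$ generated by $\{a^i_{+-}, a^i_{-+}\}_{i=1}^6$ is in fact two-sided under the $\cdot$-product. Within each factor $\phi_i(\Oq)$, the defining relations \eqref{eq-Oq-def} show that the generators $a,b,c,d$ pairwise $q$-commute (with $bc = cb$), so the left and right ideals in $\Oq$ generated by $\{b,c\}$ coincide. Between distinct factors $\phi_i(\Oq)$ and $\phi_j(\Oq)$ ($i \ne j$), the images commute under $\cup$, hence $q$-commute under $\cdot$ by \eqref{eq-new-prod}. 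Consequently any product of the form $y \cdot a^i_{+-}$ with $y \in \skein^0$ equals a scalar times $a^i_{+-} \cdot y'$ (possibly plus an $a^i_{-+}$ term) by moving $a^i_{+-}$ across each factor, so $L^0$ is two-sided.

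Next, given $\bar{x} \in \qglue(\lantern)$, I would apply Lemma~\ref{lemma-cr-span} to choose a representative $x \in \skein^0$. Since the inclusion $\skein^0 \hookrightarrow \skein(\lantern)$ and the quotient $\skein(\lantern) \twoheadrightarrow \skeincr(\lantern)$ are both algebra homomorphisms under $\cdot$, the product $a^i_{+-} \cdot x$ computed in $\skein^0$ maps to $a^i_{+-} \cdot x$ in $\skeincr(\lantern)$. By two-sidedness of $L^0$, we may write
\begin{equation*}
a^i_{+-} \cdot x = \sum_k r_k \cdot a^{j_k}_{\epsilon_k \eta_k} \in \skein^0
\end{equation*}
with $r_k \in \skein^0$ and each $a^{j_k}_{\epsilon_k \eta_k} \in \{a^j_{+-}, a^j_{-+}\}_j$. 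The image of this expression in $\skeincr(\lantern)$ lies in the left ideal $\lideal{a^j_{+-}, a^j_{-+}}_j$, so upon passing to $\qglue(\lantern)$ we obtain $a^i_{+-} \cdot \bar{x} = 0$. The same argument works verbatim for $a^i_{-+}$.

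The main obstacle is the conceptual point that two-sidedness of a left ideal does not in general descend to its left-module quotient; in our setting it is precisely the surjectivity from Lemma~\ref{lemma-cr-span} that allows one to lift every element of $\qglue(\lantern)$ to $\skein^0$, where the two-sided structure of $L^0$ may be applied directly. The verification of two-sidedness itself is routine once one unpacks the $\Oq$ relations and the twisting factor in \eqref{eq-new-prod}, so the essential content of the lemma is this lifting argument rather than a new computation.
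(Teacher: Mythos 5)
Your proof is correct and is essentially the paper's own argument: the paper likewise observes that the left ideal generated by the $a^i_{+-},a^i_{-+}$ in $\skein^0$ is two-sided (via the $\Oq$ relations), so that by the surjectivity of Lemma~\ref{lemma-cr-span} the module $\qglue(\lantern)$ is a quotient of $\qtorus^0$, where the statement is immediate. Two cosmetic quibbles only: $a$ and $d$ do not $q$-commute with each other (what you need, and what is true, is that $b$ and $c$ $q$-commute with all four generators), and the quotient $\skein(\lantern)\onto\skeincr(\lantern)$ is a map of left modules rather than an algebra homomorphism, which is all your argument actually uses.
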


\begin{proof}
This is true for $\qtorus^0$, so it holds for the quotient $\qglue(\lantern)$.
\end{proof}

\subsection{Presentation of $\qglue(\lantern)$}

We described $\qglue(\lantern)$ as some quotient of $\qtorus^0$. In this
section we give a presentation for this quotient. To make connection to the quantum
gluing module, we rename the standard arcs.

The standard arcs cut $\lantern$ into 4 components, dual to the ideal vertices. Choose
one of these components and one standard arc on it to label as $b$. Label the other
two arcs $b',b''$ as shown in Figure~\ref{fig-stdL-arcs}, and label the arc opposite to
$b^\square$ by $c^\square$.

Let 
\begin{equation}\label{eq-qz-def}
\qz^\square=b^\square_{--},\,\qy=c^\square_{--}\in\skein(\lantern).
\end{equation}
By the definition of the product \eqref{eq-new-prod},
\begin{equation}
\label{eq-zzp}
\qz''\qz=q\qz\qz'',\qquad \qy\qz=\qz\qy.
\end{equation}
Other $q$-commuting relations, e.g. $\qz\qy'=q\qy'\qz$, can be obtained by symmetry.
These are also used as the $q$-commuting relations of $\qtorus^0$.

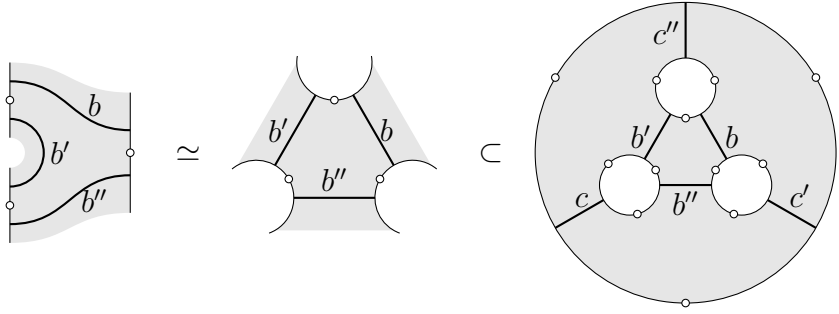
\begin{figure}[htpb!]
\centering
\begin{tikzpicture}[baseline=(ref.base)]
\tikzmath{\w=1.6;\h=\w/2;}
\fill[gray!20] (0,-0.2) arc[radius=0.2,start angle=-90,end angle=90]
-- (0,1.2)..controls +(\h,0) and +(-\h,0)..(\w,0.8)
-- (\w,-0.8)..controls +(-\h,0) and +(\h,0)..(0,-1.2) -- cycle;
\draw (0,0.2) -- (0,1.2) (0,-0.2) -- (0,-1.2) (\w,-0.8) -- (\w,0.8);
\draw[fill=white] (0,0.7)circle(0.05) (0,-0.7)circle(0.05) (\w,0)circle(0.05);
\draw[thick,inner sep=2pt] (0,-0.45) arc[radius=0.45,start angle=-90,end angle=90]
node[midway,right]{$b'$}
(0,0.95)..controls +(\h,0) and +(-\h,0)..node[near end,above]{$b$} (\w,0.3)
(0,-0.95)..controls +(\h,0) and +(-\h,0)..node[near end,below]{$b''$} (\w,-0.3);
\node (ref) at (\h,0){\phantom{$-$}};
\end{tikzpicture}
\quad$\simeq$\quad
\begin{tikzpicture}[baseline=(ref.base)]
\tikzmath{\r1=1.2;\r2=0.5;\s=\r1*sqrt(3)-\r2;}
\fill[gray!20] (90:\r1) ++(0:\r2) -- ++(180:{2*\r2}) -- ++(-120:\s)
-- ++(-60:{2*\r2}) -- ++(0:\s) -- ++(60:{2*\r2}) -- cycle;
\draw[thick,inner sep=2pt]
(90:\r1) --node[left]{$b'$} (-150:\r1) --node[above]{$b''$} (-30:\r1)
-- node[right]{$b$} cycle;
\foreach \t in {90,-30,-150} {
  \draw[fill=white] (\t:\r1)+({\t-80}:\r2)arc[radius=\r2,start
  angle={\t-80},delta angle=-200];
\draw[fill=white] (\t:{\r1-\r2})circle(0.05);
}
\node (ref) at (0,0){\phantom{$-$}};
\end{tikzpicture}
\quad$\subset$\quad
\begin{tikzpicture}[every node/.style={inner sep=2pt},baseline=(ref.base)]
\tikzmath{\r1=0.86;\r2=0.4;}
\draw[fill=gray!20] (0,0)circle(2);
\draw[thick] (90:\r1) --node[right,inner sep=4pt]{$b$} (-30:\r1)
--node[below]{$b''$} (-150:\r1) --node[left]{$b'$} cycle;
\foreach \t in {90,-30,-150} {
\draw[fill=white] (-\t:2)circle(0.05) (\t:\r1)circle(\r2);
\foreach \s in {75,-75,180}
\draw[fill=white] (\t:\r1) +({\t+\s}:\r2)circle(0.05);
\draw[thick] (\t:{\r1+\r2}) -- (\t:2);
}
\path (90:1.6)node[left]{$c''$} (-150:1.6)node[above]{$c$} (-30:1.6)
node[anchor=240]{$c'$};
\node (ref) at (0,0){\phantom{$-$}};
\end{tikzpicture}
\caption{Labeling arcs on the standard lantern.}\label{fig-stdL-arcs}
\end{figure}

\begin{lemma}
\label{lemma-qz-rel}
In $\qglue(\lantern)$, we have
\begin{equation}\label{eq-qz-rel}
\qz\qz'\qz''=\iunit q^{3/2},\qquad
\qz^{-2}+(\qz'')^2=1 \,.
\end{equation}
\end{lemma}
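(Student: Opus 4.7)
My strategy is to establish both identities by direct manipulation in $\skeincr(\lantern)$, exploiting the corner relations \eqref{eq-crdef}, the gluing relations \eqref{eq-qglue-def}, and the auxiliary identities developed earlier in this section (notably \eqref{eq-cup-puncture}, Corollary \ref{cor-tw}, and Lemma \ref{lemma-crhandle}).

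For the cubic identity $\qz\qz'\qz''=\iunit q^{3/2}$, I will work with the configuration of three stacked arcs $b,b',b''$ (each with $--$ states) forming a triangle around the enclosed boundary triangle $T_0$. The first step is to convert the $\cdot$-product to the $\cup$-product via \eqref{eq-new-prod}; this introduces a Weyl factor computed from the cross-gradings of the three arcs on the three shared outer triangles $T_1,T_2,T_3$. At each vertex $v_i$ where two of the arcs meet on $T_i$, their endpoints lie on adjacent edges with a puncture between them. Using Corollary \ref{cor-tw} I rotate the lower endpoint across the puncture onto the same edge as the upper one; after the rotation the two endpoints sit on a single edge and can be combined via the state exchange \eqref{eq-stex} together with the gluing relations $a^i_{+-}=a^i_{-+}=0$, which collapses the pair to a scalar. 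Iterating at all three vertices and accumulating the factors of $q$ and $\iunit$ should yield $\iunit q^{3/2}$.

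For the Lagrangian identity $\qz^{-2}+(\qz'')^2=1$, I first observe that the defining relation $ad-q^{-2}bc=1$ of $\Oq$ reduces in $\qglue(\lantern)$ to $a_{++}\cdot a_{--}=1$ (since the gluing relations set $b=c=0$), so $\qz^{-1}=b_{++}$ and the identity becomes $b_{++}\cdot b_{++}+b''_{--}\cdot b''_{--}=1$. Because $b$ and $b''$ share the outer triangle $T_1$, the plan is to consider an auxiliary arc placed near $T_1$ encircling the puncture that separates the two endpoints, and apply the puncture-loop expansion \eqref{eq-cup-puncture}. Evaluating this configuration one way via the corner relations produces a scalar; expanding the same configuration via \eqref{eq-cup-puncture} yields a sum of three terms indexed by states at the endpoints. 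The gluing relations kill the mixed $(-,+)$ term, and the surviving $(+,+)$ and $(-,-)$ contributions become $b_{++}^2$ and $b''_{--}^2$ after absorbing the Weyl factors coming from \eqref{eq-new-prod}. Rearranging gives the Lagrangian equation.

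The principal obstacle in both steps is the detailed bookkeeping of the $q$- and $\iunit$-powers: every invocation of \eqref{eq-new-prod}, Corollary \ref{cor-tw}, the state exchange \eqref{eq-stex}, and the corner relations \eqref{eq-crdef} introduces a specific monomial that must conspire to produce $\iunit q^{3/2}$ and $1$ respectively. A secondary subtlety for the second identity is locating the correct auxiliary arc near $T_1$ (with the right heights, orientation, and position relative to the puncture) so that \eqref{eq-cup-puncture} manufactures exactly the combination $b_{++}^2$ and $b''_{--}^2$; once the diagram is chosen, the remainder of the argument is algebraic, but identifying this configuration is the main creative step of the proof.
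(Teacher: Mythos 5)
There is a genuine gap, and it is the same missing idea in both halves: the paper derives \emph{both} identities by expanding the third arc $b'$, not by manipulating $b$ and $b''$ alone. For the vertex equation one starts from the single arc $\qz'=b'_{--}$, rotates both of its endpoints across the punctures with Corollary~\ref{cor-tw} so that it runs around the opposite corner of the hexagon, and then cuts it there with \eqref{eq-cup-puncture}; the gluing relations \eqref{eq-qglue-def} kill the mixed-state pieces and leave $\qz'=\iunit q\,\qz^{-1}\cup(\qz'')^{-1}$, which is the first identity. Your alternative---stacking $b_{--},b'_{--},b''_{--}$ and ``collapsing the pair to a scalar'' at each corner---does not correspond to an available move: \eqref{eq-stex} converts a turn-back arc into boundary-ending pairs (not ends of two \emph{distinct} strands into a scalar), the corner relations \eqref{eq-crdef} apply only to a single corner arc, and the relations \eqref{eq-qglue-def} apply to standard arcs, which your rotated strands no longer are. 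The only systematic way to rejoin boundary-ending pairs is \eqref{eq-cup-puncture} read in reverse, which requires the full three-term state combination, not the fixed $(-,-)$ states you have.

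For the Lagrangian the gap is sharper. A single application of \eqref{eq-cup-puncture} creates exactly two new endpoints, so no choice of auxiliary arc near the corner can produce the four-endpoint terms $b_{++}\cup b_{++}$ and $b''_{--}\cup b''_{--}$ that your expansion is supposed to yield; the squares cannot appear at that stage. In the paper's proof the essential input is the gluing relation on the third arc itself, $b'_{+-}=0$: expanding it via Corollary~\ref{cor-tw} and \eqref{eq-cup-puncture} gives the \emph{linear} relation $0=-q\,\qz^{-1}\cup(\qz'')^{-1}+\qz\cup(\qz'')^{-1}-q^{-1}\,\qz\cup\qz''$, and only after multiplying on the left by $\qz^{-1}\cup\qz''$ does the quadratic identity $\qz^{-2}+(\qz'')^2=1$ emerge. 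Your proposal never invokes $b'_{\pm\mp}=0$ at all, and you yourself note that the crucial configuration is ``the main creative step'' still to be found; that configuration is precisely the rotated $b'$ arc. (Your preliminary observation that $b_{++}=\qz^{-1}$ in $\qtorus^0$ is correct and harmless, but it does not substitute for the missing input.)
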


\newenvironment{stdLhex}{
\begin{linkdiag}[0]
\tikzmath{\w=1.6;\h=\w/2;}
\fill[gray!20] (0,-0.2) arc[radius=0.2,start angle=-90,end angle=90]
-- (0,1.2)..controls +(\h,0) and +(-\h,0)..(\w,0.8)
-- (\w,-0.8)..controls +(-\h,0) and +(\h,0)..(0,-1.2) -- cycle;
\draw (0,0.2) -- (0,1.2) (0,-0.2) -- (0,-1.2) (\w,-0.8) -- (\w,0.8);
\draw[fill=white] (0,0.7)circle(0.07) (0,-0.7)circle(0.07) (\w,0)circle(0.07);
}{\end{linkdiag}}

\begin{proof}
To prove the first identity, start with $\qz'=b'_{--}$ and apply Corollary~\ref{cor-tw}.
\begin{equation}
\qz'=
\begin{stdLhex}
\draw[thick] (0,-0.45)\stnodel{-}..controls +(1.6,0) and +(1.6,0)..(0,0.45) \stnodel{-};
\end{stdLhex}
=(-\iunit q^{-1})\cdot q\left(\iunit
\begin{stdLhex}
\draw[thick] (0,-0.95)\stnodel{+}..controls +(1.6,0.4) and +(1.6,-0.4)..(0,0.95)
\stnodel{+};
\end{stdLhex}
+
\begin{stdLhex}
\draw[thick] (0,-0.95)\stnodel{-}..controls +(1.6,0.4) and +(1.6,-0.4)..(0,0.95)
\stnodel{+};
\end{stdLhex}
\right)
\end{equation}
After applying \eqref{eq-cup-puncture}, the diagrams are of the form $b_{\mu\nu}\cup
b''_{\mu''\nu''}$.
This is zero if $\mu\ne\nu$ or $\mu''\ne\nu''$. Therefore, the only nonzero term after
\eqref{eq-cup-puncture} is
\begin{equation}
\qz'=\iunit q
\begin{stdLhex}
\draw[thick] (0,-0.95)\stnodel{+}..controls +(\h,0) and +(-\h,0)..(\w,-0.3)\stnode{+}
(0,0.95)\stnodel{+}..controls +(\h,0) and +(-\h,0)..(\w,0.3)\stnode{+};
\end{stdLhex}
=\iunit q\qz^{-1}\cup(\qz'')^{-1}.
\end{equation}
This is equivalent to $\qz\qz'\qz''=\iunit q^{3/2}$.

The second identity is similar, starting with $b'_{+-}=0$ where the $+$ state is the
top endpoint.
\begin{align*}
0&=
\begin{stdLhex}
  \draw[thick] (0,-0.45)\stnodel{-}..controls +(1.6,0) and +(1.6,0)..(0,0.45)
  \stnodel{+};
\end{stdLhex}
=\iunit
\begin{stdLhex}
  \draw[thick] (0,-0.95)\stnodel{+}..controls +(1.6,0.4) and +(1.6,-0.4)..(0,0.95)
  \stnodel{+};
\end{stdLhex}
+
\begin{stdLhex}
  \draw[thick] (0,-0.95)\stnodel{-}..controls +(1.6,0.4) and +(1.6,-0.4)..(0,0.95)
  \stnodel{+};
\end{stdLhex}
+
\begin{stdLhex}
  \draw[thick] (0,-0.95)\stnodel{+}..controls +(1.6,0.4) and +(1.6,-0.4)..(0,0.95)
  \stnodel{-};
\end{stdLhex}
-\iunit
\begin{stdLhex}
  \draw[thick] (0,-0.95)\stnodel{-}..controls +(1.6,0.4) and +(1.6,-0.4)..(0,0.95)
  \stnodel{-};
\end{stdLhex}\\
\intertext{The first two terms are calculated before, and the last two terms are
similar. We get}
&=\iunit(\iunit q)
\begin{stdLhex}
  \draw[thick] (0,-0.95)\stnodel{+}..controls +(\h,0) and +(-\h,0)..(\w,-0.3)
  \stnode{+} (0,0.95)
\stnodel{+}..controls +(\h,0) and +(-\h,0)..(\w,0.3)\stnode{+};
\end{stdLhex}
+0+
\begin{stdLhex}
  \draw[thick] (0,-0.95)\stnodel{+}..controls +(\h,0) and +(-\h,0)..(\w,-0.3)
  \stnode{+} (0,0.95)
\stnodel{-}..controls +(\h,0) and +(-\h,0)..(\w,0.3)\stnode{-};
\end{stdLhex}
-\iunit(-\iunit q^{-1})
\begin{stdLhex}
  \draw[thick] (0,-0.95)\stnodel{-}..controls +(\h,0) and +(-\h,0)..(\w,-0.3)
  \stnode{-} (0,0.95) \stnodel{-}..controls +(\h,0) and +(-\h,0)..(\w,0.3)\stnode{-};
\end{stdLhex}\\
&=-q\qz^{-1}\cup(\qz'')^{-1}+\qz\cup(\qz'')^{-1}-q^{-1}\qz\cup\qz''.
\end{align*}
After multiplying on the left by $\qz^{-1}\cup\qz''$, we get $\qz^{-2}+(\qz'')^2=1$.
\end{proof}

\begin{remark}
The same calculations can be done with $b'_{++}$ and $b'_{-+}$, but they do not give
additional relations.
\end{remark}

The next theorem gives a promised presentation for the $\qglue(\lantern)$ module.

\begin{theorem}
\label{thm-lantern}
We have
\begin{equation}\label{GG}
\qglue(\lantern)=
\qtorus\langle\qz,\qz'',\qy\rangle
\quotbyL{\qz^{-2}+(\qz'')^2-1, \qz^2-\qy^2} \,.
\end{equation}
\end{theorem}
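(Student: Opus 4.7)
The strategy is to identify $\qglue(\lantern)$ with the claimed quotient by a surjection-and-kernel argument built from the material already developed. First, Lemma~\ref{lemma-cr-span} shows that every element of $\skeincr(\lantern)$ lifts through the embedding of the subalgebra $\skein^0$ generated by the six standard arcs. The relations \eqref{eq-qglue-def} collapse $\skein^0$ onto the quantum torus $\qtorus^0$ of \eqref{eq-lantern-torus}, whose six Weyl-ordered generators are $\qz,\qz',\qz'',\qy,\qy',\qy''$. This yields a natural surjection $\qtorus^0 \twoheadrightarrow \qglue(\lantern)$, and the whole task reduces to identifying its kernel.

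The next step is to cut down the six generators to three. Lemma~\ref{lemma-qz-rel} gives the triangle relation $\qz\qz'\qz''=\iunit q^{3/2}$ together with the Lagrangian $\qz^{-2}+(\qz'')^2=1$; the exact same proof applied to the opposite arcs $c,c',c''$ of Figure~\ref{fig-stdL-arcs} yields $\qy\qy'\qy''=\iunit q^{3/2}$ and $\qy^{-2}+(\qy'')^2=1$. The two triangle relations eliminate $\qz'$ and $\qy'$ as Weyl-ordered monomials, leaving four generators $\qz,\qz'',\qy,\qy''$. The last reduction, and the heart of the proof, is the opposite-edge identification $\qz^2=\qy^2$: granted this, the two Lagrangians force $(\qy'')^2=1-\qy^{-2}=1-\qz^{-2}=(\qz'')^2$, and a parallel computation on the pair $(b'',c'')$ pins down the sign, so $\qy''$ becomes expressible in $\qz''$. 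To prove $\qz^2=\qy^2$ I split the lantern via Theorem~\ref{thm-split} along a simple closed curve that separates the two boundary triangles carrying the endpoints of $b$ from the two carrying the endpoints of $c$. On the resulting two pairs of pants, $b_{--}^2$ and $c_{--}^2$ live in different tensor factors and each can be independently simplified using the counit of Corollary~\ref{cor-comod} and the handle-slide moves of Lemma~\ref{lemma-crhandle}, collapsing both to the same element of $\qglue(\lantern)$.

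Finally I verify that no further relations are imposed by matching $R$-bases on the two sides of the claimed equality. On the presentation side, the Lagrangian $\qz^{-2}+(\qz'')^2=1$ expresses $(\qz'')^{2k}$ as a polynomial in $\qz^{\pm 2}$ and thereby cuts $R[\qz^{\pm 1},(\qz'')^{\pm 1}]$ down to a free module on a sub-lattice of bounded $\qz''$-degree, and adjoining $\qy$ modulo $\qy^2-\qz^2$ multiplies the basis by $\{1,\qy\}$. On the skein side, the basis of Theorem~\ref{thm-basis-rd} restricted to diagrams built from standard arcs, after passing to $\qglue(\lantern)$ via \eqref{eq-qglue-def} and the two symmetric Lagrangians, has precisely the same index set. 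The main obstacle is the opposite-edge identification $\qz^2=\qy^2$: it is the quantum avatar of the classical fact that opposite edges of an ideal tetrahedron share a shape parameter, and unlike the Lagrangians it cannot be extracted from $\qtorus^0$ formally, but instead genuinely requires the splitting and corner-reduction machinery of Theorem~\ref{thm-split}, Corollary~\ref{cor-tw}, and Lemma~\ref{lemma-crhandle}.
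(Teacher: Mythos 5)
Your surjection from $\qtorus^0$ onto $\qglue(\lantern)$ via Lemma~\ref{lemma-cr-span}, and the extraction of the Lagrangian and triangle relations from Lemma~\ref{lemma-qz-rel} by symmetry, agree with the paper. But the two places where the theorem actually requires work both have problems.

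\textbf{The relation $\qz^2=\qy^2$.} You assert that this ``cannot be extracted from $\qtorus^0$ formally'' and propose a splitting argument. Both halves of this are off. First, the relation \emph{is} a formal consequence of the four vertex equations $\qz\qz'\qz''=\qz\qy'\qy''=\qy\qz'\qy''=\qy\qy'\qz''=\iunit q^{3/2}$ (each one is an instance of Lemma~\ref{lemma-qz-rel} at a vertex of the dual tetrahedron), obtained by multiplying the first two and dividing by the last two of these Weyl-ordered monomials — this is exactly what the paper does. Second, your proposed splitting does not prove it: a separating curve that puts both endpoints of $b$ on one side and both endpoints of $c$ on the other is \emph{disjoint} from both arcs, so $\cut$ sends $\qz^2$ to $\qz^2\otimes1$ and $\qy^2$ to $1\otimes\qy^2$, which live in different tensor factors and give you no identity at all. (The other two separating curves meet each of $b,c$, so then the split diagrams are not of the form $b^2_{--}$ and $c^2_{--}$ in a single factor, contradicting your description.)

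\textbf{The injectivity / ``no further relations'' step.} This is the genuinely hard direction, and your basis-matching plan does not go through. The left quotient $\qtorus\langle\qz,\qz''\rangle\quotbyL{\qz^{-2}+(\qz'')^2-1}$ does \emph{not} have a normal form of ``bounded $\qz''$-degree'': the rewriting $(\qz'')^{2}\mapsto 1-\qz^{-2}$ lowers positive $\qz''$-degree but nothing in the left ideal lets you eliminate $(\qz'')^{-2}$ (the naive attempt $(\qz'')^{-2}=1+(\qz'')^{-2}\qz^{-2}$ just recurses). Moreover, there is no pre-established basis of $\qglue(\lantern)$ to match against — Theorem~\ref{thm-basis-rd} describes a basis for $\skeinrd$, not $\skeincr$, and even the appendix computation of a basis for $\skeincr(\annulus)$ requires a full diamond-lemma argument. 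The paper avoids all of this by constructing an explicit inverse $g$: remove small disks around the vertices of the dual graph of $\lantern$, split along the resulting $12$ ideal arcs into $4$ annuli and $6$ bigons, apply the counit on the annuli, and then verify that this map kills both the generators \eqref{eq-qglue-def} and — the key non-formal step — the $A$-handle slides \eqref{eq-lantern-handle}, the latter by reducing to a single standard arc and reusing the computation from Lemma~\ref{lemma-qz-rel}. Your sketch does not engage with this step, and without it one only knows that the stated relations hold, not that they are all of them.
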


\begin{proof}
Let $\qglue^0$ denote the quotient of $\qtorus^0$ by the left ideal generated by
\eqref{eq-qz-rel} as well as the relations obtained by symmetries of $\lantern$. This
includes the vertex equations
\begin{equation}
\qz\qz'\qz''=\qz\qy'\qy''=\qy\qz'\qy''=\qy\qy'\qz''=\iunit q^{3/2}.
\end{equation}
Similarly, there are 12 Lagrangian equations. We can eliminate $\qz',\qy',\qy''$ using
the vertex equations. Then we get $\qz^2=\qy^2$, and only one Lagrangian
$\qz^{-2}+(\qz'')^2=1$ is required. Thus, 
\begin{equation}
\qglue^0=\qtorus\langle\qz,\qz'',\qy\rangle
\quotbyL{\qz^{-2}+(\qz'')^2-1, \qz^2-\qy^2}.
\end{equation}
The discussion prior to the theorem shows that there is a surjective map
$f:\qglue^0\to\qglue(\lantern)$. Now we find the inverse.

Recall the setup of Lemma~\ref{lemma-cr-span}. For each vertex of the blue graph, draw
a small disk around it such that the punctures on the red curves are on the boundary
of the disk. See the light blue disks in Figure~\ref{fig-lgraph}. Let $\lantern'$ be
$\lantern$ minus the closure of the disks. Then the red curves break into 12 ideal arcs
on $\lantern'$.

If we split $\lantern'$ along the 12 red ideal arcs, then the surface becomes 4
standard annuli and 6 bigons. Thus, the splitting homomorphism has the form
$\skein(\lantern')\to(\skein(\annulus)^{\otimes4})\otimes(\skein(\bigon)^{\otimes6})$.
If we consider the corner reduction $\skeincr(\lantern')$, the defining relations
\eqref{eq-bad} and \eqref{eq-crdef} are contained in the annuli, so if we also reduce
the annuli, we get
\begin{equation}
\cutred:\skeincr(\lantern')\to(\skeincr(\annulus)^{\otimes4})
\otimes(\skein(\bigon)^{\otimes6}).
\end{equation}
Let $k:\skein(\bigon)^{\otimes6}\xrightarrow[\cong]{\phi}\skein^0\onto\qtorus^0$
be the quotient map. Now consider the composition
\begin{equation}
\label{eq-lantern-inv}
\tilde{g}:\skeincr(\lantern')
\xrightarrow{\cutred}
(\skeincr(\annulus)^{\otimes4})\otimes(\skein(\bigon)^{\otimes6})
\xrightarrow{(\epsilon^{\otimes4})\otimes k}
\qtorus^0\onto\qglue^0.
\end{equation}
The inclusion $\lantern'\embed\lantern$ induces a surjective map
$\skeincr(\lantern')\onto\skeincr(\lantern)\onto\qglue(\lantern)$. We claim that
$\tilde{g}$ induces a map $g:\qglue(\lantern)\to\qglue^1$. Then it is easy to check
that $g$ is inverse to $f$, which proves the theorem.

To finish the proof, we show that $g$ is well-defined. For this, we need to consider
the kernel $I=\ker(\skeincr(\lantern')\onto\qglue(\lantern))$, which is a left ideal. 
By definition, the kernel of $\skeincr(\lantern)\onto\qglue(\lantern)$ is generated
by \eqref{eq-qglue-def}, which can be lifted to $\skeincr(\lantern')$. On the other
hand, the kernel of $\skeincr(\lantern')\onto\skeincr(\lantern)$ is generated by the
handle slides.
\begin{equation}
\label{eq-lantern-handle}
\begin{linkdiag}
\fill[gray!20] (0,0) rectangle (1,1);
\fill[blue!50] (0.5,0.5)circle(0.15);
\draw[blue,dashed] (0.5,0.5) -- (0.5,0) (0.5,0.5) -- (0,1) (0.5,0.5) -- (1,1);
\draw[thick,rounded corners] (0.2,0) -- (0.2,0.5) -- (0.5,1);
\end{linkdiag}
=
\begin{linkdiag}
\fill[gray!20] (0,0) rectangle (1,1);
\fill[blue!50] (0.5,0.5)circle(0.15);
\draw[blue,dashed] (0.5,0.5) -- (0.5,0) (0.5,0.5) -- (0,1) (0.5,0.5) -- (1,1);
\draw[thick,rounded corners] (0.2,0) -- (0.8,0.4) -- (0.5,1);
\end{linkdiag}.
\end{equation}
Together, these relations generate $I$ as a left ideal in $\skeincr(\lantern')$. We
just need to show $\tilde{g}(I)=0$.

\begin{figure}[htpb!]
\centering
\begin{tikzpicture}[baseline=(ref.base)]
\fill[gray!20] (0,0)circle(1);
\fill[blue!50] (0,0)circle(0.2);
\draw[blue,dashed] foreach \t in {-90,30,150} {(0,0) -- (\t:1)};
\clip (0,0)circle(1);
\begin{scope}[red]
\draw foreach \t in {90,-30,-150} {(\t:0.2) -- +({\t+60}:1) (\t:0.2) -- +({\t-60}:1)};
\draw[fill=white] foreach \t in {90,-30,-150} {(\t:0.2)circle(0.05)};
\draw[dashed]
foreach \t in {90,-30,-150} {(\t:0.6) -- +({\t+60}:1) (\t:0.6) -- +({\t-60}:1)};
\draw[fill=white] foreach \t in {90,-30,-150} {(\t:0.6)circle(0.05)};
\end{scope}
\draw[thick,rounded corners,dashed] (-165:1) -- (150:0.4) -- (105:1);
\draw[thick,rounded corners]
(-160:1) -- (-165:0.5) -- (-90:0.4) -- (-30:0.4) -- (30:0.4) -- (105:0.5) -- (100:1);
\node (ref) at (0,0){\phantom{$-$}};
\end{tikzpicture}
\caption{Extra splitting curves.}\label{fig-extra-split}
\end{figure}
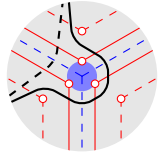

$\tilde{g}$ maps \eqref{eq-qglue-def} to zero via either $\epsilon$ or $k$. For the
handle slide, we first reduce to the case of a single standard arc.
In each annulus component, draw another splitting curve close enough to the red curve
but also away from the handle slide region. See the dashed red curve in Figure
~\ref{fig-extra-split}, where both sides of the handle slide are shown. Here, close
enough means the diagram in the region between them consists of small segments of
standard arcs and the segments of the handle slide. Now split along the dashed curves
and apply the counit. Since the handle slide is sandwiched between the red curves, the
counit for both sides of the handle slide are the same. We can also choose the height
order so that the arc changed by the handle slide is at the bottom. This means if
$\tilde{g}$ is invariant for the handle slide of a single standard arc, then by
multiplying the rest of the diagram on the left, we prove the invariance for general
diagrams. Up to symmetry, there is only one handle slide of standard arcs (with 4
combinations of states). By reinterpreting the calculations in Lemma
~\ref{lemma-qz-rel}, we obtain the invariance under $\tilde{g}$. Therefore,
$\tilde{g}$ sends both types of generators of $I$ to $0$.
\end{proof}


\section{A quantum trace map of triangulated 3-manifolds}
\label{sec.qtrace}

In this section we give the promised definition of the quantum trace map
~\eqref{defqtr} and prove its properties stated in Theorems~\ref{thm.1}
and~\ref{thm.frob}.

\subsection{Quantum gluing module}
\label{sec-qglue}

Let $M$ be an oriented 3-manifold with an ideal triangulation $\triang$. Let
$\heeg_\triang:=(\surface_\triang,\{A_f\},\{B_e\})$ be the dual surface from
Section~\ref{sec-dualS}. By construction, $\surface_\triang$ split along all $A$-curves
consists of a lantern $\lantern_j=\surface_\triang\cap T_j$ for each tetrahedron $T_j$.

The quantum gluing module $\qglue(\triang)$ was explained in the introduction. By
rearranging the order of quotients and tensor products, there is a presentation more
compatible with splitting. Note that the Lagrangian equations only involve variables
from the same tetrahedron. Thus, the quotient by Lagrangian can be taken before the
tensor product, so
\begin{equation}
\qglue(\triang)=\rideal{\text{edge}}\backslash
\Big(\bigotimes_{j=1}^N\qtorus
\langle\qz_j,\qz''_j\rangle\quotbyL{\text{Lagrangian}}\Big) \,.
\end{equation}
Using Theorem~\ref{thm-lantern}, for each tetrahedron $T_j$, we get an isomorphism
\begin{equation}\label{eq-qglueL-elim-y}
\qglue(\lantern)\quotbyL{\qz-\qy} \cong
\qtorus\langle\qz_j,\qz''_j\rangle\quotbyL{\text{Lagrangian}}
\end{equation}
sending $\qz\mapsto\qz_j$ and $\qz''\mapsto\qz''_j$.
Tensoring these together, we get another definition
\begin{equation}\label{eq-qglue-triang}
\qglue(\triang)=
\rideal{\text{edge}}\backslash
\Big(\bigotimes_{j=1}^N\qglue(\lantern_j)\quotbyL{\qz_j-\qy_j}\Big)
\end{equation}
where the generators of each copy $\qglue(\lantern_j)$ is now written with the index $j$.

\begin{remark}
\label{rem-qglue-ext}
We can extend the definition of $\qglue(\triang)$ to include the $\qy$ variables by
removing the quotient of $\lideal{\qz_j-\qy_j}$. The results below all work with this
extension.
\end{remark}

Like the skein module, if the choices of $R$ and $q$ need to be shown, we will use the
notation $\qglue_q(\triang;R)$. As an example, the coordinate ring $\cx[G_\triang]$ of
the gluing variety is $\qglue_1(\triang;\cx)/\sqrt{0}$.

We have a universal coefficient property similar to \eqref{eq-skein-ucoef}.

\begin{lemma}
\label{lemma-qglue-classical}
Suppose $r:R\to R'$ is a ring homomorphism and $r(q)=q'$. Then
\begin{equation}\label{eq-qglue-classical}
\qglue_q(\triang;R)\otimes_R R'\cong\qglue_{q'}(\triang;R').
\end{equation}
\end{lemma}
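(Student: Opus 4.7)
The plan is to deduce the isomorphism from right exactness of the base-change functor $-\otimes_R R'$, applied to the successive-quotient presentation of $\qglue(\triang)$ described in~\eqref{ghatdef}.

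First I would establish the analogous statement for the ambient quantum torus, namely
$$\qtorus_q(\triang;R)\otimes_R R'\cong\qtorus_{q'}(\triang;R').$$
This is immediate from the explicit presentation in~\eqref{qtorusT}: the free associative $R$-algebra on the generators $\qz_j^{\pm1},(\qz''_j)^{\pm1}$ modulo the $q$-commutation relations $\qz''_j\qz_j-q\qz_j\qz''_j$ (together with the commutations across tetrahedra) is turned by $r$ into the free $R'$-algebra on the same generators modulo the $q'$-commutation relations. Equivalently, the Weyl-ordered monomials form an $R$-basis of $\qtorus_q(\triang;R)$, so tensoring over $R$ with $R'$ produces a free $R'$-module with the same basis, and the product is determined by the same integer bilinear form on exponent vectors as in~\eqref{eq-qtorus-prod}.

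Next I would apply right exactness in two stages. Let $L\subset\qtorus_q(\triang;R)$ be the left ideal generated by the Lagrangian expressions $\qz_j^{-2}+(\qz''_j)^2-1$ from~\eqref{3qz}. The right exact sequence
$$L\longrightarrow\qtorus_q(\triang;R)\longrightarrow\qtorus_q(\triang;R)\quotbyL{\mathrm{Lagrangian}}\longrightarrow 0$$
remains right exact after tensoring with $R'$, and because the Lagrangian generators are universal polynomial expressions in $\qz_j,\qz''_j$ whose form does not depend on the base ring, the image of $L\otimes_R R'$ inside $\qtorus_{q'}(\triang;R')$ is precisely the corresponding left ideal over $R'$. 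Hence the left quotient commutes with base change. Repeating the same argument with the right ideal generated by the edge gluing expressions $\prod_{T:e\in T}\qz_T^\Box+q^2$ acting from the right on $\qtorus_q(\triang;R)\quotbyL{\mathrm{Lagrangian}}$ then yields the claimed isomorphism.

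The only subtlety worth flagging is that the edge gluing relations~\eqref{eqt} involve Weyl-ordered products of $q$-commuting variables, carrying an explicit power of $q$ in their normalization. I would verify that $r$ sends this normalization to the corresponding $q'$-normalization. This is automatic: Weyl ordering is prescribed by the universal formula $q^{-\frac12\sum_{i<j}B_{ij}}$ depending only on the commutation matrix $B$ coming from the triangulation, and $B$ is unchanged by the ring homomorphism $r$. Thus the $q$-Weyl-ordered edge generator maps under $r$ to the $q'$-Weyl-ordered edge generator, so no additional care is required beyond generic right exactness.
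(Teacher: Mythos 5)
Your proposal is correct and follows essentially the same route as the paper's proof: both rest on the freeness of the quantum torus (so $\qtorus_q(\triang;R)\otimes_R R'\cong\qtorus_{q'}(\triang;R')$), right exactness of $-\otimes_R R'$, and the observation that the defining relations are universal expressions whose images generate the corresponding ideals over $R'$. The only cosmetic difference is that you pass through the two quotients in succession, whereas the paper treats the full kernel $I_R$ at once and concludes with the five lemma.
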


\begin{proof}
The proof is the same as \cite[Proposition~2.2(4)]{Przytycki:fund}. The key here is
that $\qglue$ is a quotient of a quantum torus, which is a free module. In particular,
$\qtorus_q(\triang;R)\otimes_R R'=\qtorus_{q'}(\triang;R')$.
Let $I_R=\ker(\qtorus_q(\triang;R)\onto\qglue_q(\triang;R))$, and define $I_{R'}$
similarly. Then we have exact sequences
\begin{equation}
\begin{tikzcd}
I_R\otimes_R R' \arrow[r] \arrow[d] &
\qtorus_q(\triang;R)\otimes_R R' \arrow[r] \arrow[d,equal] &
\qglue_q(\triang;R)\otimes_R R' \arrow[r] \arrow[d] & 0\\
I_{R'} \arrow[r] & \qtorus_{q'}(\triang;R') \arrow[r] &
\qglue_{q'}(\triang;R') \arrow[r] & 0
\end{tikzcd}
\end{equation}
Here, the top row is exact by the right exactness of the tensor product. A formal
argument shows that the left vertical map is surjective. Then the five lemma proves
\eqref{eq-qglue-classical}.
\end{proof}

\subsection{Quantum trace map}

Starting from the dual surface $\heeg_\triang:=(\surface_\triang,\{A_f\},\{B_e\})$,
instead of a cell structure, we can recover $M$ from $\heeg_\triang$ using a Heegaard-like
process. By attaching 2-handles to the closed thickening $\surface_\triang\times[-1,1]$
along all $A_f\times\{-1\}$ and capping off spherical boundary components, we obtain the
handlebody inside the surface $\surface_\triang$. Then $M$ is obtained by attaching
2-handles to all $B_e\times\{1\}$.

By Proposition~\ref{prop-handle}, there is a surjective map
$\skein(\surface_\triang)\onto\skein(M)$,
and the kernel is generated by handle slides. Since the $A$-handles are attached to the
bottom, the submodule $\lideal{A}$ of $A$-handle slides is a left ideal in
$\skein(\surface_\triang)$.
Similarly, the $B$-handle slides generate a right ideal $\rideal{B}$. Thus, we get the
isomorphism claimed in Proposition~\ref{prop.ST}
\begin{equation}
\rideal{B}\backslash\skein(\surface_\triang)/\lideal{A}
\xrightarrow{\cong}\skein(M).
\end{equation}

Define the quantum trace map $\qtr_\triang:\skein(M)\to\qglue(\triang)$ using the
following diagram.
\begin{equation}
\label{eq-qtr-def}
\begin{tikzcd}
\skein(\surface_\triang) \arrow[d,two heads] \arrow[r,"\cut_A"]
\arrow[drr,"\ttr_\triang"']&
\bigotimes_{j=1}^N\skeincr(\lantern_j) \arrow[r,two heads] &
\bigotimes_{j=1}^N\qglue(\lantern_j) \arrow[d,two heads] \\
\skein(M) \arrow[rr,dashed,"\qtr^q_\triang"']
&& \qglue(\triang)
\end{tikzcd}
\end{equation}
Here $\cut_A$ is the splitting map along all $A$-circles.
As before, $q$ can be included like $\qtr^q_\triang$ if it is important.

\begin{theorem}
The map $\qtr_\triang:\skein(M)\to\qglue(\triang)$ is well-defined.
\end{theorem}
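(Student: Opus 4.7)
By Proposition~\ref{prop.ST}, $\skein(M) \cong \rideal{B}\backslash\skein(\surface_\triang)\quotbyL{A}$, so the map $\qtr_\triang$ as defined by the diagram \eqref{eq-qtr-def} is well-defined precisely when the composition
\begin{equation*}
\ttr_\triang : \skein(\surface_\triang) \xrightarrow{\cut_A} \bigotimes_{j=1}^N \skeincr(\lantern_j) \twoheadrightarrow \bigotimes_{j=1}^N \qglue(\lantern_j) \twoheadrightarrow \qglue(\triang)
\end{equation*}
annihilates both the left ideal $\lideal{A}$ of $A$-handle slides and the right ideal $\rideal{B}$ of $B$-handle slides. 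The plan is to verify these two vanishings separately by local computations, one per lantern for the $A$-case and one per edge of $\triang$ for the $B$-case.

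For the $A$-handle slide ideal $\lideal{A}$: since $\lideal{A}$ is a left ideal and the $A$-handles sit at the bottom, it suffices to check vanishing on a single handle-slide element $L - L\#A_f$ in which the slid segment is at the lowest height. The curve $A_f$ appears as a boundary triangle of exactly the two lanterns adjacent to the face $f$, so after applying $\cut_A$ the difference $L - L\#A_f$ becomes a local relation inside a single lantern $\lantern_j$, where the slid strand wraps once around the triangular boundary corresponding to $A_f$. Lemma~\ref{lemma-crhandle} rewrites this wrap as a scalar multiple of the unwrapped strand at the bottom, and the framing convention built into the definition of a handle slide (the framings of $L$ and $A_f$ are opposite) is set up precisely to cancel this scalar. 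Hence $L - L\#A_f$ vanishes already in $\skeincr(\lantern_j)$ and a fortiori in $\qglue(\triang)$.

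For the $B$-handle slide ideal $\rideal{B}$: the curve $B_e$ runs through all lanterns $\lantern_T$ with $T \ni e$, crossing an $A$-curve between each pair of consecutive ones. A single $B$-handle slide $L \sim L\#B_e$ therefore produces, after $\cut_A$, simultaneous corner modifications in every such lantern $\lantern_T$. Using Corollary~\ref{cor-tw} to convert each corner move into right multiplication by a shape variable, together with the presentation $\qglue(\lantern) = \qtorus\langle \qz,\qz'',\qy \rangle\quotbyL{\cdots}$ from Theorem~\ref{thm-lantern}, the combined relation becomes right multiplication by the element $\prod_{T : e \in T}\qz_T^\Box + q^2$ assembled in Weyl-ordered form. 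This is exactly a generator of the right ideal $\rideal{\mathrm{edge}}$ appearing in the definition \eqref{eq-qglue-triang} of $\qglue(\triang)$. Hence $\rideal{B}$ maps into the kernel of the final quotient.

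The main obstacle is the scalar bookkeeping in the $B$-handle slide case: one must track the Weyl-ordering corrections from \eqref{qzcommute} and the corner rules of Corollary~\ref{cor-tw} to verify that, after collecting contributions across all $T \ni e$, the product of shape variables picks up precisely the sign and power $-q^2$ appearing in \eqref{eqt}. A classical check at $q=1$ via Proposition~\ref{prop-classical-rep} and the classical edge equation \eqref{et} (which gives $\prod z_T^\Box = -1$) fixes the sign, and the quantum version then reduces to consistent application of \eqref{qzcommute}. The $A$-handle slide case is comparatively routine, being localized to a single lantern and controlled directly by Lemma~\ref{lemma-crhandle} once the framing normalization is respected.
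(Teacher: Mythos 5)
Your overall skeleton coincides with the paper's: by Proposition~\ref{prop.ST} it suffices to show that the composite $\ttr_\triang$ kills $\lideal{A}$ and $\rideal{B}$, and both checks are local (one per lantern, one per edge). But the two local verifications are exactly where the content lies, and the mechanisms you describe do not hold up. For the $A$-slides there is no scalar to be cancelled and the framing convention plays no such role: after isotoping the slide region off $A_f$ and splitting, the handle slide becomes the \emph{second} identity of Lemma~\ref{lemma-crhandle} (a strand passing on one side of the puncture-triangle equals, with coefficient exactly $1$, the strand routed around the other side), not the first identity with its $-q^3$ factor. That on-the-nose identity is nontrivial --- its proof uses \eqref{eq-twup} and \eqref{eq-stex} --- and it is precisely what makes $\cut_A(\lideal{A})=0$; if a genuine factor $-q^{\pm3}$ appeared, no framing bookkeeping would remove it and the map would simply fail to be well-defined. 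As written, your cancellation claim is unsubstantiated.

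The $B$-case is the more serious gap: the statement that the slide difference ``becomes right multiplication by $\prod_{T\ni e}\qz_T^\Box+q^2$'' is asserted rather than derived, and the tools you invoke do not directly apply. Corollary~\ref{cor-tw} gives relations that hold \emph{at the bottom} (they come from left-ideal corner reductions), whereas $\rideal{B}$ is a right ideal, so the slid strands must be placed at the \emph{top} of the height order, above all other link strands. To collapse the splitting state sum in that position one needs the two-sidedness of the quotient defining $\qglue(\lantern_j)$ (Lemma~\ref{lemma-qglue-2side}), which forces all surviving terms to have all $+$ or all $-$ states; your sketch never addresses this, and without it the state sum does not reduce to two terms. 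The paper then computes $\ttr_\triang(a)=c\cup a_-+c^{-1}\cup a_+$ and $\ttr_\triang(b)=-q^{-2}\,d\cup a_+-q^{2}\,d^{-1}\cup a_-$ as in \eqref{eq-ttr-alpha}--\eqref{eq-ttr-beta}, identifies $d\cup c$ with the Weyl-ordered edge monomial, and inserts $1=-q^{-2}(d\cup c)$ modulo $\rideal{\mathrm{edge}}$; the exact $q$-powers are controlled not by a specialization but by the observation that the correction factors of \eqref{eq-new-prod} are trivial here, because split endpoints occur in pairs on matching boundary triangles with opposite orientations, so $\cdot$ may be replaced by $\cup$. A classical check at $q=1$ fixes at most an overall sign and cannot certify the quantum identity you need, so this step remains a genuine gap in your argument.
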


\begin{proof}
Define the composition $\ttr_\triang$ using the diagram. We need to show that
$\ttr_\triang(\lideal{A})=0$ and $\ttr_\triang(\rideal{B})=0$.

First consider the handle slide along $A_f$ for some face $f$. To calculate the cut
$\cut_A$, we need to isotope the handle slide region slightly to be disjoint from
the curve $A_f$. Then after splitting, the handle slide becomes an identity in
$\skeincr(\lantern)$ by Lemma~\ref{lemma-crhandle}. Thus, $\cut_A(\lideal{A})=0$,
so $\ttr_\triang(\lideal{A})=0$ as well.

\begin{figure}[htpb!]
\centering
\begin{tikzpicture}[baseline=(ref.base),inner sep=2pt]
\fill[gray!20,even odd rule] (0,0) circle(0.5) circle(1.6);
\draw[blue] (0,0) circle(0.8);
\draw[red] foreach \t in {-30,-150,90,30} {(\t:0.5) -- (\t:1.6)} (60:0.6)
node[rotate=-30]{...};
\draw[thick,rounded corners,dashed] (-115:1.6)..controls (-115:1) and
(-65:1)..(-65:1.6) node[midway,below]{$b$};
\draw[thick,rounded corners] (-120:1.6) -- (-120:1)
arc[radius=1,start angle=-120,delta angle=-300]
-- (-60:1.6) (135:1)node[above left]{$a$};
\node (ref) at (0,0) {\phantom{$-$}};
\end{tikzpicture}
\caption{$B$-handle slide: $a=b$ in $\skein(M)$.}\label{fig-bslide}
\end{figure}

Now look at the handle slide along $B_e$ for some edge $e$. A neighborhood of $B_e$
is shown in Figure~\ref{fig-bslide}, where the blue circle is $B_e$, and the red
segments are parts of the $A$-circles. The black strands are part of link, and both
sides of the handle slide are shown. There may be additional strands of the link in
the region, but they are all below the strand in the figure. By definition, we split
along the red $A$-circles and identify the components with the standard lantern
$\lantern$ such that the blue $B$-arcs are the standard arcs. This means the punctures
of the split are away from the $B$-circles. We use the convention where we omit the
additional strands and consider the relations ``at the top''.

Start with the solid strand $a$ in Figure~\ref{fig-bslide}. Except for the
segments in the south, the arcs after splitting along the $A$-circles are standard
arcs, which we can make to be higher than all other strands. By Lemma
~\ref{lemma-qglue-2side}, if any of the standard arcs after splitting has opposite
states, then the diagram is zero in $\qglue$. Therefore, any nonzero term has all
$+$ or all $-$ states in the region in Figure~\ref{fig-bslide}. Define the following
tangles in the surface after splitting.
\begin{equation}
a_\mu=
\begin{tikzpicture}[baseline=(ref.base),inner sep=1pt]
\fill[gray!20,even odd rule] (0,0) circle(0.5) circle(1.6);
\draw[blue] (0,0) circle(0.8);
\draw[red] foreach \t in {-30,-150,90,30} {(\t:0.5) -- (\t:1.6)} (60:0.6)
node[rotate=-30]{...};
\draw[thick,rounded corners] (-120:1.6) -- (-120:1)
arc[radius=1,start angle=-120,delta angle=-30] node[above left]{\stsize$\mu$}
(-60:1.6) -- (-60:1) arc[radius=1,start angle=-60,delta angle=30]
node[above right]{\stsize$\mu$};
\node (ref) at (0,0) {\phantom{$-$}};
\end{tikzpicture}\,,\quad
c=
\begin{tikzpicture}[baseline=(ref.base),inner sep=1pt]
\fill[gray!20,even odd rule] (0,0) circle(0.5) circle(1.6);
\draw[blue] (0,0) circle(0.8);
\draw[red] foreach \t in {-30,-150,90,30} {(\t:0.5) -- (\t:1.6)} (60:0.6)
node[rotate=-30]{...};
\draw[thick] (-150:1)node[below]{\stsize$-$}
arc[radius=1,start angle=-150,delta angle=-240] node[below]{\stsize$-$} (90:1)
node[above left]{\stsize$-$} node[above right]{\stsize$-$} (30:1)
node[above]{\stsize$-$} node[right]{\stsize$-$};
\node (ref) at (0,0) {\phantom{$-$}};
\end{tikzpicture}\,,\quad
d=
\begin{tikzpicture}[baseline=(ref.base),inner sep=1pt]
\fill[gray!20,even odd rule] (0,0) circle(0.5) circle(1.6);
\draw[blue] (0,0) circle(0.8);
\draw[red] foreach \t in {-30,-150,90,30} {(\t:0.5) -- (\t:1.6)} (60:0.6)
node[rotate=-30]{...};
\draw[thick] (-150:1)node[above]{\stsize$-$}
arc[radius=1,start angle=-150,delta angle=120] node[above]{\stsize$-$};
\node (ref) at (0,0) {\phantom{$-$}};
\end{tikzpicture}\,.
\end{equation}
Here, $a_\mu$ is a sum over the states of all endpoints not shown in the figure.
Then by definition,
\begin{equation}
\label{eq-ttr-alpha}
\ttr_\triang(a)=c\cup a_-+c^{-1}\cup a_+.
\end{equation}
Here, the $\cup$-product is performed before reduction to $\qglue(\triang)$. On the
other hand, if we isotope $b$ and impose height order as below,
\begin{equation}
b=
\begin{tikzpicture}[baseline=(ref.base),inner sep=1pt]
\fill[gray!20,even odd rule] (0,0) circle(0.5) circle(1.6);
\draw[blue] (0,0) circle(0.8);
\draw[red] foreach \t in {-30,-150,90,30} {(\t:0.5) -- (\t:1.6)} (60:0.6)
node[rotate=-30]{...};
\path[red,tips,->] (-30:1.6) -- (-30:0.5);
\path[red,tips,->] (-150:1.6) -- (-150:0.5); 
\draw[thick] (-120:1.6) {[rounded corners] -- (-120:1.2)}
arc[radius=1.2,start angle=-120,delta angle=-45]
arc[radius=0.1,start angle=-165,delta angle=-180]
arc[radius=1,start angle=-165,delta angle=150]
arc[radius=0.1,start angle=165,delta angle=-180]
{[rounded corners] arc[radius=1.2,start angle=-15,delta angle=-45]} -- (-60:1.6);
\node (ref) at (0,0) {\phantom{$-$}};
\end{tikzpicture}\,,
\end{equation}
we get
\begin{equation}\label{eq-ttr-beta}
\ttr_\triang(b)
=\relarc[->]{-}{+}(d\cup a_+)\relarcl[->]{-}{+}
+\relarc[->]{+}{-}(d^{-1}\cup a_-)\relarcl[->]{+}{-}
=-q^{-2}d\cup a_+-q^2d^{-1}\cup a_-.
\end{equation}
We finish the proof by showing that the (quantized) edge equation imply
$\ttr_\triang(a)=\ttr_\triang(b)$.

To do so, we first identify the edge equation with $d\cup c=-q^2$. By
construction, different standard arcs do not end on the same boundary edge. Thus,
$d\cup c$ consists of arcs that commute under $\cup$, so it is the
Weyl-ordered product under $\cdot$, which is the monomial used in the edge
equation.

Now we start from \eqref{eq-ttr-alpha} and insert $1=-q^{-2}(d\cup c)$ and
its inverse,
\begin{equation}
\label{eq-ttr-alpha2}
\ttr_\triang(a)=-q^2(d^{-1}\cup c^{-1})\cdot(c\cup a_-)
-q^2(d\cup c)\cdot(c^{-1}\cup a_+).
\end{equation}
We would like to cancel $c$ with its inverse. This can be done by rewriting the
$\cdot$ using $\cup$. The correction factors in both terms are $1$. This is because
the terms in \eqref{eq-ttr-alpha} are results of splitting. This means the endpoints
and their states appear in pairs on matching boundary triangles. The same is true of
$d\cup c$ and its inverse. Because of the orientation, the matching boundary
triangles give opposite factors in \eqref{eq-new-prod}. Therefore, in
\eqref{eq-ttr-alpha2},
we can replace $\cdot$ with $\cup$ without additional factors. Then after cancelling
$c$ with its inverse, we get $\ttr_\triang(a)=\ttr_\triang(b)$. This shows
$\ttr_\triang(\rideal{B})=0$.
\end{proof}

\subsection{Classical limit}

Now we establish the classical limit, namely the commutativity of the
diagram \eqref{eqn.diagram}. This can be factored as follows.
\begin{equation}
\label{eq-classical-diag}
\begin{tikzcd}
\skein_q(M;\Runiv) \arrow[d,"\otimes\cx"] \arrow[r,"\qtr^q_\triang"] &
\qglue_q(\triang;\Runiv) \arrow[d,"\otimes\cx"] \\
\skein_1(M;\cx) \arrow[d,two heads,"/\sqrt{0}"] \arrow[r,"\qtr^1_\triang"] &
\qglue_1(\triang;\cx) \arrow[d,two heads,"/\sqrt{0}"]\\
\cx[\frameX] \arrow[r,"\tr_\triang"] & \cx[G_\triang]
\end{tikzcd}
\end{equation}
The vertical map $\skein_1(M;\cx)\to\cx[\frameX]$ comes from Lemma~\ref{lem-classical}.
The bottom map $\tr_\triang$ is the classical trace map induced by $G_\triang\to\frameX$
from Proposition~\ref{prop-classical-rep}.

\begin{theorem}
The diagram \eqref{eq-classical-diag} commutes.
\end{theorem}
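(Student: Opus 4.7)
The plan is to break the diagram into its two component squares. The upper square commutes by general naturality: the defining recipe \eqref{eq-qtr-def} for $\qtr^q_\triang$---splitting along $A$-curves, passing to the corner-reduced lantern algebra, and imposing the edge and Lagrangian relations---consists entirely of operations that commute with base change of the coefficient ring. Combined with the universal-coefficient isomorphisms \eqref{eq-skein-ucoef} and Lemma~\ref{lemma-qglue-classical}, this shows that $\qtr^1_\triang$ is literally the specialization of $\qtr^q_\triang$ at $q=1$, which gives commutativity of the top square.

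For the bottom square, first observe that both horizontal arrows are $\cx$-algebra homomorphisms between reduced commutative rings: at $q=1$ the skein relation \eqref{eq-skein} forces $\skein_1(M;\cx)$ to be commutative under disjoint union (see Lemma~\ref{lem-classical}), while $\qglue_1(\triang;\cx)$ is a commutative quotient of a Laurent polynomial ring, and quotienting by nilradicals gives $\qglue_1(\triang;\cx)/\sqrt{0}=\cx[G_\triang]$ essentially by definition of $G_\triang$ as the common zero locus of \eqref{3z} and \eqref{et}. By Lemma~\ref{lem-classical} the algebra $\cx[\frameX]$ is generated over $\cx$ by trace functions $t_K$ of framed knots, so it suffices to verify that $\qtr^1_\triang(K)$ and $\tr_\triang(t_K)$ agree in $\cx[G_\triang]$ for a single framed knot $K\subset M$.

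The verification on a knot is the heart of the argument. I would isotope $K$ into $\surface_\triang$ so that it is transverse to every $A$-curve, and unwind \eqref{eq-qtr-def} at $q=1$. The splitting $\cut_A$ produces a state sum: for each assignment of signs to $K\cap\bigcup_f A_f$ we obtain a product of arc elements in $\bigotimes_j \qglue_1(\lantern_j;\cx)$, and $\qtr^1_\triang(K)$ is the sum over all such assignments. Now fix $z\in G_\triang$ and pass to the representation $\rho_z:\pi_1(\frameM)\to\SL_2(\cx)$ of Proposition~\ref{prop-classical-rep}. Each arc of $K$ between consecutive $A$-intersections lies in some lantern, and the defining relations \eqref{eq-qglue-def} together with Theorem~\ref{thm-lantern} identify its value at $z$ with the corresponding matrix entry of a product of the $\alpha$, $\beta$, $\gamma$ matrices from \eqref{eq-abc-mat}. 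Summing over states then contracts these indices around the closed curve $K$, reproducing $\tr(\rho_z(K))=t_K(\rho_z)=\tr_\triang(t_K)(z)$.

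The principal obstacle is the bookkeeping in this last step: one must track how the framing of $K$ interacts with the section of $\frameM$ from Section~\ref{sec-cell} (so that full rotations in the fiber contribute the expected $-I$), confirm that arcs traversing long/medium/short edges pick up exactly the $\gamma$, $\alpha$, $\beta$ matrices in the correct order, and check that the successive state sums produced by $\cut_A$ realize the standard trace formula $\tr(A_1\cdots A_n)=\sum_{\mu_1,\dots,\mu_n}(A_1)_{\mu_1\mu_2}\cdots(A_n)_{\mu_n\mu_1}$. Once these identifications are in place, no further quantum corrections appear at $q=1$ and the equality becomes matrix arithmetic.
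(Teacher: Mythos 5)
Your plan matches the paper's proof: the top square is handled by the universal-coefficient/base-change argument, and the bottom square is reduced (using commutativity at $q=1$ and the fact that framed knots generate) to a single curve, which is then pushed onto the 1-skeleton so that $\cut_A$ yields exactly the state sum $\sum_{s}m^1_{s_1s_2}\cdots m^k_{s_ks_1}=\tr(M^1\cdots M^k)$ with the $\alpha,\beta,\gamma$ matrices of Proposition~\ref{prop-classical-rep}. The bookkeeping you flag (framing versus the section $s$, and which matrix each edge type contributes) is precisely what the paper carries out via Equations~\eqref{Mist}--\eqref{Mgamma}, so the proposal is correct and essentially identical in approach.
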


\begin{proof}
The top square commutes essentially by definition, so we focus on the bottom square.

As observed in \cite{Le:decomp}, in the stated skein algebra with $q=1$, the element
defined by a tangle is unchanged by crossing changes as well as height reordering on the
boundary. This also means the stated skein algebra is commutative (for both $\cup$ and
$\cdot$ since the correction factor in \eqref{eq-new-prod} is $1$). This is also passed
on to the various quotients, and the diagram \eqref{eq-qtr-def} that defines
$\qtr^1_\triang$ is a commutative diagram of algebras and algebra maps.

This discussion shows that $\qtr^1_\triang$ is an algebra homomorphism, so we can check
the bottom square on algebra generators of $\skein_1(M;\cx)$. Since $\skein_1(M;\cx)$
is a quotient of $\skein_1(\surface_\triang;\cx)$, closed curves on $\surface_\triang$
with vertical framing generates $\skein_1(M;\cx)$. Such a curve is homotopic to a
curve $K$ on the smooth 1-skeleton $\dtrunc^{(1)}$ defined in
Section~\ref{sec-cell}. Thus, it is sufficient to consider curves $K$ of this form.

To calculate $\qtr^1_\triang(K)$, we need to calculate the splitting $\cut_A(K)$. By
definition, $K$ is transverse to all splitting curves $A_f$. Then \eqref{eq-le-split} can
be applied to $K$ since heights do not matter for $q=1$. Choose an arbitrary orientation
of $K$ and write $K=m^1m^2\dotsm m^k$ as a concatenation of edges of
$\dtrunc^{(1)}$. Then
\begin{equation}
\cut_A(K)=\sum_{s_1,\dotsc,s_k=\pm1}m^1_{s_1s_2}m^2_{s_2s_3}\dotsm m^k_{s_ks_1}
=\tr(\qvar{M}^1\qvar{M}^2\dotsm\qvar{M}^k).
\end{equation}
Here, we used the notation for arcs with states assigned from Section~\ref{sec-Oq}, and
\begin{equation}
\qvar{M}^i=\begin{pmatrix}m^i_{++}&m^i_{+-}\\m^i_{-+}&m^i_{--}\end{pmatrix}
\end{equation}
is a $2\times2$ matrix over $\skeincr_1(\lantern;\cx)$, which is mapped to
$\qglue_1(\lantern;\cx)$. If the arc $m^i$ is an $\alpha$ edge, then
\begin{equation}
\label{Mist}  
m^i_{st}=\relarc[]{s}{t},\qquad \qvar{M}^i=\begin{pmatrix}0&1\\-1&0\end{pmatrix} \,.
\end{equation}
If the arc $m^i$ is a $\beta$ edge, then
\begin{equation}
\label{Mbeta}  
m^i_{st}=\relcorner{t}{s},\qquad \qvar{M}^i=
\begin{pmatrix}\iunit&0\\1&-\iunit \end{pmatrix} \,.
\end{equation}
Finally, if the arc $m^i$ is a $\gamma$ edge, then it is a standard arc of the lantern.
By definitions \eqref{eq-qglue-def} and \eqref{eq-qz-def}, after reduced to
$\qglue_1(\lantern;\cx)$,
\begin{equation}
\label{Mgamma}  
\qvar{M}^i=\begin{pmatrix}\qz^{-1}&0\\0&\qz\end{pmatrix}.
\end{equation}
Let $M^i$ be $\qvar{M}^i\bmod\text{nilradical}$, which removes the hat on $\qz$.
Then $M^i$ are exactly the matrices in Proposition~\ref{prop-classical-rep}, so
\begin{equation}
\tr_\triang(t_K)=\tr(M^1M^2\dotsm M^k)=\qtr^1_\triang(K)\bmod\text{nilradical}.
\end{equation}
Thus, the bottom square commutes.
\end{proof}

\subsection{Chebyshev-Frobenius map}
\label{sub.2frob}

In this section we discuss the quantum trace map at roots of unity. To do so,
we need to recall the Chebyshev-Frobenius map on the skein module and on the quantum
torus. $R=\cx$ throughout this section and are omitted from the notations.

Given a framed knot $K\subset M$, let the \term{framed power} $K^{(n)}\subset M$ be the
link consisting of $n$ parallel copies of $K$, obtained by small translation in the
direction of the framing. For a polynomial $f(x)=\sum_{i=0}^n a_ix^i$, the
\term{threading}
of $K$ by $f$ is defined as
\begin{equation}
K^{(f)}=\sum_{i=0}^n a_i K^{(i)}\in\skein(M).
\end{equation}
More generally, the threading of a link $L=K_1\cup\dotsb\cup K_\ell$ by $f$ is defined
as a ``linear extension'' of applying $f(x)$ to every component of $L$
\begin{equation}
  L^{(f)}=\sum_{i_1,\dotsc,i_\ell=0}^n a_{i_1}\dotsm
  a_{i_\ell} K_1^{(i_1)}\cup\dotsb\cup K_\ell^{(i_\ell)}\in\skein(M).
\end{equation}

When $q$ is a root of unity, threading by Chebyshev polynomials has special properties.
The Chebyshev polynomial $T_n(x)\in\ints[x]$ is the family of polynomials given by
\begin{equation}
T_0(x)=2,\qquad T_1(x)=x,\qquad T_n(x)=xT_{n-1}(x)-T_{n-2}(x),\quad n\ge 2.
\end{equation}
Suppose $\zeta\in\cx$ such that $\zeta^4$ is a primitive $N$-th root of unity. Let
$\varepsilon=\zeta^{N^2}$. Then threading links by $T_N$ defines a map
\begin{equation}
\Phi_\zeta:\skein_\varepsilon(M)\to\skein_\zeta(M).
\end{equation}
This is the \term{Chebyshev homomorphism} introduced by \cite{BW:I}, and it can
be defined in the context of punctured bordered surfaces. Note the framed power
$a^{(N)}$ makes sense for a (framed, stated) arc $a$ over $\surface$.

\begin{theorem}[{\cite[Corollary~4.7]{BL}}]\label{thm-Cheb-surface}
Fix a punctured bordered surface $\surface$. Then there exists a unique algebra
homomorphism $\Phi_\zeta:\skein_\varepsilon(\surface)\to\skein_\zeta(\surface)$ (using
the $\cup$-product) such that
\begin{equation}
\Phi_\zeta(a)=a^{(N)}\quad\text{if $a$ is an arc},\qquad
\Phi_\zeta(K)=K^{(T_N)}\quad\text{if $K$ is a knot}.
\end{equation}

Let $e$ be an ideal arc on $\surface$ and $\surface_e$ be the splitting of $\surface$
along $e$. Then $\Phi_\zeta$ is compatible with splitting homomorphisms, that is,
the following diagram commutes.
\begin{equation}
\begin{tikzcd}
\skein_\varepsilon(\surface) \arrow[r,"\cut_e"] \arrow[d,"\Phi_\zeta"] &
\skein_\varepsilon(\surface_e) \arrow[d,"\Phi_\zeta"]\\
\skein_\zeta(\surface) \arrow[r,"\cut_e"] &
\skein_\zeta(\surface_e)
\end{tikzcd}
\end{equation}
\end{theorem}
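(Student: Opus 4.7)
The plan is to follow the strategy pioneered by Bonahon--Wong, adapted by L\^e and by Bonahon--Liu to the stated setting. Uniqueness is immediate: by Theorem~\ref{thm-basis}, stated arcs and framed knots generate $\skein_\varepsilon(\surface)$ as an algebra under $\cup$, so any algebra homomorphism is pinned down by its values on arcs and knots.

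For existence, I would define $\Phi_\zeta$ on the basis of ordered, increasingly stated simple diagrams from Theorem~\ref{thm-basis} by applying $N$-fold parallel to each stated arc component and $T_N$-threading to each closed curve component, then extend $R$-linearly, and verify that the defining relations \eqref{eq-skein}, \eqref{eq-loop}, \eqref{eq-arcs}, \eqref{eq-stex} are preserved. The central technical input is the quantum binomial vanishing at $q=\zeta$: since $\zeta^4$ is a primitive $N$-th root of unity, the Gaussian binomial $\binom{N}{k}_{\zeta^2}$ vanishes for $0<k<N$. Applying the $\varepsilon$-skein relation $N$ times to an $N$-fold parallel crossing yields a sum of diagrams weighted by $q$-binomials that collapses, at $q=\zeta$, to precisely the $\zeta$-skein relation applied to $N$-fold parallels. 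The trivial loop relation reduces to the classical Chebyshev identity $T_N(-u-u^{-1})=-u^N-u^{-N}$ evaluated at $u=q^2$: this gives $T_N(-\zeta^2-\zeta^{-2})=-\zeta^{2N}-\zeta^{-2N}$, matching the coefficient prescribed by the $\varepsilon$-loop relation after applying the map. The trivial arc relations \eqref{eq-arcs} reduce to a direct calculation with $N$-fold parallels of $\pm\pm$ stated arcs, where $q^{-N/2}$ on the $\zeta$ side matches $\varepsilon^{-1/2}=\zeta^{-N^2/2}$ on the $\varepsilon$ side after the threading.

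For compatibility with splitting, I would use that $\cut_e$ applied to an $N$-fold parallel arc meeting $e$ transversely sums over all state assignments at the $N$ new intersection points; this sum reorganizes, again via root-of-unity identities, into the $N$-fold parallel of $\cut_e(a)$ in $\skein_\zeta(\surface_e)$. Since both $\Phi_\zeta$ and $\cut_e$ are algebra homomorphisms with respect to $\cup$, and since splitting commutes with disjoint union of components, it suffices to verify the commuting square on the arc and knot generators, reducing to these local computations.

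The main obstacle I expect is the state exchange relation \eqref{eq-stex} under $N$-fold parallel: the right-hand side involves reordered heights with two specific sign patterns, and its $N$-fold parallel expansion introduces $O(N^2)$ crossings that must be resolved simultaneously, together with a cascade of height exchanges. Showing that the $\zeta$-expansion reduces to exactly the stated right-hand side requires careful bookkeeping with Gaussian binomial identities and the height-exchange relations \eqref{eq-heightex}; this is the step that most genuinely depends on the refinement of Bonahon--Wong's miraculous cancellations due to L\^e, and where the root-of-unity hypothesis is used most delicately.
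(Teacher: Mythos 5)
You should first note that the paper does not prove this statement at all: it is imported verbatim from \cite[Corollary~4.7]{BL}, so there is no internal argument to compare yours against, and the expected justification here is the citation. Judged on its own, your uniqueness argument is fine (a basis diagram from Theorem~\ref{thm-basis} has no crossings, hence is the $\cup$-product of its arc and knot components), and your splitting-compatibility step is the right kind of state-sum computation (the mixed-state cross terms in $\cut_e(a^{(N)})$ must cancel at the root of unity).

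The existence step, however, has a genuine gap. $\Phi_\zeta$ acts on a link component-by-component — thread each knot by $T_N$, take the $N$-th framed power of each arc — whereas the relations \eqref{eq-skein} and \eqref{eq-stex} are local and change the component structure: resolving a crossing merges two components into one, splits one into two, or turns closed components into arcs. Consequently the three diagrams in a single instance of \eqref{eq-skein} are threaded in inequivalent ways, and ``applying the $\varepsilon$-skein relation $N$ times to an $N$-fold parallel crossing'' only compares $N$-parallels, not the $T_N$-threaded images; proving that component-wise threading kills the relation submodule is precisely the content of Bonahon--Wong's miraculous cancellations (equivalently, L\^e's transparency of $T_N$-threaded knots and $N$-powered arcs), and this difficulty is already present for \eqref{eq-skein}--\eqref{eq-loop}, not only for \eqref{eq-stex} where you locate it. Relatedly, ``define $\Phi_\zeta$ on the basis and verify the defining relations'' is not coherent as a construction: a linear map defined on a basis needs no verification; what must be shown is that it is an algebra map whose value on \emph{every} arc and knot (not just basis ones) is the prescribed one, i.e.\ that threading is invariant under isotopy and the defining relations — a global, not local, statement. (A minor point: $T_N(-u-u^{-1})=-u^N-u^{-N}$ holds only for $N$ odd; the loop check does go through for all $N$, but only after the parity analysis with $\varepsilon=\zeta^{N^2}$.) To make this a proof rather than a citation, you would need to import the transparency/centrality results, or follow the route of \cite{BL}, which builds $\Phi_\zeta$ compatibly with the splitting homomorphism and reduces to elementary pieces where it is the known quantum Frobenius on $\skein(\bigon)\cong\Oq$, so that the commuting square is part of the construction rather than checked afterwards.
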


We next recall the analogous map, the \term{Frobenius homomorphism} on a quantum torus.
It is given by
\begin{equation}
\Phi^\qtorus_\zeta:\qtorus_\varepsilon\langle x_1,\dotsc,x_k\rangle
\to\qtorus_\zeta\langle x_1,\dotsc,x_k\rangle,\qquad
\Phi^\qtorus_\zeta(x_i)=x_i^N.
\end{equation}
This map does not require $\zeta$ to be a root of unity, and $N$ could be arbitrary
(but still with $\varepsilon=\zeta^{N^2}$). However, it is only relevant to us when
$\zeta$ and $N$ are as before.

\begin{lemma}
\label{lemma-qglue-Frob}
The Frobenius homomorphism for $\qtorus\langle\qz,qz''\rangle$ maps the Lagrangian
to the Lagrangian.

The Frobenius homomorphism for $\qtorus(\triang)$ maps the edge equations to the
edge equations.

Thus, the Frobenius homomorphisms induce maps
\begin{equation}
\varphi_\zeta:\qglue_\varepsilon(\lantern)\to\qglue_\zeta(\lantern),\qquad
\varphi_\zeta:\qglue_\varepsilon(\triang)\to\qglue_\zeta(\triang).
\end{equation}
\end{lemma}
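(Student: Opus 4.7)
The plan is to verify the two inclusion statements separately and then conclude that $\Phi^\qtorus_\zeta$ descends to the claimed quotient maps. The general technical ingredient I would isolate first is the interaction of $\Phi^\qtorus_\zeta$ with Weyl ordering: since the skew form gives $\langle \vexp{k}, \vexp{k} \rangle_B = 0$, formula \eqref{eq-qtorus-prod} implies $(x^{\vexp{k}})^N = x^{N\vexp{k}}$ inside any quantum torus, and a direct comparison of the normalizing scalars at $\varepsilon$ versus $\zeta$ (using only $\varepsilon = \zeta^{N^2}$) shows that $\Phi^\qtorus_\zeta$ sends the Weyl-ordered monomial $x^{\vexp{k}}$ at $\varepsilon$ to $x^{N\vexp{k}}$ at $\zeta$; equivalently, to the $N$-th power, taken inside $\qtorus_\zeta$, of the Weyl-ordered monomial $x^{\vexp{k}}$ at $\zeta$.

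For the Lagrangian I would set $u = \qz^{-2}$ and $v = (\qz'')^2$; a short computation using $\qz''\qz = q\qz\qz''$ gives $uv = q^4 vu$, and the Lagrangian relator becomes $u + v - 1$. Its image under $\Phi^\qtorus_\zeta$ is $u^N + v^N - 1$. Passing to the left quotient by $u+v-1$ in $\qtorus_\zeta\langle \qz, \qz''\rangle$, one has $(u+v)^N = 1^N = 1$, and the $Q$-binomial theorem with $Q = \zeta^4$ yields $(u+v)^N = u^N + v^N$, since the vanishing $\binom{N}{k}_Q = 0$ for $0 < k < N$ is precisely the hypothesis that $\zeta^4$ is a primitive $N$-th root of unity. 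Hence $u^N + v^N - 1$ lies in $\lideal{u+v-1}$, as required.

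For the edge relator $m_e + \varepsilon^2$ with $m_e = \prod_{T \ni e} \qz_T^\Box$ Weyl-ordered at $\varepsilon$, the preliminary fact yields $\Phi^\qtorus_\zeta(m_e) = (m_e)^N$ inside $\qtorus_\zeta(\triang)$, so the image of the relator is $(m_e)^N + \varepsilon^2$. Modulo the right ideal generated by $m_e + \zeta^2$, the element $m_e$ becomes $-\zeta^2$, and everything reduces to the scalar identity $(-\zeta^2)^N = -\varepsilon^2$, which I expect to be the main, and essentially only, obstacle. Rearranging, it becomes $(-\xi)^{N-1} = 1$ where $\xi = \zeta^{2N} \in \{\pm 1\}$; letting $d$ denote the order of $\zeta$, the assumption that $\zeta^4$ has order $N$ forces $d \in \{N, 2N, 4N\}$. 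The first two possibilities both require $N$ odd and give $\xi = 1$, while the third gives $\xi = -1$, so $(-\xi)^{N-1} = 1$ in every case.

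Once these two inclusions are in hand, $\Phi^\qtorus_\zeta$ descends automatically to the claimed maps $\varphi_\zeta : \qglue_\varepsilon(\lantern) \to \qglue_\zeta(\lantern)$ and $\varphi_\zeta : \qglue_\varepsilon(\triang) \to \qglue_\zeta(\triang)$ via the presentations in \eqref{eq-qglueL-elim-y} and \eqref{eq-qglue-triang}. The auxiliary lantern relator $\qz^2 - \qy^2$ is trivially preserved, because $\qz$ and $\qy$ commute, so $(\qz^2)^N - (\qy^2)^N$ is divisible by $\qz^2 - \qy^2$ in the usual commutative sense.
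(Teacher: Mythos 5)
Your proposal is correct and follows essentially the same route as the paper: the $q$-binomial theorem at the root of unity $\omega=\zeta^4$ handles the Lagrangian relator, and the fact that the Frobenius map sends Weyl-ordered monomials to their $N$-th powers together with the scalar identity $(-\zeta^2)^N=-\varepsilon^2$ handles the edge relators, with the descent then following from the left/right ideal bookkeeping. You simply supply more detail than the paper does (the normalization check for Weyl ordering, the case analysis behind $(-\zeta^2)^N=-\varepsilon^2$, which the paper calls a ``simple check,'' and the extra lantern relator $\qz^2-\qy^2$), all consistent with the paper's argument.
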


\begin{proof}
For the Lagrangian equations, we use the following form of the $q$-binomial theorem
at roots of
unity: if $XY=\omega YX$ for a root of unity $\omega$ with order $N$, then the
quantum binomial theorem and the vanishing of the quantum binomial at roots of unity
implies that
\begin{equation}
(X+Y)^N=X^N+Y^N \,.
\end{equation}
Apply this to $X=\qz^{-2}$, $Y=(\qz'')^2$, and $\omega=\zeta^4$, we get
\begin{equation}
\Phi^\qtorus_\zeta(\qz^{-2}+(\qz'')^2)=\qz^{-2N}+(\qz'')^{2N}
=(\qz^{-2}+(\qz'')^2)^N=1.
\end{equation}

For the edge equations, this is a standard algebraic calculation. First, a simple
check shows that
\begin{equation}
\label{eq-zeta-power}
(-\zeta^2)^N=-\varepsilon^2.
\end{equation}
Now, $\Phi^\qtorus_\zeta$ sends a Weyl-ordered monomial to its $N$-th power, so
$(\text{edge})=-\varepsilon^2$ is sent to $(\text{edge})^N=-\varepsilon^2$, which is
the correct equation by \eqref{eq-zeta-power}.

After verifying that all calculations are compatible with multiplication on the
appropriate sides, we get the map $\varphi_\zeta$.
\end{proof}

As mentioned in Remark~\ref{remark-ichoice}, here we actually need to be careful about
the choice of the 4-th root of unity $\iunit$. By \eqref{eq-zeta-power},
\begin{equation}
\label{eq-iprime}
\iunit'=(\iunit\zeta)^N/\varepsilon
\end{equation}
is a primitive 4-th root of unity, but it could be either $\pm\iunit$. We use $\iunit$
for $q=\zeta$ and $\iunit'$ for $q=\varepsilon$.

In the rest of the section we give a proof of Theorem~\ref{thm.frob}. 
Recall that to split a surface along a closed curve, we remove 3 points and split
along the three ideal arcs. Let $\surface'_\triang$ be $\surface_\triang$ with three
points removed for each $A$-curve, and let $\cut'_A$ be the splitting along ideal
$A$-arcs for $\surface'_\triang$. Then by definition \eqref{eq-splitcr}, the following
diagram commutes.
\begin{equation}
\begin{tikzcd}
\skein(\surface'_\triang) \arrow[r,"\cut'_A"] \arrow[d,two heads] &
\bigotimes_{j=1}^N\skein(\lantern_j) \arrow[d,two heads] \\
\skein(\surface_\triang) \arrow[r,"\cut_A"] &
\bigotimes_{j=1}^N\skeincr(\lantern_j)
\end{tikzcd}
\end{equation}

Expand the diagram \eqref{eq-Cheb-Frob} using the definition of the quantum trace map
and replace $\cut_A$ by $\cut'_A$ using the diagram above. Then we need to show that
the following diagram commutes.
\begin{equation}
\begin{tikzcd}
\skein_\varepsilon(M) \arrow[d,"\Phi_\zeta"] &
\skein_\varepsilon(\surface'_\triang) \arrow[l,two heads] \arrow[r,"\cut'_A"]
\arrow[d,"\Phi_\zeta"] &
\bigotimes_{j=1}^N\skein_\varepsilon(\lantern_j) \arrow[r,two heads]
\arrow[d,"\otimes\Phi_\zeta"] &
\bigotimes_{j=1}^N\qglue_\varepsilon(\lantern_j) \arrow[r,two heads]
\arrow[d,"\otimes\varphi_\zeta"] &
\qglue_\varepsilon(\triang) \arrow[d,"\varphi_\zeta"] \\
\skein_\zeta(M) &
\skein_\zeta(\surface'_\triang) \arrow[l,two heads] \arrow[r,"\cut'_A"] &
\bigotimes_{j=1}^N\skein_\zeta(\lantern_j) \arrow[r,two heads] &
\bigotimes_{j=1}^N\qglue_\zeta(\lantern_j) \arrow[r,two heads] &
\qglue_\zeta(\triang)
\end{tikzcd}
\end{equation}
The first and the last squares commute essentially by definition. The second square
commutes by Theorem~\ref{thm-Cheb-surface}.
For the remaining third square, we show that it commutes on each tensor factor. By
definition, if $a$ is a simple arc diagram whose endpoints are on different boundary
edges, then the framed power $a^{(N)}$ agrees with the algebraic power $a^N$. Then by the
choice \eqref{eq-iprime}, $\Phi_\zeta$ respects the corner reductions \eqref{eq-bad} and
\eqref{eq-crdef}, so we can replace $\skein_\bullet(\lantern_j)$ with
$\skeincr_\bullet(\lantern_j)$ in the diagram. Then by Lemma~\ref{lemma-cr-span},
$\skeincr_\bullet(\lantern)$ is spanned by products of arcs with endpoints on different
boundary edges, so $\Phi_\zeta$ is given by the algebraic power again. Moreover, each arc
is either a generator of $\qglue(\lantern_j)$ or $0$, so $\Phi_\zeta$ matches
$\varphi_\zeta$ on this spanning set. This proves the commutativity of the third square.

This completes the proof of Theorem~\ref{thm.frob}.
\qed


\section{Computational aspects}
\label{sec.compute}

In this section we discuss how to compute the quantum trace map ~\eqref{defqtr}
using the methods of \texttt{SnapPy}, following the pioneering ideas of Thurston.
This method uses as input triangulations of 3-manifolds~\cite{snappy}. The data of
a triangulation $\calT$ encoded in \texttt{SnapPy} allows one to define the 
lantern surface and to describe the peripheral curves on it. In a future addition
to these methods, one may add framed links in the complement of a knot and trace them
to the ideal triangulation of the knot complement, and then to the lantern surface
giving an effective computation of the quantum trace map. 

\subsection{Cusp diagram and dual surface}

In this section we discuss how to obtain the lantern surface from an ideal triangulation
encoded in \texttt{SnapPy}.

Given an ideal triangulation $\triang$ of $M$, the cell decomposition $\dtrunc$ using
double truncation restricts to a cell decomposition $\dtrunc_\partial$ of the boundary
$\partial M$. From Figure~\ref{fig-doubly}, we see that $\partial M$ is glued from
the vertex hexagons $(\gamma^{-1}\beta)^3$, the base polygons of edge prisms $\gamma^n$,
and the circles $\beta^2$.

In \texttt{SnapPy}, the tetrahedra in ideal triangulations are only truncated once at the
vertices but not the edges. In this case, the boundary $\partial M$ is given by a
triangulation $\lambda$. We can easily truncate $\lambda$ to obtain $\dtrunc_\partial$.
The triangles of $\lambda$ become the vertex hexagons of $\dtrunc_\partial$, and the
corners of the triangles correspond to $\gamma$ edges. The truncated edges of $\lambda$
can be doubled into $\beta$ edges if needed. See Figure~\ref{fig-cusp-trunc}.

\begin{figure}[htpb!]
\centering
\begin{tikzpicture}
\tikzmath{\r=2;\s=0.4;\t=\r/2-\s;}
\draw (0,0) -- (30:\r) -- (90:\r) -- (150:\r) -- cycle -- (90:\r);
\fill[blue,opacity=0.5] (0,0) -- (30:\s)
arc[radius=\s,start angle=30,end angle=150] (90:\r) -- ++(-30:\s)
arc[radius=\s,start angle=-30,end angle=-150] (30:\r) -- ++(150:\s)
arc[radius=\s,start angle=150,delta angle=60] (150:\r) -- ++(30:\s)
arc[radius=\s,start angle=30,delta angle=-60];
\draw[blue] (30:\s) arc[radius=\s,start angle=30,end angle=150] (90:\r) ++(-30:\s)
arc[radius=\s,start angle=-30,end angle=-150] (30:\r) ++(150:\s)
arc[radius=\s,start angle=150,delta angle=60] (150:\r) ++(30:\s)
arc[radius=\s,start angle=30,delta angle=-60];
\begin{scope}[red]
\draw (90:{\r/2}) circle[x radius=\t/3,y radius=\t];
\draw (30:\s) [rotate=-60]
arc[x radius=\t/3,y radius=\t,start angle=-90,delta angle=-180];
\draw (90:\r) ++ (-150:\s) [rotate=-60]
arc[x radius=\t/3,y radius=\t,start angle=90,delta angle=-180];
\draw (150:\s) [rotate=60]
arc[x radius=\t/3,y radius=\t,start angle=-90,delta angle=180];
\draw (90:\r) ++ (-30:\s) [rotate=60]
arc[x radius=\t/3,y radius=\t,start angle=90,delta angle=180];
\end{scope}
\end{tikzpicture}
\caption{The truncated triangulation of the cusp.}\label{fig-cusp-trunc}
\end{figure}
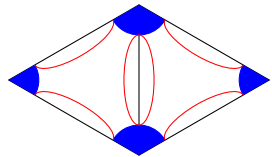

Now compare $\partial M$ with the dual surface $\surface_\triang$, shown in
Figure~\ref{fig-smooth-skel}. They share the vertex hexagons and the $\beta^2$ circles.
While $\partial M$ has the bases of edge prisms, $\surface_\triang$ has the sides of the
edge prisms, which are the annuli between parallel $B$-curves. This shows that
$\partial M$ can be obtained from $\surface_\triang$ by surgery along all $B$-curves.

Conversely, we can reverse the surgery. This also recovers the decorations on
$\surface_\triang$ with some additional bookkeeping which \texttt{SnapPy} does. By
definition, the reverse surgery truncates the vertices of $\lambda$ and pair the
boundary circles by gluing in annuli. Clearly, the $B$-curves are just the boundary
circles after truncation. The $A$-curves are homotopic to $(\alpha\beta)^3$ in the
1-skeleton $\dtrunc^{(1)}$. $\alpha$ edges are just transverse arcs of the inserted
annuli, and $\beta$ edges are homotopic to truncated edges of $\lambda$. Note the ends
of edges in $\lambda$ correspond to ends of edges in $\triang$. \texttt{SnapPy}
keeps track of this correspondence, which determines which triples of truncated edges
connect to form the $A$-circles.

It is convenient to describe $\surface_\triang$ by splitting into lanterns so that the
diagrams are planar and that the quantum trace can be calculated. This can be obtained
from $\lambda$ by
\begin{enumerate}
\item
  truncating the vertices to create the $\gamma$ edges,
\item
  splitting along the truncated edges, resulting in the vertex hexagons, and
\item
  gluing the hexagons along $\gamma$ edges, which become standard arcs on the lantern.
\end{enumerate}
In step (2), the pairings of the truncated edges after splitting needs to be recorded
so that the lanterns can glue back together.

\texttt{SnapPy} also calculates the meridian and longitude, given as cycles in the
dual 1-skeleton of $\lambda$. This means the curves intersect edges of $\lambda$
transversely and do not go through vertices of $\lambda$. Then the steps above also
draw the curves on the lanterns.

The second author has implemented the lantern surface of a \texttt{SnapPy} triangulation
as a python module. 

\subsection{Example: The $4_1$ knot}
\label{sub.41}

In this section we illustrate the discussion of the previous section, and the
computation of the quantum trace map with the default triangulation $\triang$ of
the $4_1$ knot with isometry signature \texttt{cPcbbbiht\_BaCB}. This is a triangulation
with two tetrahedra $T_0$ and $T_1$ shown in Figure~\ref{fig-41-triang} using the
convention from Figure~\ref{fig-label-tetra}. There are 4 face pairings labeled A,B,C,D.

\begin{figure}[htpb!]
\centering
\begin{tikzpicture}
\tikzmath{\r=1;\s=0.6;}
\draw (0,0) circle(\r) node[anchor=-150]{3};
\draw (0,0) -- (-150:\r)node[anchor=30]{1} (0,0) -- (-30:\r)
node[anchor=150]{0} (0,0) -- (90:\r)node[above]{2};
\path[-o-={0.5}{>}] (0,0) -- (90:\r);
\path[-o-={0.5}{>}] (-30:\r) arc[radius=\r,start angle=-30,delta angle=120];
\path[-o-={0.5}{>}] (-30:\r) arc[radius=\r,start angle=-30,delta angle=-120];
\path[-o-={0.5}{>>}] (-30:\r) -- (0,0);
\path[-o-={0.5}{>>}] (-150:\r) -- (0,0);
\path[-o-={0.5}{>>}] (-150:\r) arc[radius=\r,start angle=-150,delta angle=-120];
\path (0,-\r-0.5)node{$T_0$} (30:\s)node{B} (150:\s)node{A} (-90:\s)
node{C} (45:\r+0.4)node{D};
\begin{scope}[xshift={\r*4cm}]
\draw (0,0) circle(\r) node[anchor=-150]{3};
\draw (0,0) -- (-150:\r)node[anchor=30]{1} (0,0) -- (-30:\r)
node[anchor=150]{0} (0,0) -- (90:\r)node[above]{2};
\path[-o-={0.5}{>}] (90:\r) -- (0,0);
\path[-o-={0.5}{>}] (90:\r) arc[radius=\r,start angle=90,delta angle=-120];
\path[-o-={0.5}{>}] (-150:\r) arc[radius=\r,start angle=-150,delta angle=120];
\path[-o-={0.5}{>>}] (-30:\r) -- (0,0);
\path[-o-={0.5}{>>}] (-150:\r) -- (0,0);
\path[-o-={0.5}{>>}] (-150:\r) arc[radius=\r,start angle=-150,delta angle=-120];
\path (0,-\r-0.5)node{$T_1$} (30:\s)node{D} (150:\s)node{A} (-90:\s)node{C} (45:\r+0.4)
node{B};
\end{scope}
\end{tikzpicture}
\caption{\texttt{SnapPy} triangulation of the $4_1$ knot.}
\label{fig-41-triang}
\end{figure}
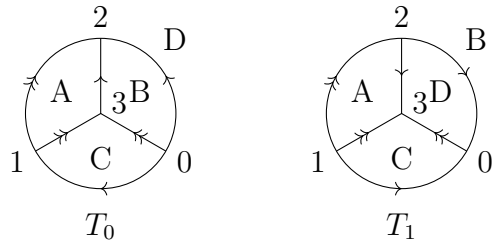

The triangulation $\lambda$ of the cusp calculated by \texttt{SnapPy} is shown in
Figure~\ref{fig-41-cusp} after truncation. The triangle near vertex $i$ in $T_j$ is
indexed by $i+4j$. The edge numbering is also included for the lantern diagram later.
The labels 1,2 at truncated vertices refer to the edges of $\triang$ corresponding to
single or double arrows in Figure~\ref{fig-41-triang}. The meridian $\mu$ and
longitude $\lambda$ (simplified by twisting twice around $\mu$) of the $4_1$ knot are
also included.

\begin{figure}[htpb!]
\centering
\begin{tikzpicture}
\tikzmath{\r=2;\s=0.4;\t=0.6;\e=0.8;\c=\r-\e;}
\path (0,0)coordinate(b-1) +(120:\r)coordinate(t-1)
++(\r,0)coordinate(b-2) +(120:\r)coordinate(t-2)
++(\r,0)coordinate(b1) +(120:\r)coordinate(t1)
++(\r,0)coordinate(b2) +(120:\r)coordinate(t2)
++(\r,0)coordinate(b-1r) +(120:\r)coordinate(t-1r);
\draw[red] (b-1) -- (b-1r) -- (t-1r) -- (t-1) -- cycle
(b-1) -- (t-2) -- (b-2) -- (t1) -- (b1) -- (t2) -- (b2) -- (t-1r);
\path[red,every node/.style={font=\scriptsize,inner sep=1pt}]
(b-1) +(120:\e)node[anchor=30]{9} +(60:\c)node[anchor=-30]{4}
(b-2) +(120:\e)node[anchor=30]{8} +(60:\c)node[anchor=-30]{10}
(b1) +(120:\e)node[anchor=30]{5} +(60:\c)node[anchor=-30]{11}
(b2) +(120:\e)node[anchor=30]{0} +(60:\c)node[anchor=-30]{3}
(b-1r) +(120:\e)node[anchor=30]{9}
foreach \p/\a in {b-1/below,t-1/above} {(\p) ++(\r/2,0)node[\a]{2} ++(\r,0)
  node[\a]{1} ++(\r,0)node[\a]{7} ++(\r,0)node[\a]{6}};
\draw[blue,every node/.style={circle,fill=white,font=\small,inner sep=2pt}]
(t-1)node{$1$} ++(-60:\s) -- ++(60:\s)
(b-1)node{$1$} ++(120:\s) -- ++(0:\s) -- ++(-60:\s)
(t-1r)node{$1$} ++(-60:\s) -- ++(180:\s) -- ++(120:\s)
(b-1r)node{$1$} ++(120:\s) -- ++(-120:\s)
(t-2)node{$2$} ++(180:\s) -- ++(-60:\s) -- ++(0:\s) -- ++(60:\s)
(b-2)node{$2$} ++(180:\s) -- ++(60:\s) -- ++(0:\s) -- ++(-60:\s)
(t1)node{$1$} ++(180:\s) -- ++(-60:\s) -- ++(0:\s) -- ++(60:\s)
(b1)node{$1$} ++(180:\s) -- ++(60:\s) -- ++(0:\s) -- ++(-60:\s)
(t2)node{$2$} ++(180:\s) -- ++(-60:\s) -- ++(0:\s) -- ++(60:\s)
(b2)node{$2$} ++(180:\s) -- ++(60:\s) -- ++(0:\s) -- ++(-60:\s);
\path (barycentric cs:t-1r=1,b2=1,b-1r=1)node{0}
(barycentric cs:t2=1,b1=1,b2=1)node{1}
(barycentric cs:t1=1,b-2=1,b1=1)node{2}
(barycentric cs:t-2=1,b-1=1,b-2=1)node{3}
(barycentric cs:t-1=1,b-1=1,t-2=1)node{6}
(barycentric cs:t-2=1,b-2=1,t1=1)node{7}
(barycentric cs:t1=1,b1=1,t2=1)node{4}
(barycentric cs:t2=1,b2=1,t-1r=1)node{5};
\draw[blue,-o-={0.6}{>}] (b-1r) ++(120:\s) -- ++(-120:\s);
\draw[blue,-o-={0.6}{>}] (b1) ++(60:\s) -- ++(-60:\s);
\draw[blue,-o-={0.7}{>>}] (b-2) ++(120:\s) -- ++(-120:\s);
\draw[blue,-o-={0.7}{>>}] (b2) ++(180:\s) -- ++(60:\s);
\draw[->] (b2) ++(-\t,-0.3) -- ++(0,0.3)
arc[radius={\r-\t},start angle=0,end angle=60]
arc[radius=\t,start angle=-120,end angle=-180] -- ++(0,0.3)
node[right]{$\mu$};
\draw[->] (b-1) ++(120:{\r/2}) ++(-0.3,0) -- ++({4*\r+0.6},0)
node[anchor=-135]{$\lambda-2\mu$};
\end{tikzpicture}
\caption{Cusp diagram of the $4_1$ knot.}\label{fig-41-cusp}
\end{figure}
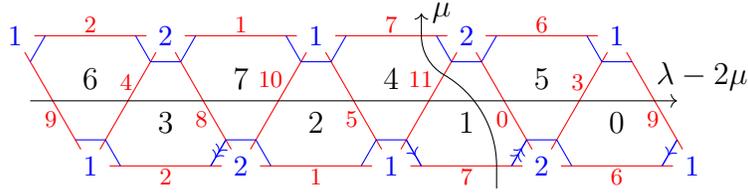

Now we can convert the cusp diagram to the dual surface $\surface_\triang$ by gluing
the truncated vertices of the cusp. To make sure the truncated edges of the cusp
match correctly to form the $A$-circles, we mark the arcs around the 01 and 13 edges
in $T_0$ with arrows in Figure~\ref{fig-41-triang}, which can be continued to obtain
all other pairings.

Next, we split $\surface_\triang$ along $A$-circles to obtain lantern diagrams. We
can determine all pairings of the blue edges using the markings in
Figure~\ref{fig-41-cusp}
or using additional \texttt{SnapPy} data not shown in the figure. After splitting the
red edges and gluing the blue edges, we obtain Figure~\ref{fig-41-lantern}. Here, the
orientations of the blue standard arcs are induced from Figure~\ref{fig-41-cusp}.

\begin{figure}[htpb!]
\centering
\begin{tikzpicture}[baseline=(ref.base)]
\tikzmath{\r=1.5;\s=0.3;\d=0.45;\e=\r*0.6;}
\fill[gray!20] (0,0)circle(\r+0.8);
\begin{scope}[thick,blue,radius=\r]
\draw[-o-={0.5}{>}] (90:\r) arc[start angle=90,delta angle=120];
\draw[-o-={0.5}{>}] (-150:\r) arc[start angle=-150,delta angle=120];
\draw[-o-={0.5}{>>}] (90:\r) arc[start angle=90,delta angle=-120];
\draw[-o-={0.5}{>}] (0,0) -- (90:\r);
\draw[-o-={0.5}{>>}] (0,0) -- (-150:\r);
\draw[-o-={0.5}{>>}] (-30:\r) -- (0,0);
\end{scope}
\draw[thick,red,fill=white,radius=\s] (90:\r)circle node{1} (-30:\r)
circle node{2} (-150:\r)circle node{3} (0,0)circle node{0};
\path[red,font=\scriptsize] (90:\r) +(90:\d)node{3} +(-45:\d)node{4} +(-135:\d)node{5}
(-30:\r) +(-30:\d)node{6} +(-165:\d)node{7} +(105:\d)node{8}
(-150:\r) +(-150:\d)node{9} +(75:\d)node{10} +(-15:\d)node{11}
(0,0) +(-90:\d)node{0} +(150:\d)node{1} +(30:\d)node{2};
\path (30:\e)node{3} (150:\e)node{2} (-90:\e)node{1} (45:\r+0.5)node{0};
\path (-90:\r+1.2)node{$\lantern_0$};
\begin{scope}[xshift=\r*4cm]
\fill[gray!20] (0,0)circle(\r+0.8);
\begin{scope}[thick,blue,radius=\r]
\draw[-o-={0.5}{>}] (-30:\r) arc[start angle=-30,delta angle=-120];
\draw[-o-={0.5}{>}] (-150:\r) arc[start angle=-150,delta angle=-120];
\draw[-o-={0.5}{>>}] (90:\r) arc[start angle=90,delta angle=-120];
\draw[-o-={0.5}{>}] (90:\r) -- (0,0);
\draw[-o-={0.5}{>>}] (0,0) -- (-150:\r);
\draw[-o-={0.5}{>>}] (-30:\r) -- (0,0);
\end{scope}
\draw[thick,red,fill=white,radius=\s] (90:\r)circle node{5} (-30:\r)
circle node{6} (-150:\r)circle node{7} (0,0)circle node{4};
\path[red,font=\scriptsize] (90:\r) +(90:\d)node{11} +(-45:\d)node{10} +(-135:\d)
node{9} (-30:\r) +(-30:\d)node{7} +(-165:\d)node{6} +(105:\d)node{8}
(-150:\r) +(-150:\d)node{5} +(75:\d)node{4} +(-15:\d)node{3}
(0,0) +(-90:\d)node{0} +(150:\d)node{2} +(30:\d)node{1};
\path (30:\e)node{7} (150:\e)node{6} (-90:\e)node{5} (45:\r+0.5)node{4};
\path (-90:\r+1.2)node{$\lantern_1$};
\end{scope}
\node (ref) at (0,0){\phantom{$-$}};
\end{tikzpicture}
\caption{Lantern diagram for the $4_1$ knot.}\label{fig-41-lantern}
\end{figure}
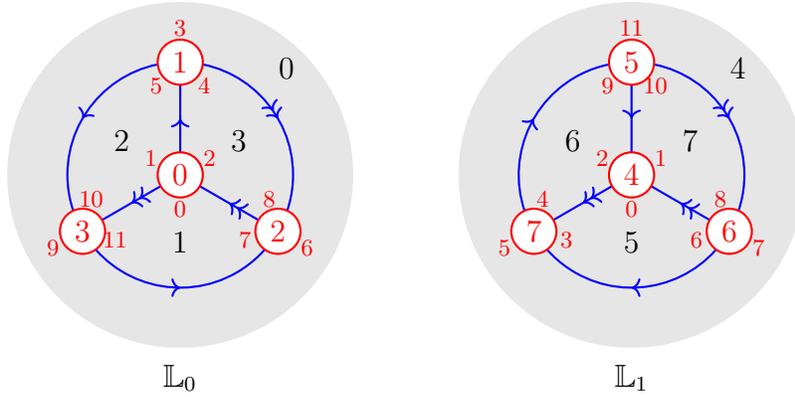

The meridian $\mu$ and the longitude $\lambda$ are shown in the subsequent
Figures~\ref{fig-41-meri} and \ref{fig-41-long}, where the labels are omitted to
avoid clutter.

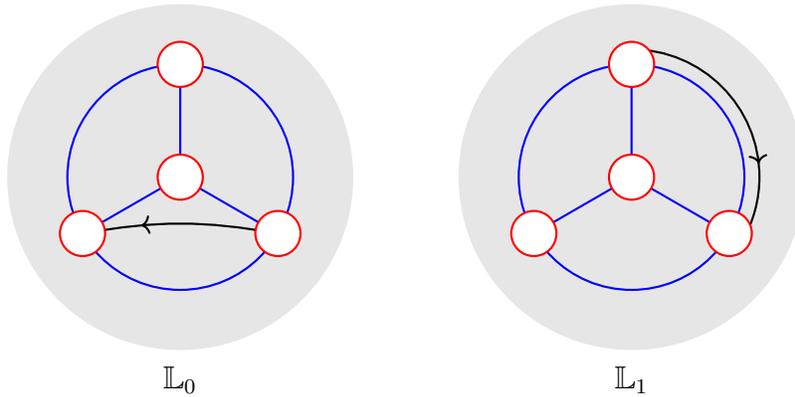
\begin{figure}[htpb!]
\centering
\begin{tikzpicture}[baseline=(ref.base),thick]
\tikzmath{\r=1.5;\s=0.3;\d=0.45;\e=\r*0.6;}
\fill[gray!20] (0,0)circle(\r+0.8);
\draw[blue] (0,0)circle(\r) foreach \t in {90,-30,-150} {(0,0) -- (\t:\r)};
\draw[-o-={0.7}{>}] (-30:\r) to[out=170,in=10] (-150:\r);
\draw[red,fill=white] (90:\r)circle(\s) (-30:\r)circle(\s) (-150:\r)
circle(\s) (0,0) circle(\s);
\path (-90:\r+1.2)node{$\lantern_0$};
\begin{scope}[xshift=\r*4cm]
\fill[gray!20] (0,0)circle(\r+0.8);
\draw[blue] (0,0)circle(\r) foreach \t in {90,-30,-150} {(0,0) -- (\t:\r)};
\draw[-o-={0.7}{>}] (90:\r+0.2) arc[radius=\r+0.2,start angle=90,end angle=-30];
\draw[red,fill=white] (90:\r)circle(\s) (-30:\r)circle(\s) (-150:\r)
circle(\s) (0,0)circle(\s); \path (-90:\r+1.2)node{$\lantern_1$};
\end{scope}
\node (ref) at (0,0){\phantom{$-$}};
\end{tikzpicture}
\caption{Meridian of the $4_1$ knot.}\label{fig-41-meri}
\end{figure}

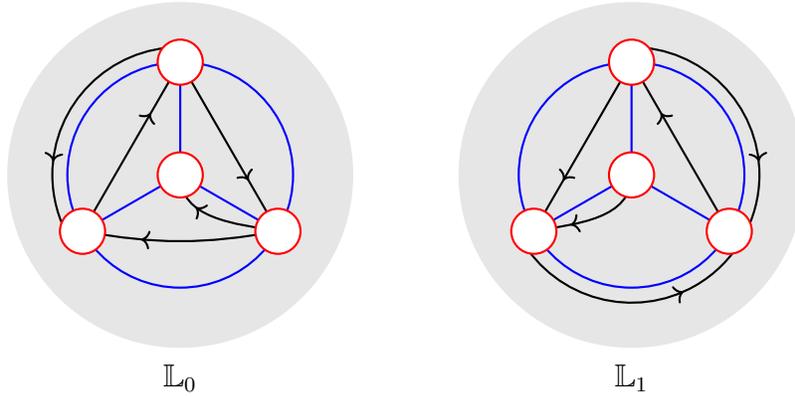
\begin{figure}[htpb!]
\centering
\begin{tikzpicture}[baseline=(ref.base),thick]
\tikzmath{\r=1.5;\s=0.3;\d=0.45;\e=\r*0.6;}
\fill[gray!20] (0,0)circle(\r+0.8);
\draw[blue] (0,0)circle(\r) foreach \t in {90,-30,-150} {(0,0) -- (\t:\r)};
\draw[-o-={0.7}{>}] (-30:\r) to[out=-170,in=-10] (-150:\r);
\draw[-o-={0.7}{>}] (-30:\r) to[out=170,in=-90] (0,0);
\draw[-o-={0.7}{>}] (90:\r) -- (-30:\r);
\draw[-o-={0.7}{>}] (-150:\r) -- (90:\r);
\draw[-o-={0.7}{>}] (90:\r+0.2) arc[radius=\r+0.2,start angle=90,delta angle=120];
\draw[red,fill=white] (90:\r)circle(\s) (-30:\r)circle(\s) (-150:\r)
circle(\s) (0,0)circle(\s);
\path (-90:\r+1.2)node{$\lantern_0$};
\begin{scope}[xshift=\r*4cm]
\fill[gray!20] (0,0)circle(\r+0.8);
\draw[blue] (0,0)circle(\r) foreach \t in {90,-30,-150} {(0,0) -- (\t:\r)};
\draw[-o-={0.7}{>}] (0,0) to[out=-90,in=10] (-150:\r);
\draw[-o-={0.7}{>}] (-30:\r) -- (90:\r);
\draw[-o-={0.7}{>}] (90:\r) -- (-150:\r);
\draw[-o-={0.7}{>}] (90:\r+0.2) arc[radius=\r+0.2,start angle=90,delta angle=-120];
\draw[-o-={0.7}{>}] (-150:\r+0.2) arc[radius=\r+0.2,start angle=-150,delta angle=120];
\draw[red,fill=white] (90:\r)circle(\s) (-30:\r)circle(\s) (-150:\r)
circle(\s) (0,0)circle(\s);
\path (-90:\r+1.2)node{$\lantern_1$};
\end{scope}
\node (ref) at (0,0){\phantom{$-$}};
\end{tikzpicture}
\caption{Longitude of the $4_1$ knot.}\label{fig-41-long}
\end{figure}

Finally, we calculate $\qtr_\triang(\mu)$ as an example. To interpret
Figure~\ref{fig-41-meri} as the diagram of $\mu\in\skein(\surface_\triang)$ after
splitting, we need to add punctures to the boundaries of $\lantern_j$. The exact
positions of the punctures do not matter, as long as they are consistent between
lanterns. We make the choice that the punctures on edges 7 and 11 are close to edges
6 and 9 respectively. This makes the arc $\mu^1=\mu\cap\lantern_1$
standard, but the arc $\mu^0=\mu\cap\lantern_0$ is not. In the state sum of the
splitting, if the endpoints of $\mu^1$ are assigned opposite states, then the diagram
is zero in $\qglue(\lantern_1)$. Thus,
\begin{equation}
\qtr_\triang(\mu)=\sum_{s\in\{\pm\}}\mu^0_{ss}\otimes\mu^1_{ss}.
\end{equation}
To express this in terms of quantized shape variables, we adopt the convention that
$\qz_j$ is associated to the $01$ edge in $T_j$. In lantern diagrams, it corresponds to
the standard arc connecting boundary circles 2,3 (shifted by $4j$). In this convention,
$\mu^1_{ss}=(\qz''_1)^{-s}$. For $\mu^0_{ss}$, we can use Corollary~\ref{cor-tw} to twist
it into standard arcs, giving $\mu^0_{ss}=\qz_0^s$. Therefore,
\begin{equation}
\qtr_\triang(\mu)=\qz_0(\qz''_1)^{-1}+\qz_0^{-1}\qz''_1.
\end{equation}

\subsection*{Acknowledgements}

The authors wish to thank Francis Bonahon, Tudor Dimofte and Thang L\^e for
enlightening conversations.


\appendix

\section{Proof of Theorem~\ref{thm-stdA}}
\label{sec-stdA-iso}

Throughout this section, $\surface$ is a surface with triangular boundary. All
diagrams are positively ordered in this section, so the height order will be omitted.

The only nontrivial part is that $\phi_a$ is bijective. We do so by identifying a
basis of $\skeincr(\annulus)$. Draw the standard annulus as in Figure~\ref{fig-ann-cut},
where the top and the bottom are identified. The dashed (ideal) arc is determined by
the choice of $a$, and it will be used to describe the basis.

\begin{figure}[htpb!]
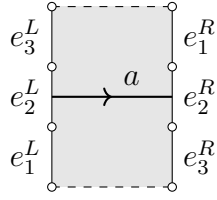

\centering
$\begin{anndiag}{2}{3}
\draw (0,0) -- (0,3);
\foreach \y in {0,1,2,3}
\draw[fill=white] (0,\y)circle(0.07);
\foreach \i in {1,2,3}
\path (2,{3.5-\i}) node[right]{$e^R_\i$} (0,{\i-0.5}) node[left]{$e^L_\i$};
\draw[thick] (0,1.5) -- (2,1.5);
\path[tips,thick,->] (0,1.5) -- (1,1.5) node[above right]{$a$};
\end{anndiag}$
\caption{Labelling the standard annulus.}\label{fig-ann-cut}
\end{figure}

We say a tangle diagram on $\annulus$ is \term{normal} if it is in general position
with the dashed arc. A normal isotopy is an isotopy within the class of normal diagrams.
Then two normal isotopy classes of tangle diagrams represent isotopic tangles if and
only if they are related by the Reidemeister moves and the following moves
\begin{equation}
\label{eq-stdA-dashcup}
\text{(RII'):}\quad
\begin{linkdiag}[1]
\fill[gray!20] (0,0) rectangle (1,2);
\draw[dashed] (0,0) -- (1,0) (0,2) -- (1,2);
\draw[thick] (0,0.3)..controls +(0.4,0)..(0.4,0)
(0,0.5)..controls +(0.6,0)..(0.6,0)
(0.4,2)..controls +(0,-0.4) and +(0,-0.4)..(0.6,2)
(0,0.75) -- +(1,0) (0,1.25) -- +(1,0) (0.8,1)node[rotate=90]{...};
\end{linkdiag}
=
\begin{linkdiag}[1]
\fill[gray!20] (0,0) rectangle (1,2);
\draw[dashed] (0,0) -- (1,0) (0,2) -- (1,2);
\draw[thick] (0,0.3)..controls +(0.6,0) and +(0.6,0)..(0,0.5)
(0,0.75) -- +(1,0) (0,1.25) -- +(1,0) (0.8,1)node[rotate=90]{...};
\end{linkdiag}.\qquad
\text{(RIII'):}\quad
\begin{linkdiag}[1]
\fill[gray!20] (0,0) rectangle (1,2);
\draw[dashed] (0,0) -- (1,0) (0,2) -- (1,2);
\draw[thick] (0,0.75) -- +(1,0) (0,1.25) -- +(1,0) (0.8,1)node[rotate=90]{...};
\begin{knot}[end tolerance=2pt]
\strand[thick] (0,0.3)..controls +(0.4,0)..(0.4,0)
(0,0.5)..controls +(0.6,0)..(0.6,0);
\strand[thick] (0,1.4)..controls +(1,0) and +(0,-0.5)..(0.4,2);
\strand[thick](0,1.6)..controls +(0.4,0)..(0.6,2);
\end{knot}
\end{linkdiag}
=
\begin{linkdiag}[1]
\fill[gray!20] (0,0) rectangle (1,2);
\draw[dashed] (0,0) -- (1,0) (0,2) -- (1,2);
\draw[thick] (0,0.75) -- +(1,0) (0,1.25) -- +(1,0) (0.8,1)node[rotate=90]{...};
\begin{knot}[end tolerance=2pt]
\strand[thick] (0,1.7)..controls +(0.4,0)..(0.4,2)
(0,1.5)..controls +(0.6,0)..(0.6,2);
\strand[thick] (0,0.4)..controls +(0.4,0)..(0.6,0);
\strand[thick] (0,0.6)..controls +(1,0) and +(0,0.5)..(0.4,0);
\end{knot}
\end{linkdiag}.
\end{equation}
They are related by isotopies of $\annulus$, but not as normal diagrams. Let $A$ be
the set of normal isotopy classes of tangle diagrams on $\annulus$. Then the set of
isotopy class of tangles is equivalent to the quotient of $A$ by (framed) Reidemeister
moves and RII', RIII'.

Let $RA$ denote the free $R$-module spanned by $A$. $RA$ has a product structure by
stacking with coefficients just like \eqref{eq-new-prod}. Of course, we only need the
left action induced by the product.

Recall the linear diamond lemma from \cite{SW}. Let $S\subset A\times RA$, and write
$a\to b$ for $(a,b)\in S$. Elements of $S$ are called reduction rules on $A$. Define
$(S)$ as the $R$-submodule spanned by $\{a-b\in RA\mid a\to b\}$.

For $x,y\in RA$ with $x=\sum_i r_ia_i$ where $r_i\in R$ and $a_i\in A$, write
$x\rightsquigarrow y$ if there exists a $j$ such that $r_j\ne0$ and
$y=r_jb+\sum_{i\ne j}r_ia_i$ where $a_j\to b$. Finally, let $x\succeq y$ if $x=y$ or if
there exists a sequence $x_0\rightsquigarrow x_1\dots\rightsquigarrow x_n$ with
$x_0=x$, $x_n=y$. Clearly, $(S)$ contains all $x-y$ with $x\succeq y$.

A set of reduction rules on $A$ is called \term{locally confluent} if for any $a\in A$
and $b,c\in RA$ with $a\to b, a\to c$, there exists $v\in RA$ such that $b\succeq v$
and $c\succeq v$. An element $a\in A$ is \term{irreducible} if the only $b\in RA$
with $a\succeq b$ is $b=a$. Let $A_{\textnormal{irr}}\subset A$ be the subset of
irreducible elements.

The following result is a combination of Theorems~2.2 and 2.3 in \cite{SW}.

\begin{theorem}
\label{thm-diamond}
Let $\deg:A\to J$ be a map to a well-ordered set $J$, and let
$A_j=\{a\in A:\deg(a)<j\}$. Suppose $S$ is a locally confluent set of reduction rules
on $A$ such that for any $a\to b$ with $\deg(a)=j$, $b\in RA_j$. Then the map
\begin{equation}
RA_{\textnormal{irr}}\embed RA\onto RA/(S)
\end{equation}
is an isomorphism.
\end{theorem}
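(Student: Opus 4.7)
The plan is to prove Theorem~\ref{thm-diamond} by the classical Noetherian-induction argument underlying Newman's diamond lemma, adapted to the linear (module) setting. The hypothesis that $\deg : A \to J$ lands in a well-ordered set and that every reduction $a \to b$ puts $b$ in $RA_{\deg(a)}$ provides termination: for any $x \in RA$, the multiset of degrees of its monomials with nonzero coefficient strictly decreases (in the Dershowitz--Manna order on finite multisets over $J$) under any single reduction $x \rightsquigarrow x'$, so no infinite reduction chain can exist. Termination already gives surjectivity of the composition $RA_{\textnormal{irr}} \embed RA \onto RA/(S)$: iteratively reducing any $x \in RA$ produces some $y \in RA_{\textnormal{irr}}$ with $x \succeq y$, and the identity $x - y \in (S)$ holds because each elementary step $x_i \rightsquigarrow x_{i+1}$ satisfies $x_i - x_{i+1} \in (S)$ by definition.

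For injectivity I will show that the normal form of each $x \in RA$ is unique, by the linear version of Newman's lemma. Concretely, I plan to prove by Noetherian induction on the multiset of monomial degrees of $x$ that any two reduction sequences $x \succeq y_1$ and $x \succeq y_2$ with $y_1, y_2 \in RA_{\textnormal{irr}}$ yield $y_1 = y_2$. If $x$ is already irreducible there is nothing to do. Otherwise, let $x \rightsquigarrow x'$ and $x \rightsquigarrow x''$ be the first steps of the two sequences: local confluence produces $v \in RA$ with $x' \succeq v$ and $x'' \succeq v$, and each of $x'$, $x''$, $v$ has strictly smaller degree multiset than $x$. Applying the inductive hypothesis first to $x'$ and then to $x''$, with any completion of $v \succeq w$ to an irreducible $w$, forces both $y_1$ and $y_2$ to equal $w$.

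With unique normal forms established, define $\nu : RA \to RA_{\textnormal{irr}}$ by sending each $x$ to its normal form. Linearity of $\nu$ follows from uniqueness together with the fact that the reduction rules act $R$-linearly on individual monomials. Because $\nu(x) = \nu(y)$ whenever $x \succeq y$, the map $\nu$ vanishes on the generators $a - b$ of $(S)$, so it descends to a map $\bar\nu : RA/(S) \to RA_{\textnormal{irr}}$. Since $\nu$ restricts to the identity on $RA_{\textnormal{irr}}$, the two compositions of $\bar\nu$ with $RA_{\textnormal{irr}} \embed RA \onto RA/(S)$ are both identities, which finishes the proof.

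The hard part is clearly the injectivity half: local confluence is a purely one-step condition, while what is actually needed is global confluence over arbitrarily long reduction sequences. The crucial leverage is the degree hypothesis $b \in RA_j$ when $\deg(a) = j$, which is precisely what makes the Dershowitz--Manna order on degree multisets a well-founded order compatible with $\rightsquigarrow$, and hence makes the Noetherian induction in the Newman-lemma step actually run. If this degree condition were weakened, one would have to assume strong (rather than merely local) confluence, which is exactly what the theorem is designed to avoid.
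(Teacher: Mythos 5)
Two remarks before the substance: the paper itself does not prove Theorem~\ref{thm-diamond} — it is quoted as a combination of Theorems~2.2 and 2.3 of \cite{SW} — so your write-up is a from-scratch attempt at the cited result. Your termination argument is correct: the hypothesis $b\in RA_j$ for $\deg(a)=j$ does make the multiset of degrees of the support strictly decrease under each step, and the multiset order over a well-ordered set is well-founded; together with the observation that a $\rightsquigarrow$-terminal element must lie in $RA_{\textnormal{irr}}$ and that each step changes $x$ by an element of $(S)$, this gives surjectivity.

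The injectivity half, however, has a genuine gap at the sentence ``local confluence produces $v\in RA$ with $x'\succeq v$ and $x''\succeq v$.'' The hypothesis of the theorem is local confluence of the \emph{rules}: it joins $b$ and $c$ only when $a\to b$ and $a\to c$ are two rules applied to one and the same basis element $a\in A$. What your Newman-style induction needs is local confluence of the induced relation $\rightsquigarrow$ on all of $RA$, i.e.\ joinability of any two one-step reducts of a general linear combination $x=\sum_i r_ia_i$, and that passage is not formal in the linear setting. A reduction step is only defined at a monomial whose coefficient is nonzero, so cancellation interferes in both relevant configurations: if the two steps hit different monomials $a_j\ne a_k$, the first step can destroy or change the coefficient of $a_k$ (take $x=a_1+a_2$ with rules $a_1\to -a_2+u$ and $a_2\to c$; after the first step the monomial $a_2$ has cancelled and the second rule no longer applies), and if they hit the same monomial via different rules, the joining sequence $b\succeq v$, $c\succeq v$ cannot simply be transplanted into $\sum_{i\ne j}r_ia_i+r_jb$, since the ambient terms shift or annihilate the coefficients the transplanted steps require (and over a ring with zero divisors even the scaling $b\succeq v\Rightarrow r_jb\succeq r_jv$ can fail). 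Bridging rule-level local confluence to confluence on $RA$ is precisely the content of the linear diamond lemma, and the proofs of Bergman and of \cite{SW} are structured around this difficulty (reductions as everywhere-defined linear endomorphisms, an induction over the grading showing that reduction-unique elements form a submodule, etc.) rather than a bare application of Newman's lemma. A secondary but related soft spot: the linearity of your normal-form map $\nu$ is asserted, not proved; note that, given everything else you have, ``$\nu$ is linear and kills $a-b$'' is essentially equivalent to the injectivity $(S)\cap RA_{\textnormal{irr}}=0$ you are trying to establish, so it cannot be waved through — it does admit a proof by induction on the degree multisets, but only \emph{after} global confluence has been secured, which is exactly the missing step above.
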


Define the following set $S$ of reduction rules on $A$.
\begin{subequations}
\begin{align}
\label{eq-stdA-skein}
\begin{linkdiag}
\fill[gray!20] (-0.1,0)rectangle(1.1,1);
\begin{knot}
\strand[thick] (1,1)--(0,0); \strand[thick] (0,1)--(1,0);
\end{knot}
\end{linkdiag}
&\to q
\begin{linkdiag}
\fill[gray!20] (-0.1,0)rectangle(1.1,1);
\draw[thick] (0,0)..controls (0.5,0.5)..(0,1);
\draw[thick] (1,0)..controls (0.5,0.5)..(1,1);
\end{linkdiag}
+q^{-1}
\begin{linkdiag}
\fill[gray!20] (-0.1,0)rectangle(1.1,1);
\draw[thick] (0,0)..controls (0.5,0.5)..(1,0);
\draw[thick] (0,1)..controls (0.5,0.5)..(1,1);
\end{linkdiag},\qquad
\begin{linkdiag}
\fill[gray!20] (0,0)rectangle(1,1); \draw[thick] (0.5,0.5)circle(0.3);
\end{linkdiag}
\to(-q^2-q^{-2})
\begin{linkdiag}
\fill[gray!20] (0,0)rectangle(1,1);
\end{linkdiag}.\\
\label{eq-stdA-return}
\relarc[]{+}{+}&\to0,\qquad \relarc[]{-}{-}\to0,\qquad
\relarc[]{+}{-}\to q^{-1/2}\relempty.\\
\label{eq-stdA-stex}
\relacross[]{-}{+}
&\to q^2\relacross[]{+}{-}+q^{-1/2}
\begin{linkdiag}
\fill[gray!20] (0,0)rectangle(1,1);\draw (1,0)--(1,1);
\draw [thick] (0,0.7)..controls(0.8,0.7) and (0.8,0.3)..(0,0.3);
\end{linkdiag}.\\
\label{eq-stdA-handle}
\begin{linkdiag}[1]
\fill[gray!20] (0,0) rectangle (1,2);
\draw[dashed] (0,0) -- (1,0) (0,2) -- (1,2);
\draw[thick] (0,0.4)..controls +(0.5,0)..(0.5,0)
(0,1.6)..controls +(0.5,0)..(0.5,2)
(0,0.75) -- +(1,0) (0,1.25) -- +(1,0) (0.8,1)node[rotate=90]{...};
\end{linkdiag}
&\to
\begin{linkdiag}[1]
\fill[gray!20] (0,0) rectangle (1,2);
\draw[dashed] (0,0) -- (1,0) (0,2) -- (1,2);
\begin{knot}
\strand[thick] (0,0.75) -- +(1,0) (0,1.25) -- +(1,0);
\strand[thick] (0,0.4)..controls +(0.8,0) and +(0.8,0)..(0,1.6);
\end{knot}
\path (0.8,1)node[rotate=90]{...};
\end{linkdiag}.\\
\label{eq-stdA-twsimple}
\begin{linkdiag}[0.9]
\fill[gray!20] (0,0) rectangle (1,1.8); \draw (1,0) -- (1,1.8);
\draw[fill=white] (1,0)circle(0.07) (1,1.3)circle(0.07);
\draw[thick] (0,0.5) -- +(1,0) (0,1) -- +(1,0);
\draw[thick] (0,0.2) -- +(1,0) \stnode{+};
\path (0.8,0.75)node[rotate=90]{...} (1,0.65)node[right]{$e^\ast_3$};
\end{linkdiag}
&\to \iunit q^{\frac{1}{2}(d^\ast_2-d^\ast_3+3)}
\begin{linkdiag}[0.9]
\fill[gray!20] (0,0) rectangle (1,1.8); \draw (1,0) -- (1,1.8);
\draw[fill=white] (1,0)circle(0.07) (1,1.3)circle(0.07);
\begin{knot}
\strand[thick] (0,0.5) -- +(1,0) (0,1) -- +(1,0);
\strand[thick] (0,0.2)..controls+(0.5,0) and +(-0.5,0)..(1,1.6)\stnode{-};
\end{knot}
\path (0.8,0.75)node[rotate=90]{...};
\end{linkdiag},\qquad
\begin{linkdiag}[0.9]
\fill[gray!20] (0,0) rectangle (1,1.8); \draw (1,0) -- (1,1.8);
\draw[fill=white] (1,0)circle(0.07) (1,1.3)circle(0.07);
\draw[thick] (0,0.5) -- +(1,0) (0,1) -- +(1,0);
\draw[thick] (0,1.6) -- +(1,0) \stnode{-};
\path (0.8,0.75)node[rotate=90]{...} (1,0.65)node[right]{$e^\ast_2$};
\end{linkdiag}
\to -\iunit q^{-\frac{1}{2}(d^\ast_1-d^\ast_2+3)}
\begin{linkdiag}[0.9]
\fill[gray!20] (0,0) rectangle (1,1.8); \draw (1,0) -- (1,1.8);
\draw[fill=white] (1,0)circle(0.07) (1,1.3)circle(0.07);
\begin{knot}
\strand[thick] (0,0.5) -- +(1,0) (0,1) -- +(1,0);
\strand[thick] (0,1.6)..controls+(0.5,0) and +(-0.5,0)..(1,0.2)\stnode{+};
\end{knot}
\path (0.8,0.75)node[rotate=90]{...};
\end{linkdiag}.\\
\label{eq-stdA-twupm}
\begin{linkdiag}[0.9]
\fill[gray!20] (0,0) rectangle (1,1.8); \draw (1,0) -- (1,1.8);
\draw[fill=white] (1,0)circle(0.07) (1,1.3)circle(0.07);
\draw[thick] (0,0.5) -- +(1,0) (0,1) -- +(1,0);
\draw[thick] (0,0.2) -- +(1,0) \stnode{-};
\path (0.8,0.75)node[rotate=90]{...} (1,0.65)node[right]{$e^\ast_3$};
\end{linkdiag}
&\to \iunit q^{\frac{1}{2}(d^\ast_3-d^\ast_2+3)}
\begin{linkdiag}[0.9]
\fill[gray!20] (0,0) rectangle (1,1.8); \draw (1,0) -- (1,1.8);
\draw[fill=white] (1,0)circle(0.07) (1,1.3)circle(0.07);
\begin{knot}
\strand[thick] (0,0.5) -- +(1,0) (0,1) -- +(1,0);
\strand[thick] (0,0.2)..controls +(0.5,0) and +(-0.5,0)..(1,1.6) \stnode{+};
\end{knot}
\path (0.8,0.75)node[rotate=90]{...};
\end{linkdiag}
+q^{\frac{1}{2}(2d^\ast_1-d^\ast_2-d^\ast_3+1)}
\begin{linkdiag}[0.9]
\fill[gray!20] (0,0) rectangle (1,1.8); \draw (1,0) -- (1,1.8);
\draw[fill=white] (1,0)circle(0.07) (1,1.3)circle(0.07);
\begin{knot}
\strand[thick] (0,0.5) -- +(1,0) (0,1) -- +(1,0);
\strand[thick] (0,0.2)..controls+(0.5,0) and +(-0.5,0)..(1,1.6)\stnode{-};
\end{knot}
\path (0.8,0.75)node[rotate=90]{...};
\end{linkdiag}.\\
\label{eq-stdA-twdownp}
\begin{linkdiag}[0.9]
\fill[gray!20] (0,0) rectangle (1,1.8); \draw (1,0) -- (1,1.8);
\draw[fill=white] (1,0)circle(0.07) (1,1.3)circle(0.07);
\draw[thick] (0,0.5) -- +(1,0) (0,1) -- +(1,0);
\draw[thick] (0,1.6) -- +(1,0) \stnode{+};
\path (0.8,0.75)node[rotate=90]{...} (1,0.65)node[right]{$e^\ast_2$};
\end{linkdiag}
&\to q^{\frac{1}{2}(2d^\ast_3-d^\ast_1-d^\ast_2-1)}
\begin{linkdiag}[0.9]
\fill[gray!20] (0,0) rectangle (1,1.8); \draw (1,0) -- (1,1.8);
\draw[fill=white] (1,0)circle(0.07) (1,1.3)circle(0.07);
\begin{knot}
\strand[thick] (0,0.5) -- +(1,0) (0,1) -- +(1,0);
\strand[thick] (0,1.6)..controls +(0.5,0) and +(-0.5,0)..(1,0.2) \stnode{+};
\end{knot}
\path (0.8,0.75)node[rotate=90]{...};
\end{linkdiag}
-\iunit q^{\frac{1}{2}(d^\ast_1-d^\ast_2-3)}
\begin{linkdiag}[0.9]
\fill[gray!20] (0,0) rectangle (1,1.8); \draw (1,0) -- (1,1.8);
\draw[fill=white] (1,0)circle(0.07) (1,1.3)circle(0.07);
\begin{knot}
\strand[thick] (0,0.5) -- +(1,0) (0,1) -- +(1,0);
\strand[thick] (0,1.6)..controls+(0.5,0) and +(-0.5,0)..(1,0.2)\stnode{-};
\end{knot}
\path (0.8,0.75)node[rotate=90]{...};
\end{linkdiag}.
\end{align}
\end{subequations}
Here, $d^\ast_i$ is the grading on the edge $e^\ast_i$ for $\ast=L,R$ and $i=1,2,3$.
Note \eqref{eq-stdA-skein}--\eqref{eq-stdA-stex} are taken from the proof of
\cite[Theorem~2.11]{Le:decomp} (cited here as Theorem~\ref{thm-basis}).
\eqref{eq-stdA-handle}--\eqref{eq-stdA-twdownp} are from Corollary~\ref{cor-tw} and
Lemma~\ref{lemma-crhandle}.

\begin{lemma}
\label{lemma-RA-quot}
The natural map $RA\to\skeincr(\annulus)$ descends to an isomorphism
$RA/(S)\to\skeincr(\annulus)$.
\end{lemma}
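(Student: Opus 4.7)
The plan is to apply the linear diamond lemma (Theorem~\ref{thm-diamond}) to the reduction system $S$, after first verifying descent to $\skeincr(\annulus)$. Descent is almost immediate: \eqref{eq-stdA-skein}--\eqref{eq-stdA-stex} are the defining relations of the stated skein algebra; the corner-twist rules \eqref{eq-stdA-twsimple}--\eqref{eq-stdA-twdownp} are repackagings of Corollary~\ref{cor-tw}; and the handle slide \eqref{eq-stdA-handle} is a tangle isotopy across the dashed arc $a$, hence a valid identity in $\skein(\annulus)$. So we get a well-defined $R$-linear surjection $\pi: RA/(S) \to \skeincr(\annulus)$ (surjectivity being clear: every tangle is isotopic to a normal diagram).

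The heart of the argument is to identify $RA/(S)$ with a free module on a canonical set $A_{\textnormal{irr}}$ via Theorem~\ref{thm-diamond}. I will define a complexity $\deg: A \to J$ into a well-ordered set taking the lexicographic tuple (number of crossings, number of trivial loops, number of bad/nonstandard arcs at the corners, number of endpoints on the edges $e^L_i$, $e^R_i$ for $i = 2, 3$, plus a count measuring the "handle-slide-reducible" geometry along the dashed arc). Each rule in $S$ is, by direct inspection, strictly degree-decreasing, so the termination hypothesis of Theorem~\ref{thm-diamond} is satisfied. The irreducible set $A_{\textnormal{irr}}$ will consist of simple, positively ordered diagrams whose endpoints all lie on $e^L_1 \cup e^R_1$ and whose underlying arcs do not admit the handle-slide simplification \eqref{eq-stdA-handle}; these are exactly products of the $a_{\mu\nu}$ in increasing height order, i.e.\ they are in bijection with the standard monomial basis of $\Oq$.

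Once $A_{\textnormal{irr}}$ is identified and confluence is verified, the diamond lemma gives $RA/(S) \cong RA_{\textnormal{irr}}$, and the composite $RA_{\textnormal{irr}} \to RA/(S) \xrightarrow{\pi} \skeincr(\annulus)$ may be compared with the $\Oq$-action on the empty diagram: injectivity of the latter on the $\Oq$-basis follows from Theorem~\ref{thm-basis-rd} applied to $\annulus$, which then forces $\pi$ to be injective, completing the proof.

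The main obstacle will be verifying local confluence. The overlaps among \eqref{eq-stdA-skein}--\eqref{eq-stdA-stex} are exactly those handled in the proof of Theorem~\ref{thm-basis} in \cite{Le:decomp}, and the overlaps among the corner twists are covered by Corollary~\ref{cor-tw} together with Lemma~\ref{lemma-crhandle}. The genuinely new critical pairs involve the handle slide \eqref{eq-stdA-handle} interacting with a crossing, a state exchange, or a corner twist whose support meets the strands running across the dashed arc; because \eqref{eq-stdA-handle} is a non-local rewrite that moves several strands simultaneously, reconciling the two reduction orders requires a careful height-book-keeping argument using Lemma~\ref{lemma-bad} and the grading conventions of \eqref{eq-new-prod}. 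I expect these confluence diagrams to close only after passing to the common lower-complexity normal form dictated by $\deg$, which is the step that will require the most calculation.
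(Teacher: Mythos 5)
There is a genuine gap at the injectivity step. Your surjection $\pi: RA/(S)\to\skeincr(\annulus)$ is fine, and the diamond-lemma identification $RA/(S)\cong RA_{\textnormal{irr}}$ is the right tool for the appendix as a whole (the paper uses it, but only \emph{after} this lemma). The problem is your final claim that the images of the irreducible diagrams are linearly independent in $\skeincr(\annulus)$ ``by Theorem~\ref{thm-basis-rd} applied to $\annulus$''. Theorem~\ref{thm-basis-rd} provides a basis of the \emph{reduced} module $\skeinrd(\annulus)$; the module $\skeincr(\annulus)$ is a further quotient by the left ideal generated by the corner relations \eqref{eq-crdef}, and linear independence in $\skeinrd(\annulus)$ says nothing about linear independence in that quotient. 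Proving that the $\Oq$-monomials remain independent after corner reduction is exactly the nontrivial half of Theorem~\ref{thm-stdA} (injectivity of $\phi_a$), which is what the lemma, the confluence Lemma~\ref{lemma-confl}, and Theorem~\ref{thm-diamond} are jointly designed to deliver; so your argument assumes essentially what is to be proven and cannot establish the present lemma.

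The paper's proof avoids any basis, degree, or confluence argument here: it works entirely with submodules of $RA$. One writes $\skeincr(\annulus)=RA/I$, where $I$ is the left ideal generated by the Reidemeister moves, the normal-diagram moves RII$'$ and RIII$'$ of \eqref{eq-stdA-dashcup}, the relations \eqref{eq-skein}--\eqref{eq-stex}, the bad arcs \eqref{eq-bad}, and the corner relations \eqref{eq-crdef}, and then proves $(S)=I$: first that $(S)$ is a left ideal (nontrivial only because of the $\cdot$-product coefficients \eqref{eq-new-prod} in the rules \eqref{eq-stdA-twsimple}--\eqref{eq-stdA-twdownp}), then $(S)\subset I$ since every rule holds in $\skeincr(\annulus)$ (this is the part your proposal does cover), and finally $I\subset(S)$ by reducing each generator of $I$ to zero using $S$: the Reidemeister moves classically, RII$'$ via the handle-slide rule \eqref{eq-stdA-handle} plus Reidemeister moves, RIII$'$ by resolving a crossing and invoking RII$'$, and the bad-arc and corner relations at the six punctures by direct computation. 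This containment $I\subset(S)$ is the substantive content of the lemma and is missing from your proposal; the confluence verification and the identification of $A_{\textnormal{irr}}$ with the $\Oq$-basis belong to the subsequent step of the appendix, not to this lemma.
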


\begin{proof}
The set of isotopy classes of tangles on $\annulus$ is the quotient of $A$ by
Reidemeister moves and RII', RIII'. Therefore, $\skeincr(\annulus)=RA/I$ where $I$ is
the left ideal generated by Reidemeister moves, RII', RIII',
\eqref{eq-skein}--\eqref{eq-stex}, \eqref{eq-bad}, and \eqref{eq-crdef}. Thus, we
need to prove that $I=(S)$.

First we check that $(S)$ is a left ideal in $RA$. This would be completely trivial
if the product is simply stacking. The coefficients introduced by \eqref{eq-new-prod}
requires some work for \eqref{eq-stdA-twsimple}--\eqref{eq-stdA-twdownp}, but it is
still straightforward.

By construction, all elements of $(S)$ are $0$ in $\skeincr(\annulus)$. Thus,
$(S)\subset I$. To show the reverse inclusion, we need to reduce the generators of
$I$ to $0$ using $S$.
For Reidemeister moves, this is well-known; see e.g., \cite{Kau}. For RII', apply
\eqref{eq-stdA-handle} and Reidemeister moves. For RIII', after resolving the crossing,
the two sides are related by an RII' move. \eqref{eq-skein}, \eqref{eq-loop},
\eqref{eq-arcs}, and \eqref{eq-stex} correspond to
\eqref{eq-stdA-skein}--\eqref{eq-stdA-stex}.
For \eqref{eq-bad} and \eqref{eq-crdef}, they
each come in 6 flavors, one for each puncture of $\annulus$. The opposite sides have the
same reduction rules, so we only need to consider 3. Direct calculation shows they indeed
have the correct reductions.
\end{proof}

\begin{lemma}
\label{lemma-confl}
The set $S$ of reduction rules is locally confluent.
\end{lemma}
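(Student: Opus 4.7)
The plan is to verify local confluence by enumerating the critical pairs, that is, the minimal configurations in a diagram where two of the rules in $S$ apply simultaneously, and then reducing each pair to a common normal form. Since the rules act on disjoint small pictures (a crossing, a trivial arc/loop, a pair of endpoints on a boundary edge, a near-puncture corner configuration, or a strand running once through the dashed ideal arc together with a bundle of parallel strands), a critical pair arises only when the local patches of two rules share a diagram element, and there are only finitely many such coincidences up to symmetry.

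First I would dispose of the critical pairs that already occur in the work of L\^e and of Costantino--L\^e. The rules \eqref{eq-stdA-skein}--\eqref{eq-stdA-stex} are exactly the rewriting system used in the proof of Theorem~\ref{thm-basis} (\cite[Theorem~2.11]{Le:decomp}); their pairwise critical pairs --- the Reidemeister~III overlap among skein relations, the overlaps of skein with trivial loop/arc, and the overlaps of state exchange with itself, with skein, and with the arc rules --- are already known to be confluent in the stated skein algebra, and that verification carries over verbatim. Similarly, the overlaps involving the corner twist rules \eqref{eq-stdA-twsimple}--\eqref{eq-stdA-twdownp} among themselves are equivalent, after rewriting, to the statement that \eqref{eq-twup} and \eqref{eq-twdown} of Corollary~\ref{cor-tw} are mutually inverse up to the bookkeeping of gradings $d^{\ast}_{1},d^{\ast}_{2},d^{\ast}_{3}$, which is a finite direct computation.

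The genuinely new critical pairs are those involving the handle-slide rule \eqref{eq-stdA-handle}. These fall into three groups, which I would treat in turn: (i) a handle slide that overlaps a crossing on the slid strand, which one reduces by first applying the skein relation and checking that the two resulting diagrams still handle-slide to the same element, using Reidemeister~II in the reduced form; (ii) a handle slide whose slid strand carries an endpoint on a boundary edge so that \eqref{eq-stdA-stex} or one of the twist rules also applies --- here the key identity is precisely the computation made in the second half of Lemma~\ref{lemma-crhandle}, which shows the handle slide is compatible with the corner-reduction relations; and (iii) two handle slides that apply simultaneously (e.g.\ along two different bundles of parallel strands threading the dashed arc). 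In case (iii) the resulting diagrams can both be transported past each other using \eqref{eq-stdA-skein} and Reidemeister~III, and reduce to a common simple diagram with no trivial cup.

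Finally I would observe that each reduction rule strictly decreases the quantity
\[
\deg(D)=\bigl(c(D),\,n_{\mathrm{arc}}(D),\,b(D),\,d(D)\bigr),
\]
ordered lexicographically, where $c(D)$ is the number of crossings, $n_{\mathrm{arc}}(D)$ the number of trivial loops and trivial arcs, $b(D)$ the number of bad-arc or corner configurations at punctures, and $d(D)$ the number of endpoints at the lowest position of a boundary edge that fail to be of the preferred type (e.g.\ the $+$ endpoint in \eqref{eq-stdA-twsimple}). This well-ordering feeds the hypothesis of Theorem~\ref{thm-diamond}. The main obstacle is the bookkeeping in group (ii): the grading shifts in \eqref{eq-stdA-twsimple}--\eqref{eq-stdA-twdownp} interact nontrivially with the handle slide, and one must keep careful track of how the $d^{\ast}_{i}$ change on both sides of a critical pair. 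Once these identities are checked --- they reduce to the two computations already performed in Lemma~\ref{lemma-crhandle} and in the two displays of Lemma~\ref{lemma-qz-rel} --- local confluence follows.
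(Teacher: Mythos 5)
Your plan has the right overall shape (enumerate overlapping critical pairs, quote L\^e's confluence for \eqref{eq-stdA-skein}--\eqref{eq-stdA-stex}, settle the rest by finite computation), but it misses the family of critical pairs that actually carries the content of the lemma, and it spends its effort on overlaps that turn out to be the easy ones. The nontrivial overlaps are those between the trivial-arc/state-exchange rules \eqref{eq-stdA-return}, \eqref{eq-stdA-stex} and the corner twist rules \eqref{eq-stdA-twsimple}--\eqref{eq-stdA-twdownp}: a returning arc (or a state-exchange configuration) whose lower endpoint is also the lowest endpoint on $e^\ast_1$ or $e^\ast_3$ can be rewritten in two genuinely different ways, and one must check by hand (eight cases up to symmetry) that the two reductions are joinable; this requires Reidemeister moves and further applications of \eqref{eq-stdA-skein}--\eqref{eq-stdA-return} and careful tracking of the grading shifts. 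Your enumeration contains no such case. Conversely, the handle-slide rule \eqref{eq-stdA-handle}, which you treat as the source of the ``genuinely new'' critical pairs, does not in fact produce hard overlaps: its left-hand side lives on the core loop through the dashed arc, away from the boundary, it has coefficient $1$, and it does not change any boundary endpoint or grading, so it commutes with the boundary-local rules by the disjoint-support argument (this is how the paper disposes of it); your groups (i)--(iii) are therefore either vacuous or routine, while the real work is absent. Your description of the twist-vs-twist overlaps as ``\eqref{eq-twup} and \eqref{eq-twdown} are mutually inverse'' is also not quite the right reduction: the overlap is between rules applied at two different edges of the same boundary triangle, and the two orders of reduction produce different coefficients and diagrams that only agree after further skein-type rewriting on $e^\ast_2$; it is a concrete check, but not the inverse statement.

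A secondary problem is your termination order. The lexicographic degree $(c,n_{\mathrm{arc}},b,d)$ with the crossing number $c$ first is not decreased by the rules: both \eqref{eq-stdA-handle} and the twist rules \eqref{eq-stdA-twsimple}--\eqref{eq-stdA-twdownp} slide a strand past a bundle of parallel strands and thereby \emph{create} crossings, so they can strictly increase $c$. The order that works (and is needed to invoke Theorem~\ref{thm-diamond}) is lexicographic in $(\deg_1,\deg_2)$, where $\deg_1$ counts intersections with $e^\ast_1\cup e^\ast_3$ and with the dashed arc (each of the new rules drops this by one) and $\deg_2$ is L\^e's degree controlling \eqref{eq-stdA-skein}--\eqref{eq-stdA-stex}. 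As written, your proposal neither verifies the decisive critical pairs nor supplies a valid well-ordering, so the proof of local confluence is not established.
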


\begin{proof}
Define the support of the reduction moves to be the following closed subsets of
$\annulus$:
\begin{itemize}
\item
  the crossing in the first move of \eqref{eq-stdA-skein},
\item
  the disk bounded by the loop in the second move of \eqref{eq-stdA-skein},
\item
  the disk bounded by the returning arc and part of $\partial\annulus$ in each move
  of \eqref{eq-stdA-return},
\item
  the interval between the endpoints in \eqref{eq-stdA-stex},
\item
  the nontrivial loop through the intersection with the dashed arc in
  \eqref{eq-stdA-handle}, and
\item
  the boundary triangle in each move of \eqref{eq-stdA-twsimple}--\eqref{eq-stdA-twdownp}.
\end{itemize}
Let $a\in A$ be a diagram where two reduction rules $s_1,s_2\in S$ apply. If the
support of the reductions are disjoint, then it is easy to see that $s_2$ applies to
the $s_1$-reduction and vice versa. Moreover, when both reductions are applied, the
result is independent of the order. Therefore, we only need to consider the case when
the supports intersect. In addition, \cite[Lemma~2.10]{Le:decomp} already proved that
\eqref{eq-stdA-skein}--\eqref{eq-stdA-stex} are confluent. Thus, the remaining cases are
\begin{enumerate}
\item
  \eqref{eq-stdA-return} or \eqref{eq-stdA-stex} with
  \eqref{eq-stdA-twsimple}--\eqref{eq-stdA-twdownp}.
\item
  Among \eqref{eq-stdA-twsimple}--\eqref{eq-stdA-twdownp}.
\end{enumerate}

In (1), if the lower endpoint of \eqref{eq-stdA-return} or \eqref{eq-stdA-stex} is
not the one modified by \eqref{eq-stdA-twsimple}--\eqref{eq-stdA-twdownp}, then the
confluence is the same as the disjoint support case. There are 8 cases left to check:
when the components of \eqref{eq-stdA-return} or \eqref{eq-stdA-stex} are the lowest
on $e^\ast_1$ or $e^\ast_3$. Here we check one case. The rest are similar.
\begin{align*}
\begin{linkdiag}[1]
\fill[gray!20] (0,0) rectangle (1,2); \draw (1,0) -- (1,2);
\draw[fill=white] (1,0)circle(0.07) (1,1.5)circle(0.07);
\draw[thick] (0,0.8) -- +(1,0) (0,1.2) -- +(1,0);
\draw[thick] (1,0.2)\stnode{-}..controls +(-0.5,0) and +(-0.5,0)..(1,0.5)\stnode{+};
\path (0.25,1)node{...} (1,1)node[right]{$e^\ast_3$};
\end{linkdiag}
&\xrightarrow{\eqref{eq-stdA-twupm}}
\iunit q^{\frac{1}{2}(d^\ast_3-d^\ast_2+3)}
\begin{linkdiag}[1]
\fill[gray!20] (0,0) rectangle (1,2); \draw (1,0) -- (1,2);
\draw[fill=white] (1,0)circle(0.07) (1,1.5)circle(0.07);
{\begin{knot}[consider self intersections=no splits]
\strand[thick] (0,0.8) -- +(1,0) (0,1.2) -- +(1,0);
\strand[thick] (1,1.8)\stnode{+}..controls +(-0.4,0) and +(0.4,0)..
(0.4,0.2)..controls +(-0.4,0) and +(-1,0)..(1,0.5)\stnode{+};
\end{knot}}
\path (0.25,1)node{...};
\end{linkdiag}
+q^{\frac{1}{2}(2d^\ast_1-d^\ast_2-d^\ast_3+1)}
\begin{linkdiag}[1]
\fill[gray!20] (0,0) rectangle (1,2); \draw (1,0) -- (1,2);
\draw[fill=white] (1,0)circle(0.07) (1,1.5)circle(0.07);
{\begin{knot}[consider self intersections=no splits,flip crossing=3]
\strand[thick] (0,0.8) -- +(1,0) (0,1.2) -- +(1,0);
\strand[thick] (1,1.8)\stnode{-}..controls +(-0.4,0) and +(0.4,0)..
(0.4,0.2)..controls +(-0.4,0) and +(-1,0)..(1,0.5)\stnode{+};
\end{knot}}
\path (0.25,1)node{...};
\end{linkdiag}\\
&\xrightarrow{\text{RI}}(-q^{-3})
(\iunit q^{\frac{1}{2}(d^\ast_3-d^\ast_2+3)})
\begin{linkdiag}[1]
\fill[gray!20] (0,0) rectangle (1,2); \draw (1,0) -- (1,2);
\draw[fill=white] (1,0)circle(0.07) (1,1.5)circle(0.07);
{\begin{knot}[consider self intersections=no splits]
\strand[thick] (0,0.8) -- +(1,0) (0,1.2) -- +(1,0);
\strand[thick] (1,1.8)\stnode{+}..controls +(-0.6,0) and +(-0.6,0)..(1,0.5)\stnode{+};
\end{knot}}
\path (0.25,1)node{...};
\end{linkdiag}
+(\text{...})
\begin{linkdiag}[1]
\fill[gray!20] (0,0) rectangle (1,2); \draw (1,0) -- (1,2);
\draw[fill=white] (1,0)circle(0.07) (1,1.5)circle(0.07);
{\begin{knot}[consider self intersections=no splits]
\strand[thick] (0,0.8) -- +(1,0) (0,1.2) -- +(1,0);
\strand[thick] (1,1.8)\stnode{-}..controls +(-0.6,0) and +(-0.6,0)..(1,0.5)\stnode{+};
\end{knot}}
\path (0.25,1)node{...};
\end{linkdiag}\\
&\xrightarrow{\eqref{eq-stdA-twsimple}}
(-\iunit q^{\frac{1}{2}(d^\ast_3-d^\ast_2-3)})
(\iunit q^{\frac{1}{2}((d^\ast_2+1)-(d^\ast_3+1)+3)})
\begin{linkdiag}[1]
\fill[gray!20] (0,0) rectangle (1,2); \draw (1,0) -- (1,2);
\draw[fill=white] (1,0)circle(0.07) (1,1.3)circle(0.07);
\begin{knot}
\strand[thick] (0,0.5) -- +(1,0) (0,1) -- +(1,0);
\strand[thick] (1,1.8)\stnode{+}..controls +(-0.8,0) and
+(-0.3,0)..(0.5,0.2)..controls +(0.4,0) and +(-0.4,0)..(1,1.5)\stnode{-};
\end{knot}
\path (0.1,0.75)node[rotate=90]{...};
\end{linkdiag}
+(\text{...})
\begin{linkdiag}[1]
\fill[gray!20] (0,0) rectangle (1,2); \draw (1,0) -- (1,2);
\draw[fill=white] (1,0)circle(0.07) (1,1.3)circle(0.07);
\begin{knot}
\strand[thick] (0,0.5) -- +(1,0) (0,1) -- +(1,0);
\strand[thick] (1,1.8)\stnode{-}..controls +(-0.8,0) and
+(-0.3,0)..(0.5,0.2)..controls +(0.4,0) and +(-0.4,0)..(1,1.5)\stnode{-};
\end{knot}
\path (0.1,0.75)node[rotate=90]{...};
\end{linkdiag}.
\end{align*}
After RII and \eqref{eq-stdA-return}, this agrees with the alternative reduction by
applying \eqref{eq-stdA-return} first.

In (2), the overlaps are of the following form
\begin{equation}
\begin{anndiag}{1}{2.7}
\begin{scope}[thick]
\foreach \y in {0.4,0.7,1.3,1.6,2.2,2.5}
\draw[thick] (0,\y) -- +(1,0);
\draw[thick] (0,2) -- +(1,0)\stnode{\mu} (0,0.2) -- +(1,0)\stnode{\nu};
\end{scope}
\foreach \y in {0.55,1.45,2.35}
\path (0.2,\y)node{...};
\end{anndiag}.
\end{equation}
Similar to the disjoint overlap case, the two reductions can be applied in any order,
but the coefficients and the diagrams are different depending on the order. After some
careful calculations, the reduction agree after some applications of
\eqref{eq-stdA-skein}--\eqref{eq-stdA-stex}
on $e^\ast_2$.
\end{proof}

\begin{proof}[Proof that $\phi_a$ is bijective]
Define the degree $\deg=(\deg_1,\deg_2):A\to\nats^2$. $\deg_1$ is the total number of
intersections with $e^\ast_{1,3}$ and the dashed arc. $\deg_2$ is the degree defined in
the proof of \cite[Lemma~2.10]{Le:decomp}. We do not need the exact definition. All we
need are the following results:
\begin{itemize}
\item
  $\deg_2$ is decreased by \eqref{eq-stdA-skein}--\eqref{eq-stdA-stex}.
\item
  $\deg_2(a)=0$ $\Leftrightarrow$ $a$ is irreducible with respect to
  \eqref{eq-stdA-skein}--\eqref{eq-stdA-stex} $\Leftrightarrow$ $a$ represents a
  basis element of $\skein(\annulus)$ in Theorem~\ref{thm-basis} with $\ori$ positive.
\end{itemize}
We use the lexicographical order on $\nats^2$. Since
\eqref{eq-stdA-skein}--\eqref{eq-stdA-stex} do not increase $\deg_1$, the first fact
above shows they decrease $\deg$. For the rest, they all decrease $\deg_1$ by $1$, so
they decrease $\deg$ as well. Thus, $\deg$ satisfies the condition in Theorem
~\ref{thm-diamond}. The local confluence is proved in Lemma~\ref{lemma-confl}. By
Theorem~\ref{thm-diamond}, $RA/(S)$, naturally identified with $\skeincr(\annulus)$
by Lemma~\ref{lemma-RA-quot}, is a free $R$-module with a basis given by the
irreducible diagrams. The irreducible diagrams are basis elements of $\skein(\annulus)$
with no intersection with the dashed arc or endpoints on $e^\ast_{1,3}$. Such diagrams
are contained in a neighborhood of $a$ after normal isotopy, and they match the basis
of $\skein(\bigon)\cong\Oq$ via $\phi_a$. Therefore, $\phi_a$ is bijective.
\end{proof}


\bibliographystyle{hamsalpha}
\bibliography{biblio}
\end{document}